\documentclass[preprint,10pt]{elsarticle}




\usepackage{hyperref, amsthm, amsmath, amssymb, amsfonts, mathrsfs, graphicx, xcolor, multicol, geometry, comment, tikz}
\usetikzlibrary{patterns}
\usepackage{pgfplots}
\usepackage{tikz-3dplot}


\newtheorem{theorem}{Theorem}[section] 
\newtheorem{remark}[theorem]{Remark}
\newtheorem{proposition}[theorem]{Proposition}
\newtheorem{lemma}[theorem]{Lemma}
\newtheorem{corollary}[theorem]{Corollary}
\newtheorem{problem}[theorem]{Problem} 
\newtheorem{assumption}[theorem]{Assumption}

\newcommand{\vect}[1]{#1}
\newcommand{\conj}[1]{\overline{#1}}
\newcommand{\HH}{H}
\newcommand{\LL}{L}
\newcommand{\Gtop}{\varGamma_{\textsf{u}}}
\newcommand{\Gwall}{\varGamma_{\textsf{w}}}
\newcommand{\Gbottom}{\varGamma_{\textsf{b}}}
\newcommand{\tumort}{\varOmega_{0,t}}
\newcommand{\tumor}{\varOmega_{0}}
\newcommand{\bartumor}{\overline{\varOmega}_{0}}
\newcommand{\bartumort}{\overline{\varOmega}_{0,t}}
\newcommand{\healthy}{\varOmega_{1}}
\newcommand{\healthyt}{\varOmega_{1,t}}
\newcommand{\Vomega}{V(\varOmega)}
\newcommand{\Womega}{W(\varOmega)}
\newcommand{\Vomegat}{V(\varOmega_{t})}
\newcommand{\holdall}{U}

\newcommand{\sfTheta}{\Theta}

\newcommand{\VV}{\theta}
\newcommand{\Vn}{\theta_n}
\newcommand{\nn}{n}      
\newcommand{\dn}[1]{\partial_{\nn}{#1}} 
\newcommand{\ddn}[1]{\dfrac{\partial{#1}}{\partial \nn}}  
    
\newcommand{\intO}[1]{\int_{\varOmega}{#1}{\, {dx}}}

\newcommand{\intGtop}[1]{\int_{\Gtop}{#1}{\, {ds}}} 
\newcommand{\intGtopt}[1]{\int_{\varGamma_{\textsf{u},t}}{#1}{\, {ds}}}  
  
\newcommand{\abs}[1]{\vert{#1}\vert}
\newcommand{\bigabs}[1]{\left\vert{#1}\right\vert}
\newcommand{\norm}[1]{\|{#1}\|} 
\newcommand{\cnorm}[1]{\left\|{#1}\right\|} 
\newcommand{\op}[1]{\operatorname{#1}} 
\newcommand{\dive}{\op{div}} 
 
\newcommand{\jump}[1]{\left[{#1}\right]}

\newcommand{\idmat}{I} 
\newcommand{\Lag}{F}  
\newcommand{\imagpart}[1]{\Im\left\{{#1}\right\}} 
\newcommand{\imaginary}[1]{{#1}^{\textsf{i}}}   
\newcommand{\real}[1]{{#1}^{\textsf{r}}}   
\newcommand{\reu}{u^{\textsf{r}}}
\newcommand{\imu}{u^{\textsf{i}}}
\newcommand{\reuout}{u^{\textsf{r}}_{1}}
\newcommand{\imuout}{u^{\textsf{i}}_{1}}
\newcommand{\reuin}{u^{\textsf{r}}_{0}}
\newcommand{\imuin}{u^{\textsf{i}}_{0}}
\newcommand{\dett}{I_{t}}
\newcommand{\At}{A_{t}}
\newcommand{\bt}{b_{t}}
\newcommand{\Mt}{M_{t}}
\newcommand{\wt}{w^{t}}
\newcommand{\ut}{u^{t}}
\newcommand{\pt}{p^{t}}
\newcommand{\zt}{z^{t}}

\newcommand{\coeffk}{k}
\newcommand{\mincoeffk}{\underline{\coeffk}}
\newcommand{\maxcoeffk}{\overline{\coeffk}}
\newcommand{\minsigma}{\underline{\sigma}}
\newcommand{\maxsigma}{\overline{\sigma}}
\newcommand{\kt}{{\coeffk}^{t}}
\newcommand{\Qt}{{Q}^{t}}
\newcommand{\dzero}{{d}_{\circ}}
 
\newcommand{\bigdO}{\partial \varOmega}

\newcommand{\intOt}[1]{\int_{\varOmega_{t}}{#1}{\, {d} x_{t}}}

\newcommand{\meshh}{\textsf{h}}
\newcommand{\meshgamma}{\textsf{h}_{\gamma}}
\newcommand{\imaginaryh}[1]{{#1}^{\textsf{i}}_{\meshh}}   
\newcommand{\realh}[1]{{#1}^{\textsf{r}}_{\meshh}}   
\newcommand{\imuh}{u_{\meshh}^{\textsf{i}}}

\newcommand{\aK}{\textsf{a}_{K}}
\newcommand{\hK}{\textsf{h}_{K}}
\newcommand{\RK}{R_{K}}
\newcommand{\bK}{b_{K}}
\newcommand{\Jgamma}{\textsf{J}_{\gamma}}
\newcommand{\RKa}{R_{K}^{a}}

\newcommand{\Jgammaa}{\textsf{J}_{\gamma}^{a}}
\newcommand{\ui}{u_{i}}
\newcommand{\dotui}{\dot{u}_{i}}
\newcommand{\psii}{\psi_{i}}
\newcommand{\II}{I}  
\newcommand{\xx}{\vect{x}}
\newcommand{\yy}{\vect{y}}
\newcommand{\edgemat}{{E}_{K}} 
\newcommand{\dotEK}{\dot{E}_{K}}

\newcommand{\etaK}{{\eta}_{K}}
\newcommand{\muK}{{\mu}_{K}}

\newcommand{\wh}{{w}_{\meshh}}
\newcommand{\uh}{{u}_{\meshh}}
\newcommand{\ph}{{p}_{\meshh}}
\newcommand{\erru}{{e}}
\newcommand{\errp}{{e}^{a}}
\newcommand{\vh}{{v}_{\meshh}}
\newcommand{\dotuh}{\dot{u}_{\meshh}}
\newcommand{\dotvh}{\dot{v}_{\meshh}}
\newcommand{\tildedotuh}{\dot{\tilde{u}}_{\meshh}}
\newcommand{\spaceVh}{{V}_{\meshh}(\varOmega)} 
\newcommand{\spaceWh}{{W}_{\meshh}(\varOmega)}
\newcommand{\varOmegah}{\varOmega_{\meshh}}
\newcommand{\tumorh}{\varOmega_{0,\meshh}} 
\newcommand{\healthyh}{\varOmega_{1,\meshh}} 
\newcommand{\Vomegah}{V(\varOmegah)}
\newcommand{\Jh}{{J}_{\meshh}} 
\newcommand{\holdallh}{U_{\meshh}}
\newcommand{\spacePone}{{P}^{1}_{\meshh}} 
\newcommand{\spaceVhOh}{{S}^{1}_{\meshh}} 
\newcommand{\spaceVhOhd}{{S}^{1}_{\meshh}(\varOmegah)^{d}} 
\newcommand{\VVh}{{\VV}_{\meshh}} 
\newcommand{\dotXh}{\dot{X}_{\meshh}}
\newcommand{\tu}{\tilde{{u}}}
\newcommand{\tv}{\tilde{{v}}}
\newcommand{\tuh}{\tilde{{u}}_{\meshh}}  
\newcommand{\tvh}{\tilde{{v}}_{\meshh}}  
\newcommand{\dottuh}{\dot{\tilde{{u}}}_{\meshh}}  
\newcommand{\hk}{{\meshh}_{K}}
\newcommand{\ak}{\aK}
\newcommand{\dotxxi}{\dot{\vect{x}}_{i}}
\newcommand{\xxi}{\vect{x}_{i}}
\newcommand{\xxo}{\vect{x}_{0}}
\newcommand{\xxd}{\vect{x}_{d}} 
\newcommand{\Th}{\mathcal{T}_{\meshh}}
\newcommand{\Tht}{\mathcal{T}_{\meshh}(t)}
\newcommand{\Kt}{K(t)}
\newcommand{\Ko}{K(0)}
\newcommand{\FK}{F_{K(t)}}

\newcommand{\dotFK}{\dot{F}_{K(t)}}
\newcommand{\detJac}{\mathbb{J}}
\newcommand{\Jac}{{J}_{F}}

\newcommand{\xok}{\vect{x}_{0}^{K}}
\newcommand{\tpsi}{\tilde{{\psi}}}

\newcommand{\tnabla}{\tilde{{\nabla}}}
\newcommand{\intgamma}[1]{\int_{\gamma}{#1}{\, ds}} 
\newcommand{\intOh}[1]{\int_{\varOmegah}{#1}{\, d\xx}} 
 
\newcommand{\intGtopy}[1]{\int_{\Gtop(t)}{#1}{\, {d\yy}}} 

\newcommand{\intK}[1]{ \int_{K} {#1} \,d\xx }

\newcommand{\sumint}[1]{\sum_{K}\int_{K}{\left( #1 \right)}{\, d\xx}}
\newcommand{\suminto}[1]{\sum_{K(0)}\int_{\Ko}{\left( #1 \right)}{\, d\xx}}
\newcommand{\sumintt}[1]{\sum_{K(t)}\int_{\Kt}{\left( #1 \right)}{\, d\yy}}
\newcommand{\Gtoph}{\varGamma_{\textsf{u}, \meshh}}
\makeatletter
\newcommand{\oset}[3][0ex]{%
  \mathrel{\mathop{#3}\limits^{
    \vbox to#1{\kern-1\ex@
    \hbox{$\scriptstyle#2$}\vss}}}}
\makeatother

\newcommand{\alert}[1]{{\color{black}{#1}}}
\newcommand{\harbrecht}[1]{{\color{black}{#1}}}
\newcommand{\paruch}[1]{{\color{black}{#1}}}

\journal{a journal}

\begin{document}

\begin{frontmatter}



\title{Localization of tumor through a non-conventional numerical shape optimization technique} 


\author[J. F. T. Rabago]{Julius Fergy Tiongson Rabago} 
\ead{jfrabago@gmail.com \sep jftrabago@gmail.com}

\affiliation{organization={Faculty of Mathematics and Physics, Institute of Science and Engineering, Kanazawa University},
            addressline={Kakumamachi}, 
            city={Kanazawa},
            postcode={920-1192}, 
            state={Ishikawa},
            country={Japan}}

\begin{abstract}
This paper presents a method for estimating the shape and location of an embedded tumor using shape optimization techniques, specifically through the coupled complex boundary method.  
The inverse problem---characterized by a measured temperature profile and corresponding heat flux (e.g., from infrared thermography)---is reformulated as a complex boundary value problem with a complex Robin boundary condition, thereby simplifying its over-specified nature.  
The geometry of the tumor is identified by optimizing an objective functional that depends on the imaginary part of the solution throughout the domain.
The shape derivative of the functional is derived through shape sensitivity analysis.
An iterative algorithm is developed to numerically recover the tumor shape, based on the Riesz representative of the gradient and implemented using the finite element method.
In addition, the mesh sensitivity of the finite element solution to the state problem is analyzed, and bounds are established for its variation with respect to mesh deformation and its gradient.
Numerical examples are presented to validate the theoretical results and to demonstrate the accuracy and effectiveness of the proposed method.
\end{abstract}


\begin{keyword}
Coupled complex boundary method\sep
shape optimization\sep
shape derivative\sep
tumor localization\sep
finite element method
\end{keyword}


\end{frontmatter}



\section{Introduction}\label{sec:Introduction}%
Body temperature is widely recognized as a key health indicator. 
Generally, the surface temperature of the skin is influenced by underlying blood circulation, local metabolic activity, and heat exchange between the skin and its environment \cite{Chato1985,Bowman1985}. 
Variations in any of these factors can alter skin surface temperature and heat flux, reflecting the body's physiological state. 
Tumors, due to their unique structure and angiogenesis processes, often exhibit abnormal behavior. 
Features such as inflammation, increased metabolic rates, interstitial hypertension, abnormal blood vessel morphology, and a lack of homeostatic regulation contribute to tumors generating and dissipating heat differently compared to normal tissue. 
For instance, skin temperature above a tumor, such as a malignant melanoma or breast tumor, is often significantly higher than that of surrounding tissues \cite{Knappetal2022,Lawson1956,LawsonChugtai1963,MiyakawaBolomey1995,SantaCruzetal2009}.

Abnormal skin surface temperature can indicate tumor location and size, as well as monitor changes after treatment.
Infrared thermography, a non-invasive and contact-free technique, detects subtle temperature variations by measuring skin-emitted radiation.
While early adoption was limited by cost and sensitivity issues, recent advances have improved its utility in assessing superficial tissues and related physiological processes.
For instance, Santa Cruz et al. \cite{SantaCruzetal2009} used thermography to identify acute skin reactions after boron neutron capture therapy (BNCT), linking high-temperature regions to dose distributions and undetectable melanoma nodules on CT scans.

\paruch{Accurate, non-invasive diagnosis is essential for effective treatment planning, particularly in detecting tumors beneath the skin. 
Prior studies have used thermography and inverse modeling to estimate internal temperature distributions and guide hyperthermia-based therapies. 
For example, \cite{Paruch2020} and \cite{MajchrzakParuch2011} studied tumor ablation via electromagnetic heating, while \cite{VaronOrlandeElicabe2015} and \cite{NgJamil2014} focused on thermal estimation and parameter sensitivity. 
These works highlight the diagnostic potential of thermal imaging. 
Shape optimization builds on this by providing a mathematical framework to estimate the size and location of abnormalities from surface temperature data (see, e.g., \cite{MajchrzakParuch2011,AgnelliPadraTurner2011,PadraSalva2013}). 
This approach can improve geometry-aware diagnostics and support treatments like radiofrequency ablation, especially when surgery is not feasible.}

Building on these advancements, we focus on a specific diagnostic scenario involving malignant melanoma. 
In particular, this study addresses the inverse problem of estimating the location and size of a malignant melanoma based on abnormal skin surface temperature. 
The tissue domain $\varOmega$ is modeled using Pennes' bioheat equation \cite{Pennes1948}, which accounts for thermal differences between healthy and tumor regions \cite{Lawson1956,LawsonChugtai1963,MiyakawaBolomey1995,SantaCruzetal2009}. 
Assuming piecewise constant properties, we aim to identify the tumor region $\bartumor \subset \varOmega$ using boundary temperature data on $\partial\varOmega$. 
A shape optimization approach \cite{DelfourZolesio2011,HenrotPierre2018,SokolowskiZolesio1992,MuratSimon1976} is used, where surface measurements (e.g., from thermography) are incorporated as complex Robin boundary conditions. 
The tumor shape is recovered by minimizing a functional involving the imaginary part of the solution, with optimization guided by the shape gradient (\textit{Eulerian} derivative).

In shape calculus, the Eulerian derivative \cite[p.~471]{DelfourZolesio2011} of a shape functional can be expressed in either distributed \cite{LaurainSturm2016} or boundary form \cite{DelfourZolesio2011,HenrotPierre2018,SokolowskiZolesio1992}. 
The boundary form, derived from the Hadamard-Zol\'{e}sio structure theorem \cite[Thm. 3.6, p.~479]{DelfourZolesio2011}, is notable in numerical shape optimization due to its concise representation \cite{MohammadiPironneau2001,SchulzSiebenbornWelker2015}. 
However, finite element approximations of this form are not suitable for irregular boundaries. 
The distributed Eulerian derivative, being more broadly applicable \cite{LaurainSturm2016}, warrants greater consideration in numerical computations.

This work employs finite element methods (FEMs) for numerical computations. 
The motivation for this approach is discussed in subsection \ref{subsec:review_of_literature_and_motivation}. 
FEMs are widely used in shape optimization to discretize and solve PDEs on arbitrary domains \cite{MohammadiPironneau2001}. 
Accurate numerical shape gradients are crucial for optimization algorithms \cite{DelfourZolesio2011}, particularly in geometric inverse problems. 
Distributed shape gradient algorithms are preferable to surface-based ones, as they improve finite element mesh quality and enhance efficiency, as shown in \cite{LaurainSturm2016,SchulzSiebenbornWelker2015}, where numerical comparisons confirm the robustness and efficiency of the distributed approach.
In \cite{HiptmairPaganiniSargheini2015}, the distributed Eulerian derivative demonstrates faster convergence and higher accuracy for elliptic problems. 
In this work, shape gradient information will be obtained using Delfour-Zol\'{e}sio's minimax formulation approach \cite{DelfourZolesio1988a}.
This approach, in contrast to the chain rule method, avoids the need to compute material or shape derivatives of the states. 
It is a well-established method, frequently used in optimal shape design and control theory \cite{Azegami2020}. 
However, its application in shape sensitivity analysis is not straightforward due to the pseudo-time dependence of the function spaces in the minimax formulation. 
To address this, two methods are available \cite{DelfourZolesio2011,DelfourZolesio1988a}: the function space parameterization and the function embedding technique. 
In this study, we will apply the former.


\textit{Paper organization.}  Following the medical context of skin cancer and its thermal characteristics, the remainder of this paper is organized as follows:  
Section~\ref{sec:mathematical_model} describes the mathematical model for heat transfer in the human body.  
Section~\ref{sec:inverse_problem} formulates the inverse problem.  
Section~\ref{sec:CCBM} introduces the coupled complex boundary method (CCBM) and the resulting shape optimization problem, where we clarify the motivation and contribution of this study.  
Section~\ref{sec:sensitivity_analysis} presents the computation of the shape derivative of the corresponding shape functional in the CCBM formulation.  
Section~\ref{sec:Mesh_deformation_and_transformation_of_integrals} introduces some tools needed to examine mesh deformations.
Section~\ref{sec:mesh_sensitivity_analysis} conducts a mesh sensitivity analysis for the finite element solution of the state problem.  
Section~\ref{sec:discretization_of_the_cost_function} briefly discusses the discretization of the objective function and highlights key considerations for using the volume expression in shape gradient calculations.  
Section~\ref{sec:numerical_algorithm_and_examples} details the numerical implementation of our approach and presents simulations, including cases with and without random noise in both two and three dimensions, to evaluate the algorithm’s performance.  
Finally, Section~\ref{sec:conclusion} concludes with final remarks and future directions.
Several appendices are also provided to present key results used in the study, including an \textit{a posteriori} estimate for the proposed formulation.

%
\section{The Model Equation}\label{sec:mathematical_model}
\subsection{\harbrecht{The bioheat transfer problem}}
Following \cite{AgnelliPadraTurner2011}, we consider the steady-state form of the Pennes equation \cite{Pennes1948}:

\[
    -\nabla \cdot (\sigma(x) \nabla{u}(x)) + \coeffk(x)(u(x) - T_b) = q(x), \quad x \in \varOmega \subset \mathbb{R}^{d}, \quad d \in \{ 2, 3 \},
\]
where $\sigma$ is the thermal conductivity, $k = G_b \rho_b c_b$ is the perfusion coefficient ($G_b$ is the perfusion rate, $\rho_b$ is the blood density, and $c_b$ is the specific heat of blood), $q$ is the metabolic heat source, and $T_b$ is the blood temperature.

This study primarily considers a single tumor region, denoted by $\tumor$, surrounded by healthy tissue, denoted by $\healthy$, but the results also apply in the case of multiple tumors.
The coefficients are defined as piecewise functions, where (cf. \cite{AgnelliPadraTurner2011}):
\[
    \sigma(x) =
    \begin{cases}
        \sigma_1, & \text{if } x \in \healthy, \\
        \sigma_0, & \text{if } x \in \tumor,
    \end{cases}
    \quad
    \coeffk(x) =
    \begin{cases}
        \coeffk_1, & \text{if } x \in \healthy, \\
        \coeffk_0, & \text{if } x \in \tumor,
    \end{cases}
    \quad
    Q(x) =
    \begin{cases}
        Q_1, & \text{if } x \in \healthy, \\
        Q_0, & \text{if } x \in \tumor.
    \end{cases}
\]

Transmission conditions are applied on $\partial \tumor$, with boundary conditions on $\partial \varOmega$ consisting of $\Gtop$, $\Gwall$, and $\Gbottom$: the bottom boundary $\Gbottom$ has a constant temperature $T_b > 0$, $\Gwall$ is adiabatic, and $\Gtop$ represents convective heat exchange.
Defining $u_1 = u|_{\healthy}$ and $u_0 = u|_{\tumor}$, the transmission problem is:
\begin{equation}\label{eq:main_equation}
    \begin{cases}
        -\sigma_{1} \Delta u_{1} + \coeffk_{1} u_{1} = Q_{1}, & \text{in } \healthy, \\[0.5em]
        -\sigma_{0} \Delta u_{0} + \coeffk_{0} u_{0} = Q_{0}, & \text{in } \tumor, \\[0.5em]
        u_{1} = u_{0}, & \text{on } \partial \tumor, \\[0.5em]
        -\sigma_{1} \ddn{u_{1}} = -\sigma_{0} \ddn{u_{0}}, & \text{on } \partial \tumor, \\[0.5em]
        -\sigma_{1} \ddn{u_{1}} = \alpha(u_{1} - T_{a}), & \text{on } \Gtop, \\[0.5em]
        -\sigma_{1} \ddn{u_{1}} = 0, & \text{on } \Gwall, \\[0.5em]
        u_{1} = T_{b}, & \text{on } \Gbottom,
    \end{cases}
\end{equation}
where $\alpha$ is the heat transfer coefficient, $T_a$ is the ambient temperature, and $\nn$ is the outward-pointing unit normal.
Figure~\ref{fig:domain} provides an illustration of the domain shape in two dimensions (2D).
\begin{figure}[h!]
    \centering
    \begin{tikzpicture}[scale=0.9]
        \fill[pattern=north west lines] (0,0) ellipse (1 and 0.5);
        \draw[thick] (-3, -1) rectangle (3, 1);
        \draw[thick] (0,0) ellipse (1 and 0.5);
        \node at (0,0) {\Large $\tumor$};
        \node at (1.4,-0.3) {\large $\partial\tumor$};
        \node at (0,1.3) {\Large $\Gtop$};
        \node at (0,-1.3) {\Large $\Gbottom$};
        \node at (-3.4,0) {\Large $\Gwall$};
        \node at (3.4,0) {\Large $\Gwall$};
        \node at (-2.0,0.5) {\Large $\healthy$};
    \end{tikzpicture}
    \caption{Two-dimensional illustration}
    \label{fig:domain}
\end{figure}

Evaporation on the skin surface can also be considered. 
In this case, a time-dependent heat equation must be used, as the humidity coefficient varies with time. 
\subsection{Notations}\label{subsec:notations}
In this study, we mainly work on complex versions of Sobolev and Lebesgue spaces, as the proposed method requires complex-valued functions. 
We will not, however, distinguish between spaces of real and complex functions, as the context will clarify their use.

Let $d \in \{2,3\}$ denote the spatial dimension. 
We define $\partial_{i} := \partial/\partial x_{i}$ and $\nabla := (\partial_{1}, \ldots, \partial_{d})^{\top}$. 
Also, $\partial_{\nn} := \partial/\partial \nn = \sum_{i=1}^{d} n_{i} \partial_{i}$, where $\nn \in \mathbb{R}^{d}$ is the outward unit normal vector to $\varOmega$. 
The inner product of $\vect{a}, \vect{b} \in \mathbb{R}^{d}$ is $\vect{a} \cdot \vect{b} := \vect{a}^{\top} \vect{b} \equiv \langle \vect{a}, \vect{b} \rangle$, and we use $\varphi_{n}$ for $\varphi \cdot \nn$ when convenient.

\harbrecht{For a vector-valued function $\varphi := (\varphi_1, \ldots, \varphi_d)^{\top} : \varOmega \to \mathbb{R}^d$, we define the matrix $\nabla \varphi \in \mathbb{R}^{d \times d}$ by $(\nabla \varphi)_{ij} := \partial \varphi_j / \partial x_i$ for $i, j = 1, \ldots, d$. The Jacobian of $\varphi$ is then given by $(D\varphi)_{ij} := \partial \varphi_i / \partial x_j$, so that $D\varphi = (\nabla \varphi)^{\top}$.}
Accordingly, we write $\partial_{\nn} {\varphi} := (D{\varphi})\nn$.

Let $m \in \mathbb{N} \cup \{0\}$. 
The spaces $L^{2}(\varOmega)$ and $H^{m}(\varOmega) = W^{m,2}(\varOmega)$ denote real Lebesgue and Sobolev spaces with natural norms. 
The space $H^{m}(\varOmega)^{d}$ consists of vector-valued functions ${\varphi}: \varOmega \to \mathbb{R}^{d}$ with $\harbrecht{\varphi}_{i} \in H^{m}(\varOmega)$ for $i = 1, \ldots, d$, and its norm is $\norm{{\varphi}}_{H^{m}(\varOmega)^{d}} := \left( \sum_{i=1}^{d} \norm{\harbrecht{\varphi}_{i}}^{2}_{H^{m}(\varOmega)} \right)^{1/2}$.

For the complex version, we define the inner product $(\!( {\varphi}, {\psi} )\!)_{m,\varOmega,d} = \sum_{j=1}^{d}({\varphi}, \conj{\psi})_{m,\varOmega}$ and the norm $\cnorm{{\psi}}_{m,\varOmega,d} = \sqrt{ (\!( {\psi},\conj{\psi} )\!)_{m,\varOmega, d}}$. 
For real-valued functions, $\cnorm{\cdot}_{m,\varOmega,d} = \norm{\cdot}_{W^{m,p}(\varOmega)}$. 
Here, $\conj{\varphi}$ denotes the complex conjugate of $\varphi$, and `$:$' denotes the Frobenius inner product: $\nabla {\varphi} : \nabla {\psi} = \sum_{i,j=1}^{d} {\partial_{i} {\varphi}_{j}} {\partial_{i} {\overline{\psi}}_{j}}$.
\alert{The spaces in discrete setting, as understood in the present study, are also understood to be complex-valued.}

For later use, we also denote by $\nn$ the outward unit normal to $\partial\tumor$ pointing into $\healthy$. 
We define $\dn{u}_{0}$ (respectively, $\dn{u}_{1}$) as the normal derivative from inside $\tumor$ (respectively, $\healthy$) at $\partial\healthy$, and $\jump{\cdot}$ represents the jump across this interface.
We write $\intO{\cdot}$ instead of $\int_{\healthy}{\cdot , dx} + \int_{\tumor}{\cdot , dx}$ when it causes no confusion.
\harbrecht{Finally, throughout the paper, $\Re$ and $\Im$ denote the real and imaginary parts of a complex-valued expression and $\abs{\cdot}$ denotes the modulus of a complex number.}

\subsection{Weak formulation of the problem}
In this investigation, we generally assume that 
\begin{assumption}\label{assume:weak_assumptions}
	\begin{itemize}
		\item[(A1)] $\varOmega \subset \mathbb{R}^{d}$, $d\in\{2,3\}$, is a bounded domain of class ${{C}}^{k,1}$, $k \in \mathbb{N}$;
		\item[(A2)] $\varOmega$ contains a ${{C}}^{k,1}$ subdomain $\omega \Subset \varOmega$ characterized by a finite jump in the coefficients of the PDE such that $\varOmega \setminus \overline{\omega}$ is connected;
		\item[(A3)] $\sigma_{0}, \sigma_{1} \in W^{1,\infty}(\varOmega)$ and $\sigma_{0}, \sigma_{1} > 0$; 
		\item[(A4)] $\coeffk_{0}, \coeffk_{1} \in W^{1,\infty}(\varOmega)$ and $\coeffk_{0}, \coeffk_{1} > 0$; 
		\item[(A5)] $Q_{0}, Q_{1} \in {H}^{1}({\varOmega})$.
	\end{itemize}
\end{assumption}
For later reference and technical purposes, especially in Section~\ref{sec:mesh_sensitivity_analysis}, we assume the existence of constants $\minsigma, \maxsigma, \mincoeffk, \maxcoeffk > 0$ such that for almost every $x \in \varOmega$, the following inequalities hold:
\begin{equation}\label{eq:bounds_for_sigma_and_k}
    \minsigma \leqslant \sigma(x) \leqslant \maxsigma
    \quad \text{and} \quad
    \mincoeffk \leqslant \coeffk(x) \leqslant \maxcoeffk.
\end{equation}

The regularity assumed in (A3) and (A4) of Assumption~\ref{assume:weak_assumptions} is stronger than necessary for most of the results, such as Proposition~\ref{prop:well_posedness_of_state}.
To prove this proposition, it suffices to assume that $\alpha \in L_{+}^{\infty}(\Gtop)$, $T_{a} \in H^{-1/2}(\Gtop)$, $T_{b} \in H^{1/2}(\Gbottom)$, and $Q \in L^{2}(\varOmega)$, where $L_{+}^{\infty}$ denotes the space of essentially bounded functions with positive essential infima. 
For simplicity, we often treat $\alpha$, $T_{a}$, and $T_{b}$ as constants, and $\sigma$, $\coeffk$, and $Q$ as piecewise constants.

Let $u_{b} \in H^1(\varOmega)$ be such that $u_{b} = T_{b}$ on $\Gbottom$, and define the following:
\begin{align}
\Vomega &:= \{ v \in H^1(\varOmega) \mid v = 0 \text{ on } \Gbottom \},\label{eq:space_Vomega}\\
a^{\dagger}(u, v) &:= \intO{\sigma \nabla{u} \cdot \nabla{v}} + \intO{kuv} + \intGtop{ \alpha uv }, \quad u, v \in H^{1}(\varOmega), \nonumber\\
l^{\dagger}(v) &:= \intO{Qv} + \intGtop{ \alpha T_{a} v }, \quad v \in H^{1}(\varOmega).\nonumber
\end{align}
With these, the weak formulation of \eqref{eq:main_equation} can be stated as follows:
\begin{problem}\label{prob:original_weak_form}
Find $u \in H^1(\varOmega)$ such that $u = u_{b} + w$, $w \in \Vomega$, and $a^{\dagger}(w, v) = l^{\dagger}(v) - a^{\dagger}(u_{b}, v)$, for all $v \in \Vomega$.
\end{problem}

If $u_{b} \equiv T_{b}$ in $\varOmega$, then the problem simplifies: 
Find $u \in \Vomega$ such that $a^{\dagger}(u, v) = l^{\dagger}(v) - a^{\dagger}(T_{b}, v)$, for all $v \in \Vomega$.
Unless stated otherwise, we assume $T_{b} \equiv 0$ to simplify the exposition.
For later use, we introduce the linear manifold $\Womega := \{ w \in H^1(\varOmega) \mid w|_{\Gbottom} = T_{b} \}$.

\section{The Inverse Problem}\label{sec:inverse_problem}
According to \cite{Lawson1956,LawsonChugtai1963,SantaCruzetal2009}, highly vascularized tumors can increase local blood flow and heat production, raising the temperature of the skin surface.
This abnormal temperature can help predict the location and size of the tumor.
To achieve this, the following inverse problem is solved:
\begin{problem}\label{prob:inverse_problem}
	Given $\varOmega$, constants $T_{a}$, $T_{b}$, $\alpha$, and a measured temperature profile $h$ on the upper boundary $\Gtop$, \alert{find a subdomain $\tumor^{\ast} \subset \varOmega$ such that the solution $u=u(\varOmega)=u(\healthy \cup \overline{\tumor^{\ast}})$ of the boundary value problem \eqref{eq:main_equation} matches $h$ on $\Gtop$; i.e., $u|_{\Gtop} = h$ where $u$ solves \eqref{eq:main_equation} with $\tumor = \tumor^{\ast}$.}
\end{problem}
For clarity, we emphasize here that all coefficients in \eqref{eq:main_equation}, the data $Q$, $T_{a}$, $T_{b}$, and $\partial\varOmega$ are known parameters.

The straightforward way to find the \textit{exact} inclusion $\tumor^{\ast}$ is by minimizing the least-squares misfit function:
\begin{equation}\label{eq:least_squares}
	J_{LS}(\tumor,u(\tumor)) = \frac{1}{2} \intGtop{ (u(\tumor) - h)^{2}},
\end{equation}
where $J_{LS}$ reaches its minimum when $\tumor = \tumor^{\ast}$ (see \cite{AgnelliPadraTurner2011}). 
To find the optimal shape solution of this functional, one applies shape sensitivity analysis \cite{DelfourZolesio2011,HenrotPierre2018,MuratSimon1976,SokolowskiZolesio1992}, which examines the variation of the functional with respect to the unknown shape.

Since we will use tools from shape calculus, we must precisely define the admissible set of geometries for the inclusion $\tumor$.
Let \harbrecht{$\dzero > 0$ be a fixed (small) real number} and define $\varOmega_{{\circ}} \Subset \varOmega$ with a ${{C}}^{\infty}$ boundary as:
\[
\varOmega_{{\circ}} := \{ x \in \varOmega \mid d(x,\bigdO) > \dzero \}.
\]
Let $k \in \mathbb{N}$. We define the set of all admissible geometries, or simply the \textit{hold-all} set $\holdall^{k}$, as follows:
\begin{equation}\label{eq:admissible_set_of_subdomains}
	\holdall^{k} := \{\tumor \Subset \varOmega_{{\circ}} \mid \harbrecht{d(x,\bigdO) > \dzero}, \forall x \in \tumor,\ \healthy \text{ is connected, and } \tumor \text{ is of class } {{C}}^{k,1} \}.
\end{equation}
We say that $\varOmega$ is \textit{admissible} if it contains a subdomain $\tumor \in \holdall^{k}$. 
Setting $k = 1$ suffices for our analysis.
Hereinafter, without further notice, we assume that $\varOmega$ is admissible.
\harbrecht{In \eqref{eq:admissible_set_of_subdomains}, the first condition means that $\tumor$ is compactly contained in $\varOmega$, i.e., no part of $\tumor$ touches or comes arbitrarily close to the boundary $\partial \varOmega$.
Equivalently, the tubular neighborhood of radius $\dzero > 0$ around $\tumor$ lies strictly inside $\varOmega$, which ensures a uniform positive distance $\dzero$ between $\tumor$ and the boundary $\partial \varOmega$.}

We assume the following key assumption:
\begin{assumption}\label{eq:key_assumption}
	There exists $\tumor^{\ast} \in \holdall^{k}$ such that $u^{\ast} = u((\healthy \setminus \bartumor^{\ast}) \cup \tumor^{\ast})$ solves Problem~\ref{prob:inverse_problem}.
\end{assumption} 
To emphasize the dependency of $u$ on the inclusion $\tumor$, we will use the notation $u(\tumor)$ instead of $u((\healthy \setminus \bartumor) \cup \tumor)$ for simplicity. 
Note that the mapping $\varOmega(\tumor) \mapsto u(\varOmega(\tumor))$ is well-defined since Problem~\ref{prob:original_weak_form} is well-posed. 
Thus, we adopt the simplified notation $J(\tumor) = J(\tumor, u(\tumor))$ for the functional $J$.
Nonetheless, either notation is used when convenient.
\section{The Coupled Complex Boundary Method}\label{sec:CCBM}
In this study, we propose the Coupled Complex Boundary Method (CCBM) for shape optimization to numerically approximate the solution to the inverse problem.
In the standard approach, the data needed for fitting, as described in \eqref{eq:least_squares}, is collected from part or all of the accessible boundary or surface of the medium or object.
However, this approach often leads to numerical instability, especially when tracking Neumann data using the least-squares method.
In contrast, CCBM transfers the fitting data from the boundary to the interior of the domain, similar to the Kohn-Vogelius method \cite{KohnVogelius1987}.
For a Cauchy pair on the boundary subjected to measurement, CCBM achieves this by coupling Neumann and Dirichlet data to derive a Robin condition. 
Neumann and Dirichlet data represent the real and imaginary parts of the Robin boundary condition, respectively. 
For Problem~\ref{prob:inverse_problem}, the coupling is done using a Robin boundary condition and a Dirichlet data derived from measurements.
This approach has a regularizing effect similar to that observed with the Kohn-Vogelius method \cite{Rabago2023a}.
Note, however, that this new boundary value problem is more complex and increases the dimension of the discrete system. 
Despite this, CCBM is more robust than the conventional misfit functional approaches, which rely on boundary integrals. 
Effective numerical methods can be developed within this framework.

Because measurements are only available on $\Gtop \subsetneq \partial\varOmega$, the Kohn-Vogelius method does not yield a simple form for the objective function, which defines the difference between two auxiliary problems. 
These problems involve the solutions of two PDEs: one with Dirichlet data and the other with corresponding Neumann data; see, e.g., \cite{AfraitesRabago2025,EpplerHarbrecht2012a}. 
\subsection{The CCBM and contribution to the literature}\label{subsec:review_of_literature_and_motivation}
Originally proposed in \cite{Chengetal2014} for an inverse source problem, the CCBM approach has since been applied to a variety of inverse problems. 
It was employed for an inverse conductivity problem in \cite{Gongetal2017}, and for parameter identification in elliptic problems in \cite{ZhengChengGong2020}. 
Extensions to shape inverse problems and geometric inverse source problems were presented in \cite{Afraites2022} and \cite{AfraitesMasnaouiNachaoui2022}, respectively. 
Further applications include the exterior Bernoulli problem in \cite{Rabago2023b}, free surface problems in \cite{RabagoNotsu2024}, an inverse Cauchy problem for the Stokes system in \cite{Ouiassaetal2022}, and the detection of an immersed obstacle in Stokes fluid flow in \cite{RabagoAfraitesNotsu2025}.

In this study, we use CCBM to locate a tumor in healthy tissue via thermal imaging based on Pennes' equation. 
Our approach is novel in formulating an objective function defined by a domain integral with a single pair of Cauchy data on the accessible boundary.
In free boundary \cite{Rabago2023b} and free surface problems \cite{RabagoNotsu2024}, CCBM has shown greater stability and accuracy than the classical least-squares approach, with computational cost similar to the Kohn-Vogelius method.

This study makes several contributions to the literature:
\begin{itemize}
    \item We introduce a new objective function \eqref{eq:ccbm_functional} for addressing Problem~\ref{prob:inverse_problem}, derived from the CCBM formulation; see subsection~\ref{subsec:CCBM_formulation}. 
    This functional has not been previously \harbrecht{studied} in the context of the current problem.
    \item We derive the first-order shape derivative of the objective function using Delfour-Zol\'{e}sio's minimax formulation approach \cite{DelfourZolesio1988a}. 
    To the best of our knowledge, this is the first work to consider a Lagrangian formulation of an optimal control problem with a complex PDE constraint. 
    We emphasize that the construction of the Lagrangian functional must be done carefully to obtain the correct expression.
    The method naturally applies to broader shape optimization problems constrained by complex-valued PDEs.
\end{itemize}
To compare our study with previous work, we begin by discussing the implementation of the approach \eqref{eq:least_squares} in \cite{AgnelliPadraTurner2011}. 
In that study, the authors employed a second-order finite difference scheme to solve \eqref{eq:main_equation} for both the primal and adjoint systems.
Focusing on the localization of melanoma nodules, they assumed the tumor to be circular, requiring only the center coordinates and radius for the inversion procedure. 
This simplification reduced the dimensionality of the design space for optimizing $J_{LS}$, as the design variables were limited to the tumor's center and radius.
In contrast, the earlier work \cite{ParuchMajchrzak2007} identified the tumor region using an evolutionary algorithm combined with the multiple reciprocity boundary element method.

\paruch{In this work, we apply a numerical approximation method based on finite elements, similar to that in \cite{PadraSalva2013}, while introducing five key innovations:
\begin{itemize}
	\item In contrast to \cite{AgnelliPadraTurner2011}, we employ FEMs---a numerical method similar to that used in \cite{PadraSalva2013}---to solve the state and adjoint state equations numerically. 
	The design variables in our approach are the nodal points on the boundary interface. 
	This method supports not only symmetrical tumors but also asymmetrical shapes and tumors with concave regions, thereby accommodating complex geometries.
	We emphasize that the consideration of non-elliptical tumor shapes reveals limitations in prior approaches \cite{AgnelliPadraTurner2011} (see also \cite{ParuchMajchrzak2007}), which control only the radii and center, restricting their ability to capture such complex geometries.
	\item In the computation of the deformation fields, we use the volume integral representation of the shape gradient instead of the boundary integral form (see Hadamard-Zol\'{e}sio structure theorem \cite[Thm 3.6, p.~479]{DelfourZolesio2011}).
	For a comparison of the convergence order of volume and boundary formulations in the finite element framework, we refer the reader to \cite{HiptmairPaganiniSargheini2015}.
	\item Whereas \cite{AgnelliPadraTurner2011,PadraSalva2013} test their numerical approaches only in two dimensions, our work extends the methodology to three dimensions, thereby demonstrating its applicability to higher-dimensional problems.
	\item To handle noisy measurements, especially in identifying deep small tumor, we apply $\rho$-weighted volume penalization together with the \textit{balancing principle} \cite{ClasonJinKunisch2010b}.
	Originally developed for parameter identification, this principle is, to the best of our knowledge, applied here for the first time to a shape identification problem; see subsections~\ref{subsec:test_2} and \ref{subsec:test_3d}.
	The motivation and advantages of this approach, along with the Sobolev gradient method, are discussed in subsections~\ref{subsec:test_2} and \ref{subsec:test_3d}. Additionally, we note that this approach avoids the need to calculate the curvature expression, which arises when using perimeter or surface measure penalization, as their shape derivatives involve curvature. While curvature is easy to compute in 2D, it is more complex in 3D.
	\item Although our numerical framework is similar to \cite{PadraSalva2013}, we extend the method to reconstruct multiple inclusions of varying sizes and incorporate noise handling---features not addressed in the aforementioned work.
\end{itemize}
}
\alert{Additionaly, a mesh sensitivity analysis is carried out for the FE solution of the \textit{material derivative} of the state problem; see Section~\ref{sec:mesh_sensitivity_analysis}. 
The goal is to demonstrate how the FE solution continuously depends on the mesh. 
To the best of the author's knowledge, this study is the first to conduct such an analysis, particularly in the context of numerical shape optimization problems solved using the Lagrangian method. 
The analysis builds on the ideas developed in \cite{HeHuang2021} on mesh sensitivity analysis for FE solutions of linear elliptic PDEs.}

Some remarks on the choice of shape gradient structure are necessary.
\begin{remark}\label{rem:volume_versus_boundary_expression_for_the_shape_gradient}
As already mentioned, shape gradients of PDE-constrained functionals can be expressed in two equivalent ways.
Both typically involve solving two boundary value problems (BVPs), but one integrates their traces on the domain boundary, while the other evaluates volume integrals.
When these BVPs are solved approximately using, for example, FEMs, the equivalence of the two formulas no longer holds.
\cite{HiptmairPaganiniSargheini2015} analyzed volume and boundary formulations, demonstrating through convergence analysis and numerical experiments that the volume-based expression generally provides greater accuracy in a finite element setting.
Following the aforementioned work, numerous studies have explored this direction. 
We provide a non-exhaustive list of research examining various objective functionals constrained by PDEs, including the elliptic PDEs, the Stokes equation, the Navier-Stokes equation, etc: \cite{EtlingHerzogLoayzaWachsmuth2020,GongLiRao2024,GongLiZhu2022,GongZhu2021,LaurainSturm2016,Laurain2020,LiZhu2019,LiZhu2022,LiZhu2023,LiZhuShen2022,Paganini2014,PaganiniHiptmair2016,Zhu2018}.
\end{remark}
%
%
\subsection{The CCBM formulation}\label{subsec:CCBM_formulation}
The CCBM formulation of Problem~\ref{prob:inverse_problem} is similar to \eqref{eq:main_equation}, except that on the measurement region $\Gtop$, the boundary condition is:
\[
-\sigma_{1} \ddn{u_{1}} - i u_{1} = \alpha(u_{1} - T_{a}) - i h, \quad \text{on } \Gtop,
\]
where $i = \sqrt{-1}$ and $h$ is the measured data on $\Gtop$. 
In the rest of the paper, we refer to \eqref{eq:main_equation} with this boundary condition as the \textit{CCBM equation}.

Let $u = \reu + i \imu : \varOmega \to \mathbb{C}$ be the solution of the CCBM equation.
Then, the real-valued functions ${\reu} : \varOmega \to \mathbb{R}$ and ${\imu} : \varOmega \to \mathbb{R}$ satisfy the following systems of real PDEs:
\begin{multicols}{2}
\noindent
\begin{equation}\label{eq:ccbm_real}
    \begin{cases}
        -\sigma_{1} \Delta \reuout + \coeffk_{1} \reuout = Q_{1}, & \text{in } \healthy, \\[0.5em]
        -\sigma_{0} \Delta \reuin + \coeffk_{0} \reuin = Q_{0}, & \text{in } \tumor, \\[0.5em]
        \reuout = \reuin, & \text{on } \partial \tumor, \\[0.5em]
        -\sigma_{1} \ddn{\reuout} = -\sigma_{0} \ddn{\reuin}, & \text{on } \partial \tumor, \\[0.5em]
        -\sigma_{1} \ddn{\reuout} = \alpha(\reuout - T_{a}) - \imuout, & \text{on } \Gtop, \\[0.5em]
        -\sigma_{1} \ddn{\reuout} = 0, & \text{on } \Gwall, \\[0.5em]
        \reuout = T_{b}, & \text{on } \Gbottom,
    \end{cases}
\end{equation}
\begin{equation}\label{eq:ccbm_imaginary}
    \begin{cases}
        -\sigma_{1} \Delta \imuout + \coeffk_{1} \imuout = 0, & \text{in } \healthy, \\[0.5em]
        -\sigma_{0} \Delta \imuin + \coeffk_{0} \imuin = 0, & \text{in } \tumor, \\[0.5em]
        \imuout = \imuin, & \text{on } \partial \tumor, \\[0.5em]
        -\sigma_{1} \ddn{\imuout} = -\sigma_{0} \ddn{\imuin}, & \text{on } \partial \tumor, \\[0.5em]
        -\sigma_{1} \ddn{\imuout} = \alpha \imuout - h + \reuout, & \text{on } \Gtop, \\[0.5em]
        -\sigma_{1} \ddn{\imuout} = 0, & \text{on } \Gwall, \\[0.5em]
        \imuout = 0, & \text{on } \Gbottom.
    \end{cases}
\end{equation}
\end{multicols}
\begin{remark}\label{rem:equivalent}
If \harbrecht{${\imu} = 0$ in $\varOmega$}, then $\jump{\imu} = \jump{\sigma \dn{\imu}} = 0$ on $\partial \tumor$.  
On $\Gbottom$, $\dn{\imuout} = 0$.  
Similarly, on $\Gwall$, $\imuout = 0$ since $\sigma_{1} > 0$.  
Thus, $\imuout = 0$ on $\Gbottom \cup \Gwall \cup \partial \tumor$.  

Considering the variational equation:
\begin{equation}\label{eq:var_on_top}
\int_{\Gtop} \sigma_{1} \dn{\imuout} \varphi \, ds 
= \int_{\partial \healthy} \sigma_{1} \dn{\imuout} \varphi \, ds 
= \int_{\healthy} \left( \sigma_{1} \nabla \imuout \cdot \nabla \varphi + \coeffk_{1} \imuout \varphi \right) dx = 0, \quad \forall \varphi \in \Vomega.
\end{equation}
By variational arguments, $\dn{\imuout} = 0$ on $\Gtop$, so $\imuout = 0$ on $\Gtop$.  
Note that, if we suppose that $\dn{\imuout} \neq 0$ on $\Gtop$, then $\imuout \neq 0$ on $\Gtop$. 
Taking $\varphi = \imuout \in \Vomega$ as the test function in \eqref{eq:var_on_top} leads to a contradiction.
Hence, $h - \reuout = \sigma_{1} \ddn{\imuout} + \alpha \imuout = 0$ on $\Gtop$, or equivalently, $\reuout = h$ on $\Gtop$.  
Therefore, from the PDE systems \eqref{eq:ccbm_real} and \eqref{eq:ccbm_imaginary}, $(\tumor, \reu)$ solves Problem~\ref{prob:inverse_problem}.  
Conversely, if $(\tumor, u)$ solves Problem~\ref{prob:inverse_problem}, then it satisfies the CCBM equation.
\end{remark}
Remark~\ref{rem:equivalent} implies that Problem~\ref{prob:inverse_problem} can equivalently be formulated as follows:
\begin{problem}{\label{prob:optimization_problem}}
	Find $\tumor \in \holdall^{k}$ such that ${\imu} \equiv {0}$ in $\varOmega$, where $u$ satisfy the CCBM equation.
\end{problem}
\subsection{Well-posedness of the CCBM equation}\label{subsec:wellposedness_of_CCBM_equation}
For functions $\varphi, \psi : \varOmega \to \mathbb{C}$ on the complex Sobolev space $\Vomega := \{ \varphi = \varphi^{\textsf{r}} + i \varphi^{\textsf{i}} \in H^{1}(\varOmega) \mid \varphi^{\textsf{r}} = \varphi^{\textsf{i}} = 0 \text{ on $\Gbottom$}\}$, we define the following sesquilinear and linear forms:
\begin{equation}\label{eq:forms}
\begin{aligned}
	a(\varphi,\psi) 
	&:= \intO{ \left( \sigma \nabla{\varphi} \cdot \nabla{\conj{\psi}} + \coeffk {\varphi}\conj{\psi} \right) } 
		+ \intGtop{ ( \alpha + i ) {\varphi}\conj{\psi} },\\
	l(\psi) &:= \intO{Q{\conj{\psi}}} + \alpha \intGtop{ T_{a} {\conj{\psi}} } + i \intGtop{ h {\conj{\psi}} }.
\end{aligned}
\end{equation} 
The weak form of the CCBM equation can be stated as follows:
\begin{problem}\label{prob:CCBM_weak_form}
Find $u \in \Vomega$ such that $a(\varphi,\psi) = l(\psi)$, for all $\psi \in \Vomega$.
\end{problem}
The solution to Problem~\ref{prob:CCBM_weak_form} will henceforth be called the \textit{state solution}. 
The following result establishes the well-posedness of the weak formulation.
\begin{proposition}\label{prop:well_posedness_of_state}
	Let $\varOmega$ be a Lipschitz bounded domain, $\alpha \in \mathbb{R}_{+}$, $Q \in H^{-1}(\varOmega)$, $T_{a} \in H^{-1/2}(\Gtop)$, $h \in H^{1/2}(\Gtop)$, and the known parameters be given such that Assumption~\ref{assume:weak_assumptions} holds.
	Then, Problem~\ref{prob:CCBM_weak_form} admits a unique weak solution $u \in \Vomega$ which continuously depends on the data $Q$, $T_{a}$, and $h$.
	Moreover, 
	\begin{equation}\label{eq:continuous_dependence}
		\cnorm{u}_{1,\varOmega} \leqslant c_{0} \left( \norm{Q}_{H^{-1}(\varOmega)} + \norm{T_{a}}_{H^{-1/2}(\Gtop)} + \norm{h}_{H^{1/2}(\Gtop)}\right),
	\end{equation}
	where $c_{0} > 0$ depends on $\alpha$, $\minsigma$, and $\mincoeffk$.
\end{proposition}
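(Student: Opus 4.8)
The plan is to verify the hypotheses of the Lax–Milgram lemma (its complex/sesquilinear version) for the form $a(\cdot,\cdot)$ and the antilinear functional $l(\cdot)$ on the complex Hilbert space $\Vomega$, equipped with the norm $\cnorm{\cdot}_{1,\varOmega}$. Since $\Vomega$ is a closed subspace of $H^1(\varOmega)$ cut out by a homogeneous Dirichlet condition on $\Gbottom$ (which has positive surface measure because $\varOmega$ is Lipschitz and $\Gbottom$ is one of the three boundary pieces), a Poincaré–Friedrichs inequality holds on $\Vomega$, so $\cnorm{\cdot}_{1,\varOmega}$ and the full $H^1$ norm are equivalent there; I would record this first. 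Then boundedness of $a$ follows from Cauchy–Schwarz together with the upper bounds $\sigma\le\maxsigma$, $\coeffk\le\maxcoeffk$ in \eqref{eq:bounds_for_sigma_and_k}, the fact that $\alpha\in\mathbb{R}_+$ is a constant, and the continuity of the trace operator $H^1(\varOmega)\to L^2(\Gtop)$; boundedness of $l$ follows similarly, using the duality pairings $H^{-1}(\varOmega)$–$H^1_0$, $H^{-1/2}(\Gtop)$–$H^{1/2}(\Gtop)$ and again the trace theorem to absorb the $\int_{\Gtop} h\conj\psi$ term.

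The crux is coercivity. Because $a$ is only sesquilinear and not Hermitian (the boundary term carries the factor $\alpha+i$), one cannot expect $\Re a(\varphi,\varphi)$ alone to control $\cnorm{\varphi}_{1,\varOmega}^2$ in the cleanest way, but in fact it does here: taking $\psi=\varphi$,
\begin{equation*}
\Re a(\varphi,\varphi) = \intO{\left(\sigma\abs{\nabla\varphi}^2 + \coeffk\abs{\varphi}^2\right)} + \alpha\intGtop{\abs{\varphi}^2} \;\geqslant\; \min\{\minsigma,\mincoeffk\}\,\|\varphi\|_{H^1(\varOmega)}^2,
\end{equation*}
since $\alpha\geqslant 0$ makes the boundary contribution nonnegative and it may simply be discarded. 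Combining this with the Poincaré equivalence gives $\Re a(\varphi,\varphi)\geqslant c_1\cnorm{\varphi}_{1,\varOmega}^2$ for a constant $c_1>0$ depending only on $\minsigma$, $\mincoeffk$ (and the Poincaré constant, hence on $\varOmega$ and $\dzero$). The Lax–Milgram lemma for sesquilinear forms — coercivity via the real part suffices — then yields existence and uniqueness of $u\in\Vomega$.

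For the stability estimate \eqref{eq:continuous_dependence}, test the weak equation with $\psi=u$, take real parts, and apply coercivity on the left and the boundedness bound on $l(u)$ on the right:
\begin{equation*}
c_1\cnorm{u}_{1,\varOmega}^2 \;\leqslant\; \Re a(u,u) = \Re l(u) \;\leqslant\; c_2\left(\norm{Q}_{H^{-1}(\varOmega)} + \norm{T_a}_{H^{-1/2}(\Gtop)} + \norm{h}_{H^{1/2}(\Gtop)}\right)\cnorm{u}_{1,\varOmega},
\end{equation*}
and dividing by $\cnorm{u}_{1,\varOmega}$ gives the claim with $c_0=c_2/c_1$, which depends on $\alpha$, $\minsigma$, $\mincoeffk$ as asserted (the $\alpha$-dependence entering through the trace constant in $c_2$). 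The only genuinely delicate point is making sure the Poincaré inequality on $\Vomega$ is legitimate, i.e. that $\Gbottom$ has positive $(d-1)$-dimensional measure and that the Dirichlet trace on it is well defined for $H^1$ functions on a merely Lipschitz domain; this is standard once one observes $\varOmega$ is Lipschitz and $\Gbottom$ is a fixed portion of $\partial\varOmega$ of positive measure. Everything else is a routine application of the constants already collected in \eqref{eq:bounds_for_sigma_and_k} and the trace theorem. I do not expect to need the stronger $W^{1,\infty}$ regularity of $\sigma,\coeffk$ from Assumption~\ref{assume:weak_assumptions} here — only the two-sided bounds — which is consistent with the remark in the text that (A3)–(A4) are stronger than necessary for this proposition.
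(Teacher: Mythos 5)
Your proposal is correct and follows essentially the same route as the paper, which simply invokes the complex Lax--Milgram lemma with real-part coercivity $\Re\{a(u,u)\}\geqslant c_{1}\cnorm{u}_{1,\varOmega}^{2}$ (with $c_{1}=c_{1}(\minsigma,\mincoeffk)$) and omits the details, citing the analogous argument in \cite{Chengetal2014}. Your fleshed-out version — continuity of $a$ and $l$ via the trace theorem and duality pairings, coercivity by discarding the nonnegative $\alpha$-boundary term, and the stability bound by testing with $u$ and taking real parts — is exactly what the paper leaves to the reader (note only that the Poincar\'{e} inequality is not even needed for coercivity here, since the zeroth-order term $\mincoeffk\abs{\varphi}^{2}$ already controls the $L^{2}$ part of the norm).
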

\begin{proof}
    The assertion follows from standard arguments (see, e.g., the proof of \cite[Prop.~2.2]{Chengetal2014}). 
    Specifically, the conclusion is inferred from the complex version of the Lax-Milgram lemma \cite[p.~376]{DautrayLionsv21998} by demonstrating that the sesquilinear form $a(\cdot, \cdot)$ is continuous and coercive on $\Vomega$, and the linear form $l(\cdot)$ is continuous on $\Vomega$ (\cite[Def. 4, p.~368]{DautrayLionsv21998}). 
    That is, there exists a constant $c_{1} > 0$ such that
    \begin{equation}\label{eq:V_coercivity_of_a}
    	\Re{\{a(u,u)\}} \geqslant c_{1} \cnorm{u}_{1,\varOmega}^{2}, \quad \forall u \in \Vomega. 
    \end{equation}
    The proof of coercivity uses bounds on the coefficients $\sigma$ and $\coeffk$; i.e., $c_{1}=c_{1}(\minsigma,\mincoeffk)$. 
    Uniqueness also follows from standard arguments; the proofs are omitted.
\end{proof}
In Proposition~\ref{prop:well_posedness_of_state}, we see that it is sufficient to assume lower regularity for $\varOmega$, $Q$, $T_{a}$, and $h$ than what is specified in Assumption~\ref{assume:weak_assumptions}. 
Additionally, $T_{a}$ is considered non-constant. 
Moving forward, we will adopt the regularity outlined in Assumption~\ref{assume:weak_assumptions} since a higher regularity of the state solution is necessary.
Moreover, unless otherwise specified, $T_{a} \in \mathbb{R}_{+}$.
\subsection{The shape optimization formulation}\label{subsec:shape_optimization_formulation}
From Proposition~\ref{prop:well_posedness_of_state}, we know that the state problem is uniquely solvable in $\Vomega$. 
Consequently, for any $\tumor \in \holdall^{1}$, we can define the map $\varOmega(\tumor) \mapsto u(\varOmega(\tumor))$. 
This allows us to consider, in view of Remark~\ref{rem:equivalent}, minimizing the following shape functional to solve Problem~\ref{prob:optimization_problem}:
\begin{equation}
\label{eq:ccbm_functional}
	J(\tumor) := \frac{1}{2}\intO{({\imu})^{2}}.
\end{equation}
We then pose the minimization problem:
\begin{problem}\label{prob:minimization_problem}
    Let $\varOmega$ be admissible. 
    Find $\tumor^{\ast} \in \holdall^{1}$ such that $\tumor^{\ast} = \operatorname{argmin}_{\tumor \in \holdall^{1}} J(\tumor)$.
\end{problem}
\begin{remark}
The new method allows us to define the objective function $J$ in the entire domain $\varOmega$, which brings advantages in terms of robustness in the reconstruction, such as the Kohn-Vogelius cost functional \cite{KohnVogelius1987}, compared to the least-squares misfit functional $J_{LS}$, which is defined only on the subboundary $\Gtop$; see \eqref{eq:least_squares}.
\end{remark}
\section{Shape sensitivity analysis}
\label{sec:sensitivity_analysis} 
\subsection{Some concepts from shape calculus}
We consider specific deformations of $\tumor$ through a set of admissible deformation fields, which we define as follows:
\begin{equation}\label{eq:admissible_deformation_fields}
{{\sfTheta}^{k}} := \{\VV \in \alert{{{C}}^{k,1}(\overline{\varOmega}; \mathbb{R}^{d})} \mid \operatorname{supp}{\VV} \subset \overline{\varOmega}_{{\circ}} \},
\end{equation}
where $k \in \mathbb{N} \cup \{0\}$.
For $\VV \in {{\sfTheta}^{k}}$, we denote $\Vn := \langle \VV, \nn \rangle$.

We define the transformation $T_{t} : \overline{\varOmega} \to \overline{\varOmega}_{t}$ as $T_{t} = \idmat + t \VV$, where $\VV \in {{\sfTheta}^{k}}$. 
Accordingly, we define $\tumort := T_{t}(\tumor)$, $\partial{\tumort}:= T_{t}(\partial{\tumor})$, $\healthyt := T_{t}(\healthy)$, and $\partial\healthyt := T_{t}(\partial\healthy)$.
In addition, $\varOmega_{t}=T_{t}(\varOmega) = \healthyt \cup \bartumort$ and $\partial\varOmega_{t} = \partial{\varOmega}$ since $\VV\big|_{\partial{\varOmega}} = {0}$.
\harbrecht{Let $\varphi_{t} : \varOmega_t \to \mathbb{R}$ be a function defined on $\varOmega_{t}$.  
Given the transformation $T_{t} : \healthy \cup \bartumor \to \healthyt \cup \bartumort$ from the reference domain $\varOmega = \healthy \cup \bartumor$,  
the pullback to $\varOmega$ is denoted by $\varphi^{t} := \varphi_{t} \circ T_{t} : \varOmega \to \mathbb{R}$.}

We let $t_{0} > 0$ be such that, for all $t \in \textsf{I} := [0, t_{0})$, $T_{t}$ becomes a ${{C}}^{k,1}$ diffeomorphism from $\varOmega$ onto its image (cf. \cite[Thm. 7]{BacaniPeichl2013} for $k=1$).
\harbrecht{Other important properties of $T_t$ are summarized in Appendix~\ref{appx:properties_of_POI}.}
Hereinafter, we assume that $t \in \textsf{I}$ unless stated otherwise.

The set of all admissible perturbations of $\tumor$, denoted here by ${\Upsilon}^{k}$, $k \in \mathbb{N} \cup \{0\}$, is defined as:
\begin{equation}\label{eq:admissible_domains}
	{\Upsilon}^{k} = \left\{ T_{t}(\VV)(\bartumor) \subset \varOmega \mid \tumor \in \holdall^{k}, t \in \textsf{I}, \VV \in {{\sfTheta}^{k}} \right\}.
\end{equation}

The fixed boundary $\bigdO$ does not need ${{C}}^{k,1}$ regularity; Lipschitz continuity suffices.
Nevertheless, higher regularity is assumed here for simplicity.

The functional $J : {\Upsilon}^{k} \to \mathbb{R}$ has a directional first-order Eulerian derivative at $\varOmega$ in the direction $\VV \in {{\sfTheta}^{k}}$ if the limit
\begin{equation}\label{eq:limit_shape_derivative}
	\lim_{t \searrow 0} \frac{J(\tumort) - J(\tumor)}{t} =: dJ(\tumor)[\VV]
\end{equation}
exists \cite[Sec. 4.3.2, Eq.~(3.6), p.~172]{DelfourZolesio2011}. 
We say $J$ is shape differentiable at $\tumor$ if the \alert{limit exists for all $\VV \in {{\sfTheta}^{k}}$, and} the mapping $\VV \mapsto dJ(\tumor)[\VV]$ is linear and continuous in ${{C}}^{k,1}(\overline{\varOmega})^{d}$, for some $k \in \mathbb{N} \cup \{0\}$. 
The resulting map is called the \textit{shape gradient} of $J$.
%
\subsection{Shape derivative of the cost}
Let us introduce the sesquilinear form $a_{\textsf{adj}}(\cdot, \cdot)$:
\begin{equation}\label{eq:adjoint_bilinear_form}
a_{\textsf{adj}}(\varphi,\psi)
	:=\intO{(\sigma \nabla {\varphi} \cdot \nabla \conj{\psi} + \coeffk {\varphi} \conj{\psi})}
	+ \alpha \intGtop{{\varphi} \conj{\psi}}
	- i \intGtop{{\varphi} \conj{\psi}}, \qquad {\varphi}, {\psi} \in \Vomega.
\end{equation}
The main result of this section is the shape derivative of the objective function $J$ given in the following theorem.
\alert{For simplicity, we assume $\sigma$, $\coeffk$, and $Q$ are piecewise constants.}
\begin{theorem}\label{thm:distributed_shape_gradient}
Let Assumption~\ref{assume:weak_assumptions} holds.
Assume $\varOmega$ is an admissible domain and $\VV$ an admissible deformation field. 
Then, $J$ is shape differentiable, and its distributed shape derivative $dJ(\tumor)[\VV]$ is given by
\begin{equation}\label{eq:shape_gradient}
\begin{aligned}
    dJ(\tumor)[\VV] 
    &= \frac{1}{2} \intO{\dive{\VV} {\imu}^{2}} 
	- \intO{ \dive{\VV} \sigma \sum_{j=1}^{d} \left( \partial_{j} \imaginary{u} \partial_{j}\real{p} - \partial_{j}\real{u} \partial_{j}\imaginary{p}\right) }	\\
    	&\qquad + \intO{\sigma \sum_{m=1}^{d} \sum_{j=1}^{d} \partial_{j}{\VV_{m}} \left( \partial_{j} \imaginary{u} \partial_{m}\real{p} - \partial_{j}\real{u} \partial_{m}\imaginary{p}\right) }	\\
    	&\qquad + \intO{\sigma \sum_{m=1}^{d} \sum_{j=1}^{d} \partial_{j}{\VV_{m}} \left( \partial_{m} \imaginary{u} \partial_{j}\real{p} - \partial_{m}\real{u} \partial_{j}\imaginary{p}\right) }	\\
	&\qquad + \intO{\dive{\VV}k (\imaginary{u} \real{p} - \real{u} \imaginary{p}) }
	+ \intO{\dive{\VV} {Q} \imaginary{p}},
\end{aligned}
\end{equation}
where $u$ is the state solution, and $p \in \Vomega$ is the adjoint variable solving:
\begin{equation}\label{eq:adjoint_equation}
    a_{\textsf{adj}}(p, v)
    = \intO{\imu \conj{v}}, \quad \forall v \in \Vomega.
\end{equation} 
\end{theorem}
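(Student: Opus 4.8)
The plan is to apply the Delfour--Zol\'{e}sio minimax framework with function space parameterization, exactly as advertised in the introduction. First I would introduce the Lagrangian. Since the state problem is a complex PDE, the care mentioned in the contributions must be exercised: I would write the Lagrangian as
\[
  \mathcal{L}(t, \varphi, \psi) = \frac{1}{2}\intOt{(\imagpart{\varphi})^{2}} + \Re\bigl\{ a_{t}(\varphi,\psi) - l_{t}(\psi) \bigr\},
\]
where $a_{t}$ and $l_{t}$ are the sesquilinear and linear forms of Problem~\ref{prob:CCBM_weak_form} posed on the moved domain $\varOmega_{t}$, and the real part is taken so that $\mathcal{L}$ is a genuine real-valued functional whose saddle point recovers both the state and adjoint equations. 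A short computation should confirm that stationarity in $\psi$ gives the state equation on $\varOmega_{t}$, while stationarity in $\varphi$ (taking into account the $\frac{1}{2}(\imagpart{\varphi})^2$ term, whose derivative produces the $\intOt{\imagpart{\varphi}\,\imagpart{\psi}}$ contribution) yields the adjoint equation, which after replacing $\psi$ by $\conj{\psi}$ and reading off real/imaginary parts matches \eqref{eq:adjoint_equation} with the sesquilinear form $a_{\textsf{adj}}$. One checks that at the saddle point $\mathcal{L}(t,u_{t},p_{t}) = J(\tumort)$.

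Next I would pull everything back to the fixed reference domain $\varOmega$ via $T_{t} = \idmat + t\VV$, using the change-of-variables identities collected in Appendix~\ref{appx:properties_of_POI}: $dx_{t} = I_{t}\,dx$ with $I_{t} = \abs{\det DT_{t}}$, and $\nabla_{x_{t}} = (DT_{t})^{-\top}\nabla_{x}$ acting on pullbacks. This transforms $\mathcal{L}(t,\cdot,\cdot)$ into a functional $G(t,\varphi,\psi)$ on the $t$-independent space $\Vomega \times \Vomega$ (this is precisely the function space parameterization step that removes the pseudo-time dependence of the spaces). I would then invoke the Correa--Seeger / Delfour--Zol\'{e}sio theorem on differentiability of saddle points \cite[Thm.~5.1]{DelfourZolesio1988a} (or the version in \cite[Sec.~4.3.3]{DelfourZolesio2011}): under the single-valuedness of the saddle point, the Lipschitz dependence of $G$ on $t$, and the requisite continuity of the derivatives, one has
\[
  dJ(\tumor)[\VV] = \frac{d}{dt}\,G(t, u, p)\Big|_{t=0} = \partial_{t} G(0, u, p),
\]
i.e., the total derivative equals the partial derivative in $t$ evaluated at the reference state and adjoint, with \emph{no} contribution from the derivatives of $u^{t}$, $p^{t}$.

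It remains to compute $\partial_{t} G(0,u,p)$ explicitly. This is where the bulk of the (routine but delicate) calculation sits: I would use $\frac{d}{dt}I_{t}\big|_{t=0} = \dive\VV$ and $\frac{d}{dt}(DT_{t})^{-\top}\big|_{t=0} = -(D\VV)^{\top}$, differentiate each term of $G$ term by term, take the real part, and then unpack the complex products $\sigma\nabla u \cdot \nabla\conj{p}$, $\coeffk u\conj{p}$, $Q\conj{p}$ into their real and imaginary components. The $\frac{1}{2}\intOt{(\imagpart{\varphi})^2}$ term yields the first term $\frac{1}{2}\intO{\dive\VV\,\imu^{2}}$; the gradient term produces the three terms involving $\dive\VV$ and $\partial_{j}\VV_{m}$ (the two quadratic-in-$D\VV$ terms coming from differentiating the $(DT_{t})^{-\top}$ factors on each side of the Frobenius product, the $\dive\VV$ term from the Jacobian); the reaction term $\coeffk u \conj p$ gives $\intO{\dive\VV\,k(\imu\real{p}-\real{u}\imu[p])}$; and the source term gives $\intO{\dive\VV\,Q\,\imu[p]}$. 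Note the boundary integrals over $\Gtop$ do not move since $\VV$ is supported in $\overline{\varOmega}_{\circ}$, so $\Gtop_{t}=\Gtop$ and those terms contribute nothing to $\partial_t G$ at $t=0$. Collecting the real parts gives exactly \eqref{eq:shape_gradient}. Finally, shape differentiability in the sense defined after \eqref{eq:limit_shape_derivative} follows because every term on the right-hand side is linear and continuous in $\VV \in C^{1,1}(\overline{\varOmega})^{d}$ given the regularity of $u$, $p$ from elliptic regularity.

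The main obstacle I anticipate is the correct setup of the complex-valued Lagrangian: one must be careful that taking $\Re\{\cdot\}$ of the weak form, together with the choice of whether to pair with $\psi$ or $\conj\psi$, produces the adjoint equation in precisely the form \eqref{eq:adjoint_equation} with the stated $a_{\textsf{adj}}$ (note $a_{\textsf{adj}}$ differs from $a$ only in the sign of the $i\intGtop{\varphi\conj\psi}$ term, reflecting the conjugation). A careless real/imaginary bookkeeping here would flip a sign in the adjoint or in the cross terms $\imu\real p - \real u \imu p$ of \eqref{eq:shape_gradient}. The second delicate point is verifying the hypotheses of the saddle-point differentiation theorem — in particular the uniform (in $t$) coercivity needed to guarantee the saddle point is single-valued and Lipschitz in $t$ — which follows from Proposition~\ref{prop:well_posedness_of_state} applied on $\varOmega_{t}$ together with uniform bounds on $I_{t}$ and $(DT_{t})^{-1}$ over $t \in \textsf{I}$; everything else is bookkeeping.
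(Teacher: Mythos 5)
Your overall strategy coincides with the paper's: a Lagrangian/minimax formulation, function space parameterization by composing with $T_{t}^{-1}$, verification of the Correa--Seeger hypotheses, and term-by-term differentiation of the pulled-back Lagrangian at $t=0$ (including the correct observation that the $\Gtop$ integrals do not move since $\VV$ is supported in $\overline{\varOmega}_{\circ}$). The gap sits exactly in the step you yourself flag as delicate: the construction of the complex Lagrangian. The paper takes
\[
L(\tumor,\varphi,\psi)=J(\tumor,\varphi)+\imagpart{l(\psi)-a(\varphi,\psi)},
\]
whereas you propose $J+\Re\{a_{t}(\varphi,\psi)-l_{t}(\psi)\}$. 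Because $a$ and $l$ are antilinear in $\psi$, one has $\Re\{(a-l)(\varphi,-i\psi)\}=\Im\{(l-a)(\varphi,\psi)\}$, so your Lagrangian is the paper's with the multiplier reparameterized by $\psi\mapsto -i\psi$: both have the same minimax value $J(\tumor,u(\tumor))$, but the saddle-point multipliers differ by the unit factor $-i$. Concretely, stationarity of your $\mathcal{L}$ in $\varphi$ at the saddle point reads $\intO{\imu\,\Im\{v\}}+\Re\{a(v,\psi)\}=0$ for all $v\in\Vomega$; replacing $v$ by $iv$ and combining forces $a(v,\psi)=i\intO{\imu\,v}$, i.e.\ $a_{\textsf{adj}}(\psi,v)=-i\intO{\imu\,\conj{v}}$, which is \emph{not} \eqref{eq:adjoint_equation}. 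Your claim that the $\varphi$-stationarity ``matches \eqref{eq:adjoint_equation}'' is therefore false: your saddle-point multiplier equals $-ip$ where $p$ solves \eqref{eq:adjoint_equation}, so its real and imaginary parts are the interchange (with a sign) of $\real{p}$ and $\imaginary{p}$.

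The consequence is not merely cosmetic. The shape derivative delivered by the Correa--Seeger theorem is $\partial_{t}G(0,u,\psi^{0})$ evaluated at \emph{your} saddle-point multiplier $\psi^{0}=-ip$; if you then read off real and imaginary parts as though $\psi^{0}$ were the $p$ of \eqref{eq:adjoint_equation}, every adjoint-dependent term in \eqref{eq:shape_gradient} --- the combinations $\partial_{j}\imaginary{u}\,\partial_{m}\real{p}-\partial_{j}\real{u}\,\partial_{m}\imaginary{p}$, the $\coeffk$ term, and the $Q$ term --- comes out with $\real{p}$ and $\imaginary{p}$ swapped and signs flipped. This is precisely the bookkeeping trap that the paper's remark on the construction of the Lagrangian is devoted to, and it is why the imaginary part of the residual $l-a$ must be taken there (equivalently: if you insist on $\Re\{a_{t}-l_{t}\}$, you must convert the resulting multiplier back by the factor $i$ before identifying it with the solution of \eqref{eq:adjoint_equation}). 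The remaining ingredients of your argument --- the pullback identities $\frac{d}{dt}I_{t}\big|_{t=0}=\dive\VV$ and $\frac{d}{dt}A_{t}\big|_{t=0}=(\dive\VV)\idmat-D\VV-(D\VV)^{\top}$, the verification of the saddle-point differentiability hypotheses via uniform coercivity on $t\in\textsf{I}$, and the linearity and continuity of the resulting map in $\VV$ --- agree with the paper and are sound.
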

\begin{remark}[Construction of the Lagrangian functional]
In the minimax formulation, constructing the Lagrangian functional is crucial. In optimal control problems with real PDE constraints, the application is straightforward \cite{Sturm2016}, as it involves the sum of a utility function and equality constraints. 
However, in this work, careful construction of the Lagrangian is necessary due to the complex nature of the PDE constraints.

To illustrate how the Lagrangian functional should be constructed, we provide the following formal computation of the \textit{directional} derivative of $J$ in the direction of an arbitrary function $v \in \Vomega$:
\begin{equation}\label{eq:directional_derivative}
\begin{aligned}
	J^{\prime}(u(\varOmega))[v(\varOmega)]
	= \left. \frac{d}{d \varepsilon} \left\{ \frac{1}{2} \intO{\left( \imu + \varepsilon \Im\{v\} \right)^2}\right\} \right|_{\varepsilon = 0}
	= \intO{\imu \Im\{v\}}
	= \Im\left\{ \intO{\imu v}\right\}.
\end{aligned}
\end{equation}

Suppose $\Lag(t, \varphi, \psi)$ denotes the Lagrangian functional associated with the CCBM formulation. 
Assume $\Lag$ is sufficiently differentiable with respect to $t$, $\phi$, and $\psi$, and the strong material derivative $\dot{u}$ exists in $\Vomega$. 
We can then compute:

\begin{equation}\label{eq:derivative_of_Lagrangian}
dJ(\tumor)[\theta] = \left. \frac{d}{dt} \Lag(t, u_t, p) \right|_{t=0}
= \underbrace{\left. \frac{\partial \Lag}{\partial t}(t, u, p) \right|_{t=0}}_{\text{shape derivative}}
+ \underbrace{\frac{\partial \Lag}{\partial \phi}(0, u, p)(\dot{u})}_{\text{adjoint equation}}.
\end{equation}
Since $\dot{u} \in \Vomega$, it follows that:
\[
dJ(\tumor)[\theta] = \left. \frac{\partial \Lag}{\partial t}(t, u, p) \right|_{t=0}.
\]

Now, taking the complex conjugate of both sides of \eqref{eq:adjoint_equation}, we obtain:
\begin{equation}\label{eq:adjoint_relation_with_cost_function}
    \overline{a_{\textsf{adj}}(p, v)}
    = \intO{(\sigma \nabla \conj{p} \cdot \nabla{v} + \coeffk \conj{p} v)}
    + \intGtop{(\alpha + i) \conj{p} v}
    = \intO{\imu v}.
\end{equation}
Thus, to ensure consistency between \eqref{eq:directional_derivative}, \eqref{eq:derivative_of_Lagrangian}, and \eqref{eq:adjoint_relation_with_cost_function}, the imaginary part of the equality constraint, corresponding to the adjoint equation, must be taken in the intermediate computation of the shape derivative using the minimax formulation.

\end{remark}
We derive the shape derivative of $J$ (i.e., the proof of Theorem~\ref{thm:distributed_shape_gradient}) via minimax formulation in the spirit of \cite{DelfourZolesio1988a}.
This method enables us to find the derivative of the shape functional without computing the material or shape derivative of the state. 
An alternative is the rearrangement method developed in \cite{IKP2008, IKP2006}, which avoids such computation.
To proceed, we introduce the Lagrangian functional 
	\[
		L({\tumor}, \varphi, \psi) := J({\tumor},\varphi) + \imagpart{l(\psi) - a(\varphi,\psi)}, \qquad \varphi, \psi \in {\Vomega},
	\]
where $a$ and $l$ are given in \eqref{eq:forms}.
Observe that
	\[	
		J({\tumor}, u({\tumor})) = \min_{\varphi \in {\Vomega}} \sup_{\psi \in {\Vomega}} L({\tumor}, \varphi, \psi), 
	\]
because
	\[
		\sup_{\psi \in {\Vomega}} L({\tumor}, \varphi, \psi)  =
			\begin{cases}
				J({\tumor}, u({\tumor})) & \text{if $\varphi = u \in {\Vomega}$},\\
				+\infty & \text{otherwise}.
			\end{cases}
	\]
It can be verified that $L$ is convex and continuous with respect to $\varphi$ and concave and continuous with respect to $\psi$ (cf. \cite{Rabago2023a}). 
Thus, by Ekeland and Temam \cite{EkelandTemam1976}, $L$ has a saddle point $(u,p) \in {\Vomega}^{2}$ if and only if $(u, p) \in {\Vomega}^{2}$ satisfies the system:
\begin{equation}\label{eq:saddle_point}
\left\{
\begin{aligned}
	\left. \dfrac{\partial}{\partial{\varphi}}L({\tumor}, \varphi, \psi)[v]  \right|_{(\varphi, \psi) = (u,p)}&= 0, \qquad  v \in {\Vomega},\\
	\left. \dfrac{\partial}{\partial{\psi}}L({\tumor}, \varphi, \psi)[v] \right|_{(\varphi, \psi) = (u,p)}&= 0, \qquad  v \in {\Vomega}.
\end{aligned}
\right.
\end{equation}
The first equation corresponds to the variational equation of the state problem, while the second characterizes the adjoint solution.

An analogous Lagrangian functional can be defined over the perturbed domain, and the previous analysis still applies. 
Specifically, we have the following expression: 
\begin{equation}\label{eq:minimax_form}
    J(\tumort, u(\tumort)) = \min_{\varphi \in \Vomegat} \sup_{\psi \in \Vomegat} L(\tumort, \varphi, \psi),
    \end{equation}
with the corresponding saddle point characterized by the following system of PDEs:
\begin{equation}\label{eq:saddle_point_perturbed_case}
    \left\{
    \begin{aligned}
    \left. \dfrac{\partial}{\partial{\varphi}}L(\tumort, \varphi, \psi)[v]  \right|_{(\varphi, \psi) = (u(\tumort), p(\tumort))} &= 0, \qquad  v \in \Vomegat,\\
    \left. \dfrac{\partial}{\partial{\psi}}L(\tumort, \varphi, \psi)[v] \right|_{(\varphi, \psi) = (u(\tumort), p(\tumort))} &= 0, \qquad  v \in \Vomegat.
    \end{aligned}
    \right.
\end{equation}

The main steps for the remainder of the proof are as follows:
\begin{itemize}
	\item we differentiate $L(\tumort, \varphi, \psi)$ with respect to $t \geqslant 0$; i.e., we evaluate \eqref{eq:limit_shape_derivative} by using \eqref{eq:minimax_form};
	\item we verify the assumptions of Correa-Seeger Theorem \cite{CorreaSeeger1985}.
\end{itemize}
%
To derive $L(\tumort, \varphi, \psi)$ with respect to $t \geqslant 0$, we apply Theorem \ref{thm:Correa_Seeger} (\cite{CorreaSeeger1985}). 
To achieve this, the domain $\tumort$ must be mapped back to the reference domain ${\tumor}$ using the transformation $T_{t}$. 
However, composing by $T_{t}$ inside the integrals results in terms ${u} \circ T_{t}$ and ${p} \circ T_{t}$, which could be non-differentiable. To address this, we parameterize $H^{1}(\varOmega)$ by composing its elements with $T^{-1}_{t}$. Thus, we write
\[
	\Lag(t, \varphi, \psi) := L(\tumort, \varphi \circ T^{-1}_{t}, \psi \circ T^{-1}_{t}).
\]
After applying a change of variables, we obtain
\[
\begin{aligned}
	\Lag(t, \varphi, \psi) 
	&= \frac{1}{2}\intO{\dett {\imaginary{\varphi}}^{2}}
		 - \Im\left\{ \intO{( {{\sigma}^{t}} \At \nabla{\varphi} \cdot \nabla{\conj{\psi}} + \dett {\coeffk}^{t} {\varphi}{\conj{\psi}})} 
		+ \intGtop{ (\alpha + i)  \bt {\varphi}{\conj{\psi}}} \right.\\
	&\qquad  \qquad  \qquad \qquad \qquad \left. - \intO{\dett {\Qt}{\conj{\psi}}}
		- \alpha \intGtop{ \bt T_{a}^{t} {\conj{\psi}} } 
		- i \intGtop{ \bt h^{t} {\conj{\psi}} } \right\}.
\end{aligned}
\]
On $\partial{\varOmega}$, we note that $\bt = 1$ for all $t>0$ since $\VV\big|_{\partial{\varOmega}} = \vect{0}$. 
In view of \eqref{eq:saddle_point_perturbed_case}, $\Lag$ has a unique saddle point $(\ut, \pt) \in {\Vomega}^{2}$, for all $t \in \textsf{I}$, satisfying the system of PDEs
\begin{equation}
\label{eq:saddle_point_systems}
\left\{
\begin{aligned}
	\intO{( {{\sigma}^{t}} \At \nabla{\ut} \cdot \nabla{v} + \dett {\coeffk}^{t} {\ut}{v})} 
				+ \intGtop{ (\alpha + i) \bt{\ut}\conj{v}}\\
	\qquad - \intO{\dett {\Qt}{\conj{v}}}
		- \alpha \intGtop{ \bt T_{a}^{t} \conj{v} } 
		- i \intGtop{ \bt h^{t} \conj{v} } &= 0, \quad \forall{v} \in {\Vomega},\\
	\intO{( {{\sigma}^{t}} \At \nabla{\pt} \cdot \nabla{\conj{v}} + \dett {\coeffk}^{t} {\pt} \conj{v})} 
				- i \intGtop{ \bt{\pt} \conj{v}}
				+ \intO{\dett {\imuout} \conj{v}}
	&= 0, \quad \forall{v} \in {\Vomega}.
\end{aligned}
\right.
\end{equation}
\harbrecht{The validity of the assumptions of Theorem~\ref{thm:Correa_Seeger} (\cite{CorreaSeeger1985}) can be established without difficulty, as the arguments closely follow those presented in \cite[pp.~521 and 551]{DelfourZolesio2011}. 
Accordingly, the details are omitted for brevity.} 
Consequently, by invoking Theorem~\ref{thm:Correa_Seeger}, we obtain
\[
\begin{aligned}
	{d}J({{\tumor}})[\VV]
	&= \partial_{t} \Lag(t, u, p)\big|_{t=0}\\
	&= \frac{1}{2} \intO{\dive{\VV} {\imu}^{2}} 
    	- \Im \left\{ \intO{(\sigma A \nabla{u} \cdot \nabla \conj{p} + \dive{\VV} \coeffk u \conj{p} - \dive{\VV} Q \conj{p})} \right\} \\
   	&\qquad - \Im \left\{ \intO{( \nabla{\sigma} \cdot \VV (\nabla{u} \cdot \nabla \conj{p}) + \nabla{\coeffk} \cdot \VV {u} \conj{p} - \VV \cdot \nabla Q) \conj{p}} \right\},
\end{aligned}
\]	 
which, after taking the imaginary part of the expression inside the brackets, leads to \eqref{eq:shape_gradient}.
\alert{The spatial derivatives of $\sigma$, $\coeffk$, and $Q$ vanish, given that they are piecewise constants.}

The following result can be easily drawn from \eqref{eq:shape_gradient} and \eqref{eq:adjoint_equation}.
\begin{corollary}[Necessary condition]\label{cor:necessary_condition}
	Let the subdomain ${\tumor^{\ast}}$ be such that $u=u(\varOmega({\tumor^{\ast}}))$ satisfies \eqref{eq:main_equation}, i.e., there holds $\imu \equiv 0$ on $\varOmega({\tumor^{\ast}})$.
	Then, ${\tumor^{\ast}}$ is a stationary solution of Problem~\ref{prob:optimization_problem}.
	That is, it fulfills the necessary optimality condition
	\begin{equation}
	\label{eq:optimality_condition}
		{d}J({\tumor^{\ast}})[\VV] = 0, \quad \forall\VV \in \sfTheta^{1}.
	\end{equation}
\end{corollary}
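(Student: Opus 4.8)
The plan is to read the conclusion off directly from the explicit formula \eqref{eq:shape_gradient} together with the adjoint equation \eqref{eq:adjoint_equation}, exploiting the hypothesis that $\imu \equiv 0$ on $\varOmega(\tumor^{\ast})$. First I would examine the adjoint problem at $\tumor^{\ast}$: since the right-hand side of \eqref{eq:adjoint_equation} equals $\intO{\imu\conj{v}}$, the hypothesis $\imu\equiv 0$ makes it vanish, so the adjoint variable $p\in\Vomega$ satisfies $a_{\textsf{adj}}(p,v)=0$ for all $v\in\Vomega$. The form $a_{\textsf{adj}}$ in \eqref{eq:adjoint_bilinear_form} is just the adjoint of $a$ from \eqref{eq:forms} (equivalently, it differs from $a$ only in the sign of the purely imaginary boundary term $i\intGtop{\varphi\conj{\psi}}$), so it has the same real part on the diagonal; hence the coercivity estimate \eqref{eq:V_coercivity_of_a} transfers verbatim, giving $\Re\{a_{\textsf{adj}}(p,p)\}\geqslant c_{1}\cnorm{p}_{1,\varOmega}^{2}$. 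Choosing $v=p$ then forces $p\equiv 0$ in $\varOmega$, and in particular $\real{p}\equiv\imaginary{p}\equiv 0$ together with $\nabla\real{p}\equiv\nabla\imaginary{p}\equiv 0$.

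Next I would substitute $\imu\equiv 0$ and $p\equiv 0$ into \eqref{eq:shape_gradient}. The first integral vanishes because it carries the factor ${\imu}^{2}$; the second, third, and fourth integrals vanish because every summand contains a first-order partial derivative of $\real{p}$ or $\imaginary{p}$; the fifth integral vanishes because it contains the factor $(\imaginary{u}\,\real{p}-\real{u}\,\imaginary{p})$; and the last integral vanishes because it contains the factor $\imaginary{p}$. Consequently $dJ(\tumor^{\ast})[\VV]=0$ for every $\VV\in\sfTheta^{1}$, which is precisely \eqref{eq:optimality_condition}.

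As a conceptual cross-check (and an alternative argument), one may note that $J\geqslant 0$ on $\holdall^{1}$ while $J(\tumor^{\ast})=\tfrac12\intO{({\imu})^{2}}=0$ by hypothesis, so $\tumor^{\ast}$ is a global minimizer; since both $\VV$ and $-\VV$ belong to $\sfTheta^{1}$, the one-sided limits \eqref{eq:limit_shape_derivative} give $dJ(\tumor^{\ast})[\VV]\geqslant 0$ and $dJ(\tumor^{\ast})[-\VV]=-dJ(\tumor^{\ast})[\VV]\geqslant 0$, again yielding $dJ(\tumor^{\ast})[\VV]=0$. I do not anticipate any genuine obstacle in this proof; the only point meriting a line of justification is the coercivity (hence unique solvability) of $a_{\textsf{adj}}$, which is immediate from the proof of Proposition~\ref{prop:well_posedness_of_state}.
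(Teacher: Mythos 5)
Your proposal is correct and follows essentially the same route as the paper's own (much terser) proof: the hypothesis $\imu\equiv 0$ kills the right-hand side of the adjoint equation, coercivity forces $p\equiv 0$, and then every term of \eqref{eq:shape_gradient} vanishes. The extra details you supply (the transfer of coercivity to $a_{\textsf{adj}}$ and the alternative argument via global minimality) are sound but not a different method.
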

\begin{proof}
    Assuming that $\imu \equiv 0$ in $\varOmega({\tumor^{\ast}})$, it follows that $p=0$ in $\varOmega({\tumor^{\ast}})$.  
    Therefore, we have ${d}J({\tumor^{\ast}}) \equiv 0$ in $\varOmega({\tumor^{\ast}})$, which implies that ${d}J({\tumor^{\ast}})[\VV] = 0$ for all $\VV \in \sfTheta^{1}$.
\end{proof}
%
\section{Mesh deformation}
\label{sec:Mesh_deformation_and_transformation_of_integrals}
Before presenting the numerical results, we first analyze how the FE solution depends on mesh variations. 
Theorem~\ref{thm:mesh_sensitivity_main_result} provides a bound on the solution variation based on mesh deformation and its gradient, with a similar bound for nonsmooth mesh velocity fields given in equation~\eqref{eq:mesh_sensitivity_non_smooth_case}. 
These results demonstrate that the FE solution continuously depends on the mesh, regardless of dimension, unstructured simplicial meshes, or the finite element approximation used. Numerical examples are provided in subsection~\ref{subsec:mesh_sensitivity_numerical_examples}. 

\harbrecht{This section largely follows the presentation in~\cite{HeHuang2021}, and adopts the notations therein, as well as those in~\cite{ErnGuermond2004}.}
We consider the FE solution of Problem~\ref{prob:CCBM_weak_form} under the assumption that the state $u$ possesses $H^2$ regularity.
Let $\Th$ be a simplicial mesh triangulation of $\varOmega$, and let $K$ be a generic element of $\Th$. 
We denote by $\hk$ the diameter (the length of the longest side) of $K$, and by $\ak$ its minimum height, defined as the distance from a vertex to its opposite facet. 
For reference, we label the vertices of $K$ as $\xxo^{K}, \ldots, \xxd^{K}$. 
The mesh $\Th$ is called shape-regular if there exists a constant $\bar{\meshh} > 0$ such that
\[
	\frac{\hk}{\ak} \leqslant \bar{\meshh}, \quad \forall K\in \Th.
\]
In the literature, $\Th$ is often defined in terms of the mesh size $\meshh$, given by $\meshh = \max_{K \in \Th} \hk \equiv \max_{K \in \Th} \max_{0 \leqslant i,j \leqslant d} \abs{\xxi - \xx_{j}}$.

While shape-regularity is a standard assumption in finite element error analysis, we do not assume it in our mesh sensitivity analysis. 
Instead, we only require that each element be \textit{non-degenerate} (see, e.g., \cite[Def. 4.4.13, Eq.~(4.4.16), p.~108]{BrennerScott2008}) or \textit{non-inverted}, which amounts to requiring $\ak > 0$, for all $K \in \Th$.
%
%
%


In the latter part of this section, we discuss key tools in our analysis related to the properties of the mesh element $K$.  
For notational simplicity, we occasionally omit the superscript $K$ when no confusion arises.  
Using its vertices $\{\xxo\}_{i=0}^{d}$, we define the edge matrix (of size $d \times d$) of $K$ (see, e.g., \cite{LiHuang2017}) as follows:  
\[
	E = [E_{1}, \ldots, E_{d}] =  [\xx_{1} - \xxo, \ldots, \xxd - \xxo].
\]  
Here, the $i$th column vector, given by $E_{i} = \xxi - \xxo$ for $i=1,\ldots,d$, represents an edge of the triangle or element $K$.  
Since $K$ is non-degenerate, the matrix $E$ is non-singular.  

Consider a set of linearly independent points $\{\xxi\}_{i=0}^{d}$ in $\mathbb{R}^d$ and their convex hull, called a \textit{simplex}. 
For $0 \leqslant i \leqslant d$, let $F_i$ be the face (an edge in 2D) opposite to vertex $\xxi$, with outward normal $\nn_i$. 

The barycentric coordinates $\{\lambda_i\}$ for a simplex $K \subset \mathbb{R}^d$ are defined by
\[
\lambda_i(\xx) = 1 - \frac{(\xx - \xxi) \cdot \nn_i}{(\xx_j - \xxi) \cdot \nn_i} = \frac{(\xx_j - \xx) \cdot \nn_i}{(\xx_j - \xxi) \cdot \nn_i},
\]
where $\xx_j$ is any vertex of the face $F_i$. They satisfy $0 \leqslant \lambda_i \leqslant 1$, $\lambda_i(\xx_j) = \delta_{ij}$, and vanish on $F_i$. For a $2$-simplex, $\lambda_i$ corresponds to the area ratio of the triangle formed by $\xx$ and the edge opposite $\xxi$. The functions $\lambda_i$ are affine, with $\sum_{i=0}^d \lambda_i = 1$ and $\sum_{i=0}^d \lambda_i \xxi = \xx$. Differentiating the latter gives $\sum_{i=0}^d (\xxi - \xxo)(\nabla \lambda_i)^\top = \II$, or equivalently, $E[\nabla \lambda_1, \ldots, \nabla \lambda_d]^\top = \II$, where $\II$ is the $d \times d$ identity matrix, hence $E^{-\top} = [\nabla \lambda_1, \ldots, \nabla \lambda_d]^\top$.

The edge matrix $\edgemat$ of $K$ depends on the vertex ordering but many geometric properties of $K$, independent of the ordering, can still be computed using $\edgemat$. 
For instance, the area (in 2D) or volume (in 3D) of $K$ is given by $\abs{\op{det}(\edgemat)}/d!$. 
The $i$th height of $K$ is $1/\abs{\nabla \lambda_{i}^K}$ \harbrecht{(see, e.g., \cite[Chap.~7]{ErnGuermond2021})}, and by our definition, $\aK = \min_i \abs{\nabla \lambda_{i}^{K}}^{-1}$. 
This quantity will be useful for estimating matrix norms involving $\edgemat$. 

Now, let us consider the FE space associated with $\Th$ as
\[
    \spaceVh = \{v \in \Vomega \cap C(\overline{\varOmega}) \mid \ v|_K \in P_{k}(K), \ \forall K \in \Th \},
\]
where $\Vomega$ is given by \eqref{eq:space_Vomega} and $P_{k}(K)$, $k \in \mathbb{N} \cup \{0\}$, is the set of polynomials of degree no more than $k$ defined on $K$. 
Any function $\vh$ in $\spaceVh$ can be expressed as $ \vh = \sum_i v_i \psii(\xx)$, where $\{\psi_1, \psi_2, \ldots\}$ is a basis for $\spaceVh$. 
We distinguish FE basis functions $\{\psi_1, \psi_2, \ldots\}$ from the linear Lagrange basis functions $\{\phi_i, i = 1,2, \ldots\}$ (with $\phi_i$ being associated with $\xxi$) and emphasize that they can be different (even when $k = 1$). 

The FE solution of the BVP \eqref{prob:CCBM_weak_form} is obtained by solving the following problem:
\begin{problem}\label{prob:CCBM_weak_form_fem}
Find $\uh \in \spaceVh$ such that
\begin{equation}\label{eq:weak_form_fem}
	a(\uh, \psi) = l(\psi), \quad \forall \psi \in \spaceVh,
\end{equation}
where $a$ and $l$ are the same forms as those given in \eqref{eq:forms}.
\end{problem}

The following error estimate can be proven similarly to the case of general real-valued elliptic PDEs found in most FEM textbooks (e.g., see \cite[Eq.~(6.1.6), p.~156]{BrennerScott2008}).
\begin{proposition}Assume that $u \in \HH^2(\varOmega)$ and the mesh $\Th$ is regular. 
Then,
\begin{equation}\label{eq:standard_error_estimate}
    \cnorm{\nabla (\uh - u)}_{L^2(\varOmega)} \leqslant c \meshh \cnorm{\Delta{u}}_{L^2(\varOmega)},
\end{equation}
where $\meshh = \max_{K \in \Th} \hk$ and $c > 0$ is a constant independent of $u$, $\uh$, and $\Th$.
\end{proposition}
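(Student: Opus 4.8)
The plan is to run the classical Céa-lemma-plus-interpolation argument, taking a little care with the sesquilinear (complex) structure and with the interface. First I would observe that $\spaceVh \subset \Vomega$, so the form $a(\cdot,\cdot)$ is automatically continuous and $\Vomega$-coercive on $\spaceVh$, with exactly the constants already recorded in (the proof of) Proposition~\ref{prop:well_posedness_of_state}; write $M$ for the continuity constant and $c_{1}$ for the coercivity constant from \eqref{eq:V_coercivity_of_a}. Subtracting \eqref{eq:weak_form_fem} from the weak formulation in Problem~\ref{prob:CCBM_weak_form}, both tested against $\psi \in \spaceVh$, yields the Galerkin orthogonality $a(u - \uh,\psi) = 0$ for all $\psi \in \spaceVh$.

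Next I would establish quasi-optimality: for any $\vh \in \spaceVh$, coercivity together with Galerkin orthogonality gives
\begin{align*}
c_{1}\,\cnorm{u - \uh}_{1,\varOmega}^{2}
&\le \Re\{a(u - \uh,\, u - \uh)\} = \Re\{a(u - \uh,\, u - \vh)\}\\
&\le M\,\cnorm{u - \uh}_{1,\varOmega}\,\cnorm{u - \vh}_{1,\varOmega},
\end{align*}
hence $\cnorm{u - \uh}_{1,\varOmega} \le (M/c_{1})\inf_{\vh \in \spaceVh}\cnorm{u - \vh}_{1,\varOmega}$; the passage to real parts is the only place the derivation differs from the real-valued template. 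I would then bound the best-approximation error by a suitable interpolant. Since $d \le 3$ yields $H^{2} \hookrightarrow C(\overline{\varOmega})$ (alternatively one may use the Scott--Zhang quasi-interpolant and avoid even this), the nodal interpolant $\vh$ of $u$ is well defined, and a standard scaling/Bramble--Hilbert argument on each element, using only shape-regularity of $\Th$, gives $\cnorm{u - \vh}_{1,\varOmega} \le c\,\meshh\,(\abs{u}_{H^{2}(\healthy)} + \abs{u}_{H^{2}(\tumor)})$. Here I would make explicit that the hypothesis ``$u \in H^{2}(\varOmega)$'' is to be read in the piecewise sense---$u|_{\healthy}$ and $u|_{\tumor}$ are each $H^{2}$ by the regularity in Assumption~\ref{assume:weak_assumptions}, while $\partial_{\nn}u$ jumps across $\partial\tumor$ when $\sigma_{0}\neq\sigma_{1}$---so the mesh must resolve $\partial\tumor$ (no element straddling the interface) for the elementwise interpolation estimates to apply. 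Combining the two displays and using $\cnorm{\nabla(\uh - u)}_{L^{2}(\varOmega)} \le \cnorm{u - \uh}_{1,\varOmega}$ then yields $\cnorm{\nabla(\uh - u)}_{L^{2}(\varOmega)} \le c\,\meshh\,(\abs{u}_{H^{2}(\healthy)} + \abs{u}_{H^{2}(\tumor)})$.

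To reach the stated form, with $\cnorm{\Delta u}_{L^{2}(\varOmega)}$ on the right, I would finish in one of two ways: either read $\cnorm{\Delta u}_{L^{2}(\varOmega)}$ as the usual stand-in for the broken $H^{2}$ seminorm (as in the Poisson model problem of \cite[Eq.~(6.1.6)]{BrennerScott2008}, where $\Delta u = -f$), or invoke the $H^{2}$ a priori estimate $\abs{u}_{H^{2}(\healthy)} + \abs{u}_{H^{2}(\tumor)} \le c\,\cnorm{\Delta u}_{L^{2}(\varOmega)}$ for the transmission problem with mixed Dirichlet/Neumann/complex-Robin data, legitimate under the smoothness of $\partial\varOmega$ and $\partial\tumor$ and the separation $\tumor \Subset \varOmega$ (lower-order and data terms are absorbed through the PDE itself, using that $\sigma$, $\coeffk$, $Q$ are piecewise constant). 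I expect this last identification---the elliptic-regularity bound relating the (broken) $H^{2}$ seminorm to $\cnorm{\Delta u}_{L^{2}}$ for a transmission problem carrying a complex Robin condition---to be the only genuinely non-routine ingredient; Céa's lemma, Galerkin orthogonality and nodal interpolation on a regular mesh are textbook, which is precisely why the statement can be asserted to follow ``similarly to'' the real-valued case.
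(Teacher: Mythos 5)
Your proposal is correct and follows exactly the route the paper intends: the paper gives no proof at all, merely asserting that the estimate ``can be proven similarly to the case of general real-valued elliptic PDEs'' with a pointer to \cite[Eq.~(6.1.6), p.~156]{BrennerScott2008}, and your C\'ea-lemma-plus-interpolation argument (with the real part taken in the coercivity step for the sesquilinear form) is precisely that standard argument carried out. If anything, you are more careful than the source in flagging that $u$ is only piecewise $H^{2}$ across $\partial\tumor$, that the mesh must resolve the interface, and that identifying the broken $H^{2}$ seminorm with $\cnorm{\Delta u}_{L^{2}(\varOmega)}$ requires an elliptic-regularity step the paper leaves implicit.
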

Assumption~\ref{assume:weak_assumptions} is sufficient to achieve $H^{2}$ regularity for state $u$.

The error estimate \eqref{eq:standard_error_estimate} confirms the stable dependence of the FE solution on the mesh. 
It ensures that the FE solution remains close to the exact solution for all regular meshes with a maximum element diameter $\meshh$, regardless of their variations. 
However, the estimate does not indicate whether or how the FE solution continuously depends on the mesh.
This is what we aim to explain in this work in the spirit of \cite{HeHuang2021}.
\section{Mesh Sensitivity Analysis} \label{sec:mesh_sensitivity_analysis}
To analyze variations in mesh qualities and functions under mesh deformation, we adopt an approach akin to gradient methods in optimal control (e.g., \cite{BrysonHo1975}) and sensitivity analysis in shape optimization (e.g., \cite{DelfourZolesio2011,HaslingerMakinen2003,SokolowskiZolesio1992}). 
This involves introducing a small mesh deformation, deriving a finite element formulation, and establishing bounds on the resulting variation in the FE solution. 
Several supporting lemmas from the literature (e.g., \cite{Ciarlet2002,ErnGuermond2004,HeHuang2021}) are employed to achieve this. 
The mesh sensitivity of the finite element solution is presented in the next section.

We assume that a smooth vector field $\dot{X} = \dot{X}(\xx)$ is given on $\varOmega$ and satisfies
\begin{equation}\label{eq:assumption_on_velocity_field}
	\norm{\dot{X}}_{W^{1,\infty}(\varOmega)^{d}} < \infty.
\end{equation}
Hereinafter, we say that a vector field is smooth if it satisfies the conditions stated above.
\alert{
\begin{remark}
In shape optimization, the vector field $\dot{X}$ corresponds to the \textit{flow} speed generated by the computed shape gradient, such as in \eqref{eq:shape_gradient}.  
More precisely, we can associate the vector field $\dot{X}$ with the derivative of the perturbation of the identity operator $T_t$ with respect to the fictitious time parameter $t$, i.e.,  
\[
	\dfrac{d}{dt} T_{t} \big|_{t=0} = \VV = \dot{X},
\]
provided that $\VV$ inherits sufficient regularity from the state and adjoint state variables.  
To obtain a sufficiently regular deformation field $\VV$ associated with the shape gradient ${d}J({{\tumor}})$, one approach is to compute its Riesz representative, for instance, in the $H^{3}(\varOmega)$ Sobolev space. 
This can be achieved using the well-known Sobolev gradient method \cite{Neuberger1997,Doganetal2007}.  
In the numerical solution of Problem~\ref{prob:minimization_problem}, however, the $H^{1}$ Riesz representative of ${d}J({{\tumor}})$ (cf. \eqref{eq:extension_regularization}) is sufficient, as demonstrated in the latter part of Section~\ref{sec:numerical_algorithm_and_examples}.
\end{remark}
}

We analyze mesh deformation of $\Th$ with fixed connectivity, where vertices move according to  
\begin{equation}\label{eq:mesh_perturbation}  
\xxi(t) = \xxi(0) + t \dotxxi, \qquad 0 \leqslant t < t_{1}, \quad i = 1,2, \ldots,  
\end{equation}  
with constant-in-time nodal velocities $\dotxxi = \dot{X}(\xxi(0))$ and small $t_1 > 0$.  
The time-dependent mesh is denoted by $\Tht$. 
In our numerical setting, boundary vertices are fixed, i.e., $\dot{X} = 0$ on $\partial\varOmega$, though the analysis also covers moving boundaries.  
Since any smooth mesh deformation can be linearized in the form of \eqref{eq:mesh_perturbation}, this model is sufficiently general.

To analyze mesh deformation, we consider the bijective affine map $\FK$ from $\Ko = K \in \Th$ to $\Kt \in \Tht$ (see \cite[p.~83]{Ciarlet2002}), using coordinates $\xx$ and $\yy$ on $\Ko$ and $\Kt$, respectively.  
Quantities in $\yy$ are marked with a tilde (e.g., $\tnabla := \nabla_{\yy}$ versus $\nabla := \nabla_{\xx}$), and we examine their deformation by pulling them back to $\xx$ and differentiating in time, denoted by a dot, analogous to material differentiation.

We begin by considering the time derivatives of the Jacobian matrix and its determinant. 
Let the Jacobian matrix of $\FK$, also known as the \textit{deformation gradient}, be denoted by $\Jac = \partial \FK(\xx)/\partial \xx$, and let the Jacobian determinant be denoted by $\detJac = \op{det}(\Jac)$ such that $\detJac > 0$. 
Consequently, we have the invertibility of $\Jac$ and the bijectivity of $\FK$.
We can express $\Jac$ in terms of the edge matrices of $\Ko$ and $\Kt$. 
As $\yy = \FK(\xx)$ is affine, it can be written as (cf. \cite[Eq.~(1.91), p.~58]{ErnGuermond2004}) 
\begin{equation}\label{eq:affine_transform} 
	\yy = \FK(\xx) = \xok(t) + \Jac(\xx - \xok(0)), \quad \forall\xx\in \Ko. 
\end{equation}
Accordingly, we have
\[
    \Jac^{ij} = D \FK(\xx)_{ij} = \frac{\partial y_{i}}{\partial x_{j}}, 
    \qquad (\Jac^{ij})^{-1} = D \FK^{-1}(\yy)_{ij} = \frac{\partial x_{i}}{\partial y_{j}},
    \qquad i,j = 1, \ldots, d,
\]
where $\xx = (x_{1}, \ldots, x_{d})$ and $\yy = (y_{1}, \ldots, y_{d})$.
Now, by taking $\xx=\xxi^{K}(0)$ and $\yy=\xxi^{K}(t)$, for $i=1,\ldots,d$, sequentially, we get
\[	
	[\xx_1^{K}(t) - \xok(t), \ldots, \xx_d^{K}(t) - \xok(t)] = \Jac [\xx_1^{K}(0) - \xok(0), \ldots, \xx_d^{K}(0) - \xok(0)],
\]
which gives (cf. \cite[Eq.~(3.2)]{HuangKamenski2018})
\begin{equation}\label{eq:Jacobian_{b}y_E}
	\Jac = E_{\Kt}E_{\Ko}^{-1},
	\qquad
	\Jac^{-1} = E_{\Ko} E_{\Kt}^{-1},
	\qquad\text{and}\quad
	\Jac\big|_{t=0} = \II,
\end{equation}
where $\II$ stands for the $d \times d$ identity matrix.

Differentiating \eqref{eq:affine_transform} with respect to $t$ gives
\[
	\dotFK(\xx) = \dot{\xx}_{0}^{K}(t) + \dot{\Jac}(\xx - \xok(0))
		= \dot{\xx}_{0}^{K}(t) + \dot{E}_{\Kt}E_{\Ko}^{-1}(\xx - \xok(0)),
\]
where $\dot{E}_{\Kt} = [\dot{\xx}_1^{K}(t) - \dot{\xx}_{0}^{K}(t), \ldots, \dot{\xx}_d^{K}(t) - \dot{\xx}_{0}^{K}(t)]$.

The map $\FK(\xx)$ admits a linear basis representation $\FK(\xx) = \sum_{i=0}^d \xxi^{K}(t) \lambda_{i}^{K}(\xx)$, whose time derivative is given by  $\dotFK(\xx) = \sum_{i=0}^d \dot{\xx}_{i}^{K}(t) \lambda_{i}^{K}(\xx)$.

We now present key lemmas supporting Theorem~\ref{thm:mesh_sensitivity_main_result} and estimate~\eqref{eq:mesh_sensitivity_non_smooth_case}, with proofs available in \cite{Ciarlet2002, ErnGuermond2004, ErnGuermond2021, HeHuang2021}.
\begin{lemma}\label{lem:identities}
	Let $\dotXh$ be defined as a piecewise linear velocity field, i.e., $\dotXh = \sum_{i=0}^{d} \dotxxi \lambda_i(\xx)$, and assume that the mesh velocity field is smooth.
	We have the following identities:
	\begin{align}
		\dotFK &= \dotXh \big|_{K},\qquad
		\nabla \cdot ( \dotFK  ) 
			= \op{tr}( \dot{\Jac} )	
			= \op{tr}( \dot{E}_{\Kt}E_{\Ko}^{-1} )
			= \nabla \cdot ( \dotXh \big|_{K} ),\label{eq:identity1}\\[0.5em]
		\dot{\Jac}\big|_{t=0} &= \dot{E}_{\Ko}E_{\Ko}^{-1}, \qquad
		\dot{( \Jac^{-1} )}\big|_{t=0} = -\dot{E}_{\Ko}E_{\Ko}^{-1}, \qquad
		\dot{\detJac}\big|_{t=0} = \nabla \cdot ( \dotXh \big|_{K} ).\nonumber
	\end{align}
	In addition, the following inequalities hold:
	\begin{equation}\label{eq:velocity_inequalities}
		\norm{ \dotXh }_{L^{\infty}(\varOmega)^{d}} \leqslant \norm{ \dot{X} }_{L^{\infty}(\varOmega)^{d}},\qquad
		\norm{ \nabla \cdot \dotXh }_{L^{\infty}(\varOmega)^{d \times d}} \leqslant d \norm{\nabla \dot{X}}_{L^{\infty}(\varOmega)^{d \times d}} \max_{K} \frac{\hK}{\aK}.
	\end{equation}
\end{lemma}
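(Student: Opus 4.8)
The plan is to verify each identity by a direct computation starting from the affine representation $\FK(\xx) = \xok(t) + \Jac(\xx - \xok(0))$ together with the barycentric form $\FK(\xx) = \sum_{i=0}^{d} \xxi^K(t)\,\lambda_i^K(\xx)$, and then to obtain the two inequalities in \eqref{eq:velocity_inequalities} by passing to the reference element. First I would establish $\dotFK = \dotXh|_K$: differentiating the barycentric form in $t$ gives $\dotFK(\xx) = \sum_{i=0}^d \dotxxi\,\lambda_i^K(\xx)$ (the $\lambda_i^K$ are $t$-independent, since they are determined by the fixed reference element $\Ko$), which is precisely the definition of $\dotXh$ restricted to $K$. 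Taking the divergence in $\xx$ of $\dotFK(\xx) = \dot{\xx}_0^K(t) + \dot{E}_{\Kt}E_{\Ko}^{-1}(\xx - \xok(0))$ yields $\nabla\cdot\dotFK = \op{tr}(\dot{E}_{\Kt}E_{\Ko}^{-1}) = \op{tr}(\dot{\Jac})$, using \eqref{eq:Jacobian_{b}y_E}; this equals $\nabla\cdot(\dotXh|_K)$ by the first identity. This proves \eqref{eq:identity1}.

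For the second line, I would evaluate at $t=0$. Since $\Jac = E_{\Kt}E_{\Ko}^{-1}$ and $E_{\Kt}|_{t=0} = E_{\Ko}$, differentiating the product gives $\dot{\Jac}|_{t=0} = \dot{E}_{\Ko}E_{\Ko}^{-1}$. For the inverse, from $\Jac\,\Jac^{-1} = \II$ one gets $\dot{\Jac}\,\Jac^{-1} + \Jac\,\dot{(\Jac^{-1})} = 0$, so at $t=0$ (where $\Jac|_{t=0}=\II$) we obtain $\dot{(\Jac^{-1})}|_{t=0} = -\dot{\Jac}|_{t=0} = -\dot{E}_{\Ko}E_{\Ko}^{-1}$. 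For the Jacobian determinant, Jacobi's formula gives $\dot{\detJac} = \detJac\,\op{tr}(\Jac^{-1}\dot{\Jac})$, hence at $t=0$, $\dot{\detJac}|_{t=0} = \op{tr}(\dot{\Jac}|_{t=0}) = \op{tr}(\dot{E}_{\Ko}E_{\Ko}^{-1}) = \nabla\cdot(\dotXh|_K)$ by \eqref{eq:identity1}.

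The first inequality in \eqref{eq:velocity_inequalities} is immediate: on each $K$, $\dotXh = \sum_i \dotxxi\lambda_i$ is the convex combination (since $\lambda_i \geqslant 0$, $\sum_i\lambda_i = 1$) of the nodal values $\dotxxi = \dot{X}(\xxi(0))$, so $\norm{\dotXh}_{L^\infty(K)^d} \leqslant \max_i|\dot{X}(\xxi(0))| \leqslant \norm{\dot{X}}_{L^\infty(\varOmega)^d}$; taking the max over $K$ gives the claim. The main obstacle is the second inequality, the gradient bound. Here I would write $\nabla(\dotXh|_K) = \sum_{i=0}^d \dotxxi(\nabla\lambda_i^K)^\top$ and, using $\sum_i\nabla\lambda_i = 0$, rewrite it as $\sum_{i=1}^d(\dotxxi - \dot{\xx}_0^K)(\nabla\lambda_i^K)^\top$ so that each factor $\dotxxi - \dot{\xx}_0^K = \dot{X}(\xxi(0)) - \dot{X}(\xx_0^K(0))$ can be bounded by $\norm{\nabla\dot{X}}_{L^\infty}\,|\xxi(0) - \xx_0^K(0)| \leqslant \norm{\nabla\dot{X}}_{L^\infty}\,\hK$ via the mean value inequality along the edge. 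Then $\abs{\nabla\lambda_i^K} \leqslant 1/\aK$ by the height characterization $\aK = \min_i\abs{\nabla\lambda_i^K}^{-1}$ recalled before the lemma, and summing the $d$ terms and controlling the matrix norm yields $\norm{\nabla\cdot\dotXh}_{L^\infty(K)^{d\times d}} \leqslant d\,\norm{\nabla\dot{X}}_{L^\infty(\varOmega)^{d\times d}}\,(\hK/\aK)$; taking the max over $K$ gives \eqref{eq:velocity_inequalities}. The delicate points are keeping track of which matrix norm is used and ensuring the telescoping with $\nabla\lambda_0$ is handled correctly so the edge-length factor $\hK$ rather than a diameter of $\varOmega$ appears; these are routine once the decomposition is set up, so I would only sketch them and refer to \cite{Ciarlet2002,ErnGuermond2004,HeHuang2021} for the standard estimates.
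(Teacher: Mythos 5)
Your proposal is correct and takes essentially the route the paper intends: the paper itself omits the proof of Lemma~\ref{lem:identities} and defers to \cite{Ciarlet2002,ErnGuermond2004,ErnGuermond2021,HeHuang2021}, but the telescoping identity $\nabla\cdot\dotXh=\sum_{i=1}^{d}(\dot{\xx}_{i}^{K}-\dot{\xx}_{0}^{K})\cdot\nabla\lambda_{i}^{K}$ and the mean-value (fundamental theorem of calculus) bound on $\dot{\xx}_{i}^{K}-\dot{\xx}_{0}^{K}$ that carry your second inequality are exactly the ones the paper records in Remark~\ref{rem:non-smooth_velocity}, and your computations for the identities and the convexity argument for the first inequality are the standard ones. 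Nothing further is needed.
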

Let $\norm{\cdot}_{2}$ and $\norm{\cdot}_{F}$ denote the usual 2-norm and Frobenius norm (or Hilbert-Schmidt norm) for matrices.
Then, we have the sharp inequality $\norm{M}_{2} \leqslant \norm{M}_{F}$.
We thus have the following lemma.
\begin{lemma}\label{lem:estimates_for_edge_matrices}
	For smooth mesh velocity field, there hold the estimates
	\begin{equation}\label{eq:edge_matrices_estimates}
		\norm{\edgemat^{-1}}_{2} \leqslant \frac{\sqrt{d}}{\ak}
		\qquad
		\text{and} 
		\qquad
		\norm{\dotEK}_{2} \leqslant \sqrt{d} \hk \norm{\nabla \dot{X}}_{L^{\infty}(\varOmega)^{d \times d}}.
	\end{equation}
\end{lemma}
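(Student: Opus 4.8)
The plan is to establish the two estimates in Lemma~\ref{lem:estimates_for_edge_matrices} by leveraging the barycentric-coordinate identities already derived in the excerpt, namely $E^{-\top} = [\nabla\lambda_1, \ldots, \nabla\lambda_d]^\top$ and the expression $\aK = \min_i \abs{\nabla\lambda_i^K}^{-1}$ for the minimum height. First I would treat $\norm{\edgemat^{-1}}_2$: since $\norm{\edgemat^{-1}}_2 = \norm{\edgemat^{-\top}}_2 \leqslant \norm{\edgemat^{-\top}}_F$ by the sharp inequality $\norm{M}_2 \leqslant \norm{M}_F$ noted just above the lemma, and the rows of $\edgemat^{-\top}$ are exactly the gradients $(\nabla\lambda_i)^\top$, we get $\norm{\edgemat^{-\top}}_F^2 = \sum_{i=1}^d \abs{\nabla\lambda_i}^2 \leqslant d \max_i \abs{\nabla\lambda_i}^2$. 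Using $\abs{\nabla\lambda_i^K}^{-1} \geqslant \min_i \abs{\nabla\lambda_i^K}^{-1} = \aK$, hence $\abs{\nabla\lambda_i^K} \leqslant 1/\aK$ for each $i$, this yields $\norm{\edgemat^{-1}}_2 \leqslant \sqrt{d}/\ak$, which is the first claim.

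For the second estimate, the starting point is the identity $\dotEK = [\dotxx_1^K(t) - \dotxx_0^K(t), \ldots, \dotxx_d^K(t) - \dotxx_0^K(t)]$ recorded in the excerpt, where $\dotxxi = \dot{X}(\xxi(0))$ are the nodal velocities. Each column $\dotxx_j^K - \dotxx_0^K = \dot{X}(\xx_j^K) - \dot{X}(\xx_0^K)$ can be bounded using the mean value inequality for $\dot{X} \in W^{1,\infty}(\varOmega)^d$: $\abs{\dot{X}(\xx_j^K) - \dot{X}(\xx_0^K)} \leqslant \norm{\nabla\dot{X}}_{L^\infty(\varOmega)^{d\times d}} \abs{\xx_j^K - \xx_0^K} \leqslant \norm{\nabla\dot{X}}_{L^\infty(\varOmega)^{d\times d}} \hk$, since $\abs{\xx_j^K - \xx_0^K}$ is the length of an edge of $K$ and is therefore at most the diameter $\hk$. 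Then $\norm{\dotEK}_2 \leqslant \norm{\dotEK}_F = \bigl(\sum_{j=1}^d \abs{\dotxx_j^K - \dotxx_0^K}^2\bigr)^{1/2} \leqslant \sqrt{d}\,\hk\,\norm{\nabla\dot{X}}_{L^\infty(\varOmega)^{d\times d}}$, giving the second claim.

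I do not expect a genuine obstacle here; the lemma is essentially bookkeeping with matrix norms, and every ingredient (the Frobenius dominates 2-norm inequality, the characterization of heights via $\nabla\lambda_i$, the affine structure of $\FK$, and the Lipschitz regularity \eqref{eq:assumption_on_velocity_field}) is already in place. The one point requiring a little care is making sure the segment $[\xx_0^K, \xx_j^K]$ lies in $\varOmega$ (or in a convex neighborhood) so the mean value inequality applies with the global $W^{1,\infty}$ bound; since $K \subset \varOmega$ is a simplex, it is convex and contains the edge, so this is automatic. A secondary subtlety is that $\dotEK$ as written involves $\dotxxi(t)$, but because the nodal velocities $\dotxxi = \dot{X}(\xxi(0))$ are constant in time by \eqref{eq:mesh_perturbation}, $\dotEK$ is in fact time-independent and equals its value at $t=0$, so the bound holds uniformly on $[0,t_1)$. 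The whole proof is therefore a short chain of elementary norm estimates, and I would present it in roughly the two displays sketched above.
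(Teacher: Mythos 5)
Your proof is correct and follows exactly the route the paper intends: the paper omits the proof (deferring to \cite{Ciarlet2002,ErnGuermond2004,ErnGuermond2021,HeHuang2021}), but the ingredients it sets up---the identity $E^{-\top}=[\nabla\lambda_1,\ldots,\nabla\lambda_d]^{\top}$ with $\aK=\min_i\abs{\nabla\lambda_i^K}^{-1}$ for the first bound, and the fundamental-theorem-of-calculus/mean-value argument along each edge indicated in Remark~\ref{rem:non-smooth_velocity} for the second---are precisely what you use, together with $\norm{M}_2\leqslant\norm{M}_F$. Your side remarks on convexity of $K$ and the time-independence of $\dotEK$ are accurate and complete the argument.
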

\begin{remark}\label{rem:non-smooth_velocity}
	In Lemma \ref{lem:identities} and Lemma \ref{lem:estimates_for_edge_matrices}, we assumed that the mesh velocity is smooth which allows one to utilize 
	the identity 
	\[
		\dot{\xx}_{i}^{K} - \dot{\xx}_{0}^{K} 
		= \dot{X}(\xxi^{K}) - \dot{X}(\xx_{0}^{K})
		=  \int_{0}^{1} \nabla \dot{X}(\xx_{0}^{K} + t ( \dot{\xx}_{i}^{K} - \dot{\xx}_{0}^{K} )) \cdot (\dot{\xx}_{i}^{K} - \dot{\xx}_{0}^{K} ) \ dt,
	\]
	and get the estimates \eqref{eq:velocity_inequalities}$_{2}$ and \eqref{eq:edge_matrices_estimates}$_{2}$.
	If the mesh velocity field is not smooth, then we have
	\begin{equation}\label{eq:divdot_estimate}
	\begin{aligned}
		\abs{\nabla \cdot \dotXh}
		\leqslant \frac{1}{\min_{K} \ak } \left| \sum_{i=0}^d \dot{\xx}_{i} \right|
		\stackrel{\triangle\text{-ineq.}}{\leqslant} \frac{1}{\min_{K} \ak } \sum_{i=0}^d \left| \dot{X}(\xxi(0)) \right|.
	\end{aligned}
	\end{equation}
	In the first inequality above, the following identity was used:	
	\[
		\nabla \cdot \dotXh 
		= \sum_{i=0}^d \dot{\xx}_{i}^{K}\cdot \nabla \lambda_{i}^{K}(\xx)
		= \sum_{i=1}^d ( \dot{\xx}_{i}^{K} - \dot{\xx}_{0}^{K} )\cdot \nabla \lambda_{i}^{K}(\xx).
	\]

	Taking the supremum of the left- and right-most sides of inequality \eqref{eq:divdot_estimate} gives us
	\[
		\norm{\nabla \cdot \dotXh}_{L^\infty(\varOmega)} \leqslant \frac{d+1}{\min_{K} \ak } \norm{\dot{X}}_{L^\infty(\varOmega)^{d}}.
	\] 
	Similarly, we have 
	$
	\norm{\dot{E}_{K(0)}^{-1}}_{2}^{2}
		\leqslant \sum_{i=1}^{d} | \dot{\xx}_{i} - \dot{\xx}_{0}|^{2}
		\leqslant 2d \norm{\dot{X}}_{L^{\infty}(\varOmega)^{d}}^2$,
	from which we get
	\[
	\|\dot{E}_{K(0)}^{-1}\|_{2} \leqslant \sqrt{2d} \norm{\dot{X}}_{L^{\infty}(\varOmega)^{d}}
	\]	
	when the mesh velocity field is not smooth.
\end{remark}

We next review the (pseudo) time derivative of functions on $K$ and $K(t)$, starting with the relation between basis functions:
\[
\tilde{\psi}(\yy, t) = \psi(\FK^{-1}(\yy))
\qquad \stackrel{\eqref{eq:affine_transform}}{\Longleftrightarrow}\qquad
\tilde{\psi}(\FK(\xx), t) = \psi(\xx). 
\]
This implies $\tnabla \tilde{\psi}(\FK(\xx),t) = \Jac^{-\top}\nabla \psi(\xx)$, where $\Jac^{-\top} = (\Jac^{-1})^{\top}$, and component-wise this reads:
\[
	\frac{\partial \tv(\yy)}{\partial y_{i}} 
		= \sum_{i=1}^{d} \frac{\partial v}{\partial x_{j}}(\xx) \frac{\partial x_{j}}{\partial y_{i}}
		\qquad\text{and}\qquad
	\frac{\partial v(\xx)}{\partial x_{i}} 
		= \sum_{i=1}^{d} \frac{\partial \tv}{\partial y_{j}} (\yy) \frac{\partial y_{j}}{\partial x_{i}},
\]
where the index $i$ denotes the $i$th row of a matrix. 
Furthermore, we have
	\begin{equation}\label{eq:material_derivative_of_a_basis_function}
		\dot{\tilde{\psi}}(\cdot, 0) = 0
		\qquad \text{and} \qquad
		\oset{\vect{.}}{({\tnabla\tilde{\psi}})}\!\!(\cdot, 0) = -E_{\Ko}^{-\top} \dot{E}_{\Ko}^{\top} \nabla \psi.
	\end{equation}
For a function $f = f(\xx)$ on $K$ and $\tilde{f}(\yy,t) = f(\FK(\xx))$, the time derivative at $t = 0$ is
\begin{equation}\label{eq:material_derivative_of_fixed_function}
\dot{\tilde{f}}(\cdot, 0) = \nabla f \cdot \dotXh\big|_{K}.
\end{equation}

In the next lemma, we compute the time derivative at zero of the transport of a finite element approximation function and its corresponding gradient.
\begin{lemma}\label{lem:material_derivative_fe_approximation}
	Consider a finite element approximation $\vh = \sum_{i=1}^{d} v_{i} \psi_{i}(\xx)$, where $\{ \psi_{1}, \ldots, \psi_{d} \}$ is a basis for $\spaceVh$, and its corresponding transport $\tvh(\yy, t) = \sum_{i=1}^{d} v_{i}(t) \tilde{\psi}(\yy,t)$, where $\{ \tilde{\psi}_{1}, \ldots, \tilde{\psi}_{d}\}$ is a basis for the space $\spaceVh(t)$.
	Then, we have
	\begin{equation}\label{eq:material_derivative_fe_approximation}
		\dot{\tv}_{h}(\cdot, 0)\big|_{K} = \dotvh\big|_{K}
		\quad\text{and}\quad
		\oset{\vect{.}}{({\tnabla\tvh})}\!\!(\cdot, 0) \big|_{K} 
		= -\edgemat^{-\top} \dotEK^{\top} \nabla \vh \big|_{K} + \nabla \dotvh\big|_{K},
	\end{equation}
	for all $K \in \Th$, where $\dotvh = \sum_{i=1}^{d} \dot{v}_{i}(0) \psi_{i}(\xx)$.
\end{lemma}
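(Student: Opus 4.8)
The plan is to differentiate the transported finite element function $\tvh(\yy,t)=\sum_{i=1}^{d} v_i(t)\,\tilde\psi_i(\yy,t)$ with respect to the pseudo-time $t$ and evaluate at $t=0$, treating the function-value part and the gradient part separately. For the first identity, I would expand
\[
\dot{\tv}_h(\cdot,0)\big|_K = \sum_{i=1}^{d}\Bigl(\dot v_i(0)\,\tilde\psi_i(\cdot,0) + v_i(0)\,\dot{\tilde\psi}_i(\cdot,0)\Bigr)\Big|_K,
\]
then use that $\tilde\psi_i(\FK(\xx),0)=\psi_i(\xx)$ together with the first identity in \eqref{eq:material_derivative_of_a_basis_function}, namely $\dot{\tilde\psi}_i(\cdot,0)=0$. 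The second term drops out, leaving $\sum_i \dot v_i(0)\psi_i = \dotvh$ on $K$.

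For the gradient identity, I would apply the chain rule to $\tnabla\tvh$ and differentiate in $t$, again splitting into the contribution from the moving basis functions and the contribution from the time-varying coefficients $v_i(t)$. Writing $\tnabla\tvh(\FK(\xx),t)=\sum_i v_i(t)\,\tnabla\tilde\psi_i(\FK(\xx),t)$ and differentiating,
\[
\oset{\vect{.}}{(\tnabla\tvh)}(\cdot,0)\big|_K = \sum_{i=1}^{d} v_i(0)\,\oset{\vect{.}}{(\tnabla\tilde\psi_i)}(\cdot,0) + \sum_{i=1}^{d}\dot v_i(0)\,\nabla\psi_i,
\]
where for the second sum I use $\tnabla\tilde\psi_i(\FK(\xx),0)=\Jac^{-\top}\nabla\psi_i(\xx)$ with $\Jac|_{t=0}=\II$ from \eqref{eq:Jacobian_{b}y_E}, so it equals $\nabla\dotvh|_K$. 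For the first sum I substitute the second identity in \eqref{eq:material_derivative_of_a_basis_function}, $\oset{\vect{.}}{(\tnabla\tilde\psi_i)}(\cdot,0) = -E_{K(0)}^{-\top}\dot E_{K(0)}^{\top}\nabla\psi_i$, and pull the matrix factor (which is independent of $i$) outside, recognizing $\sum_i v_i(0)\nabla\psi_i = \nabla\vh|_K$. Since $K=K(0)$ in our notation, $E_{K(0)}=\edgemat$ and $\dot E_{K(0)}=\dotEK$, giving the stated $-\edgemat^{-\top}\dotEK^{\top}\nabla\vh + \nabla\dotvh$ on $K$.

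The main obstacle is purely bookkeeping rather than conceptual: one must be careful that the pseudo-time dependence enters $\tvh$ through two distinct channels — the deforming basis $\tilde\psi_i(\cdot,t)$ and the coefficients $v_i(t)$ — and that only the latter survives in the function-value term while both survive in the gradient term, with the basis channel producing exactly the edge-matrix correction. It is also worth noting explicitly that the identities hold elementwise on each $K\in\Th$, since $\FK$ and the affine structure \eqref{eq:affine_transform} are defined per element; no global regularity of $\tvh$ across interelement faces is needed. The smoothness assumption on the mesh velocity field is used only insofar as it was already invoked to establish \eqref{eq:material_derivative_of_a_basis_function}, so no additional hypotheses are required here.
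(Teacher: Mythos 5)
Your proposal is correct: the product-rule expansion in the two channels (moving basis $\tilde\psi_i(\cdot,t)$ and time-varying coefficients $v_i(t)$), combined with $\dot{\tilde\psi}_i(\cdot,0)=0$, $\tnabla\tilde\psi_i(\cdot,0)=\Jac^{-\top}\nabla\psi_i=\nabla\psi_i$ at $t=0$, and the identity $\oset{\vect{.}}{({\tnabla\tilde\psi_i})}(\cdot,0)=-E_{\Ko}^{-\top}\dot{E}_{\Ko}^{\top}\nabla\psi_i$ from \eqref{eq:material_derivative_of_a_basis_function}, is exactly the argument the lemma rests on. The paper itself omits the proof and defers to the cited references (in particular \cite{HeHuang2021}), where the same elementwise Leibniz-rule computation is carried out, so your derivation coincides with the intended one.
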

\subsection{Mesh sensitivity analysis for the finite element solution} \label{subsec:mesh_sensitivity_analysis_for_the_finite_element_solution}
In this section we analyze the mesh sensitivity for the FE solution $\uh = \sum_{i} \ui \psii(\xx)$ satisfying \eqref{eq:weak_form_fem}. 
On the deformed mesh  $\Th$, the perturbed FE solution can be expressed as $\tu = \sum_{i} \ui(t) \tilde{\psi}_{i}(\yy,t)$. 

Assuming that $\tu$ is differentiable, we obtain from Lemma~\ref{lem:material_derivative_fe_approximation} that the material derivative of $\tu$ at $t=0$ is given by
\begin{equation}
    \tildedotuh |_{t=0} = \sum_{i} \dot{u}_i(0) \psii.
\end{equation}
We denote this by $\dotuh$, i.e., $\dotuh = \sum_{i} \dot{u}_i(0) \psii$. This derivative quantifies the change in $\uh$ due to mesh deformation. 

First, we derive the FE formulation for $\dotuh$ and then establish a bound for $\norm{\nabla \dotuh}_{L^2(\varOmega)}$.
\begin{theorem}\label{thm:material_derivative_equation_fem}
    The material derivative $\dotuh \equiv \sum_{i} \dotui(0) \psii \in \spaceVh$ uniquely satisfies
\begin{equation}\label{eq:material_derivative_fem}
\begin{aligned} 
a(\dotuh,{\psi})
    &= \sumint{ {\sigma} \nabla{\uh} \cdot (\dotEK \edgemat^{-1} + \edgemat^{-\top} \dotEK^{\top}) \nabla{\conj{\psi}}}\\
    &\quad - \sum_{j=0}^{1}\int_{\varOmega_{j}}{ \left[ 
    		( \nabla{{\sigma}_{j}} \cdot {\dotXh}) (\nabla{\uh} \cdot \nabla{\conj{\psi}}) 
    			+ (\nabla{\coeffk_{j}} \cdot {\dotXh}) {\uh} {\conj{\psi}}
			- (\nabla{{Q}_{j}} \cdot {\dotXh}) {\conj{\psi}}
    	\right] }{\, dx}\\
   &\quad -\sum_{j=0}^{1}\int_{\varOmega_{j}}{ \left(
     		{\sigma_{j}} \nabla{\uh} \cdot \nabla{\conj{\psi}}
		+ {\coeffk_{j}}  {\uh} {\conj{\psi}}
		- {{Q}_{j}} {\conj{\psi}}
    	\right) (\nabla \cdot {\dotXh}) }{\, dx},
	\qquad \forall {\psi} \in \spaceVh.
\end{aligned}
\end{equation} 
\end{theorem}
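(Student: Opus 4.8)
The plan is to mimic, at the discrete level, the material‑derivative computation used for the continuous shape gradient: start from the finite element equation posed on the deformed mesh $\Tht$, pull it back element by element to the fixed reference mesh $\Th$ via the affine maps $\FK$, differentiate the resulting identity in $t$ at $t=0$, and reorganise the terms so that the contributions proportional to $\dotuh$ assemble into $a(\dotuh,\psi)$.

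First I would write the perturbed problem: $\tuh=\sum_i u_i(t)\tilde\psi_i(\cdot,t)$ in the finite element space $\spaceVh(t)$ associated with $\Tht$ satisfies $a(\tuh,\tilde\psi)=l(\tilde\psi)$ for all $\tilde\psi$ in that space, with $a,l$ as in \eqref{eq:forms}. It suffices to test with the transports $\tilde\psi_j(\cdot,t)$ of the reference basis functions $\psi_j$, because these span $\spaceVh(t)$ and their pullbacks $\tilde\psi_j\circ\FK=\psi_j$ are \emph{independent of $t$} — this is what keeps the differentiation clean. Since $\dotXh=0$ on $\partial\varOmega$ in the numerical setting, the facets on $\Gtop$ and the data $T_{a},h$ do not move, so the $\Gtop$‑integrals keep their reference form. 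Using the change of variables $\yy=\FK(\xx)$ on each $K$, together with $\tnabla(\,\cdot\,)=\Jac^{-\top}\nabla(\,\cdot\,)$ after pullback and $d\yy=\detJac\,d\xx$, and abbreviating $\uh^{t}:=\tuh\circ\FK=\sum_i u_i(t)\psi_i$, $\sigma^{t}:=\sigma\circ\FK$, $\coeffk^{t}:=\coeffk\circ\FK$, $Q^{t}:=Q\circ\FK$, the perturbed equation takes, for every reference basis function $\psi$, the form
\[
\sumint{\bigl(\sigma^{t}\,\Jac^{-1}\Jac^{-\top}\nabla\uh^{t}\cdot\nabla\conj{\psi}+\coeffk^{t}\,\uh^{t}\conj{\psi}-Q^{t}\conj{\psi}\bigr)\detJac}+\intGtop{(\alpha+i)\uh^{t}\conj{\psi}}-\alpha\intGtop{T_{a}\conj{\psi}}-i\intGtop{h\conj{\psi}}=0 .
\]

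Next I would differentiate this identity at $t=0$. The ingredients are Lemma~\ref{lem:identities} — $\detJac|_{t=0}=1$, $\dot{\detJac}|_{t=0}=\nabla\cdot\dotXh$ on $K$, $\Jac|_{t=0}=\II$, $\dot{(\Jac^{-1})}|_{t=0}=-\dotEK\edgemat^{-1}$, hence by the product rule $\frac{d}{dt}(\Jac^{-1}\Jac^{-\top})|_{t=0}=-(\dotEK\edgemat^{-1}+\edgemat^{-\top}\dotEK^{\top})$, a symmetric matrix — together with \eqref{eq:material_derivative_of_fixed_function}, giving $\dot{(\sigma^{t})}|_{t=0}=\nabla\sigma\cdot\dotXh$ and likewise for $\coeffk$ and $Q$, and $\dot{(\uh^{t})}|_{t=0}=\dotuh$, $\dot{(\nabla\uh^{t})}|_{t=0}=\nabla\dotuh$. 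The $\Gtop$‑terms with $T_{a}$ and $h$ have zero $t$‑derivative, while that of $\intGtop{(\alpha+i)\uh^{t}\conj{\psi}}$ is $\intGtop{(\alpha+i)\dotuh\conj{\psi}}$. Collecting the $\dotuh$‑terms, namely $\sumint{\sigma\nabla\dotuh\cdot\nabla\conj{\psi}}+\sumint{\coeffk\dotuh\conj{\psi}}+\intGtop{(\alpha+i)\dotuh\conj{\psi}}=a(\dotuh,\psi)$ (legitimate since $\spaceVh\subset\Vomega$), on the left and moving the remaining ``geometric'' terms to the right reproduces \eqref{eq:material_derivative_fem} term for term (the first right‑hand term comes from $-\frac{d}{dt}(\Jac^{-1}\Jac^{-\top})|_{t=0}$ and $(M\nabla\uh)\cdot\nabla\conj\psi=\nabla\uh\cdot M^{\top}\nabla\conj\psi$ for symmetric $M$, the second from the $\dot{(\sigma^{t})},\dot{(\coeffk^{t})},\dot{(Q^{t})}$ terms, the third from $\dot{\detJac}|_{t=0}$); the splitting into $\sum_{j=0}^{1}\int_{\varOmega_{j}}$ is forced because $\sigma,\coeffk,Q$ are only piecewise $W^{1,\infty}$ (resp.\ $H^{1}$), the mesh $\Th$ being fitted to $\domega$. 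Existence and uniqueness of $\dotuh\in\spaceVh$ then follow from the complex Lax--Milgram lemma: the right‑hand side is a bounded conjugate‑linear functional on $\spaceVh$ (the geometric factors are controlled by \eqref{eq:velocity_inequalities}--\eqref{eq:edge_matrices_estimates}, and $\nabla\uh,\uh$ are bounded, e.g.\ via \eqref{eq:standard_error_estimate} under $H^{2}$ regularity of $u$), while $a(\cdot,\cdot)$ is coercive on $\Vomega\supset\spaceVh$ by \eqref{eq:V_coercivity_of_a}; Lemma~\ref{lem:material_derivative_fe_approximation} identifies this $\dotuh$ with $\sum_i\dot u_i(0)\psii$, and the differentiability of $t\mapsto u_i(t)$ presupposed throughout follows from the implicit function theorem applied to the algebraic system $\mathsf{A}(t)\mathsf{u}(t)=\mathsf{b}(t)$, which is $C^{1}$ in $t$ with invertible $\mathsf{A}(t)$.

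The hard part will be the bookkeeping in the differentiation step: obtaining the symmetric matrix $\dotEK\edgemat^{-1}+\edgemat^{-\top}\dotEK^{\top}$ with the correct sign, cleanly isolating the $\dotuh$‑part of $\frac{d}{dt}[\sigma^{t}\Jac^{-1}\Jac^{-\top}\nabla\uh^{t}\cdot\nabla\conj{\psi}\,\detJac]$ and of the $\coeffk$‑term from the purely geometric parts, and checking that every $\Gtop$‑contribution except $\intGtop{(\alpha+i)\dotuh\conj{\psi}}$ drops out. The remaining ingredients — the element‑wise change of variables, the Jacobian identities of Lemma~\ref{lem:identities}, and the Lax--Milgram argument — are routine.
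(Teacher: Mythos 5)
Your proposal is correct and follows essentially the same route as the paper's proof: write the FE equation on the deformed mesh, pull it back element by element to the reference mesh, differentiate at $t=0$ using the Jacobian/edge-matrix identities of Lemma~\ref{lem:identities} and Lemma~\ref{lem:material_derivative_fe_approximation}, and collect the $\dotuh$-terms into $a(\dotuh,\psi)$. The only differences are organizational --- you differentiate the product $\Jac^{-1}\Jac^{-\top}$ directly rather than the transported gradients $\tnabla\tuh$ and $\tnabla\tilde{\psi}$ separately as the paper does, which yields the same symmetric matrix $\dotEK \edgemat^{-1}+\edgemat^{-\top}\dotEK^{\top}$ --- and you supply more detail than the paper on the well-posedness of the resulting discrete problem and on the differentiability of $t\mapsto u_i(t)$, which the paper simply assumes.
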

\begin{proof}
    The proof proceeds as follows.
    We start by rewriting the variational formulation of the state's FE solution on the deformed mesh $\Tht$. 
    Then, we transform all integrals into the reference mesh and differentiating both sides with respect to time while keeping $\xx$ fixed, we obtain the weak formulation of the material derivative. 
    Afterward, we apply Lemma~\ref{lem:identities}, equations~\eqref{eq:material_derivative_of_a_basis_function}--\eqref{eq:material_derivative_of_fixed_function}, and Lemma~\ref{lem:material_derivative_fe_approximation}, and taking $t=0$, we derive the variational equation~\ref{eq:material_derivative_fem}.

We rewrite \eqref{eq:weak_form_fem} into
\begin{equation}\label{eq:discretized_weak_form}
\begin{aligned}
    &\sumint{  \sigma \nabla{\uh} \cdot \nabla{\conj{\psi}} + \coeffk {\uh}\conj{\psi} } 
    + \intGtop{ ( \alpha + i ) {\uh}\conj{\psi} }\\
    &\qquad = \sumint{ Q{\conj{\psi}}} + \alpha \intGtop{ T_{a} {\conj{\psi}} } + i \intGtop{ h {\conj{\psi}} }, \quad \forall \psi \in \spaceVh.
\end{aligned}
\end{equation}
On $\Tht$, the perturbed FE solution can be expressed as $\tuh = \sum_{i} \ui(t) \psii(\yy,t)$. 
Then, equation \eqref{eq:discretized_weak_form} becomes
\[
\begin{aligned}
    &\sumintt{  \sigma \tnabla{\tuh} \cdot \tnabla{\conj{\tpsi}} + \coeffk {\tuh}\conj{\psi} } 
    + \intGtopy{ ( \alpha + i ) {\tuh}\conj{\tpsi} }\\
    &\qquad = \sumintt{ {Q}(\yy) {\conj{\tpsi}} } + \alpha \intGtopy{ T_{a} {\conj{\tpsi}} } + i \intGtopy{ h {\conj{\tpsi}} }, \quad \forall \tpsi \in \spaceVh(t),
\end{aligned}
\]
where the test space $\spaceVh(t) = \op{span}\{\tpsi_{1}, \tpsi_{2}, \ldots\}$. 
Note that in above variational equation, we actually have $\Gtop(t) = \Gtop(0) \equiv \Gtop$ because $\dot{X} = \dotXh = 0$ on $\partial\varOmega$.

Transforming all integrals onto $K(0)$ and noting that $\detJac = 1$ and $\abs{\Jac^{-\top}\nn} = 1$ on $\Gtop$ for all $t > 0$, we obtain
\begin{align*}
    &\suminto{ ( \sigma \tnabla{\tuh} \cdot \tnabla{\conj{\tpsi}} + \coeffk {\tuh}\conj{\tpsi} ) \detJac } 
    + \intGtop{ ( \alpha + i ) {\tuh}\conj{\tpsi} }\\
    &\qquad = \suminto{ {Q}(\FK(\xx)) {\conj{\tpsi}} \detJac } 
    			+ \alpha \intGtop{ T_{a} {\conj{\tpsi}} } 
    			+ i \intGtop{ h {\conj{\tpsi}} }, \quad \forall \tpsi \in \spaceVh.
\end{align*}
Differentiating the above equation with respect to $t$ while keeping $\xx$ fixed, we obtain
\[
\begin{aligned} 
    &\suminto{ ( \sigma \dot{(\tnabla{\tuh})} \ \cdot \tnabla{\conj{\tpsi}} + \coeffk {\dottuh}\conj{\tpsi} ) \detJac } 
    + \intGtop{ ( \alpha + i ) {\dottuh}\conj{\tpsi} }\\
    &\qquad = - \suminto{ ( \sigma \tnabla{\tuh} \cdot \dot{(\tnabla{\conj{\tpsi}})} \
    			+ \coeffk {\tuh} \dot{\conj{\tpsi}} ) \detJac } 
    			- \intGtop{ ( \alpha + i ) {\tuh} \dot{\conj{\tpsi}} }\\
    &\qquad \quad - \suminto{ ( \dot{\sigma} \tnabla{\tuh} \, \cdot \tnabla{\conj{\tpsi}} 
    			+ \dot{\coeffk} {\tuh}\conj{\tpsi} ) \detJac }
    - \suminto{ ( \sigma \tnabla{\tuh} \, \cdot \tnabla{\conj{\tpsi}} 
    			+ \coeffk {\tuh}\conj{\tpsi} ) \dot{\detJac} }  \\   
    &\qquad \quad + \suminto{ \dot{Q} {\conj{\tpsi}} \detJac } 
    			+ \alpha \intGtop{ \dot{T_{a}} {\conj{\tpsi}} } 
    			+ i \intGtop{ \dot{h} {\conj{\tpsi}} }\\
    &\qquad \quad + \suminto{ {Q} \dot{\conj{\tpsi}} \detJac } 
    			+ \alpha \intGtop{ T_{a} \dot{\conj{\tpsi}} } 
    			+ i \intGtop{ h \dot{\conj{\tpsi}} }\\
    &\qquad \quad + \suminto{ {Q} \conj{\tpsi} \dot{\detJac} },
    \quad \forall \tpsi \in \spaceVh.
\end{aligned}
\]
Now, applying Lemma~\ref{lem:identities}, equations~\eqref{eq:material_derivative_of_a_basis_function}--\eqref{eq:material_derivative_of_fixed_function}, and Lemma~\ref{lem:material_derivative_fe_approximation}, while taking $t=0$ and noting that $\detJac = 1$ and $\yy = \xx$ at $t = 0$, we obtain
\[
\begin{aligned} 
    &\suminto{ ( \sigma (\nabla{\dotuh} - \edgemat^{-\top} \dotEK^{\top} \nabla{\uh})  \cdot \nabla{\conj{\psi}} 
    			+ \coeffk {\dotuh}\conj{\psi} ) } 
    			+ \intGtop{ ( \alpha + i ) {\dotuh}\conj{\psi} }\\
    &\qquad = - \suminto{ ( \sigma \nabla{\uh} \cdot (- \edgemat^{-\top} \dotEK^{\top}) \nabla{\conj{\psi}} } \\
    &\qquad \quad - \suminto{ ( \nabla{\sigma} \cdot \dotXh) \nabla{\uh} \, \cdot \nabla{\conj{\psi}} 
    			+ ( \nabla{\coeffk} \cdot \dotXh ) {\uh}\conj{\psi} }\\
    &\qquad \quad - \suminto{ ( \sigma \nabla{\uh} \, \cdot \nabla{\conj{\psi}} 
    			+ \coeffk {\uh}\conj{\psi} ) ( \nabla \cdot \dotXh ) }  \\   
    &\qquad \quad + \suminto{ (\nabla{Q} \cdot \dotXh) {\conj{\psi}} } 
    		+ \suminto{ {Q} \conj{\psi} ( \nabla \cdot \dotXh ) },
   		\quad \forall \psi \in \spaceVh.
\end{aligned}
\]
Since $K(0) = K$, the following equation, obtained after some rearrangement, is equivalent to the previous one
\[
\begin{aligned} 
    a(\dotuh, \psi)
    &= \suminto{ \sigma \nabla{\uh} \cdot (\dotEK \edgemat^{-1} + \edgemat^{-\top} \dotEK^{\top}) \nabla{\conj{\psi}} } \\
    &\quad - \suminto{ ( \nabla{\sigma} \cdot \dotXh) \nabla{\uh} \, \cdot \nabla{\conj{\psi}} 
    			+ ( \nabla{\coeffk} \cdot \dotXh ) {\uh}\conj{\psi} 
			- (\nabla{Q} \cdot \dotXh) {\conj{\psi}}}\\
    &\quad - \suminto{ ( \sigma \nabla{\uh} \, \cdot \nabla{\conj{\psi}} 
    			+ \coeffk {\uh}\conj{\psi} - {Q} \conj{\psi} ) ( \nabla \cdot \dotXh ) },
   \quad \forall \psi \in \spaceVh.
\end{aligned}
\]
Noting that $\varOmega = \healthy \cup \bartumor$, we obtain \eqref{eq:material_derivative_fem} as desired.

In addition, by the regularity assumptions on $\sigma$, $\coeffk$, and ${Q}$, together with the bounds in \eqref{eq:bounds_for_sigma_and_k}, it can be easily verified that $\dotuh$ uniquely solves \eqref{eq:material_derivative_fem} in $\spaceVh$.
\end{proof}
Using the previously established theorem, we can readily derive a stability estimate for the material derivative of the state, as stated in the following theorem.
Its proof requires the following Poincar\'{e} inequality over $\Vomega$: there exists a constant $c_{P} > 0$ such that  
\begin{equation}\label{eq:Poincare_inequality}
	\cnorm{u}_{L^{2}(\varOmega)} \leqslant c_{P} \cnorm{\nabla{u}}_{L^{2}(\varOmega)^{d}}, \quad \forall u \in \Vomega.
\end{equation}
This inequality can be verified by modifying one of the standard proofs of the Poincar\'{e} inequality, namely the proof by contradiction (see, e.g., \cite[Proof of Prop.~2, p.~127]{DautrayLionsv21998}).
\begin{theorem} \label{thm:mesh_sensitivity_main_result}
Let $\varOmega \subset \mathbb{R}^{d}$, $d \in \{2,3\}$, be a polygonal/polyhedral domain. Let $\alpha \in \mathbb{R}_{+}$, $Q \in H^{1}(\varOmega)$, $T_{a}, h \in H^{1/2}(\Gtop)$, and assume the parameters satisfy Assumption~\ref{assume:weak_assumptions}. 
Let $\dot{X}$ satisfy \eqref{eq:assumption_on_velocity_field}, and let $\Th$ be a simplicial mesh with $\min_{K} \aK > 0$. 
Then, the material derivative $\dotuh \in \spaceVh$ of the FE solution to Problem~\ref{prob:CCBM_weak_form} due to mesh deformation satisfies
\begin{equation}\label{eq:mesh_sensitivity_main_result}
\begin{aligned}
    &c_{1} \cnorm{\nabla \dotuh}_{L^{2}(\varOmega)^{d}} \\
    &\qquad \leqslant \left( c_{2} \norm{\nabla{\sigma}}_{L^{\infty}(\varOmega)^{d}} 
				+ c_{2} c_{P}^{2} \norm{\nabla{\coeffk}}_{L^{\infty}(\varOmega)^{d}}  
				+ c_{P} \norm{\nabla{Q}}_{L^{2}(\varOmega)^{d}}
			\right)  \norm{\dot{X}}_{L^{\infty}(\varOmega)^{d}}\\
    &\qquad  \quad + d \left( c_{P} \norm{{Q}}_{L^{2}(\varOmega)} 
			+ 3 c_{2} \norm{{\sigma}}_{L^{\infty}(\varOmega)} 
			+ c_{2} c_{P}^{2} \norm{{\coeffk}}_{L^{\infty}(\varOmega)}  
	\right) \norm{\nabla \dot{X}}_{L^{\infty}(\varOmega)^{d}} \max_{K} \frac{\hK}{\aK},
\end{aligned}
\end{equation}
where 
\begin{equation}\label{eq:constant_for_continuous_dependence}
	c_{2}=c_{2}({Q},T_{a},h) := c_{0} \left( \norm{Q}_{H^{1}(\varOmega)} + \norm{T_{a}}_{H^{1/2}(\Gtop)} + \norm{h}_{H^{1/2}(\Gtop)} \right).
\end{equation}
\end{theorem}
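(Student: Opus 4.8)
The plan is to test the variational identity~\eqref{eq:material_derivative_fem} for the material derivative $\dotuh$ against $\psi = \dotuh$, use the coercivity of the sesquilinear form $a(\cdot,\cdot)$ on the left-hand side, and bound each of the three groups of terms on the right-hand side by a product of $\cnorm{\nabla\dotuh}_{L^{2}(\varOmega)^{d}}$ with data- and mesh-dependent quantities, so that exactly one power of $\cnorm{\nabla\dotuh}_{L^{2}(\varOmega)^{d}}$ cancels out. Concretely, taking $\psi = \dotuh$ in~\eqref{eq:material_derivative_fem}, passing to real parts, and invoking~\eqref{eq:V_coercivity_of_a} gives $c_{1}\cnorm{\dotuh}_{1,\varOmega}^{2}\leqslant \bigl|\text{RHS of }\eqref{eq:material_derivative_fem}\bigr|$; since $\cnorm{\nabla\dotuh}_{L^{2}(\varOmega)^{d}}\leqslant\cnorm{\dotuh}_{1,\varOmega}$, once the right-hand side is bounded by $(\text{quantities})\cdot\cnorm{\nabla\dotuh}_{L^{2}(\varOmega)^{d}}$ we divide by $\cnorm{\dotuh}_{1,\varOmega}$ to arrive at~\eqref{eq:mesh_sensitivity_main_result}.

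A preliminary ingredient is an a priori bound for the state FE solution $\uh$. Running the Lax--Milgram argument of Proposition~\ref{prop:well_posedness_of_state} at the discrete level (Galerkin orthogonality together with~\eqref{eq:V_coercivity_of_a}) yields $\cnorm{\uh}_{1,\varOmega}\leqslant c_{0}\bigl(\norm{Q}_{H^{-1}(\varOmega)}+\norm{T_{a}}_{H^{-1/2}(\Gtop)}+\norm{h}_{H^{1/2}(\Gtop)}\bigr)$; bounding the negative-order norms by the stronger ones gives $\cnorm{\uh}_{1,\varOmega}\leqslant c_{2}$ with $c_{2}$ as in~\eqref{eq:constant_for_continuous_dependence}. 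In particular $\cnorm{\nabla\uh}_{L^{2}(\varOmega)^{d}}\leqslant c_{2}$ and, by the Poincar\'{e} inequality~\eqref{eq:Poincare_inequality}, $\cnorm{\uh}_{L^{2}(\varOmega)}\leqslant c_{P}c_{2}$.

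The term-by-term estimates then proceed as follows. For the edge-matrix term $\sumint{\sigma\nabla\uh\cdot(\dotEK\edgemat^{-1}+\edgemat^{-\top}\dotEK^{\top})\nabla\conj{\psi}}$, I bound on each $K\in\Th$ the matrix by $\norm{\dotEK\edgemat^{-1}+\edgemat^{-\top}\dotEK^{\top}}_{2}\leqslant 2\norm{\dotEK}_{2}\norm{\edgemat^{-1}}_{2}\leqslant 2d\,(\hK/\aK)\norm{\nabla\dot{X}}_{L^{\infty}(\varOmega)^{d\times d}}$ using Lemma~\ref{lem:estimates_for_edge_matrices} and $\norm{M}_{2}\leqslant\norm{M}_{F}$, pull the uniform factor $2d\max_{K}(\hK/\aK)\norm{\nabla\dot{X}}_{L^{\infty}}$ out of $\sum_{K}$, use $\norm{\sigma}_{L^{\infty}(\varOmega)}$ and the bound on $\cnorm{\nabla\uh}_{L^2}$, and close with Cauchy--Schwarz over $\varOmega$. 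For the group of terms carrying $\nabla\sigma\cdot\dotXh$, $\nabla\coeffk\cdot\dotXh$, $\nabla Q\cdot\dotXh$, I apply Cauchy--Schwarz together with $\norm{\dotXh}_{L^{\infty}(\varOmega)^{d}}\leqslant\norm{\dot{X}}_{L^{\infty}(\varOmega)^{d}}$ from~\eqref{eq:velocity_inequalities}, the $L^{\infty}$-bounds on $\sigma$, and~\eqref{eq:Poincare_inequality} applied to $\uh$ and $\dotuh$; this produces the first line of~\eqref{eq:mesh_sensitivity_main_result} with coefficients $c_{2}\norm{\nabla\sigma}_{L^{\infty}}$, $c_{2}c_{P}^{2}\norm{\nabla\coeffk}_{L^{\infty}}$ and $c_{P}\norm{\nabla Q}_{L^{2}}$. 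For the $(\nabla\cdot\dotXh)$-weighted terms I use $\norm{\nabla\cdot\dotXh}_{L^{\infty}(\varOmega)}\leqslant d\,\norm{\nabla\dot{X}}_{L^{\infty}}\max_{K}(\hK/\aK)$ from~\eqref{eq:velocity_inequalities}, combined as before with the $L^{\infty}$-bounds, $\cnorm{\nabla\uh}_{L^2}\leqslant c_2$, $\cnorm{\uh}_{L^2}\leqslant c_Pc_2$ and~\eqref{eq:Poincare_inequality}. Adding the two $\sigma$-contributions that scale like $\norm{\nabla\dot{X}}_{L^{\infty}}\max_{K}(\hK/\aK)$ — namely $2d$ from the edge-matrix term and $d$ from the divergence term — gives the coefficient $3$ in front of $c_{2}\norm{\sigma}_{L^{\infty}}$ in the second line, and collecting all contributions yields~\eqref{eq:mesh_sensitivity_main_result}. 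The estimate~\eqref{eq:mesh_sensitivity_non_smooth_case} for a nonsmooth mesh velocity follows along identical lines, replacing the use of~\eqref{eq:velocity_inequalities} and~\eqref{eq:edge_matrices_estimates} by the bounds collected in Remark~\ref{rem:non-smooth_velocity}.

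The Cauchy--Schwarz bookkeeping is routine; the points demanding care are (i) performing the elementwise matrix-norm estimate \emph{before} the sum $\sum_{K}$, so that only the geometric factor $\max_{K}(\hK/\aK)$ — finite by the non-degeneracy hypothesis $\min_{K}\aK>0$ — appears rather than an unbounded quantity, (ii) justifying the replacement of $c_{0}(\cdots)$ by $c_{2}$ via the embeddings of the stronger data spaces, and (iii) tracking the numerical constants so that the factor $3$ in front of $c_{2}\norm{\sigma}_{L^{\infty}}$ is correctly reproduced. No further obstacle is anticipated, since the existence and uniqueness of $\dotuh$ are already furnished by Theorem~\ref{thm:material_derivative_equation_fem}.
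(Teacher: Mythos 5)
Your proposal is correct and follows essentially the same route as the paper's proof: test \eqref{eq:material_derivative_fem} with the material derivative itself (the paper writes $\psi=\uh$, evidently a typo for $\psi=\dotuh$), invoke coercivity \eqref{eq:V_coercivity_of_a}, bound the right-hand side via Cauchy--Schwarz, Poincar\'{e}, the a priori bound $\cnorm{\uh}_{1,\varOmega}\leqslant c_{2}$, and then Lemmas~\ref{lem:identities} and \ref{lem:estimates_for_edge_matrices} to produce the $\max_{K}\hK/\aK$ factor and the coefficient $3$ from the $2d+d$ combination. Your explicit discrete Lax--Milgram justification of $\cnorm{\uh}_{1,\varOmega}\leqslant c_{2}$ is a welcome clarification of a step the paper leaves implicit.
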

\begin{proof}
From Theorem~\ref{thm:material_derivative_equation_fem}, we know that $\dotuh \equiv \sum_{i} \dotui(0)\psii \in \spaceVh$.
Choosing $\psi = \uh$ in \eqref{eq:material_derivative_fem} and then taking the real part on both sides, we obtain the following estimates through the Cauchy--Schwarz inequality (applied twice), the Poincar\'{e} inequality \eqref{eq:Poincare_inequality} (also applied twice), inequality \eqref{eq:continuous_dependence} with higher regularity of the data ${Q}$, $T_{a}$, and ${h}$, and the coercivity estimate \eqref{eq:V_coercivity_of_a}:
\begin{align*}
    c_{1} \cnorm{\nabla \dotuh}_{L^{2}(\varOmega)^{d}}
    	&\leqslant c_{2} \norm{{\sigma}}_{L^{\infty}(\varOmega)} \left( 2 \max_{K} \norm{\dotEK \edgemat^{-1}}_{2} 
		+ \norm{\nabla \cdot \dotXh}_{L^{\infty}(\varOmega)}\right)
    		+ c_{2} \norm{\nabla{\sigma}}_{L^{\infty}(\varOmega)^{d}} \norm{\dotXh}_{L^{\infty}(\varOmega)^{d}}  \\
	&\quad + c_{2} c_{P}^{2}(\norm{\nabla{\coeffk}}_{L^{\infty}(\varOmega)^{d}} \norm{\dotXh}_{L^{\infty}(\varOmega)^{d}} 
			+ \norm{{\coeffk}}_{L^{\infty}(\varOmega)} \norm{\nabla \cdot \dotXh}_{L^{\infty}(\varOmega)}) \\
    	&\quad + c_{P}(\norm{\nabla{Q}}_{L^{2}(\varOmega)^{d}} \norm{\dotXh}_{L^{\infty}(\varOmega)^{d}} 
			+ \norm{{Q}}_{L^{2}(\varOmega)} \norm{\nabla \cdot \dotXh}_{L^{\infty}(\varOmega)}),
\end{align*} 
where $c_{2} > 0$ is given in \eqref{eq:constant_for_continuous_dependence}.
By employing Lemma~\ref{lem:identities} and Lemma~\ref{lem:estimates_for_edge_matrices}, and after some rearrangement, we obtain the desired estimate as follows:
\begin{align*}
    c_{1} \cnorm{\nabla \dotuh}_{L^{2}(\varOmega)^{d}}
	&\leqslant \left( c_{2} \norm{\nabla{\sigma}}_{L^{\infty}(\varOmega)^{d}} 
				+ c_{2} c_{P}^{2} \norm{\nabla{\coeffk}}_{L^{\infty}(\varOmega)^{d}}  
				+ c_{P} \norm{\nabla{Q}}_{L^{2}(\varOmega)^{d}}
			\right)  \norm{\dot{X}}_{L^{\infty}(\varOmega)^{d}}\\
    	&\quad + \left( c_{P} \norm{{Q}}_{L^{2}(\varOmega)} 
			+ c_{2} \norm{{\sigma}}_{L^{\infty}(\varOmega)} 
			+ c_{2} c_{P}^{2} \norm{{\coeffk}}_{L^{\infty}(\varOmega)}  
	\right) \norm{\nabla \cdot \dot{X}}_{L^{\infty}(\varOmega)}\\
	&\quad + 2 c_{2} \norm{{\sigma}}_{L^{\infty}(\varOmega)} \max_{K} \norm{\dotEK \edgemat^{-1}}_{2}    \\
	&\leqslant \left( c_{2} \norm{\nabla{\sigma}}_{L^{\infty}(\varOmega)^{d}} 
				+ c_{2} c_{P}^{2} \norm{\nabla{\coeffk}}_{L^{\infty}(\varOmega)^{d}}  
				+ c_{P} \norm{\nabla{Q}}_{L^{2}(\varOmega)^{d}}
			\right)  \norm{\dot{X}}_{L^{\infty}(\varOmega)^{d}}\\
    	&\quad + d \left( c_{P} \norm{{Q}}_{L^{2}(\varOmega)} 
			+ 3 c_{2} \norm{{\sigma}}_{L^{\infty}(\varOmega)} 
			+ c_{2} c_{P}^{2} \norm{{\coeffk}}_{L^{\infty}(\varOmega)}  
	\right) \norm{\nabla \dot{X}}_{L^{\infty}(\varOmega)^{d \times d}} \max_{K} \frac{\hK}{\aK}.
\end{align*}
\end{proof}
\harbrecht{Some remarks on the estimate~\eqref{eq:mesh_sensitivity_main_result} are in order.}
This bound is derived without assuming mesh regularity; the mesh can be isotropic, anisotropic, uniform, or nonuniform, as long as it is simplicial and each element has a positive minimum height (i.e., $\min_{K} \aK > 0$).

For meshes with a large aspect ratio, the term $\max_{K} \hK/\aK$ increases, making the bound more sensitive to $\norm{\nabla \dot{X}}_{L^{2}(\varOmega)^{d \times d}}$. The estimate shows that both the size and the gradient of the mesh velocity field influence the FE solution. 
While the influence of the size is independent of mesh shape, the influence of the gradient depends on the maximum element aspect ratio $\max_{K} \hK/\aK$.

The estimate~\eqref{eq:mesh_sensitivity_main_result} is also independent of any (fictitious) time derivatives. 
Thus, $\dot{X}$ can be interpreted as a mesh displacement field rather than a velocity. 
In shape optimization, mesh displacement corresponds to the perturbation of identity method, while mesh velocity relates to the speed method. 
The homogeneity with respect to time derivatives depends on whether the deformation field is autonomous or not (see \cite{BacaniRabago2015,DelfourZolesio2011}).

Overall, the bound confirms that the FE solution changes only slightly when both $\dot{X}$ and $\nabla \dot{X}$ are small, reflecting its continuous dependence on the mesh.

\begin{remark}
	If the mesh velocity field is not smooth, from Remark~\ref{rem:non-smooth_velocity}, we can replaced \eqref{eq:mesh_sensitivity_main_result} as follows:
	\begin{equation}\label{eq:mesh_sensitivity_non_smooth_case} 
	\begin{aligned}
    c_{1} \cnorm{\nabla \dotuh}_{L^{2}(\varOmega)^{d}}
	&\leqslant \left( c_{2} \norm{\nabla{\sigma}}_{L^{\infty}(\varOmega)^{d}} 
				+ c_{2} c_{P}^{2} \norm{\nabla{\coeffk}}_{L^{\infty}(\varOmega)^{d}}  
				+ c_{P} \norm{\nabla{Q}}_{L^{2}(\varOmega)^{d}}
			\right)  \norm{\dot{X}}_{L^{\infty}(\varOmega)^{d}}\\
    	&\quad + \left( c_{P} \norm{{Q}}_{L^{2}(\varOmega)} 
			+ c_{2} \norm{{\sigma}}_{L^{\infty}(\varOmega)} 
			+ c_{2} c_{P}^{2} \norm{{\coeffk}}_{L^{\infty}(\varOmega)}  
	\right) \dfrac{d+1}{\min_{K} \aK} \norm{\dot{X}}_{L^{\infty}(\varOmega)^{d}}\\
	&\quad + 2 c_{2} \norm{{\sigma}}_{L^{\infty}(\varOmega)} \dfrac{d \sqrt{2} }{\min_{K}  \aK} \norm{\dot{X}}_{L^{\infty}(\varOmega)^{d}}.
\end{aligned}
\end{equation}
\end{remark}
For a given mesh, $\min \aK > 0$ is a fixed value. 
Therefore, inequality \eqref{eq:mesh_sensitivity_non_smooth_case} shows that the FE solution remains continuously dependent on the mesh, even when the mesh velocity field lacks smoothness.
\section{Discretization of the objective function and its gradient} 
\label{sec:discretization_of_the_cost_function}
To numerically solve Problem~\eqref{prob:minimization_problem}, a suitable discretization is required. 
A standard approach involves discretizing the PDE using a finite element space defined on a computational mesh, denoted by $\varOmegah$, where the nodal positions represent the discrete unknown domain. 
A typical choice is to approximate $\Vomega$ by the finite element space of globally continuous, piecewise linear functions, defined as follows:
\begin{equation}\label{eq:space_Vh_Omegah}
    \spaceVhOh = \Vomegah \cap \spacePone, \qquad
    \text{where } \spacePone = \{v \in C(\overline{\varOmega}) \mid \ v|_K \in P_{1}(K), \ \forall K \in \varOmegah \},
\end{equation}
defined over an approximation $\varOmegah = \Th(\varOmega)$ of $\varOmega$.
Consequently, the ``discrete version'' of Problem~\ref{prob:minimization_problem} is given as follows:
\begin{problem}\label{prob:minimization_problem_fem}
Let $\varOmegah$ be an approximation of $\varOmega$ (i.e., it consists of geometrically conforming simplicial elements $K$) and
consider the admissible set of sub-domains $\holdallh$ defined as follows:
	\[
	\holdallh := \left\{ \tumorh \Subset \varOmega_{{\circ}} \ \Big| \begin{array}{l} 
			d(x,\partial\varOmega_{\meshh}) > \alert{\dzero > 0}, \forall x \in \tumorh, \ \healthyh \text{ is connected, and } \\
			\tumorh \text{ is a Lipschitz polygonal/polyhedral domain}
			\end{array} \right\}.
	\]
    Find $\tumor^{\ast} \in \holdallh$ such that
    \[ 
    \tumorh^{\ast} = \operatorname{argmin}_{\tumorh \in \holdallh} \Jh(\tumorh)
    	:= \operatorname{argmin}_{\tumorh \in \holdallh} \frac{1}{2}\intOh{({\imuh})^{2}},
    \]
    where $\imuh = \Im\{\uh\}$, $\uh$ uniquely solves \eqref{eq:weak_form_fem} in $\spaceVhOh$.
\end{problem}
Above, the minimization is understood to be taken over the nodal points in $\varOmegah$.

Notice the distinction between Problem~\ref{prob:minimization_problem_fem} and the continuous case, Problem~\ref{prob:minimization_problem}. 
A key difference is that the admissible domains now have lower regularity. 
As a result, the shape derivative of $\Jh$ cannot be expressed in boundary integral form--that is, in accordance with the Hadamard-Zol\'{e}sio structure theorem \cite[Thm 3.6, p.~479]{DelfourZolesio2011}. 
This occurs because $\varOmegah \notin C^{1,1}$, and thus, even if the data have high regularity, $\uh \notin H^{2}(\varOmegah)$. 
The lack of $H^{2}$ regularity generally prevents transforming the domain integral into a boundary integral via integration by parts; see the last paragraph in \cite[Chap.~10, sec.~5.6, p.~562]{DelfourZolesio2011}. 
For further discussion on this topic, we refer the reader to \cite{EtlingHerzogLoayzaWachsmuth2020}.

Let us derive the discrete version of \eqref{eq:shape_gradient}.
For this purpose, we introduce the discrete adjoint equation corresponding to \eqref{eq:adjoint_equation}:
\begin{problem}\label{prob:adjoint_equation_fem}
Find $\ph \in \spaceVhOh$ such that  
\begin{equation}\label{eq:adjoint_equation_discrete}
    a_{\textsf{adj}}(\ph, \vh)
    = \intOh{\imuh \conj{\vh}}, \quad \forall \vh \in \spaceVhOh.
\end{equation}  
\end{problem}
In Problem~\ref{prob:adjoint_equation_fem}, the sesquilinear form $a_{\textsf{adj}}$ is essentially given by \eqref{eq:adjoint_bilinear_form}, except that the limits of integration are taken over $\varOmegah$ and $\Gtoph$.

Now, directly substituting the state $u$ and adjoint state $p$ with their finite element counterparts $\uh$ and $\ph$ in \eqref{eq:shape_gradient} produces the correct formula for the shape derivative ${d}\Jh (\varOmegah)[\VVh]$ of the discrete objective $\Jh$, provided that the perturbation field $\VVh$ belongs to the admissble space of (discrete) deformation fields
\begin{equation}\label{eq:space_Vh_Omegah}
    \spaceVhOhd = \{\VVh \in H^{1}_{0}(\varOmegah)^{d} \cap C(\overline{\varOmega}_{\meshh})^{d} \mid \ \VVh|_K \in P_{1}(K)^{d}, \ \forall K \in \varOmegah \}.
\end{equation}
\begin{theorem}\label{thm:distributed_shape_gradient_fem}
	Let $\uh \in \spaceVhOh$ and $\ph \in \spaceVhOh$ be the unique solution to Problem~\ref{prob:CCBM_weak_form_fem} and Problem~\ref{prob:adjoint_equation_fem}, respectively.
	Moreover, let $\VVh \in \spaceVhOhd$.
	Then, 
        \begin{equation}\label{eq:shape_gradient_fem}
        \begin{aligned}
            {d}\Jh(\tumor)[\VVh] 
            &= \frac{1}{2} \intOh{\dive{\VVh} {\imu}^{2}} 
        	- \intOh{ \dive{\VVh} \sigma \sum_{j=1}^{d} \left( \partial_{j} \imaginaryh{u} \partial_{j}\realh{p} - \partial_{j}\realh{u} \partial_{j}\imaginaryh{p}\right) }	\\
            	&\qquad + \intOh{\sigma \sum_{m=1}^{d} \sum_{j=1}^{d} \partial_{j}{\VV_{\meshh, m}} \left( \partial_{j} \imaginaryh{u} \partial_{m}\realh{p} - \partial_{j}\realh{u} \partial_{m}\imaginaryh{p}\right) }	\\
            	&\qquad + \intOh{\sigma \sum_{m=1}^{d} \sum_{j=1}^{d} \partial_{j}{\VV_{\meshh, m}} \left( \partial_{m} \imaginaryh{u} \partial_{j}\realh{p} - \partial_{m}\realh{u} \partial_{j}\imaginaryh{p}\right) }	\\
        	&\qquad + \intOh{\dive{\VVh}k (\imaginaryh{u} \realh{p} - \realh{u} \imaginaryh{p}) }
        	+ \alert{\intOh{\dive{\VVh} {Q} \imaginaryh{p}}}.
        \end{aligned}
        \end{equation}
\end{theorem}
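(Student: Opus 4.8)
The plan is to reproduce, in the finite‑dimensional setting, the minimax argument used to prove Theorem~\ref{thm:distributed_shape_gradient}, and to observe that the resulting expression is algebraically identical to \eqref{eq:shape_gradient} with the state $u$ and adjoint $p$ replaced by their finite element counterparts $\uh$ and $\ph$. Concretely, I would introduce the discrete Lagrangian
\[
	L_{\meshh}(\tumorh, \varphi_{\meshh}, \psi_{\meshh}) := \Jh(\tumorh, \varphi_{\meshh}) + \Im\{l(\psi_{\meshh}) - a(\varphi_{\meshh}, \psi_{\meshh})\}, \qquad \varphi_{\meshh}, \psi_{\meshh} \in \spaceVhOh,
\]
with $a$ and $l$ as in \eqref{eq:forms} but integrated over $\varOmegah$ and $\Gtoph$. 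Exactly as in the continuous case, $\sup_{\psi_{\meshh}} L_{\meshh}(\tumorh, \varphi_{\meshh}, \psi_{\meshh})$ equals $\Jh(\tumorh, \uh)$ when $\varphi_{\meshh} = \uh$ solves \eqref{eq:weak_form_fem} and $+\infty$ otherwise, so $\Jh(\tumorh) = \min_{\varphi_{\meshh}} \sup_{\psi_{\meshh}} L_{\meshh}$; by the same convexity/concavity argument (Ekeland--Temam) the unique saddle point of $L_{\meshh}$ is $(\uh, \ph)$, with $\ph$ solving the discrete adjoint equation \eqref{eq:adjoint_equation_discrete}.

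The next step is to differentiate along the mesh perturbation $T_{t} = \idmat + t\VVh$ with $\VVh \in \spaceVhOhd$. Since $\VVh$ is piecewise linear, $T_{t}$ is piecewise affine and, for $t$ small, a bijection of $\varOmegah$ onto a geometrically conforming triangulation $\varOmegah(t)$ with non‑inverted elements and with $\partial\varOmegah(t) = \partial\varOmegah$, because $\VVh = 0$ on $\partial\varOmegah$. Composition with $T_{t}^{-1}$ then identifies $\spaceVhOh$ on the reference mesh with the $P_{1}$ space on $\varOmegah(t)$, so, writing $\Lag_{\meshh}(t, \varphi_{\meshh}, \psi_{\meshh}) := L_{\meshh}(\varOmegah(t), \varphi_{\meshh} \circ T_{t}^{-1}, \psi_{\meshh} \circ T_{t}^{-1})$ and performing the element‑by‑element change of variables (producing the Jacobian factors $\dett$, $\At$, and $\bt = 1$ on $\Gtoph$ exactly as in the proof of Theorem~\ref{thm:distributed_shape_gradient}), one obtains for $\Lag_{\meshh}$ the same integrand as for $\Lag$ there, now with $\varphi_{\meshh}, \psi_{\meshh} \in \spaceVhOh$ and the integrals taken over $\varOmegah$. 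In particular the pulled‑back coefficients coincide with $\sigma$, $\coeffk$, $Q$, since these are piecewise constant and $T_{t}$ maps $\tumorh$ onto its image.

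Finally, I would differentiate $\Lag_{\meshh}(t, \uh, \ph)$ at $t = 0$. The finite‑dimensionality trivializes the hypotheses of the Correa--Seeger theorem (Theorem~\ref{thm:Correa_Seeger}): the discrete saddle point solves a linear system whose entries depend smoothly on $t$ near $0$, so the material derivative $\dotuh \in \spaceVhOh$ exists (cf.\ Theorem~\ref{thm:material_derivative_equation_fem}), and since $\dotuh$ is itself an admissible test function in $\spaceVhOh$, the adjoint‑equation contribution in $\partial_{t}\Lag_{\meshh}$ vanishes, leaving ${d}\Jh(\tumorh)[\VVh] = \partial_{t}\Lag_{\meshh}(t, \uh, \ph)|_{t=0}$. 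Taking the imaginary part of the bracketed expression and discarding the terms carrying $\nabla\sigma$, $\nabla\coeffk$, $\nabla Q$ (which vanish for piecewise constant data) yields exactly \eqref{eq:shape_gradient_fem}. The only genuinely delicate point is the bookkeeping in the change of variables showing that $\Lag_{\meshh}$ has the same algebraic form as $\Lag$ — equivalently, that substituting $\uh, \ph$ into \eqref{eq:shape_gradient} commutes with discretization — which rests on the piecewise‑affine structure of $T_{t}$ and on $\varOmegah(t)$ remaining an admissible mesh for small $t$; the reduced ($C^{0,1}$ rather than $C^{1,1}$) regularity of $\varOmegah$ is harmless here precisely because we use the volume (distributed) form, which, unlike the Hadamard boundary form, requires no integration by parts and hence no $H^{2}$ regularity of $\uh$.
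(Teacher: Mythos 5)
Your proposal is correct and takes essentially the same approach the paper intends: the paper omits the proof of this theorem, stating only that it is ``similar to the continuous case,'' and your argument is exactly that continuous-case minimax/Correa--Seeger derivation transplanted to the discrete spaces $\spaceVhOh$ and $\spaceVhOhd$. Your added observations---that the piecewise-affine structure of $T_{t}=\idmat+t\VVh$ keeps $\varOmegah(t)$ an admissible conforming mesh for small $t$, and that the distributed (volume) form requires no integration by parts and hence no $H^{2}$ regularity of $\uh$---are consistent with the paper's own remarks in Section~\ref{sec:discretization_of_the_cost_function} and correctly identify the only points where the discrete setting could have caused trouble.
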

The proof of this theorem is similar to the continuous case, so we omit it.
\begin{remark}
Theorem~\ref{thm:distributed_shape_gradient} and Theorem~\ref{thm:distributed_shape_gradient_fem} are equivalent in terms of obtaining the shape derivative. 
Additionally, Theorem~\ref{thm:distributed_shape_gradient_fem} remains true when higher-order Lagrangian finite elements on simplices are used instead of $\spaceVhOhd$. 
However, it is important that $\VVh$ remains piecewise linear so that piecewise polynomials are transformed into piecewise polynomials of the same order. 
Note that this restriction implicitly implies the need to regularize $\VVh$ once the property of being piecewise linear is lost.
\end{remark}
\section{Numerical algorithm and examples} 
\label{sec:numerical_algorithm_and_examples}
This section outlines the numerical implementation of our approach and presents simulations using the programming software \textsc{FreeFem++} \cite{Hecht2012} to demonstrate the algorithm's performance. 
We first address the forward problem and the choice of regularization in the inversion procedure. 
\alert{Based on the study by \cite{Byrdetal2020}, we consider tumors with a circular/elliptical shape (in the 2D case) or an ellipsoidal shape (in the 3D case).}
\subsection{Forward problem}\label{subsec:forward_problem}
The computational setup is as follows:  
In the forward problem, all known parameters, including the exact geometry of the tumor, are specified.  
In all cases, we define $\varOmega$ as a rectangle of dimensions $0.09 \, \text{(m)} \times 0.03 \, \text{(m)}$, and we assume the following thermal physiological parameters based on \cite{AgnelliPadraTurner2011,DengLiu2004,ParuchMajchrzak2007}:  
\begin{align*}
\sigma_1 &= 0.5 \, (\text{W/m} \, {}^{\circ}{\text{C}}),  &&\sigma_0 = 0.75 \, (\text{W/m} \,{}^{\circ}{\text{C}}), &&T_{b} = 37{}^{\circ}{\text{C}}, \\
k_1 &= 1998.1 \, (\text{W/m}^3 \, {}^{\circ}{\text{C}}), && k_0 = 7992.4 \, (\text{W/m}^3 \, {}^{\circ}{\text{C}}), && T_a = 25{}^{\circ}{\text{C}}, \\
Q_1 &= 4200 \, (\text{W/m}^3), && Q_0 = 42000 \, (\text{W/m}^3), && \alpha = 10 \, (\text{W/m}^2 \, {}^{\circ}{\text{C}}). 
\end{align*}

The single observed measurement $h$ is generated synthetically by solving the direct problem \eqref{eq:main_equation}.  
To prevent \textit{inverse crimes} \cite[p.~179]{ColtonKress2019}, the forward problem is solved with a fine mesh and $P_2$ basis functions, while inversion uses a coarser mesh and $P_1$ basis functions. 
Gaussian noise with zero mean and standard deviation ${\harbrecht{\delta}} \norm{h}_\infty$, where ${\harbrecht{\delta}}$ is a free parameter, is added to $h$ to simulate noise.

\subsection{Regularization strategy and choice of regularization parameter value}\label{subsec:balancing_principle}
To deal with noisy measurement in our experiment, we will consider the volume functional $\op{Vol}(\tumor) = \rho \int_{\tumor}{1}\, dx$, where $\rho$ is a small positive constant. 
In this case, selecting the regularization parameter $\rho$ is crucial in the reconstruction process. 
This parameter can be determined using the discrepancy principle, which requires accurate knowledge of the noise level. 
However, reliable noise-level information is often unavailable, and errors in noise estimation can reduce reconstruction accuracy using the discrepancy principle.
To address this, we propose a heuristic rule for selecting $\rho$ that does not rely on noise-level information. 
This rule is based on the balancing principle \cite{ClasonJinKunisch2010b}, originally developed for parameter identification. 
We apply this technique for the first time in shape optimization. 
The idea is simple: fix $\beta > 1$ and compute $\rho > 0$ such that
\begin{equation}\label{eq:balancing_principle}
    (\beta - 1) J(\tumor) - \op{Vol}(\tumor) 
    := (\beta - 1) \frac{1}{2}\intO{({\imu})^{2}} - \rho \int_{\tumor}{1}\, dx = 0.
\end{equation}
This balances the data-fitting term $J(\tumor)$ with the penalty term $\op{Vol}(\tumor)$, where $\beta > 1$ controls the trade-off. 
It eliminates the need for noise-level knowledge and has been successfully applied to linear and nonlinear inverse problems \cite{ClasonJinKunisch2010, ClasonJinKunisch2010b, Clason2012, ClasonJin2012, ItoJinTakeuchi2011, Meftahi2021}.
\harbrecht{In this work, we initially set $\beta = 2$ if the radius of the tumor is accurately estimated before localization. Otherwise, we simply fix $\rho > 0$ to a sufficiently small value.}

The motivation for using the abovementioned regularization strategy is to address the difficulty of recovering small and deep tumors.  
Incorporating the shape derivative of the weighted volume functional,  $d\op{Vol}(\tumor)[\VV] = \rho \int_{\tumor} \dive{\VV} \, dx$, in computing the deformation field helps prevent excessive overshooting of the approximate shape.  
Its effectiveness is demonstrated in subsections~\ref{subsec:test_2} and \ref{subsec:test_3d}.
%
%
%
\subsection{Numerical algorithm}
\label{subsec:Numerical_Algorithm}
The main steps of our numerical algorithm follows a standard approximation procedure (see, e.g., \cite{RabagoAzegami2018}), the important details of which we provide as follows.

The tumor shape is approximated by using a domain variation technique implemented with the finite element method, similar to shape optimization methods \cite{Azegami1994,Azegami2020,Doganetal2007}. 
To prevent unwanted oscillations at the unknown interface, we use an $H^{1}$ Riesz' representative of the distributed shape gradient, a common approach in shape optimization \cite{Azegami1994,Azegami2020}, which prevents instability in the approximation process.
This also allows us to obtain the velocity of each nodes in the finite element mesh to realized the domain variation. 

To compute an $H^{1}$ Riesz' representative of the distributed shape gradient, we compute $\VV \in H_{0}^1(\varOmega)^{d}$ by solving the following variational equation 
\begin{equation}\label{eq:extension_regularization}
	\vect{b}( \VV , \vect{\varphi} ) = -dJ(\tumor)[\vect{\varphi}], \quad \forall \vect{\varphi} \in H_{0}^1(\varOmega)^{d},
\end{equation}
where $\vect{b}:H_{0}^1(\varOmega)^{d} \times H_{0}^1(\varOmega)^{d} \to \mathbb{R}$ denotes the following bounded and $H_{0}^{1}(\varOmega)$-coercive bilinear form
\begin{equation}\label{eq:bilinear_form_for_regularization}
	\vect{b}( \VV , \vect{\varphi} ) := c_{b} \intO{ \left( \nabla{\VV} : \nabla{\vect{\varphi}} + \VV \cdot \vect{\varphi} \right) }
		+ (1-c_{b}) \int_{\partial\tumor}{ \nabla_{\tau} {\VV} : \nabla_{\tau} {\vect{\varphi}} }\, ds,
\end{equation}
where $c_{b} \in (0,1]$ is a free parameter \cite{DapognyFreyOmnesPrivat2018}.

\harbrecht{In subsections~\ref{subsec:test_1} and \ref{subsec:test_2}, the parameter $c_{b}$ is fixed at 0.5, while larger values are used when testing tumors with non-elliptical shapes.
We point out that smaller values of $c_{b}$ strongly restrict changes in the volume of the initial guess. In contrast, values of $c_{b}$ close to one allow the shape of the initial guess to deform more freely.}

\alert{Equation~\eqref{eq:extension_regularization}} provides a Sobolev gradient \cite{Neuberger1997} representation of $-dJ(\tumor)$.
In \eqref{eq:bilinear_form_for_regularization}, the boundary integral enforces additional regularity of the deformation field on the boundary interface $\partial\tumor$; see, e.g., \cite{DapognyFreyOmnesPrivat2018,Doganetal2007,RabagoAfraitesNotsu2025}.
For more discussion on discrete gradient flows in shape optimization, we refer readers to \cite{Doganetal2007}.
%
%
%

To compute the $k$th boundary interface $\partial\tumor^{k}$, we carry out the following procedures:
\begin{description}
\setlength{\itemsep}{2pt}
	\item[1. \it{Initilization}] Fix the maximum number of iterations $K$, $\rho \in (0,1)$ (or $\beta > 1$) and choose an initial guess $\partial\omega^{0}$.
	\item[2. \it{Iteration}] For $k = 0, 1, \ldots, K$ , do the following:
		\begin{enumerate}
			\item[2.1] Solve the state's and adjoint's variational equations on the current domain $\varOmega^{k}$.
			\item[2.2] Choose $t>0$, and compute the deformation vector $\VV^{k}$ using \eqref{eq:extension_regularization} in $\varOmega^{k}$.
			\item[2.3] Update the current domain by setting $\varOmega^{k+1} := \{ x + t^{k} \VV^{k}(x) \in \mathbb{R}^{d} \mid  x \in \varOmega^{k}\}$.		
		\end{enumerate}
	\item[3. \it{Stop Test}] Repeat \textit{Iteration} until convergence.
\end{description}
In Step 2.2, $t^{k}$ is computed using a backtracking line search inspired by \cite[p.~281]{RabagoAzegami2020}, with the formula $t^{k} = s J({\omega}^{k})/\sqrt{\vect{b}( \VV^{k}, \VV^{k} )}$ at each iteration step $k$, where $s > 0$ is a scaling factor. 
While the calculation of the step size can be refined, in our experience, the above simple approach already yields effective results. 
To prevent inverted triangles in the mesh after the update, the step size $t^{k}$ is further reduced. 
The algorithm also terminates when $t^{k} < 10^{-8}$.

\alert{We provide some remarks on the current algorithm.
In numerical shape optimization, \textit{adaptive} FEMs are commonly used for space discretization, aiding in mean curvature computation and local mesh refinement \cite{Doganetal2007}.  
Curvature expressions naturally arise in perimeter or surface measure penalization, as their shape derivatives involve mean curvature.  
To prevent mesh degeneration and excessive stretching, we remesh instead of refining adaptively--every ten steps for 2D problems and every five for 3D.  

The bilinear form $b$ defined by \eqref{eq:bilinear_form_for_regularization} is sufficient for recovering the unknown inclusion (i.e., the tumor shape $\tumor$). 
Alternatively, it can be chosen as the elasticity operator (see, e.g., \cite{Azegami1994,Azegami1996,DapognyFreyOmnesPrivat2018,EtlingHerzogLoayzaWachsmuth2020}).  
This approach, well known in optimal shape design \cite{Azegami1994,Azegami1996,Azegami2020}, helps maintain mesh quality after deformation. 
It is based on the intuition that elastic displacements minimize compression (i.e., local volume changes) \cite{DapognyFreyOmnesPrivat2018}.  

The variational equation \eqref{eq:extension_regularization} can also be analyzed in a discrete setting. 
Although important, we omit this analysis--along with the comparison of volume and boundary shape gradients (see Remark~\ref{rem:volume_versus_boundary_expression_for_the_shape_gradient})--to avoid making the study too extensive.
For further discussion, we refer interested readers to \cite{Doganetal2007,EtlingHerzogLoayzaWachsmuth2020}, which examine discrete gradient flows in shape optimization.  

Additionally, a restricted mesh deformation procedure was developed in \cite{EtlingHerzogLoayzaWachsmuth2020} to prevent mesh deterioration or invalidation due to interior nodes penetrating neighboring cells. We refer readers to that work for more details.  
}
\harbrecht{
\begin{remark}[On adaptive refinement]
The algorithm used in this work to approximate the target shape can be modified to incorporate an adaptive strategy that reduces the error in the objective function using \textit{a posteriori} error estimates, as in \cite{PadraSalva2013}; see Remark~\ref{rem:criterion_for_adaptive_strategy} for a criterion underlying a suitable adaptive approach.
However, we find that remeshing after a fixed number of time steps already yields a reasonable identification of the tumor.  
Nonetheless, we include in Appendix~\ref{appx:a_posteriori_error_estimations} a discussion on a posteriori error estimation based on \cite{PadraSalva2013} for the current formulation.
\end{remark}
}
%
%
%
%
%
%
\subsection{Mesh sensitivity: numerical examples}\label{subsec:mesh_sensitivity_numerical_examples}
\alert{Here, we present numerical experiments to support the results established in subsection~\ref{subsec:mesh_sensitivity_analysis_for_the_finite_element_solution}. 
The main observation is that for a smooth velocity field, stability is independent of the mesh, while for a non-smooth velocity field, it becomes mesh-dependent. 
}

The results of the numerical investigation on mesh sensitivity are summarized in Figure~\ref{fig:mesh_sensitivity_analysis_2D} and Figure~\ref{fig:mesh_sensitivity_analysis_3D} for the 2D and 3D cases, respectively. 
These figures correspond to the numerical experiments conducted in subsections~\ref{subsec:test_1}--\ref{subsec:test_3d}.
The figures illustrate the error convergence behavior for exact and noisy measurements. 
When exact measurements are used, the deformation remains smooth, leading to a convergence rate that is independent of the mesh size. 
In contrast, under noisy measurements, the deformation becomes non-smooth, and the convergence behavior strongly depends on the mesh resolution.
These corroborate the results established in Theorem~\ref{thm:mesh_sensitivity_main_result} and in equation~\eqref{eq:mesh_sensitivity_non_smooth_case}.
Additionally, the presence of noise introduces instability, as seen in the increased values in the error plots. 
This highlights the essential role of measurement accuracy in maintaining robust and consistent numerical convergence. 
If the measurements are noisy and cannot be improved, we can only resort to using regularization methods to achieve at least a stable approximation. 
These methods help reduce the impact of noise and provide more reliable numerical results.
\begin{figure}[htp!]
\centering
\hfill
\resizebox{0.24\textwidth}{!}{\includegraphics{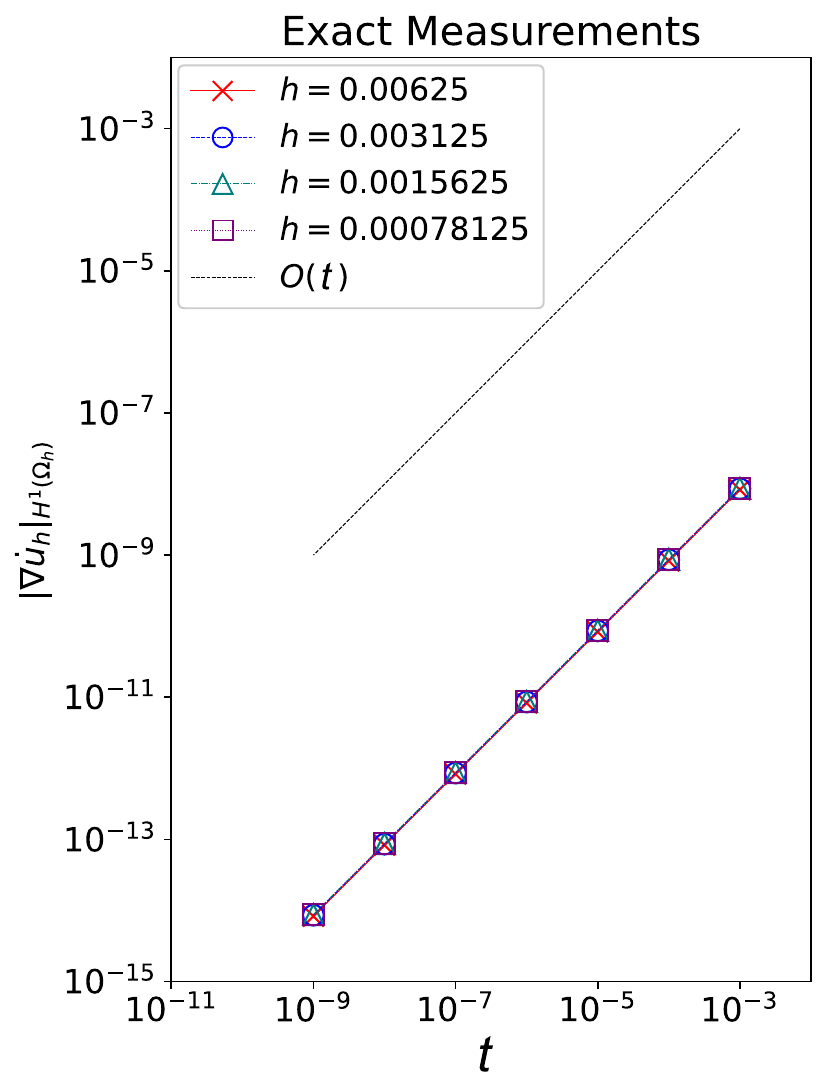}} \hfill 
\resizebox{0.24\textwidth}{!}{\includegraphics{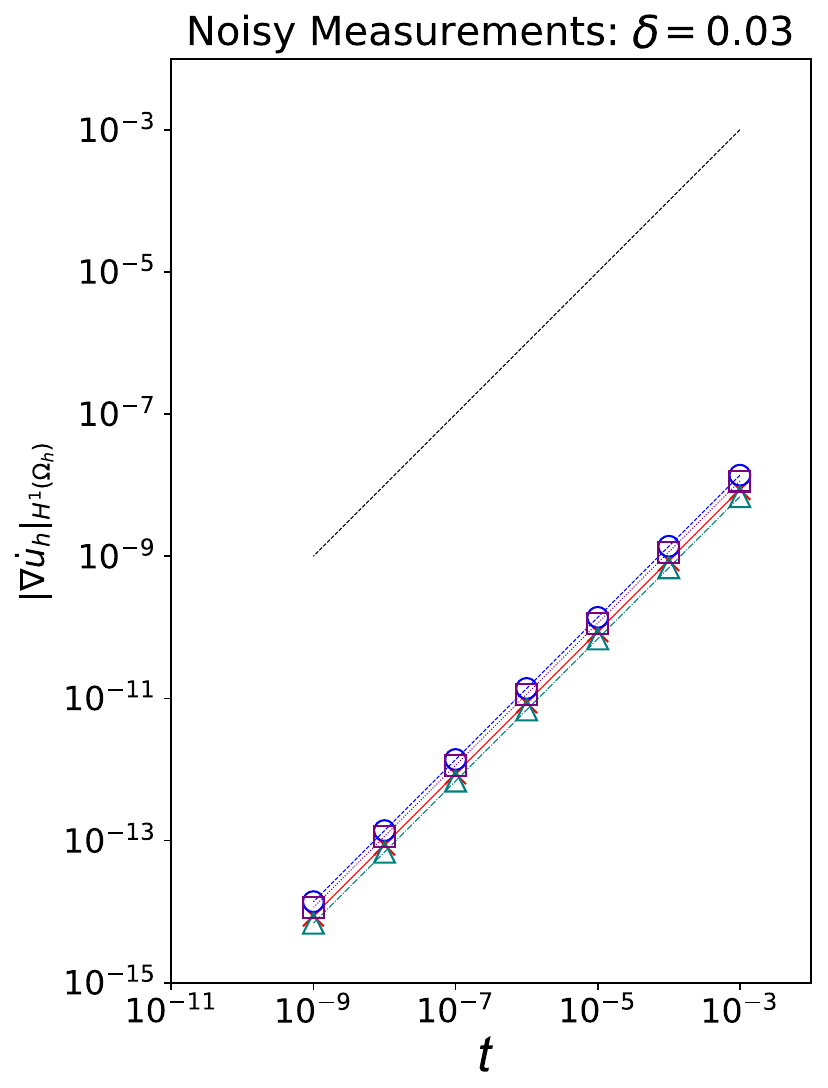}} \hfill 
\resizebox{0.24\textwidth}{!}{\includegraphics{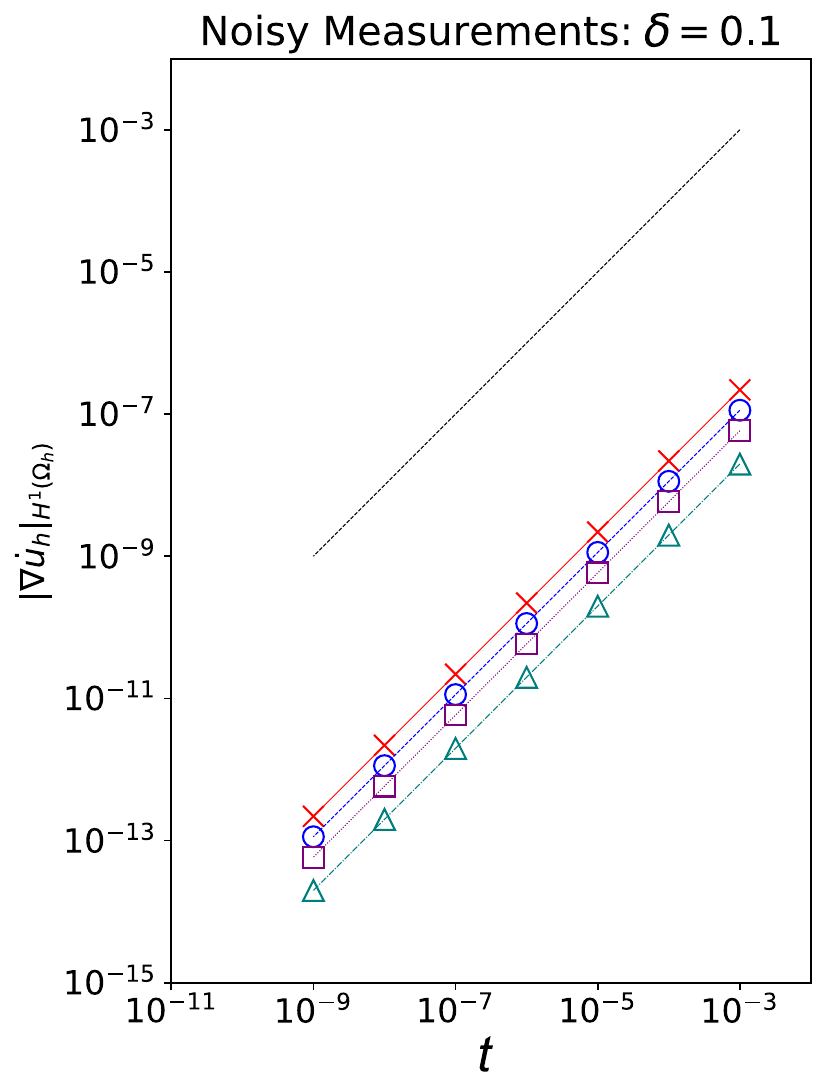}} \hfill 
\resizebox{0.24\textwidth}{!}{\includegraphics{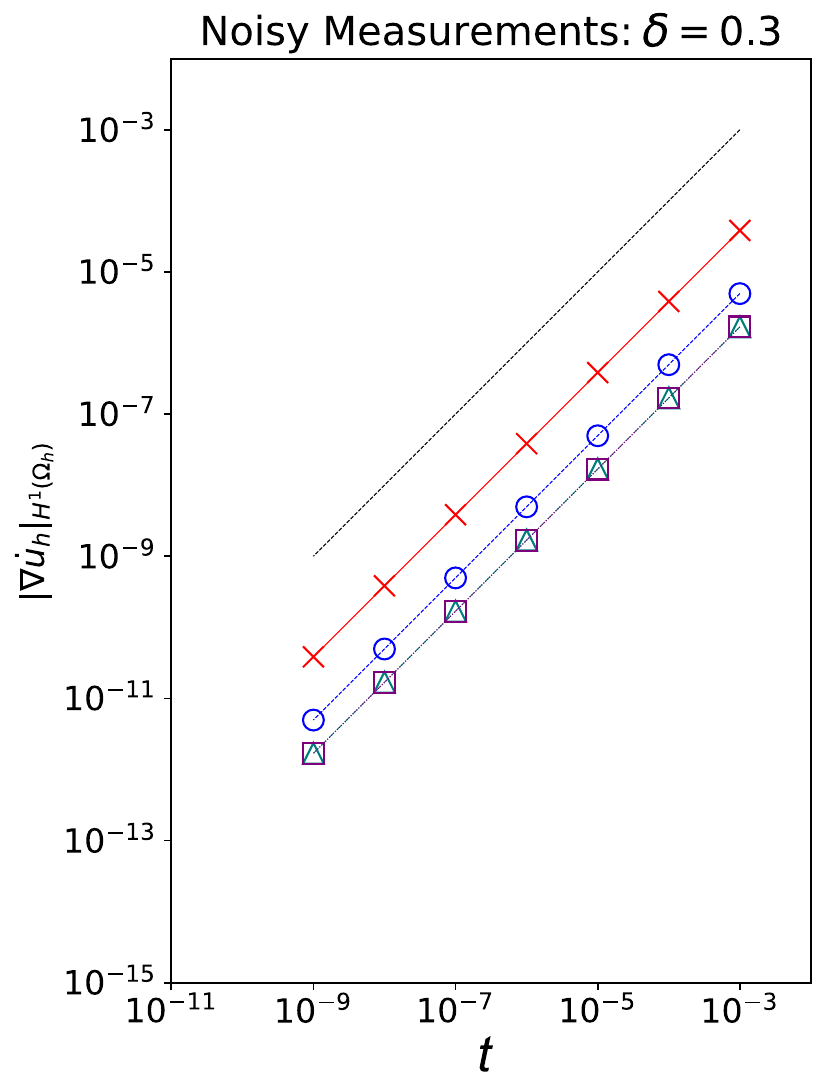}} \hfill 
\caption{Error-of-convergences with exact and noisy measurements for 2D test case}
\label{fig:mesh_sensitivity_analysis_2D}
\end{figure}
\begin{figure}[htp!]
\centering
\hfill
\resizebox{0.24\textwidth}{!}{\includegraphics{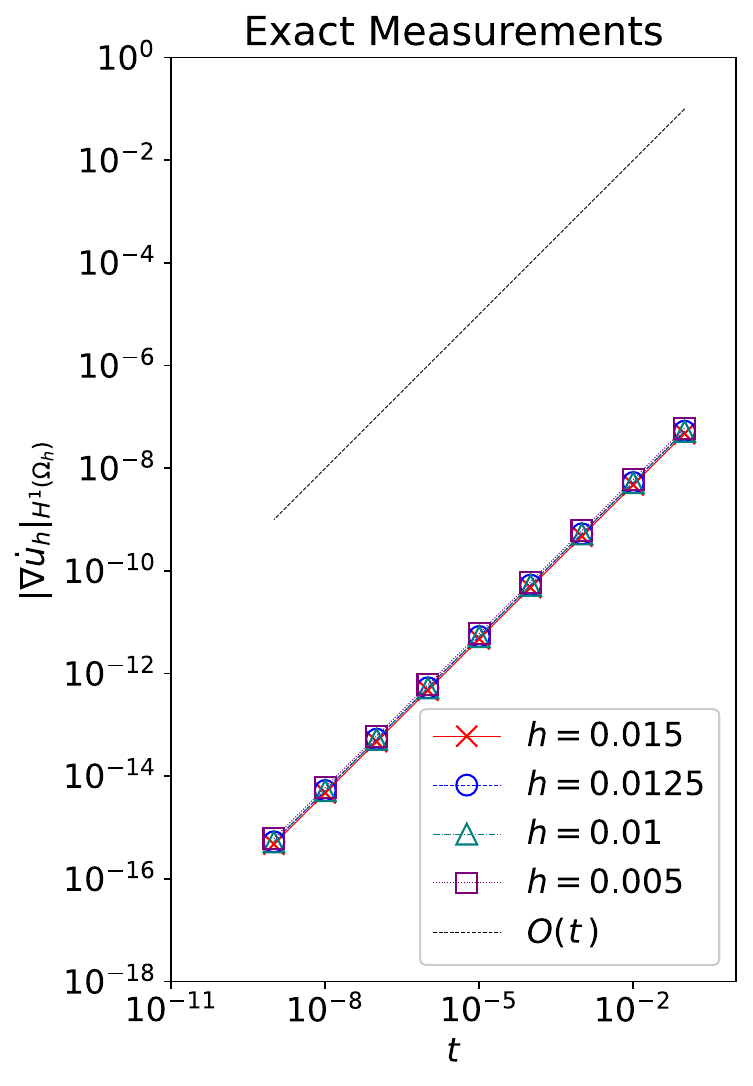}} \hfill 
\resizebox{0.24\textwidth}{!}{\includegraphics{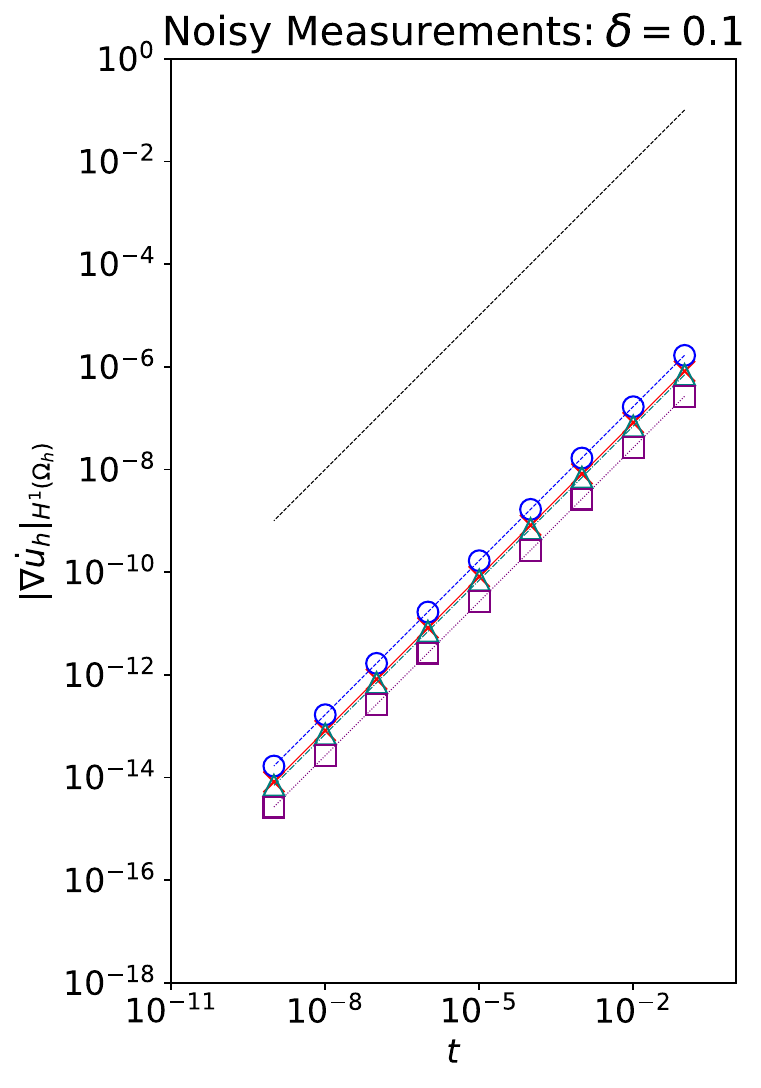}} \hfill 
\resizebox{0.24\textwidth}{!}{\includegraphics{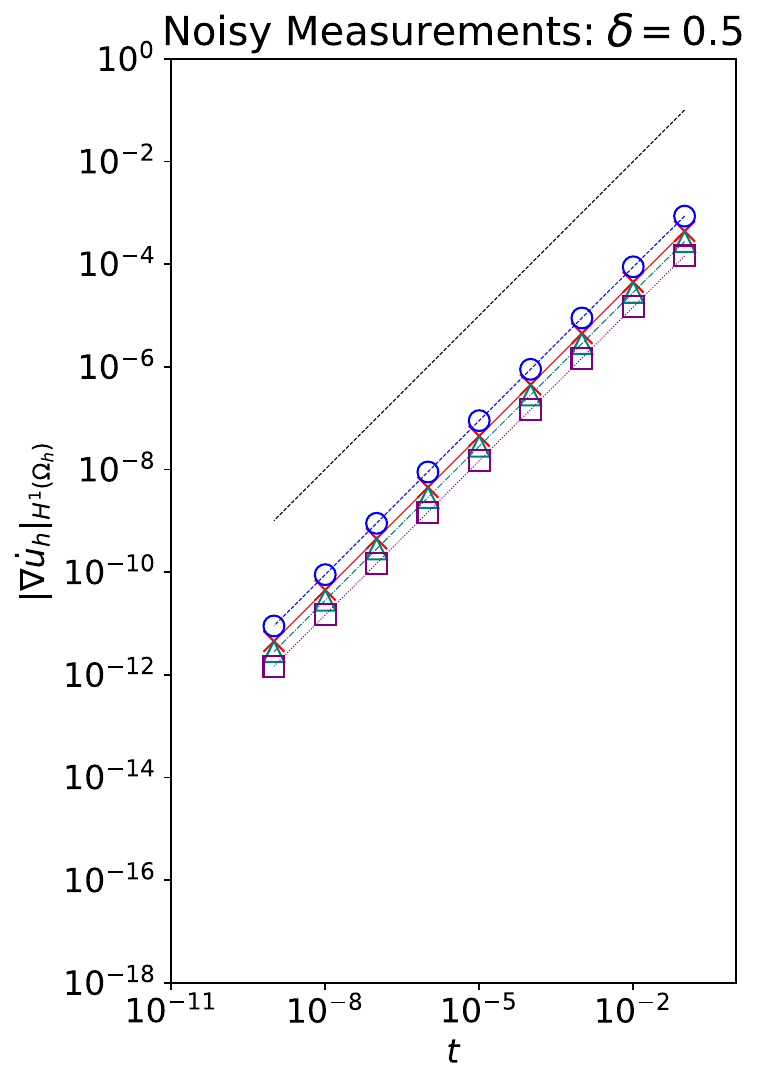}} \hfill 
\resizebox{0.24\textwidth}{!}{\includegraphics{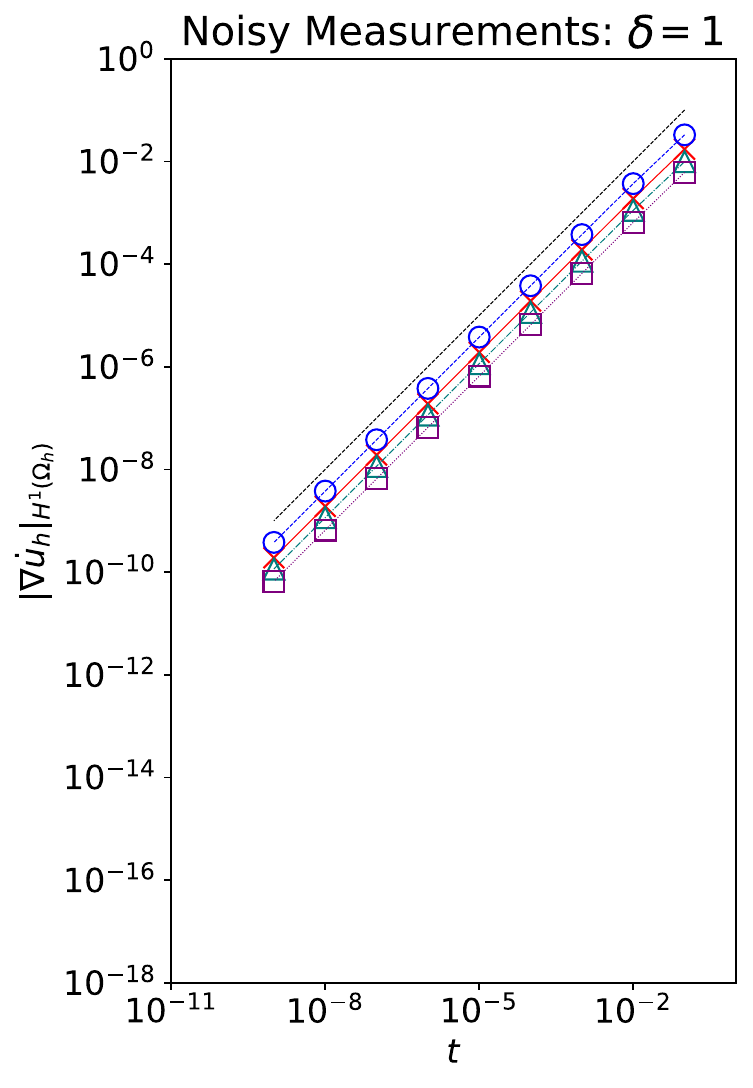}} \hfill 
\caption{Error-of-convergences with exact and noisy measurements for 3D test case}
\label{fig:mesh_sensitivity_analysis_3D}
\end{figure}

We also examine the effect of the free parameter $c_{b}$, which appears in \eqref{eq:bilinear_form_for_regularization}, on the stability of the material derivative with respect to mesh deformation.  
Figure~\ref{fig:mesh_sensitivity_analysis_2D_with_respect_to_regularization} illustrates how different values of this parameter influence error convergence in computing the Riesz representative of the shape gradient in the 2D case.  
The plots show that with exact measurements, all values of $c_{b}$ exhibit similar convergence trends. 
However, when noise is introduced, larger values of $c_{b}$ lead to smoother and more stable error decay, indicating that the computed deformation fields remain more regular across the mesh. 
In contrast, smaller values of $c_{b}$ make the method more sensitive to noise, resulting in erratic error curves and less stable deformations, as expected. 
While a higher $c_{b}$ enhances robustness and prevents excessive local deformations, an excessively large value may over-regularize the deformation, potentially obscuring finer details.  
Thus, selecting an appropriate $c_{b}$ is essential for balancing smoothness and sensitivity, ensuring stability under noise while preserving accuracy in shape gradient representation.
In conclusion, for low noise levels ($<10\%$), the scheme remains generally stable across all tested mesh sizes for $c_{b} > 10^{-5}$. 
Specifically, step sizes of $t \leqslant \varepsilon_{1}$ ($< 10^{-3}$) result in $\norm{\nabla{\dot{u}}}_{\LL^{2}(\varOmega)} < \varepsilon_{1}$.

Although not shown, we emphasize that similar convergence behavior is observed in the 3D case.
We fix $c_{b} = 0.5$ in subsections~\ref{subsec:test_1}--\ref{subsec:test_3d}.

\begin{figure}[htp!]
\centering
\hfill 
\resizebox{0.24\textwidth}{!}{\includegraphics{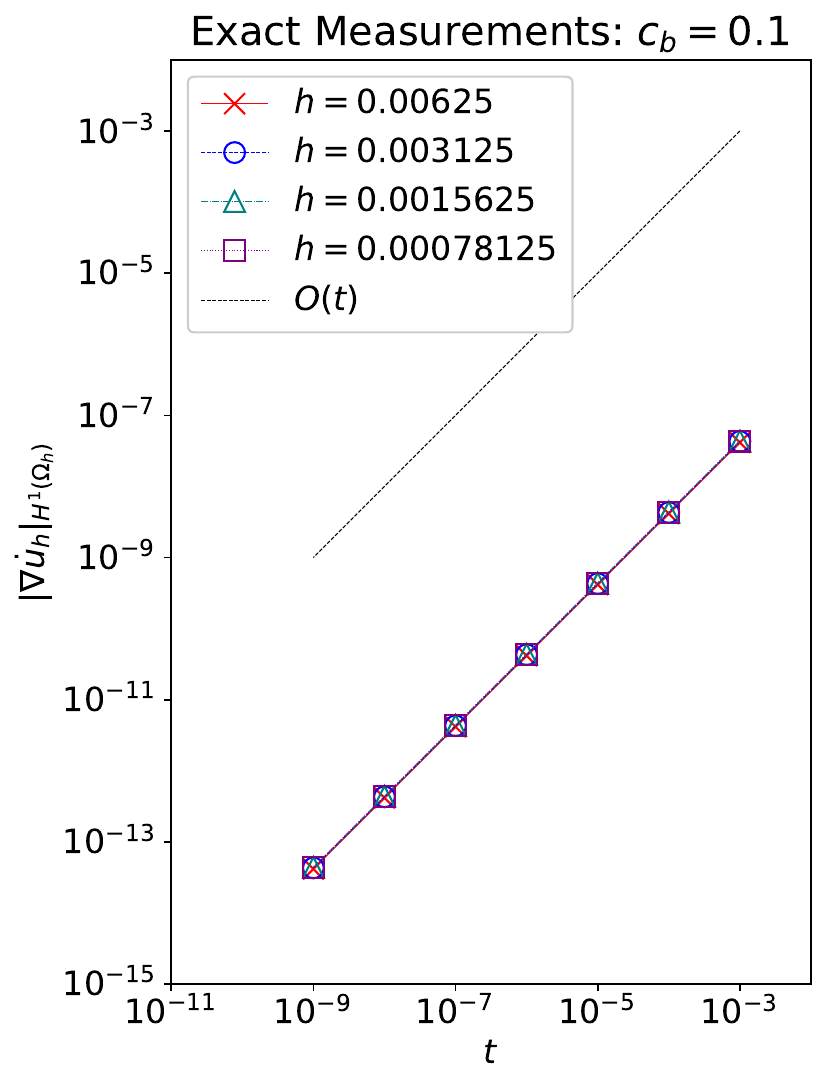}} \hfill
\resizebox{0.24\textwidth}{!}{\includegraphics{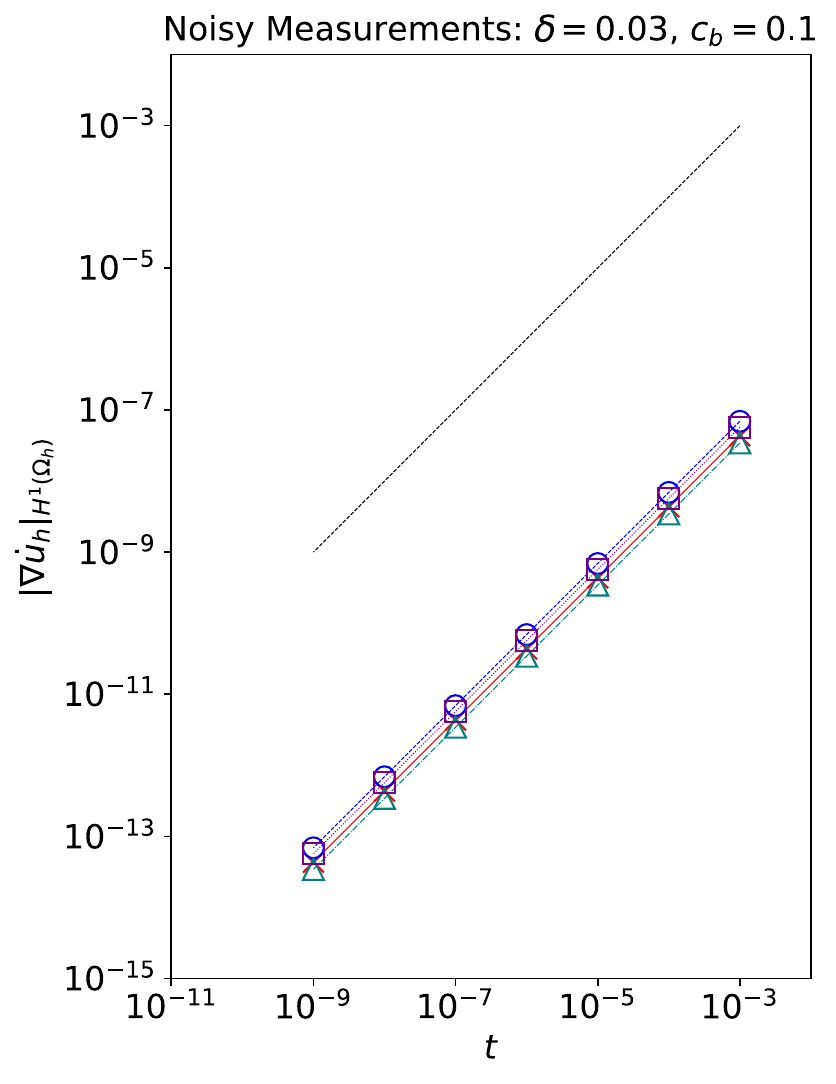}} \hfill
\resizebox{0.24\textwidth}{!}{\includegraphics{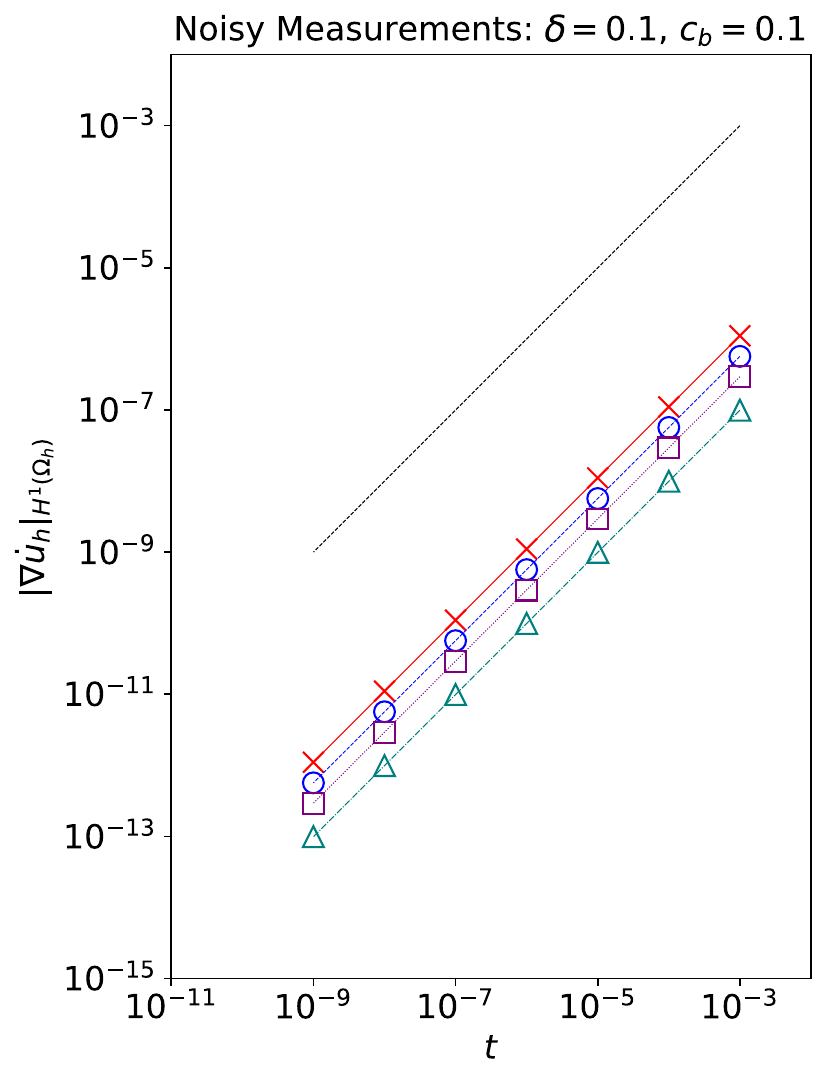}} \hfill
\resizebox{0.24\textwidth}{!}{\includegraphics{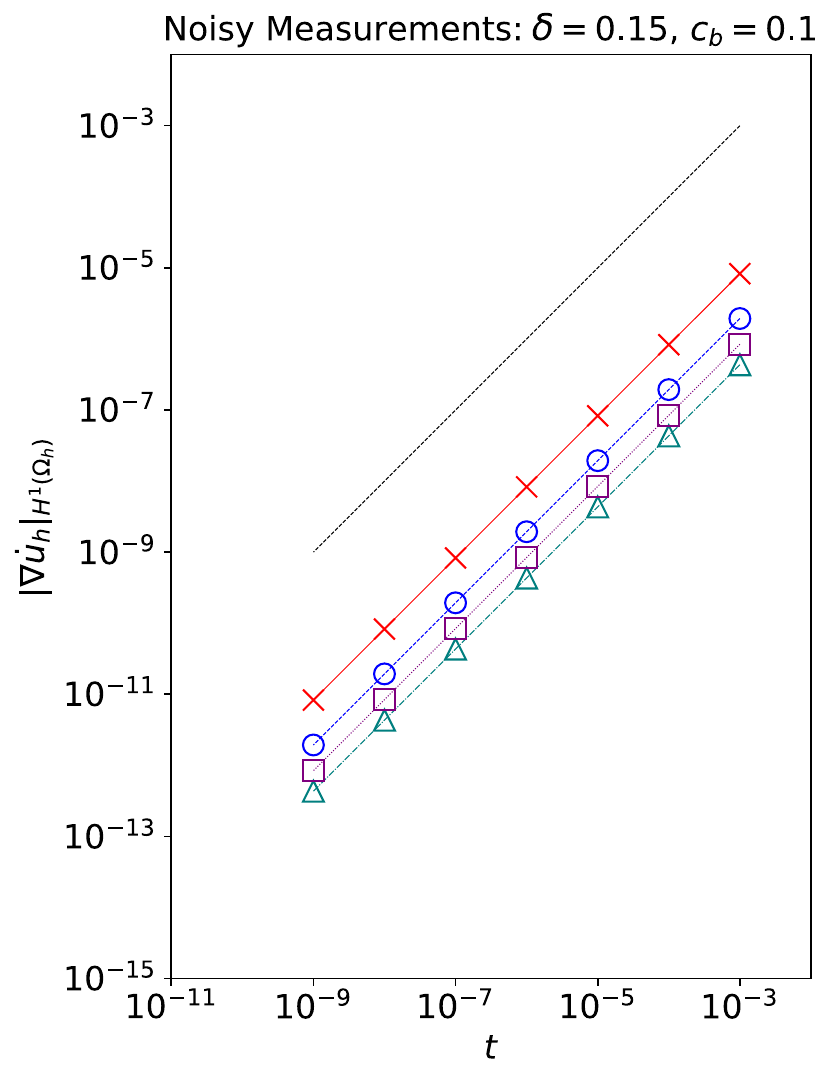}} \\[0.5em] 
\resizebox{0.24\textwidth}{!}{\includegraphics{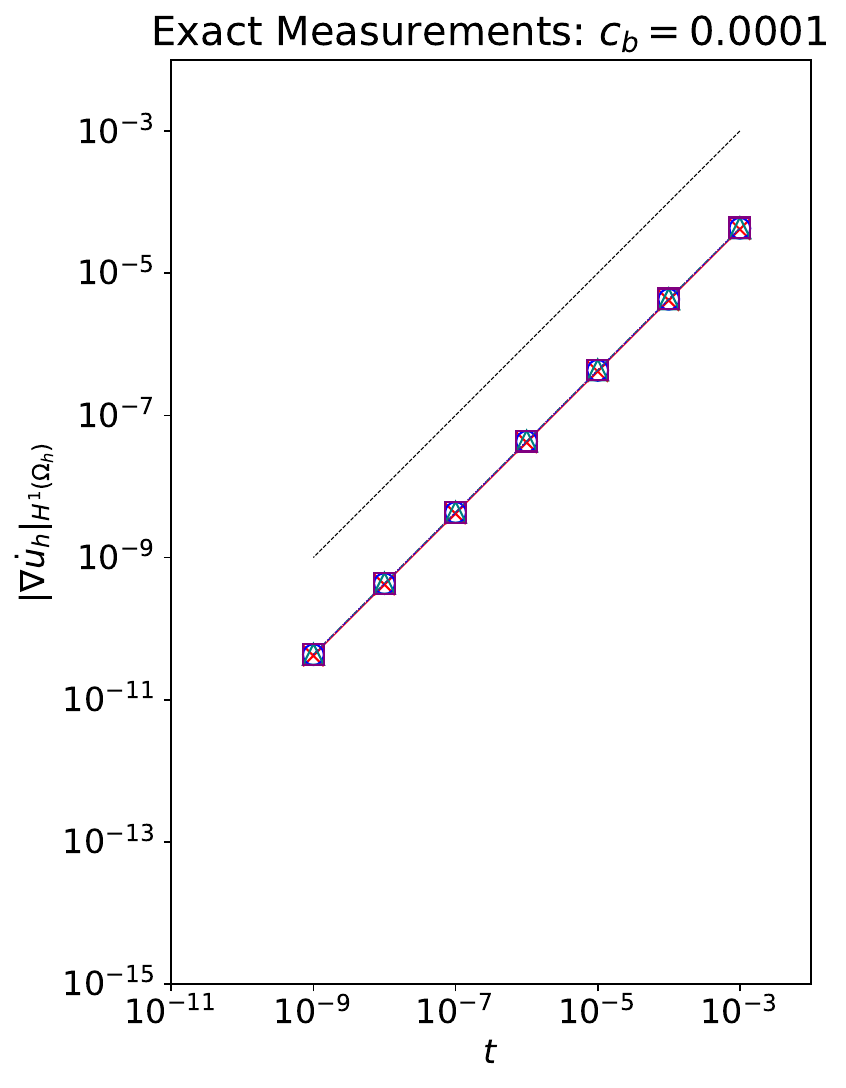}} \hfill
\resizebox{0.24\textwidth}{!}{\includegraphics{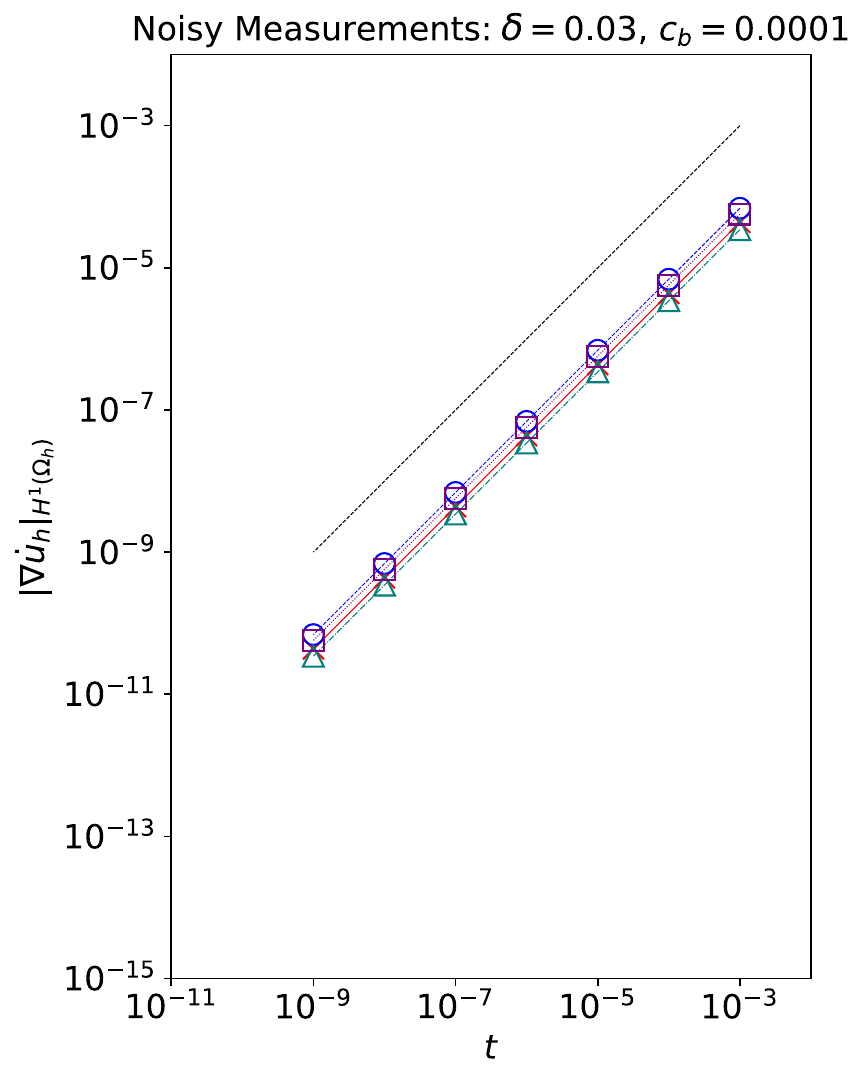}} \hfill
\resizebox{0.24\textwidth}{!}{\includegraphics{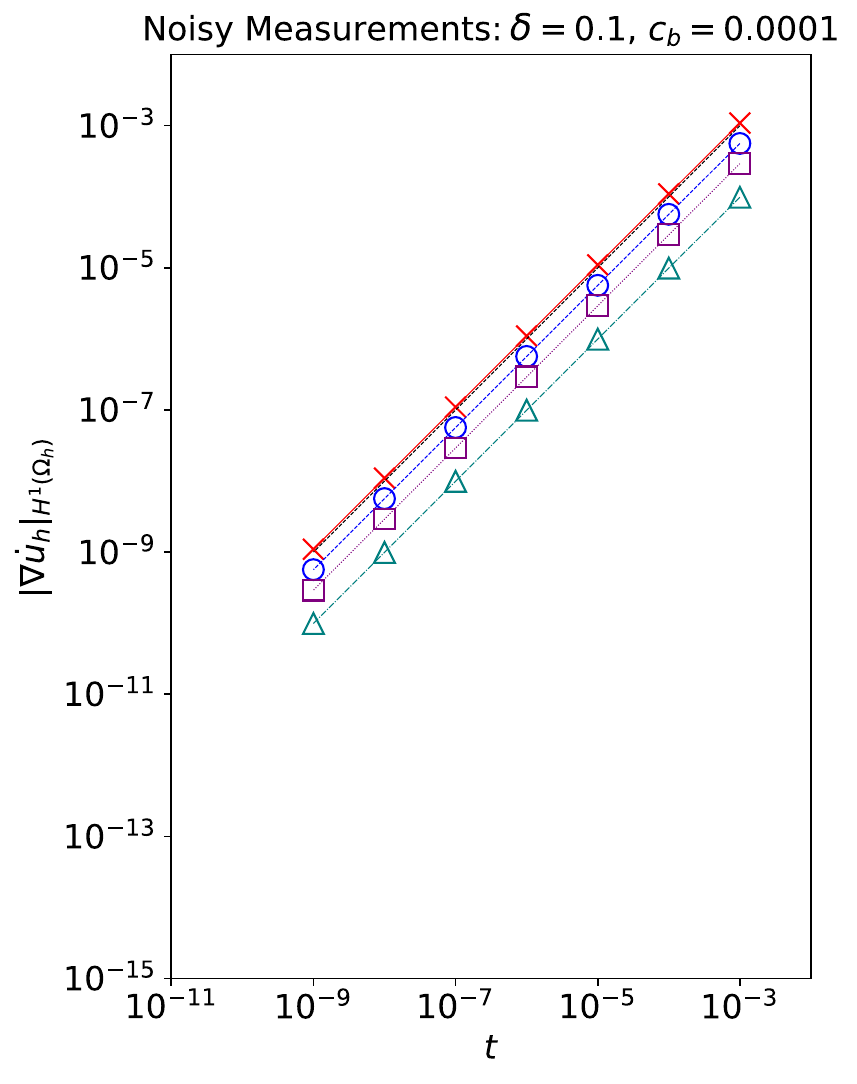}} \hfill
\resizebox{0.24\textwidth}{!}{\includegraphics{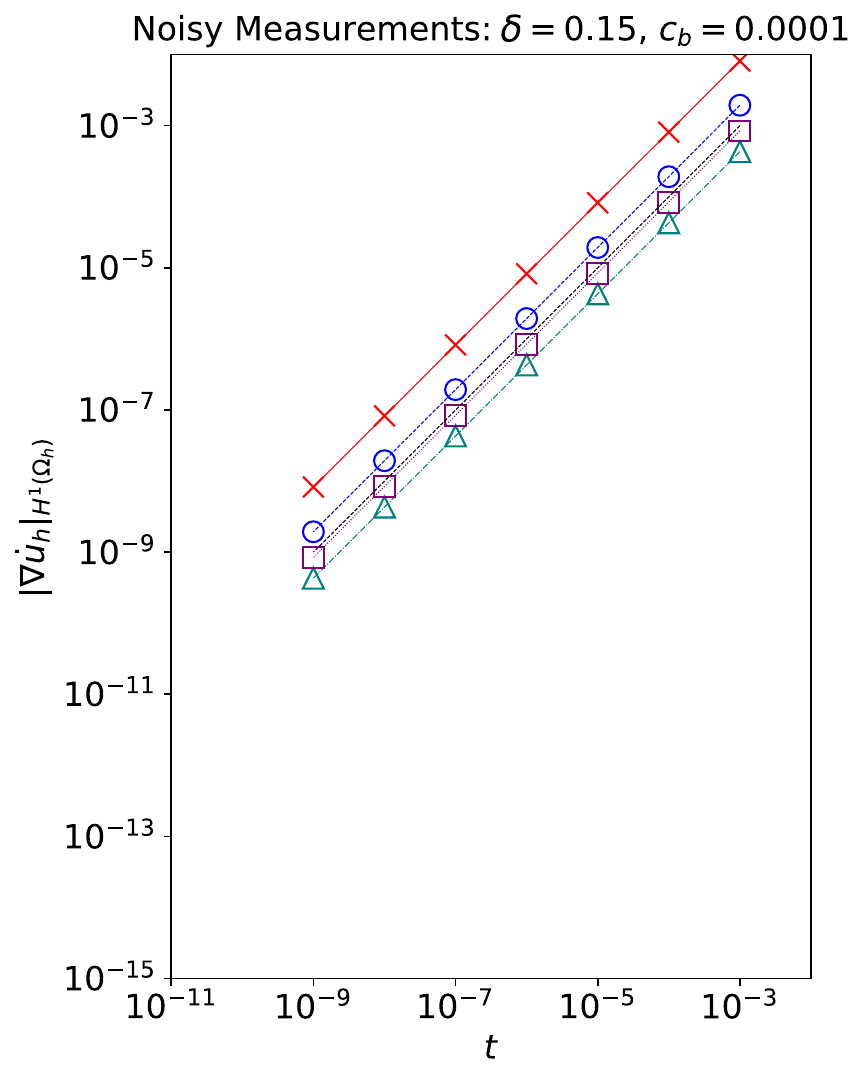}} \\[0.5em] 
\caption{Error-of-convergences with exact and noisy measurements for 2D test case testing the effect of $c_{b}$.}
\label{fig:mesh_sensitivity_analysis_2D_with_respect_to_regularization}
\end{figure}
%
%
%
\subsection{Preliminary tests}
\label{subsec:Preliminary_Tests}
Before examining specific numerical examples of the inverse problem, we first conduct some preliminary tests on the solution of the forward problem. 
These tests focus on how the size and location of the tumor affect the skin surface temperature, as well as the impact of noise on the temperature profile. 
The results will help us establish a good initial guess for the reconstruction.

\begin{figure}[htp!]
\centering 
\resizebox{0.24\textwidth}{!}{\includegraphics{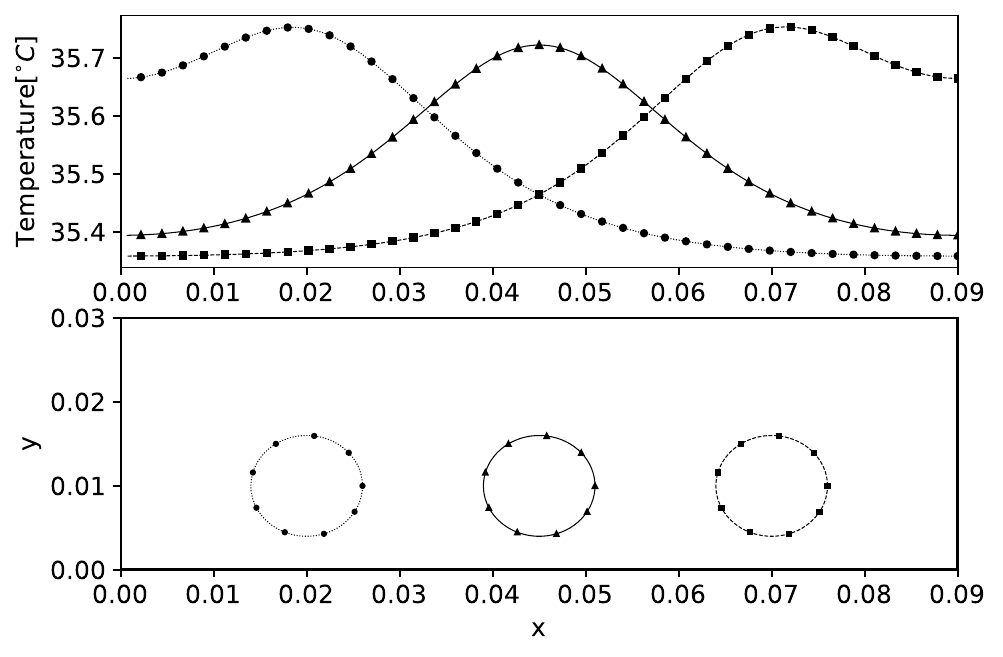}}
\resizebox{0.24\textwidth}{!}{\includegraphics{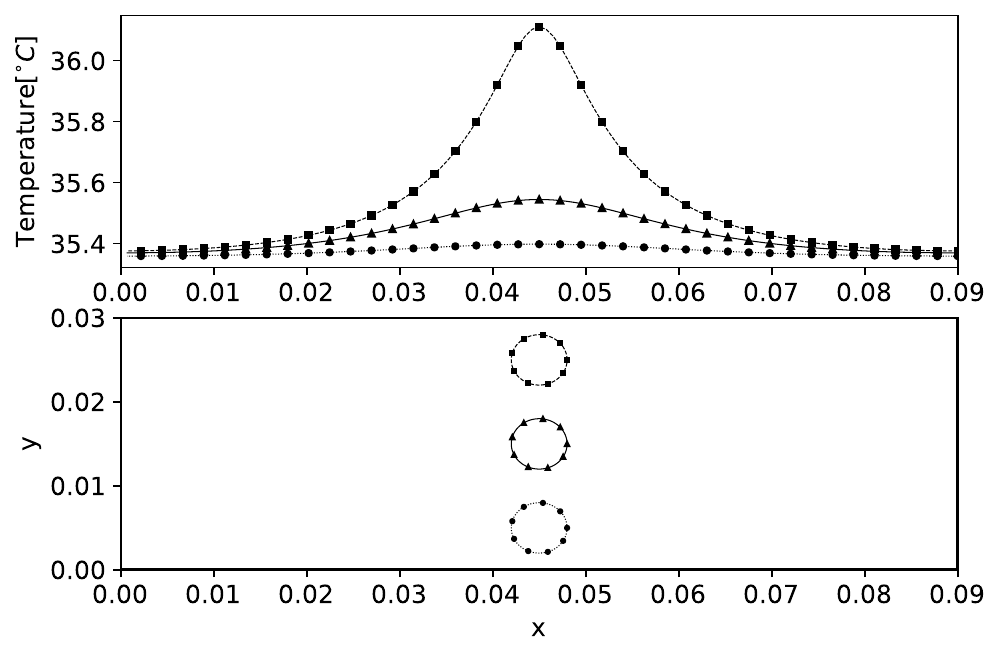}}
\resizebox{0.24\textwidth}{!}{\includegraphics{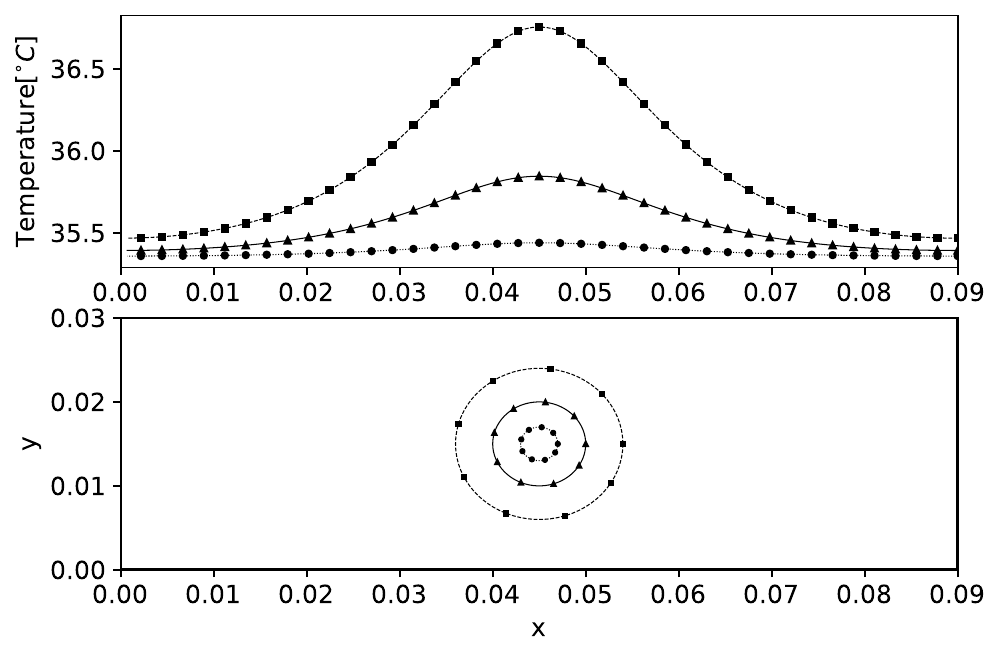}}
\resizebox{0.24\textwidth}{!}{\includegraphics{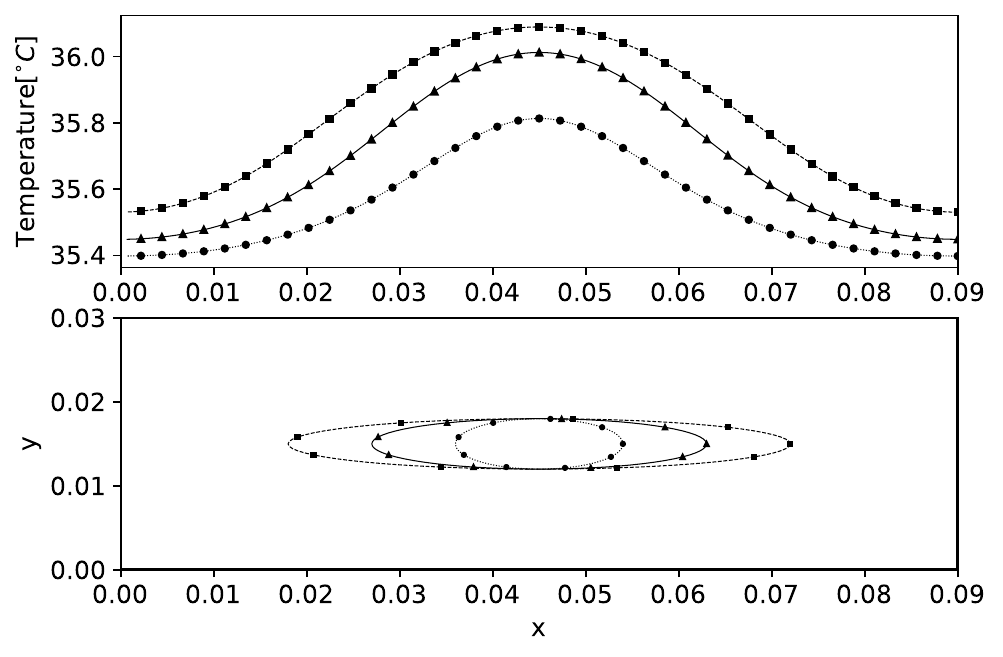}}
\caption{The effect of size and location of the tumor on the skin surface temperature profile}
\label{fig:size_and_locations_tests}
\end{figure}

The numerical results in Figure~\ref{fig:size_and_locations_tests} illustrate the relationship between tumor size and location (lower plots) and the corresponding skin surface temperature profile (upper plots). 
Tumors closer to the skin or with larger dimensions produce higher and sharper peaks in the temperature profile, while smaller or deeper tumors result in subtler variations. 
The spatial arrangement of tumors is evident in the temperature distribution, as peaks correspond to their positions. 
Furthermore, irregular tumor shapes, such as elliptical forms, generate broader or asymmetric temperature patterns. 
These observations highlight the sensitivity of skin surface temperature to underlying tumor characteristics, a crucial factor for non-invasive diagnostic methods. 
Notably, the results align with experimental findings reported in the literature. 
We will use these numerical results to guide the initialization of tumor shapes in the numerical approximation for solving the subsequent inverse problems.
It is worth noting that a similar idea was used in \cite{RabagoAzegami2018}, but in the context of cavity detection.

\begin{figure}[htp!]
\centering
\hfill
\resizebox{0.3\textwidth}{!}{\includegraphics{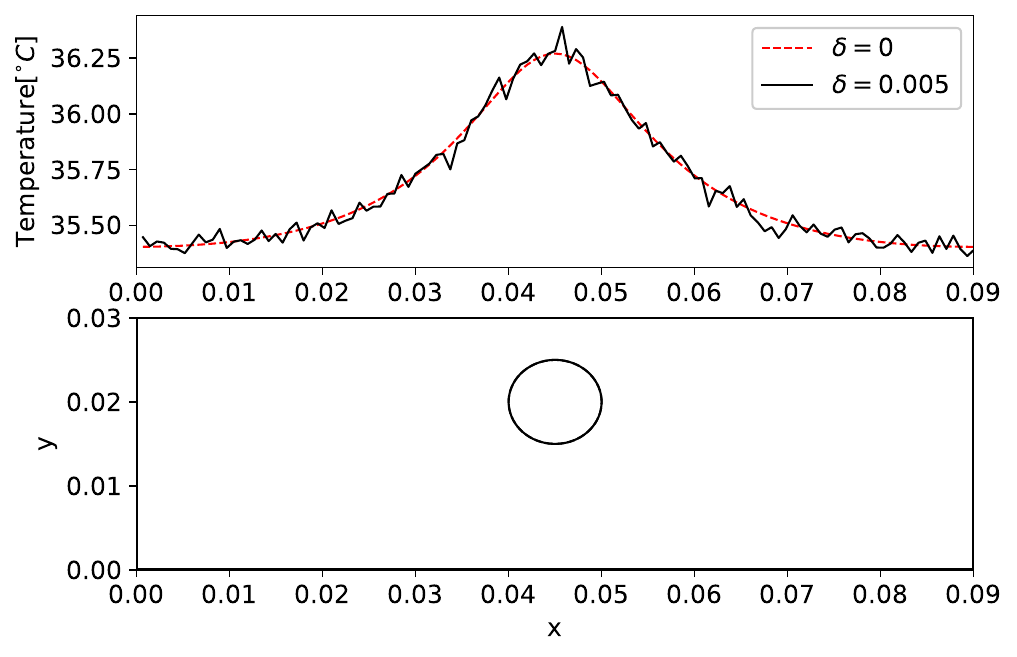}} \hfill
\resizebox{0.3\textwidth}{!}{\includegraphics{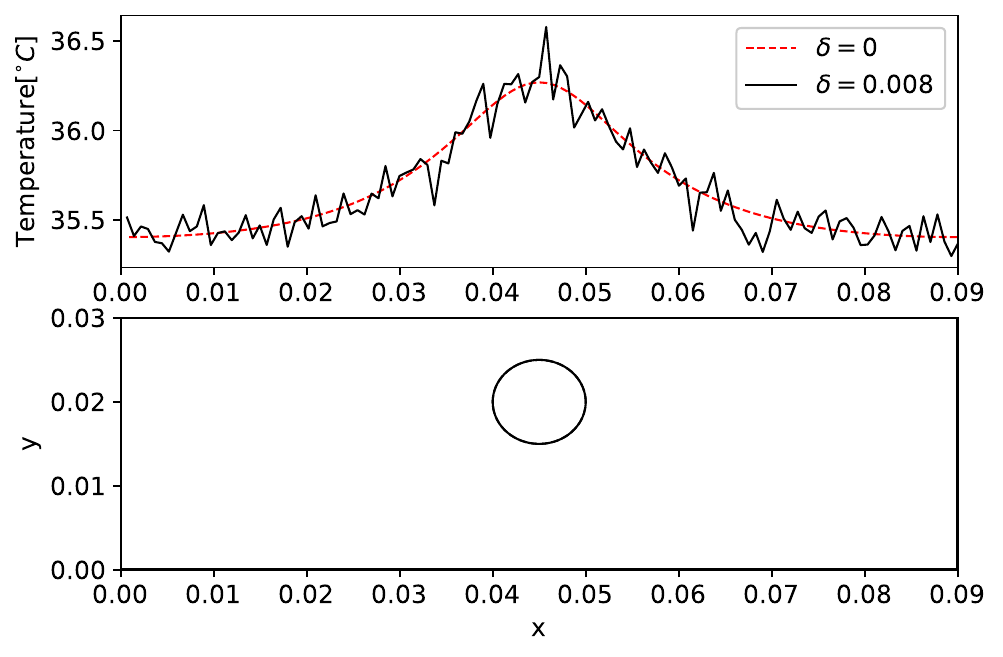}} \hfill
\resizebox{0.3\textwidth}{!}{\includegraphics{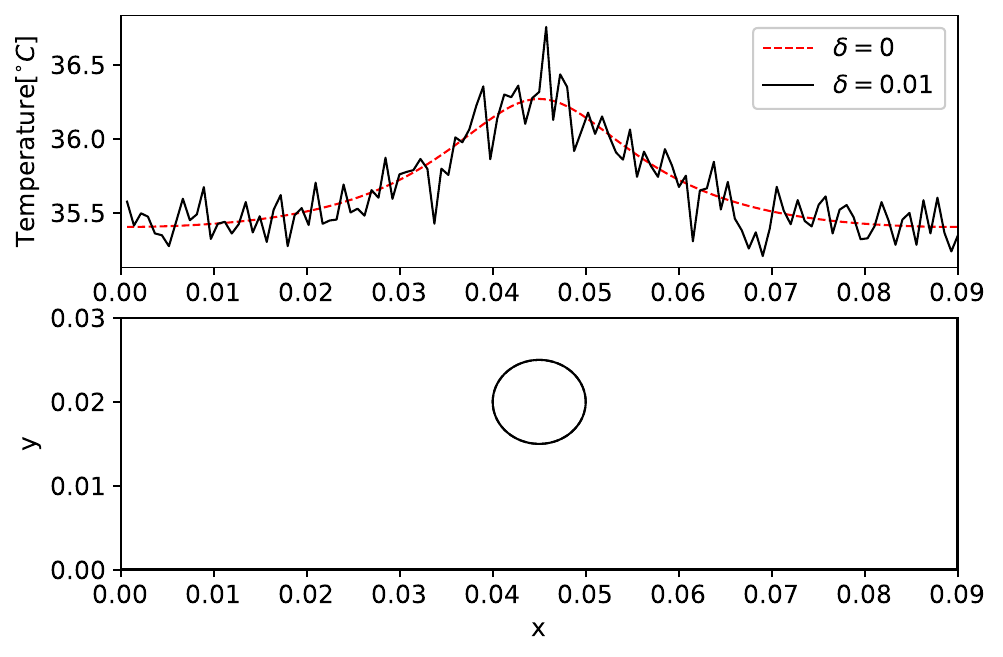}} \hfill 
\caption{The effect of noise level on the skin surface temperature profile}
\label{fig:effect_of_noise_on_skin_temperature}
\end{figure}
The top row of Figure~\ref{fig:effect_of_noise_on_skin_temperature} shows that increasing noise levels (${\harbrecht{\delta}} = 0.005, 0.008, 0.01$) introduce fluctuations in the skin surface temperature profile, making it less smooth and obscuring finer details. 
However, even in the presence of noise, the location of the tumor (indicated in the bottom row) can still be inferred from the localized temperature rise, though higher noise levels make this more challenging.
%
%
%
\subsection{\harbrecht{Identification of a shallow circular tumor}}\label{subsec:test_1}
We now consider the identification of a circular tumor centered at $(0.045, 0.020)$ with a radius of $0.005$ (m).  
The measured data is assumed to include $1\%$ noise, i.e., $\harbrecht{\delta} = 1\%$.  
Figure~\ref{fig:example_1_results} presents the numerical results for tumor shape reconstruction based on temperature measurements taken on the skin surface.

The upper left panel shows the initial setup. 
The temperature profile is fitted with an 11th-order polynomial to identify its peak, which is then used to determine the initial guess for the tumor location.  
The bottom plot displays the exact tumor location (solid line) and the initial guess (dashed line), centered at $(x_0, y_0) = (0.046, 0.01)$.

The upper right panel shows how the objective function and gradient norm evolve over iterations for initial radii $r_0 = 0.004$, $0.005$, and $0.006$. 
Among these, $r_0 = 0.005$ gives the lowest final cost, indicating the most accurate tumor approximation.
The method seems sensitive to the initial guess, especially when it is far from the actual tumor location.
We will examine this issue further in the next subsection.

The first two plots in the lower panel show the evolution of the real and imaginary parts of the state solution $u$ measured on the accessible boundary $\Gtop$.  
The third plot displays the evolution of the tumor boundary during optimization, with color indicating iteration progress (from purple for the initial guess to red for the final shape). 
The exact tumor shape is shown as a solid magenta line with `$\circ$' markers.  
Clearly, the final boundary closely approximates the true tumor shape.
The last plot displays the evolution of $J$ over iterations.
\begin{figure}[htp!]
\centering 
\resizebox{0.4\textwidth}{!}{\includegraphics{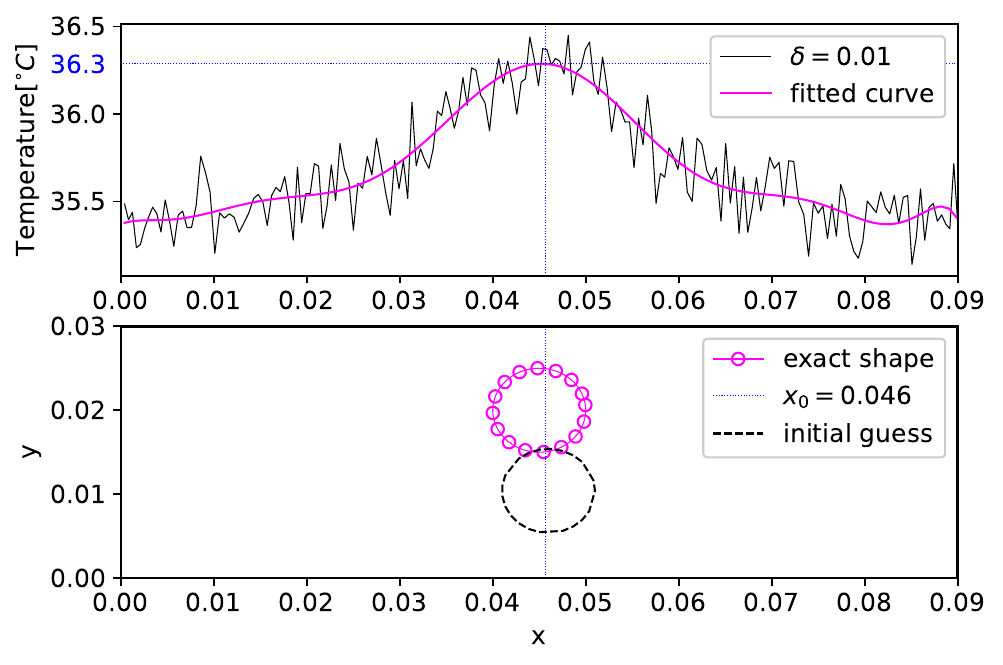}}\quad
\resizebox{0.48\textwidth}{!}{\includegraphics{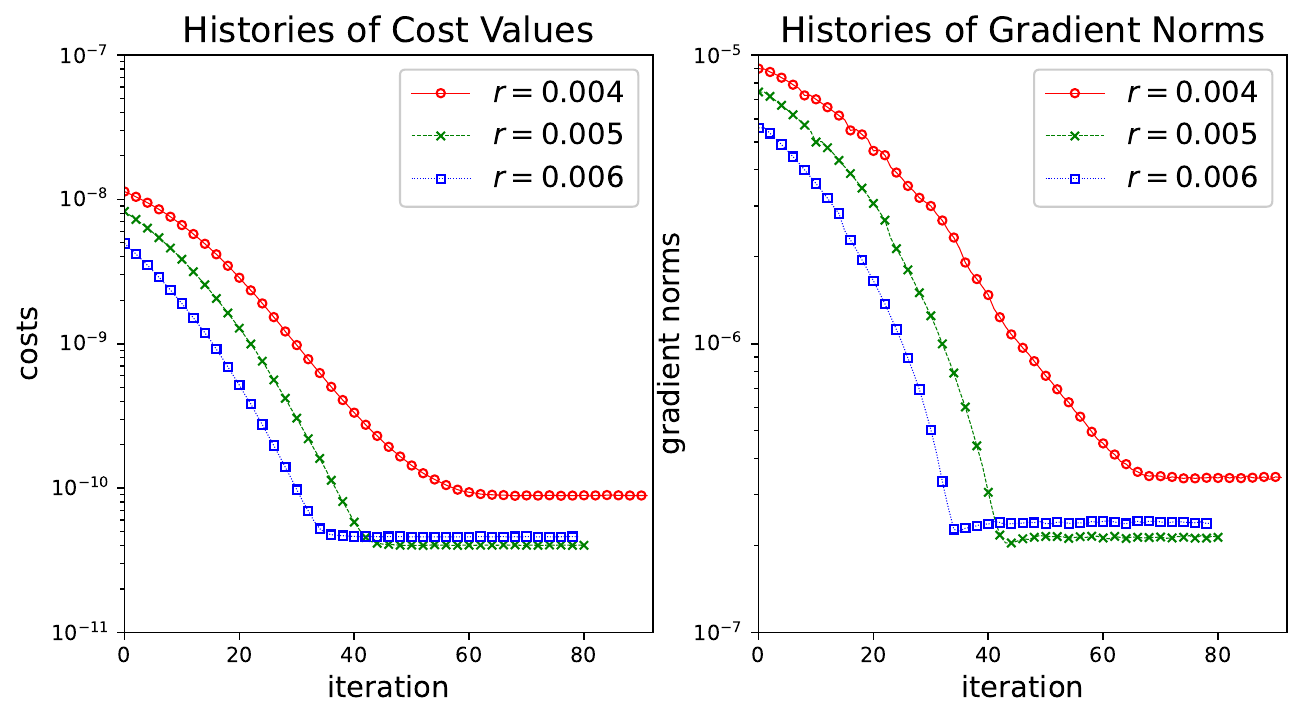}}\\[0.5em] 
\resizebox{0.85\textwidth}{!}{\includegraphics{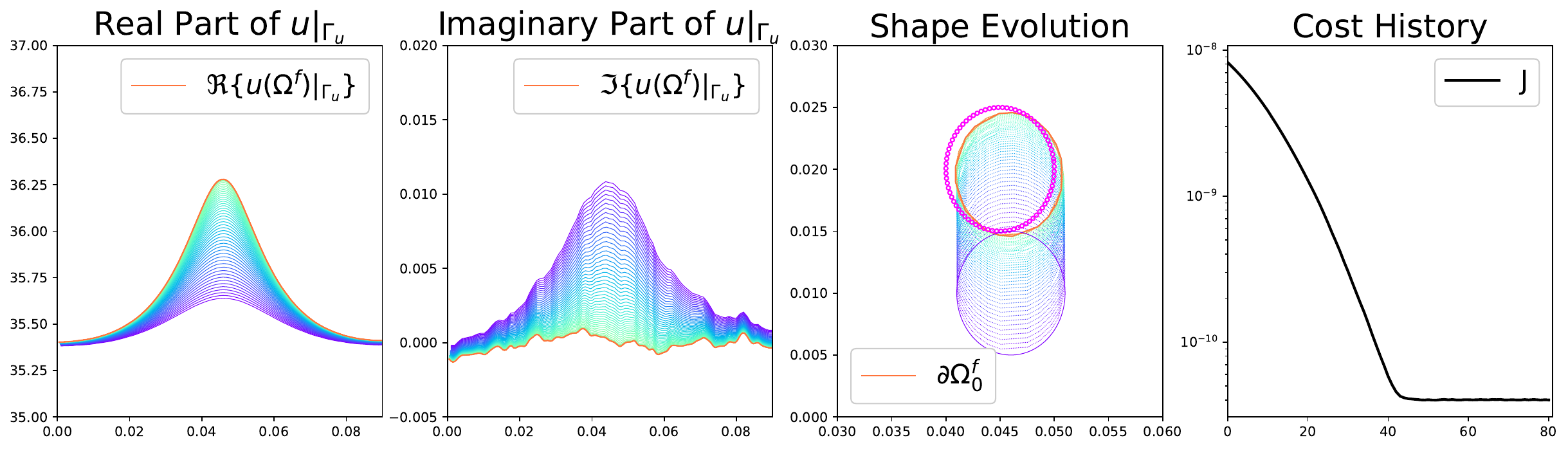}}
\resizebox{0.3\textwidth}{!}{\includegraphics{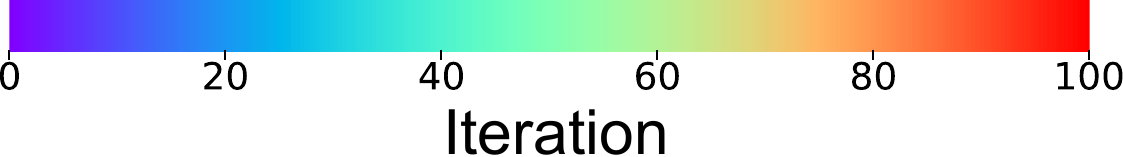}}
\caption{\harbrecht{(Top left) Temperature profile on the skin surface with noise level ${\harbrecht{\delta}} = 0.01$ and the initial setup. (Top right) Cost and gradient-norm histories for different values of the initial radius $r_{0}$. (Bottom, from left to right) Evolution of the real part of $u$; evolution of the imaginary part of $u$ restricted to $\Gtop$; evolution of the free boundary; and history of the cost value.}}
\label{fig:example_1_results}
\end{figure}
%
%
%
%
\subsection{\harbrecht{Identification of a deeper circular tumor}}\label{subsec:test_2}
We next consider identifying a smaller, deeper circular tumor centered at $(0.045, 0.015)$ with a radius of $0.003$ (m).
With ${\harbrecht{\delta}} = 1\%$ noise in the measured data, we examine the sensitivity of the method to the initial guess.

Figures~\ref{fig:example_2_results_plot_1} and \ref{fig:example_2_results_plot_2} show tumor shape reconstruction results using different initial guesses for size and location. 
The second set in Figure~\ref{fig:example_2_results_plot_2} disregards the peak measured temperature profile, typically used to guide the initial guess location. 
These figures highlight the sensitivity of the reconstruction to the initial guess. 
Notably, a smaller cost does not always mean a more accurate reconstruction for smaller or deeper tumors, even when initialized near the exact location (see Figure~\ref{fig:example_2_results_plot_1}). 
However, based on the initial cost value, $r_0 = 0.003$ or $0.0025$ consistently yield the lowest cost, while $r_0 = 0.002$ gives the highest, suggesting the former provide better tumor size approximations. 
This aligns with expectations, as the ill-posedness of the inverse problem increases when the target shape is farther from the measurement region.

\begin{figure}[htp!]
\centering 
\resizebox{0.25\textwidth}{!}{\includegraphics{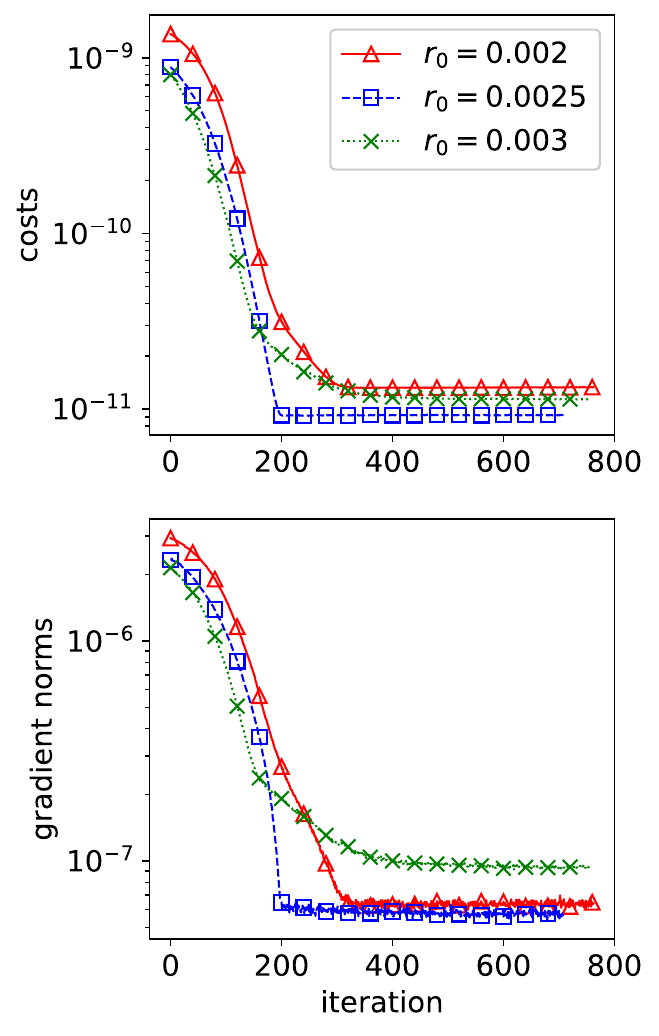}}
\resizebox{0.4\textwidth}{!}{\includegraphics{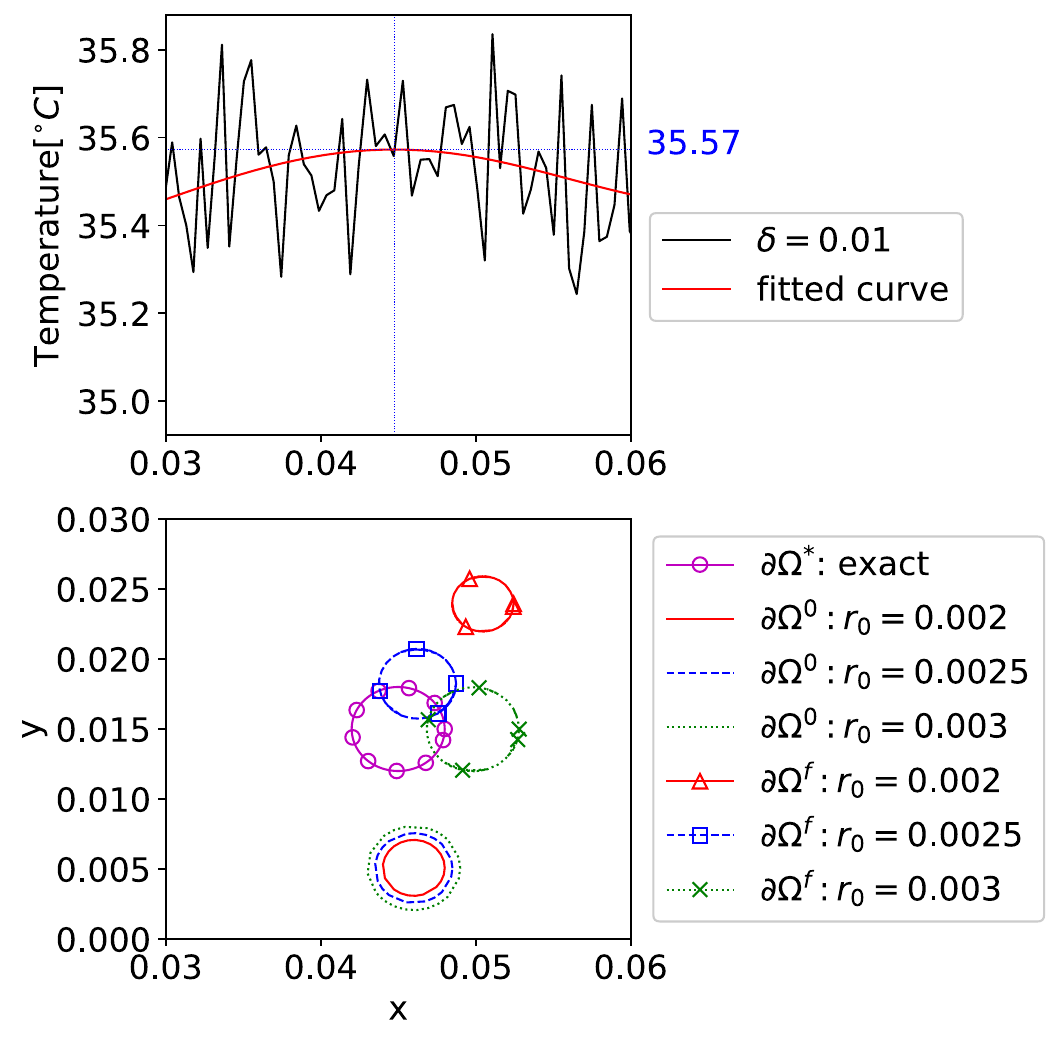}}
\caption{\harbrecht{(Left column) Cost and gradient norm histories for different sizes and locations of the initial guess; (right column) comparison of the exact, initial, and final free boundaries.}}
\label{fig:example_2_results_plot_1}
\end{figure}
\begin{figure}[htp!]
\centering 
\resizebox{0.2275\textwidth}{!}{\includegraphics{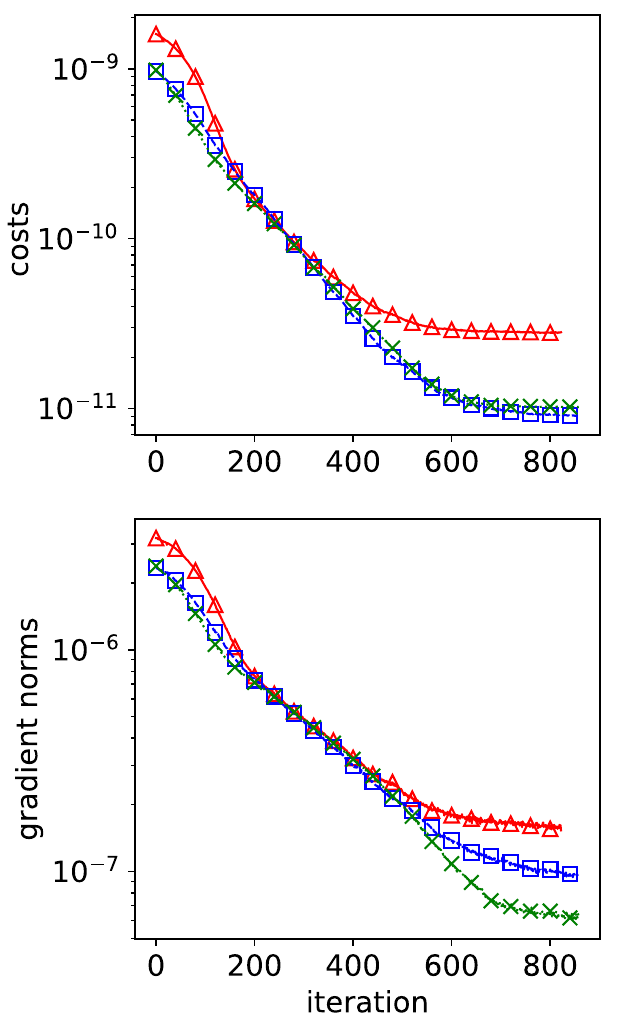}}
\resizebox{0.25\textwidth}{!}{\includegraphics{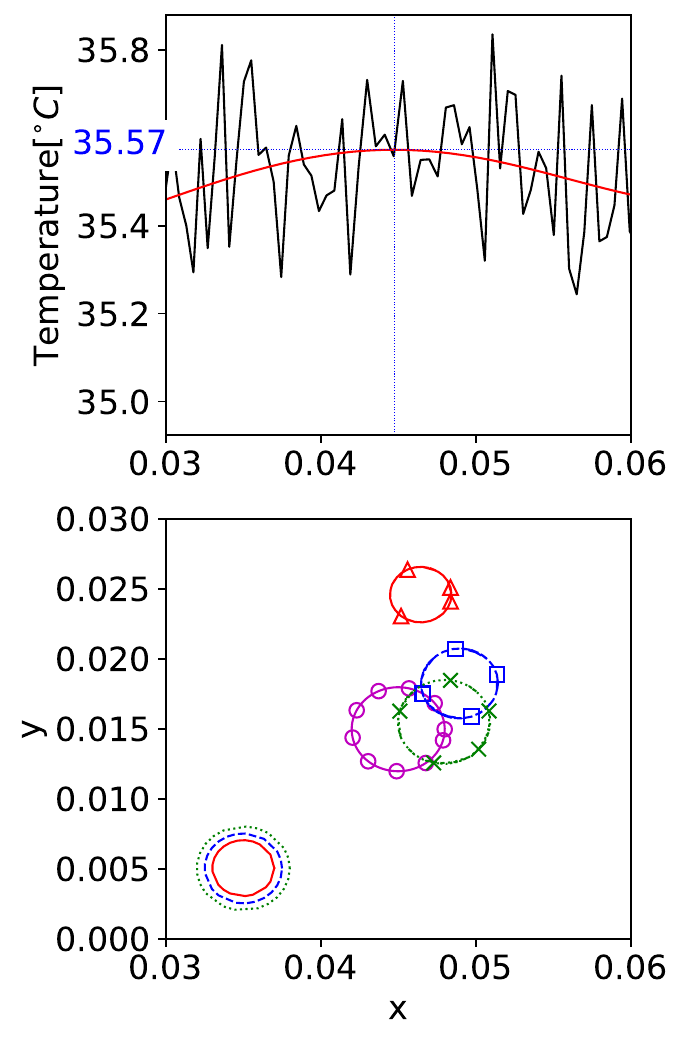}}
\hfill
\resizebox{0.2275\textwidth}{!}{\includegraphics{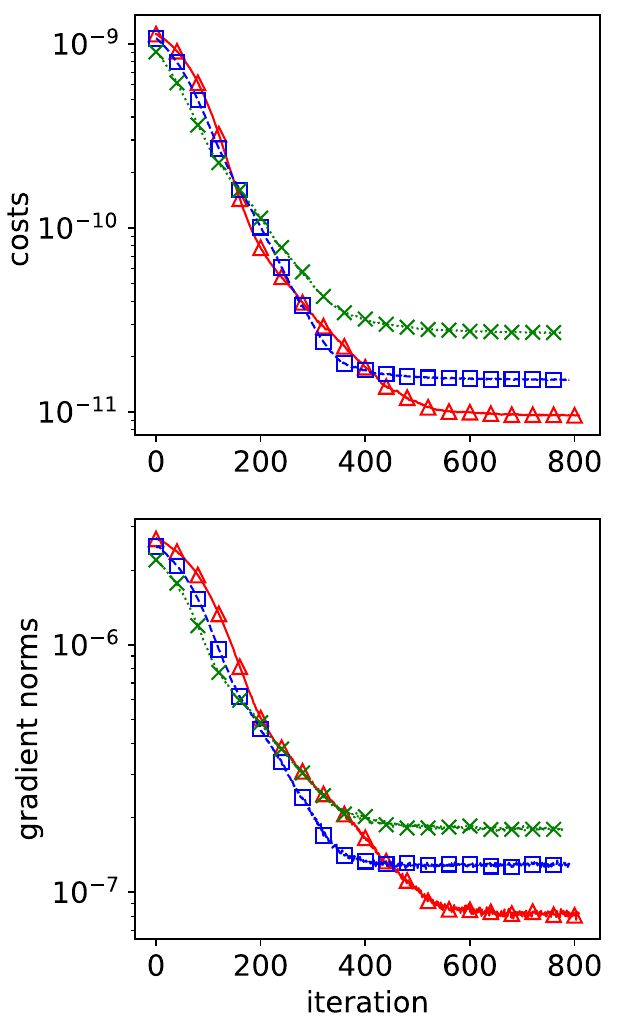}}
\resizebox{0.25\textwidth}{!}{\includegraphics{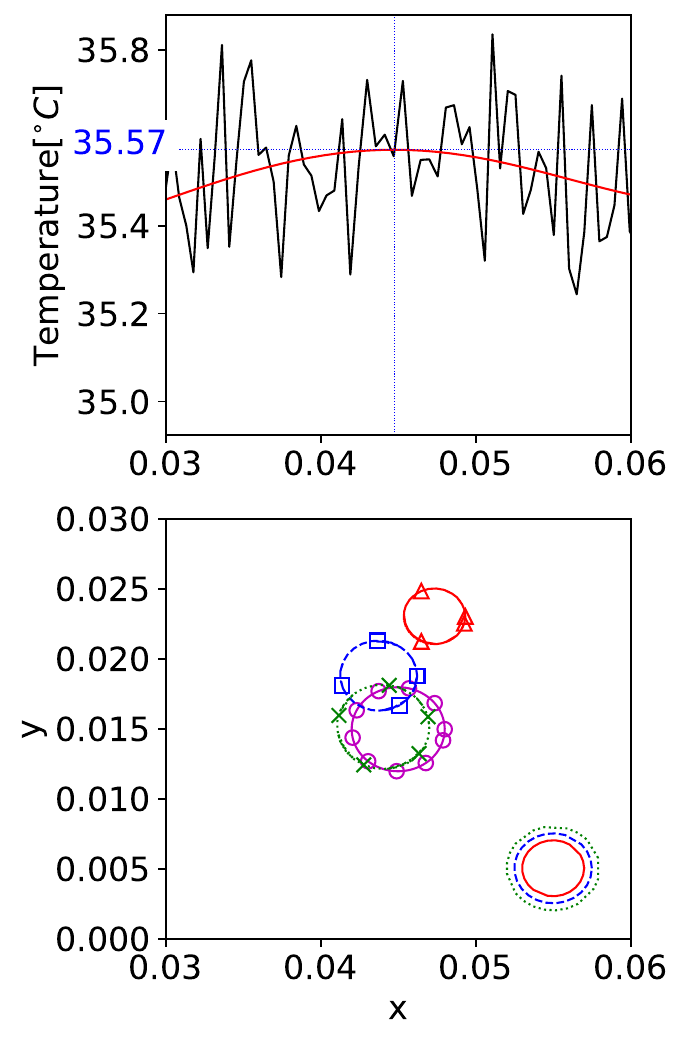}}

\caption{\harbrecht{Effect of the initial guess on the reconstruction. Refer to Figure~\ref{fig:example_2_results_plot_1} for the legend.}}
\label{fig:example_2_results_plot_2}
\end{figure}

To address the issue, we apply a penalization via a weighted volume functional and the balancing principle (see Subsection~\ref{subsec:balancing_principle}) to reduce errors and prevent overshooting. 
Figure~\ref{fig:example_3} shows improved shape approximation and reduced reconstruction errors for $r_0 = 0.0025$ and $r_0 = 0.003$, along with the histories of cost, gradient norms, and $\rho$ values. 
Final cost values alone can be misleading for selecting the best approximation, but as seen in Figure~\ref{fig:example_3_temperature_profile}, $r_0 = 0.003$ yields the best result, with the temperature profile closely matching the noisy data. 
This confirms that incorporating the measured profile ensures accurate reconstruction, even with noise.
%
%
%
%
\begin{figure}[htp!]
\centering  
\resizebox{0.9\textwidth}{!}{\includegraphics{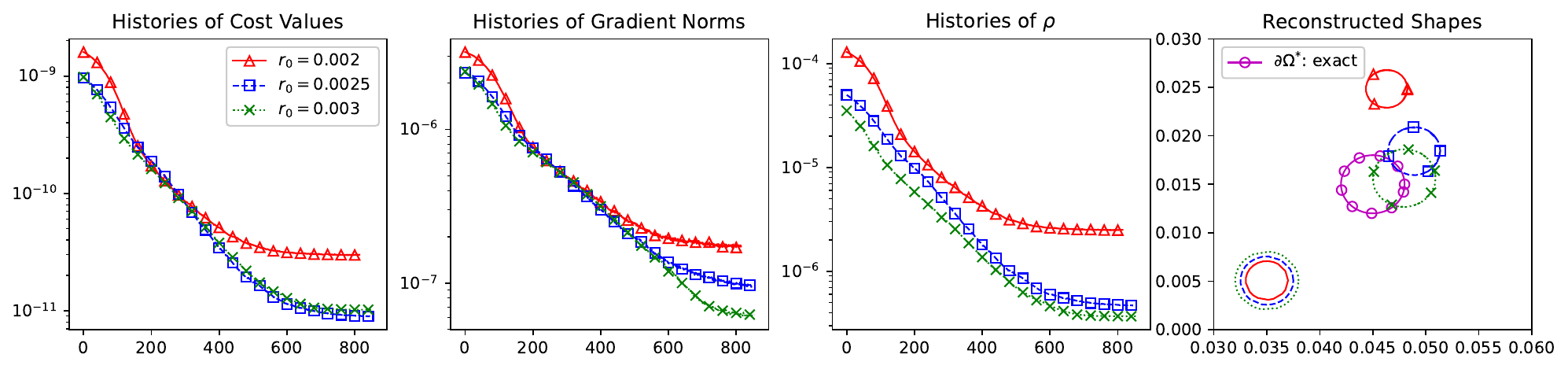}}\\
\resizebox{0.9\textwidth}{!}{\includegraphics{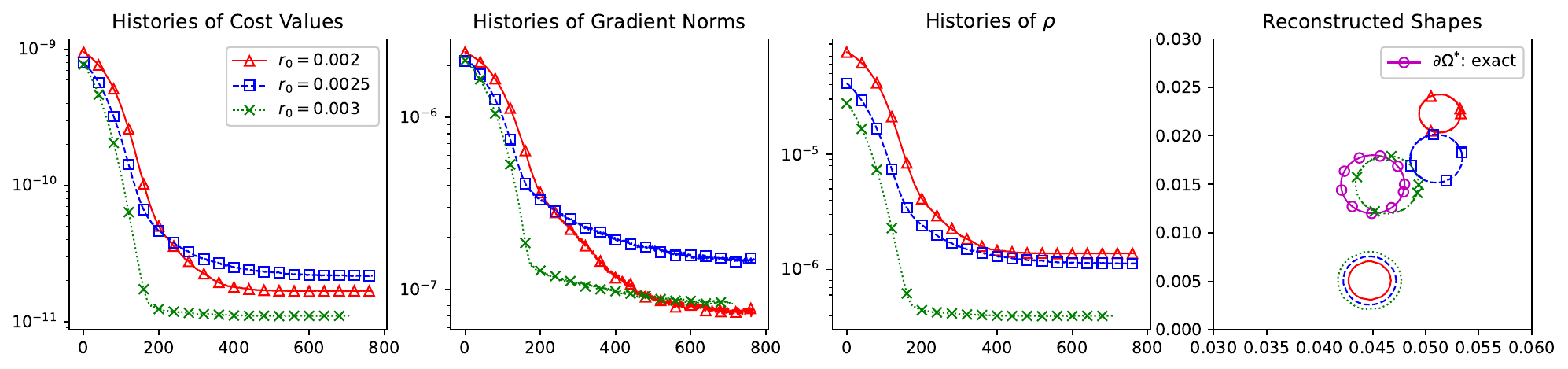}}\\
\resizebox{0.9\textwidth}{!}{\includegraphics{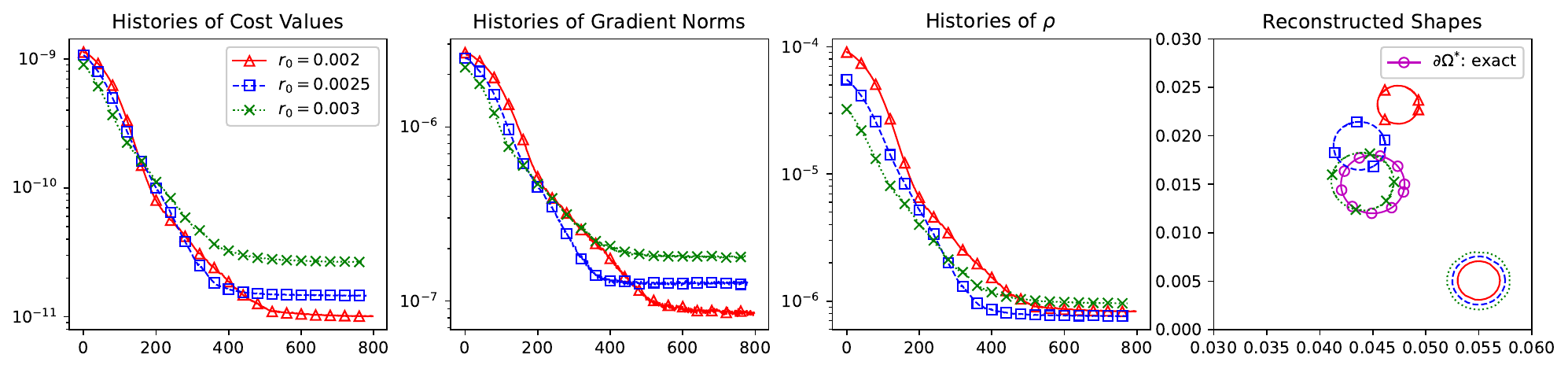}}
\caption{Reconstruction results with volume penalization coupled with the balancing principle. Refer to Figure~\ref{fig:example_2_results_plot_1} for the legend.}
\label{fig:example_3}
\end{figure}
\begin{figure}[htp!]
\centering    
\resizebox{0.25\textwidth}{!}{\includegraphics{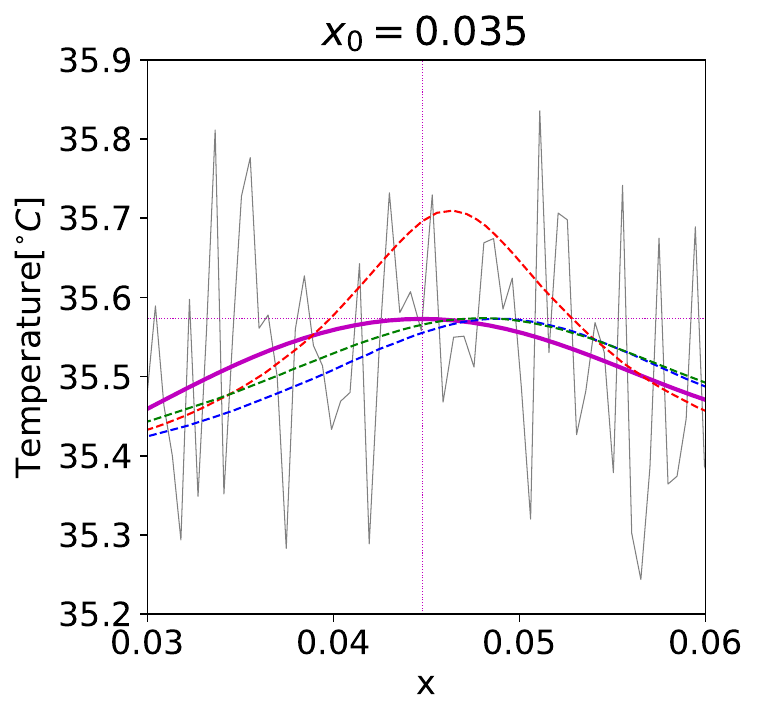}}
\resizebox{0.25\textwidth}{!}{\includegraphics{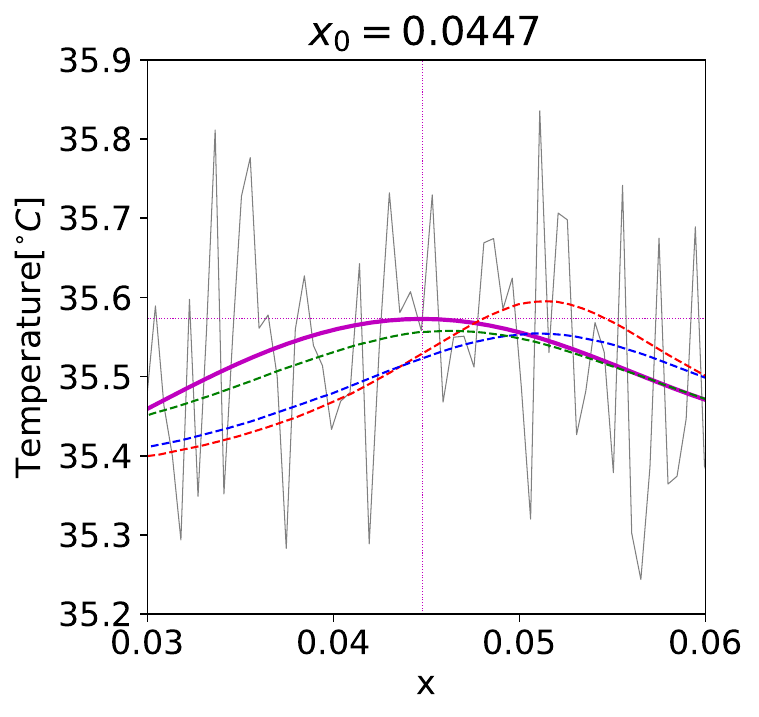}}
\resizebox{0.25\textwidth}{!}{\includegraphics{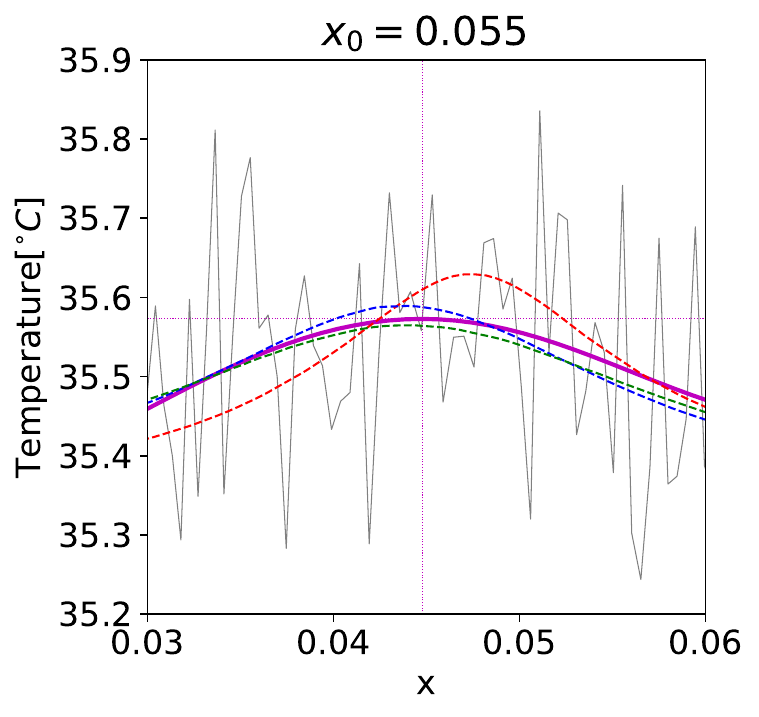}}\\[0.5em]
\resizebox{0.85\textwidth}{!}{\includegraphics{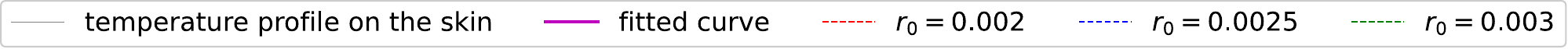}}
\caption{Comparison of the measured noisy skin temperature profile, the fitted curve, and the temperature profiles for the computed shapes in Figure~\ref{fig:example_3}.}
\label{fig:example_3_temperature_profile}
\end{figure}

\harbrecht{Based on our observations from numerous experiments (not presented here), the balancing principle tends to yield good results when the tumor radii are known in advance, although such prior knowledge is rarely available in practice.
Nevertheless, our algorithm can still achieve reasonable reconstruction by setting $c_b$ close to one. 
In the next subsection, we present results using $c_b = 1$ and $\rho \in [1 \times 10^{-5}, 5 \times 10^{-5}]$.}

\subsection{\harbrecht{Identification of non-trivial shapes}}\label{subsec:test_2_additional}
We test the proposed method on non-elliptical tumors, beginning with an initial guess that is smaller and non-circular compared to the actual tumor. 
The aim is to demonstrate accurate estimation of both tumor size and location, regardless of the size and position of the initial guess.

Figure~\ref{fig:example3b_non_trivial_shapes} presents reconstruction results with $2\%$ noise added to skin temperature measurements for four tumor shapes: one convex and three non-convex. 
The top row shows the measured data, fitted curve, and reconstructed skin temperature, while the bottom row compares the exact, initial, and reconstructed tumor shapes.

The results indicate that reconstructing the concavity of the tumor remains challenging.
This is primarily due to the severity of the ill-posedness of the problem.
Nonetheless, the method provides a reliable estimate of tumor location and size, even in the presence of noise (see third column in Figure~\ref{fig:example3b_non_trivial_shapes_summary}).

Further insights into the identification of the two cases shown in Figure~\ref{fig:example3b_non_trivial_shapes} are presented in Figure~\ref{fig:example3b_non_trivial_shapes_summary}, which displays the real and imaginary parts of the solutions, the shape evolution, and the modified-cost histories.
In fact, for these examples, the original algorithm was modified to use the value of the combined cost $J + J_{LS}$ (see the last plot in Figure~\ref{fig:example3b_non_trivial_shapes_summary}) as the termination criterion; that is, the algorithm stops when the decrease in the modified cost is sufficiently small.

\begin{figure}[htp!]
\centering  
\resizebox{0.7\textwidth}{!}{\includegraphics{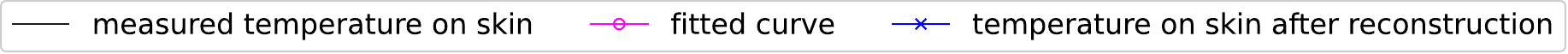}}\\
\resizebox{0.24\textwidth}{!}{\includegraphics{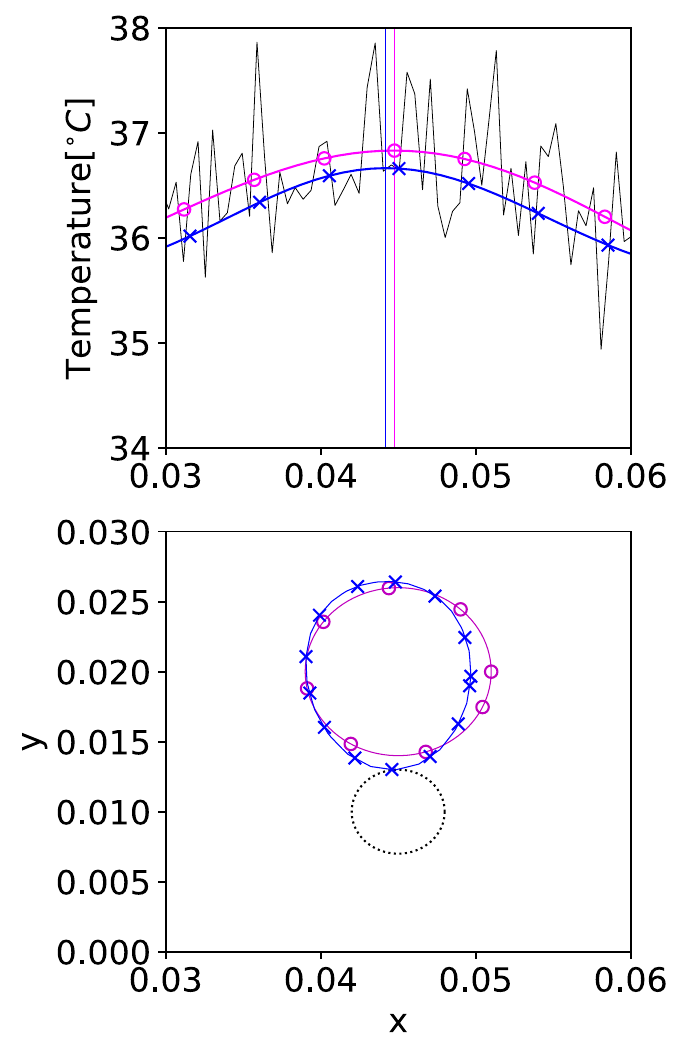}}
\resizebox{0.24\textwidth}{!}{\includegraphics{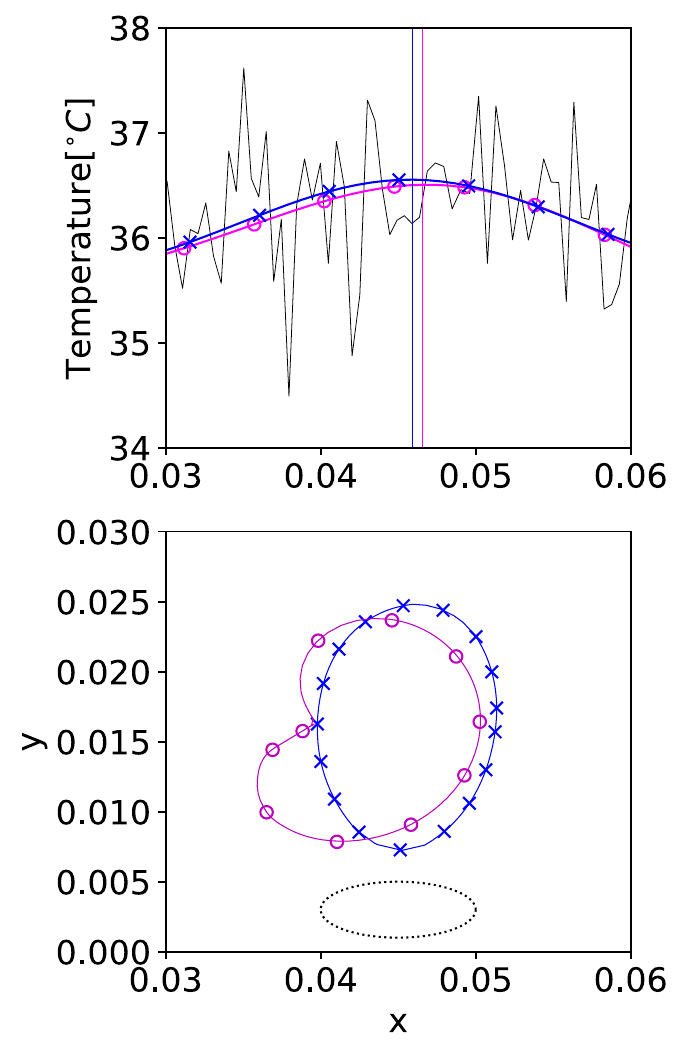}}
\resizebox{0.24\textwidth}{!}{\includegraphics{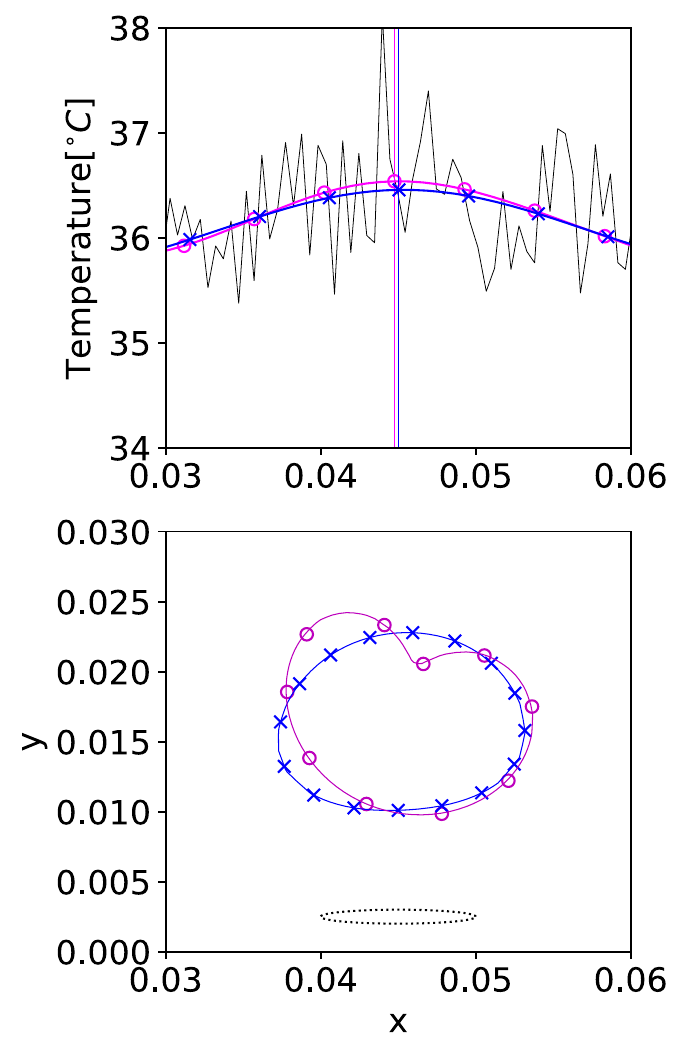}}
\resizebox{0.24\textwidth}{!}{\includegraphics{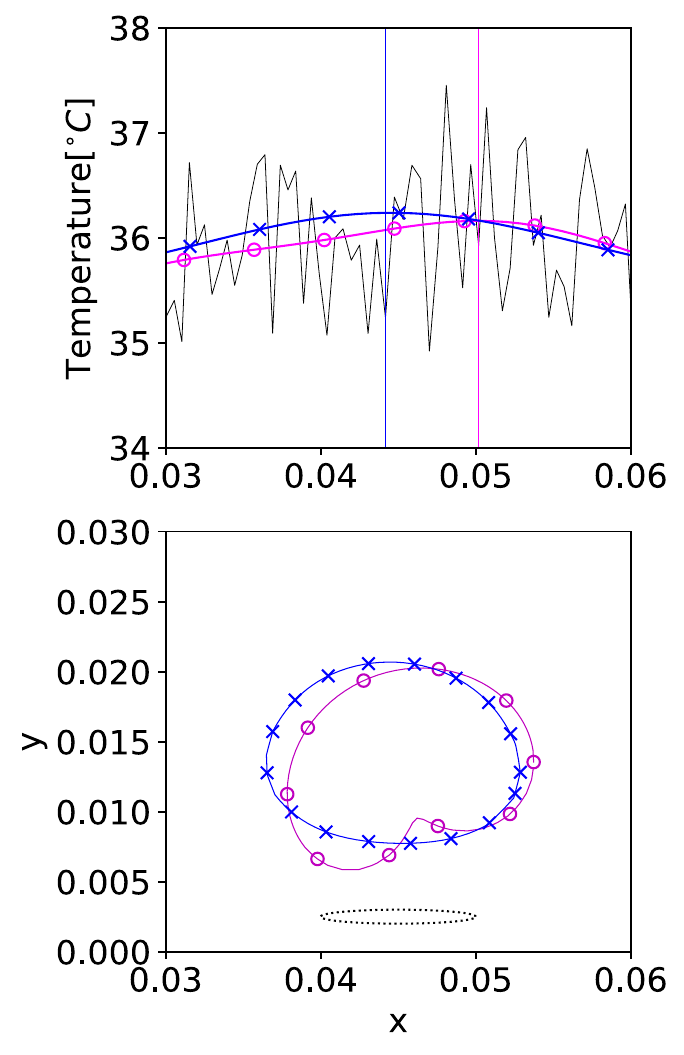}}\\ 
\resizebox{0.6\textwidth}{!}{\includegraphics{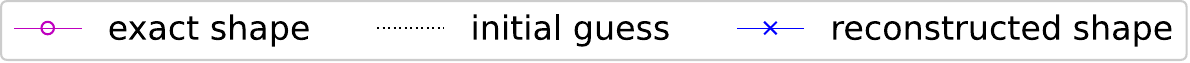}}
\caption{Reconstruction results using volume penalization with fixed $\rho$ and $c_{b} = 1$.}
\label{fig:example3b_non_trivial_shapes}
\end{figure}
\begin{figure}[htp!]
\centering  
\resizebox{0.85\textwidth}{!}{\includegraphics{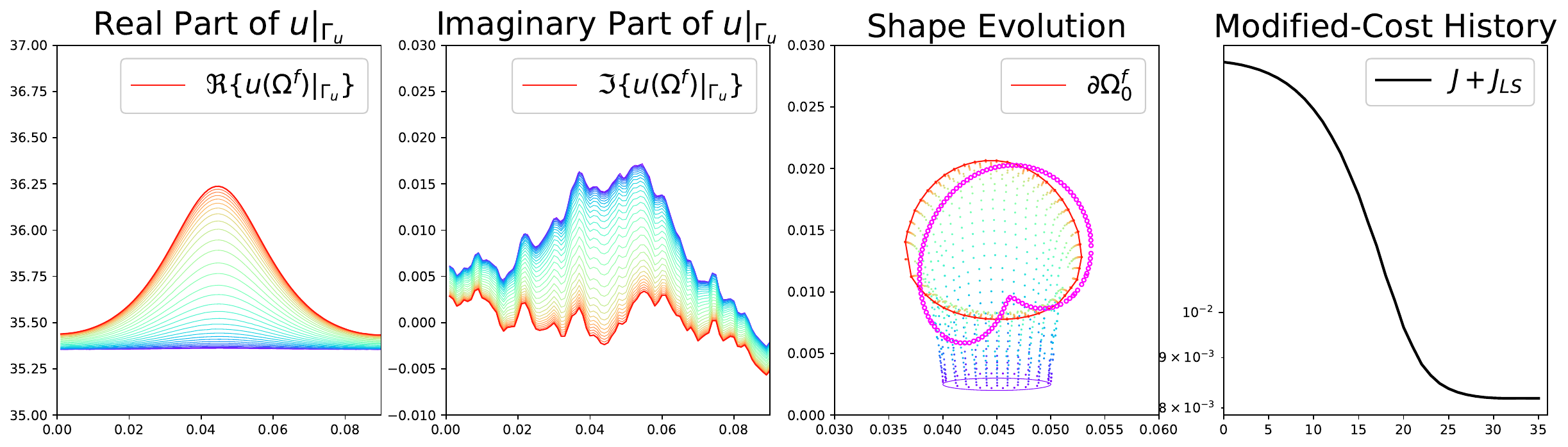}}\\ 
\resizebox{0.85\textwidth}{!}{\includegraphics{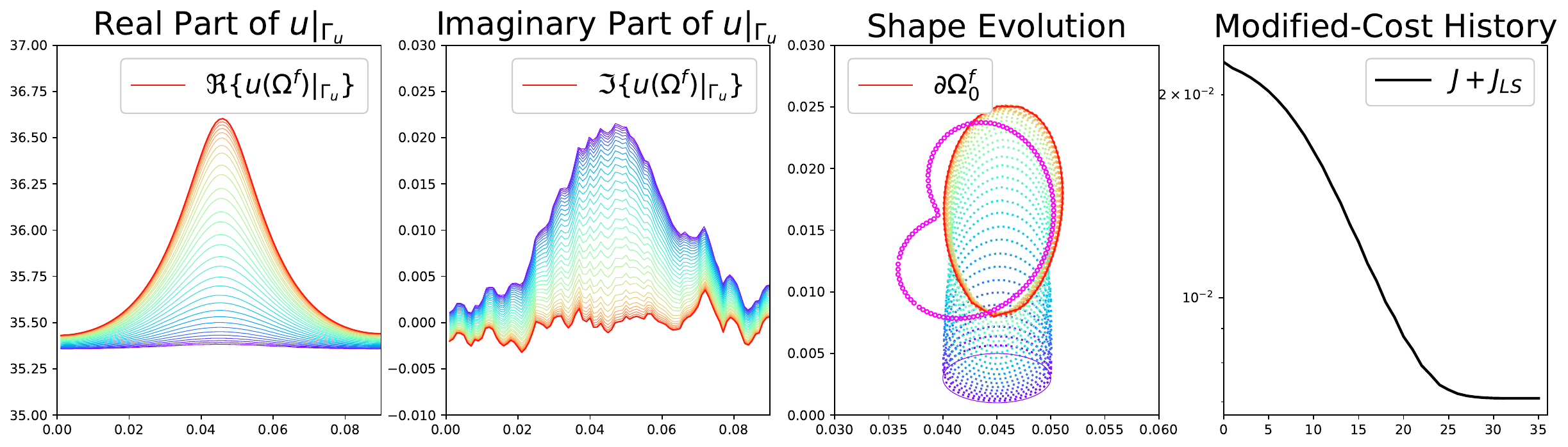}}
\caption{Real and imaginary parts of solutions, shape evolution, and modified-cost histories.}
\label{fig:example3b_non_trivial_shapes_summary}
\end{figure}
%
%
%
\subsection{Numerical example in 3D}\label{subsec:test_3d}
We also test our numerical method in the case of three spatial dimensions. 
We consider the real shape of the biological tissue of a woman's breast (see Figure~\ref{fig:3d_illustration} for an illustration), which is approximated by half of an ellipsoid with semi-axes of dimensions $R_{1} = 0.07$, $R_{2} = 0.06$, and $R_{3} = 0.07$ (m), while the shape of the tumor is approximated by an ellipsoid with semi-axes of dimensions $r_{1}^{\ast} = 0.015$ (m), $r_{2}^{\ast} = 0.015$ (m), and $r_{3}^{\ast} = 0.0125$ (m), and center position $(x_{1}^{\ast}, x_{2}^{\ast}, x_{3}^{\ast}) = (0.0175, 0.0125, 0.035)$ (m). 
The choice of numerical values for the coefficients is as given in subsection~\ref{subsec:forward_problem}, and the computational setup is essentially the same as in the 2D case.

Figure~\ref{fig:3d_setup} shows the tumor's size, location, and temperature distribution on the skin surface and inside the domain. 
The plots show an expected increase in skin temperature around the tumor, as expected. 
\alert{Figure~\ref{fig:3d_temperature_distribution} shows the exact and noisy ($1\%$ noise) temperature distributions on the skin surface.
The tumor position can be inferred from both cases, but its size is difficult to estimate due to the shape of the breast.
Even with noise, the temperature data still provides a good indication of the tumor location, especially when viewed in the $xy$-plane.}

Using this information, we initialized our approximation with an elliptical tumor shape having radii $(r_{1}^{0}, r_{2}^{0}, r_{3}^{0}) = (\varsigma r_{1}^{\ast}, \varsigma r_{2}^{\ast}, \varsigma r_{3}^{\ast})$, where $\varsigma \in \{0.8, 0.9, 1\}$, as shown in Figure~\ref{fig:3d_results} (light colors). 
The exact shape is shown in yellow, and the identified shapes are in magenta.
The left panel presents results without volume penalization and the balancing principle \eqref{eq:balancing_principle}, while the right panel includes this regularization strategy designed to select the best tumor shape approximation based on the cost history, as demonstrated previously.
Note that the computed temperature profile on the skin (see last row of Figure~\ref{fig:3d_results}) does not provide sufficient information to judge reconstruction accuracy or to determine which initial guess---small or large---yields better results, in contrast to the 2D case (Figure~\ref{fig:example_3_temperature_profile}).
Nonetheless, Figure~\ref{fig:3d_histories_of_values} shows that the lowest initial cost value always corresponds to $\varsigma = 1$, regardless of regularization. The lines labeled $\partial\varOmega_{0}^{(1)}$, $\partial\varOmega_{0}^{(2)}$, and $\partial\varOmega_{0}^{(3)}$ correspond to $\varsigma = 0.8$, $0.9$, and $1$, respectively.
Moreover, in the bottom row where regularization is applied, the final cost value is smallest for initializations with $\varsigma = 0.9$ or $\varsigma = 1$, with $\varsigma = 1$ achieving the overall lowest cost at convergence.
Figure~\ref{fig:3d_mesh_profiles} illustrates the mesh profiles for both the exact and recovered tumor shapes when $\varsigma = 1$.
Consistent with observations from the 2D case, the measured temperature profiles on the skin, both before and after the approximation, allow for accurate tumor reconstruction even in the presence of measurement noise. 
This effectiveness extends to the 3D case, confirming the reliability of our strategy for selecting the best approximation.

\begin{figure}[htp!]
\centering   
\tdplotsetmaincoords{60}{110} 

\pgfmathsetmacro{\radius}{1}

\begin{tikzpicture}[scale=1.75,tdplot_main_coords]

\draw[thin,->] (0,0,0) -- (1,0,0) node[anchor=north]{$x_{1}$};
\draw[thin,->] (0,0,0) -- (0,1,0) node[anchor=north west]{$x_{2}$};
\draw[thin,->] (0,0,0) -- (0,0,1.1) node[anchor=south]{$x_{3}$};

\shade[ball color=blue!10!white,opacity=0.2] 
    (1cm,0) arc (0:-180:1cm and 5mm) 
    arc (180:0:1cm and 1cm);

\draw[dotted] (0,\radius,0) arc (90:270:\radius);
\draw[thin] (0,-\radius,0) arc (90:270:-\radius);

\shade[ball color=gray!10!white,opacity=1] 
    (7mm,5mm) arc (0:-360:3mm and 2mm) 
    arc (360:0:2mm and 1mm);
\shade[ball color=gray!10!white,opacity=0.8] 
    (7mm,5mm) arc (0:-360:3mm and 2mm) 
    arc (360:0:2mm and 1mm);
\draw[thin, color=red] (4mm,5mm) ellipse (3mm and 2mm);

\node[above, color=red] at (17mm,3.5mm) {$\partial \varOmega_0$};
\draw[thin, ->] (7mm,5mm) to (15mm,5mm); 

\node[above] at (-12mm,8mm) {$\varGamma_{u}$};
\draw[thin, ->] (-7mm,7mm) to (-10.5mm,8.5mm); 

\node[above] at (-12mm,-6mm) {$\varGamma_{b}$};
\draw[thin, ->] (-5mm,0mm) to (-11mm,-3.5mm); 

\node at (-3mm,7mm) {$\varOmega_{1}$};  
\node at (2.75mm,5.25mm) {$\varOmega_{0}$};  

\begin{scope}[shift={(2.5cm,5mm)}]
    \shade[ball color=gray!10!white,opacity=0.8] 
        (0,0) ellipse (6mm and 3mm);

    \draw[thick, color=red] (0,0) ellipse (6mm and 3mm);
    \draw[dotted] (0,0) ellipse (6mm and 1.5mm);
    \draw[dotted] (1.5mm,0) arc (0:360:1.5mm and -3mm);

    \draw[->] (0,0) -- (6mm,0) node[below right]{$r_1$};
    \draw[->] (0,0) -- (0,3mm) node[above]{$r_3$};
    \draw[->] (0,0) -- (-2.5mm,-2.75mm) node[below]{$r_2$};

    \node[below] at (3mm,-6mm) {$(x_{1}^{\ast}, x_{2}^{\ast}, x_{3}^{\ast})$};
    \draw[thin,<-] (3mm,-6mm) to (0,0);
\end{scope}

\draw[thin,->] (4mm,5mm) to[out=30,in=150] (2.5cm,5mm);
\draw[dashed,->] (0,0,0) -- (1,0,0) node[anchor=north]{$x_{1}$};

\end{tikzpicture}
\caption{Breast tissue with tumor}
\label{fig:3d_illustration}
\end{figure}
%
%
%
%
\begin{figure}[htp!]
\centering   
\resizebox{0.24\textwidth}{!}{\includegraphics{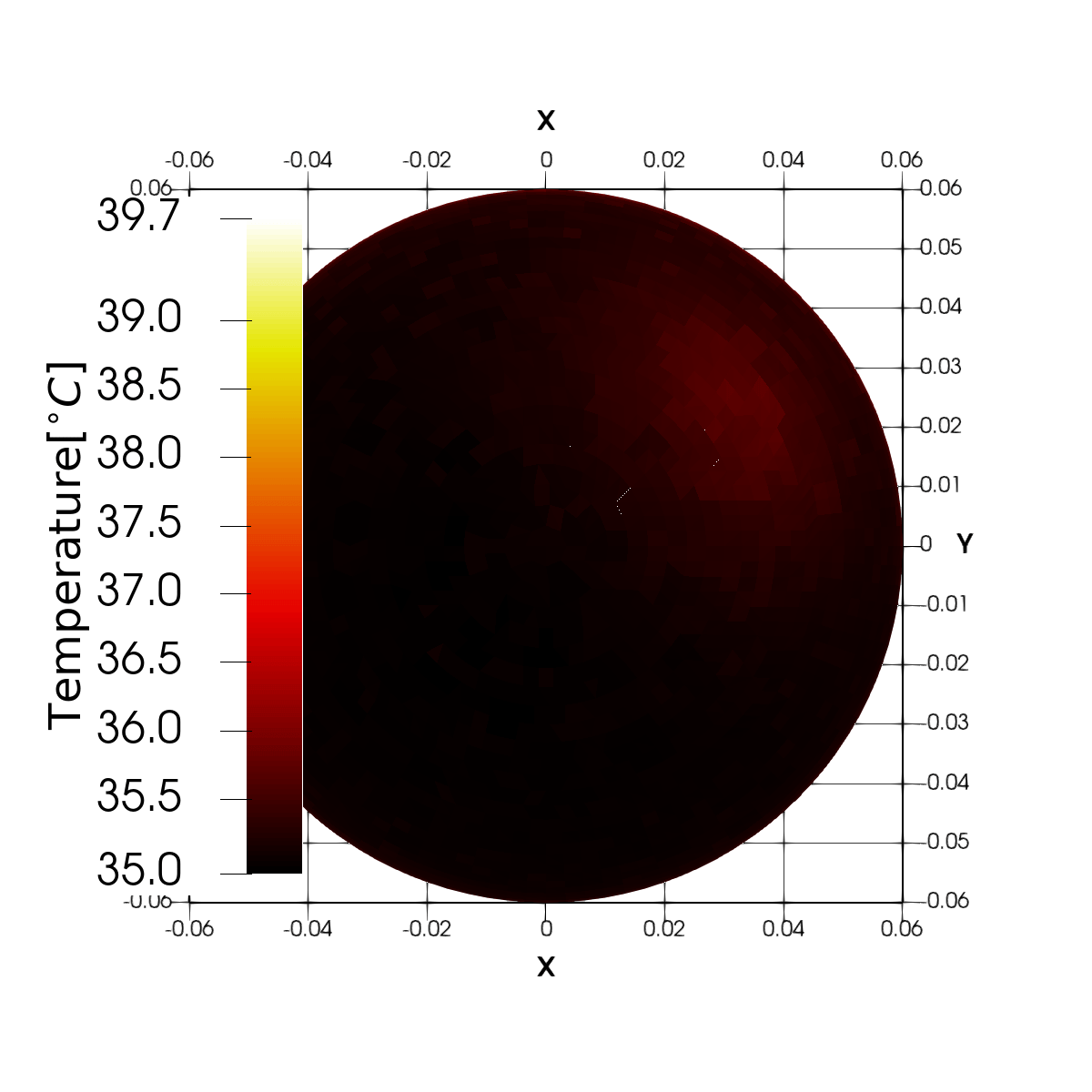}}
\resizebox{0.24\textwidth}{!}{\includegraphics{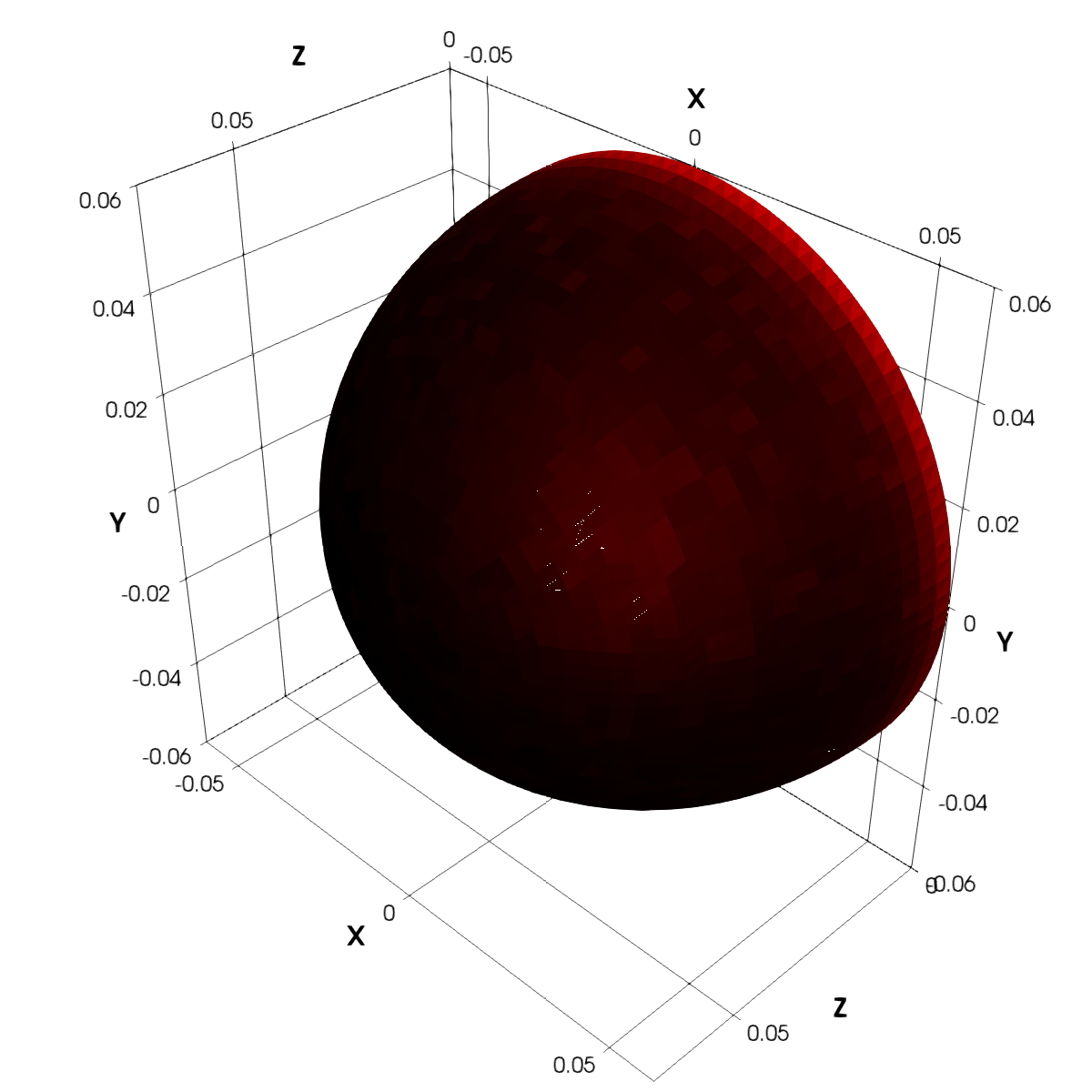}}
\resizebox{0.24\textwidth}{!}{\includegraphics{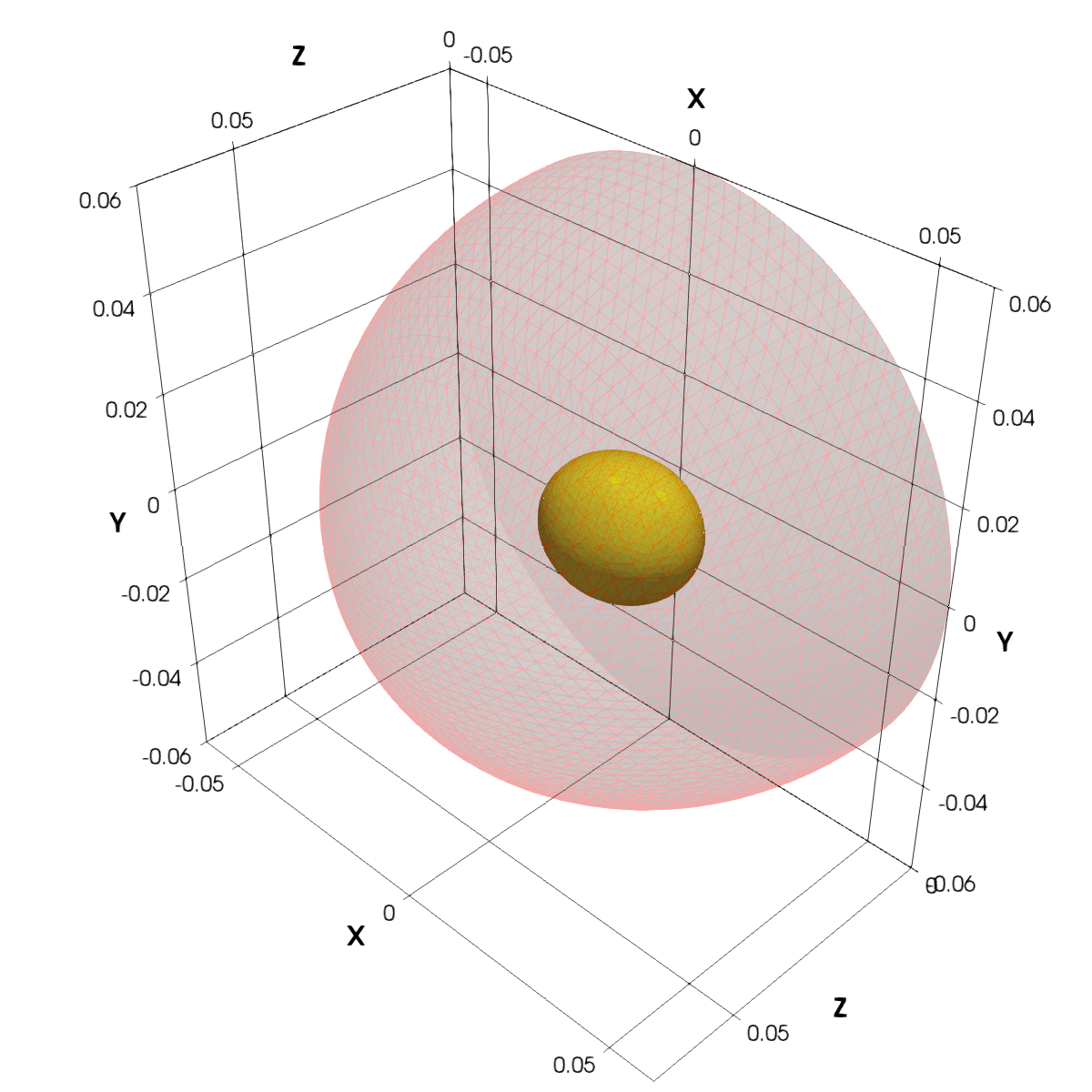}}
\resizebox{0.24\textwidth}{!}{\includegraphics{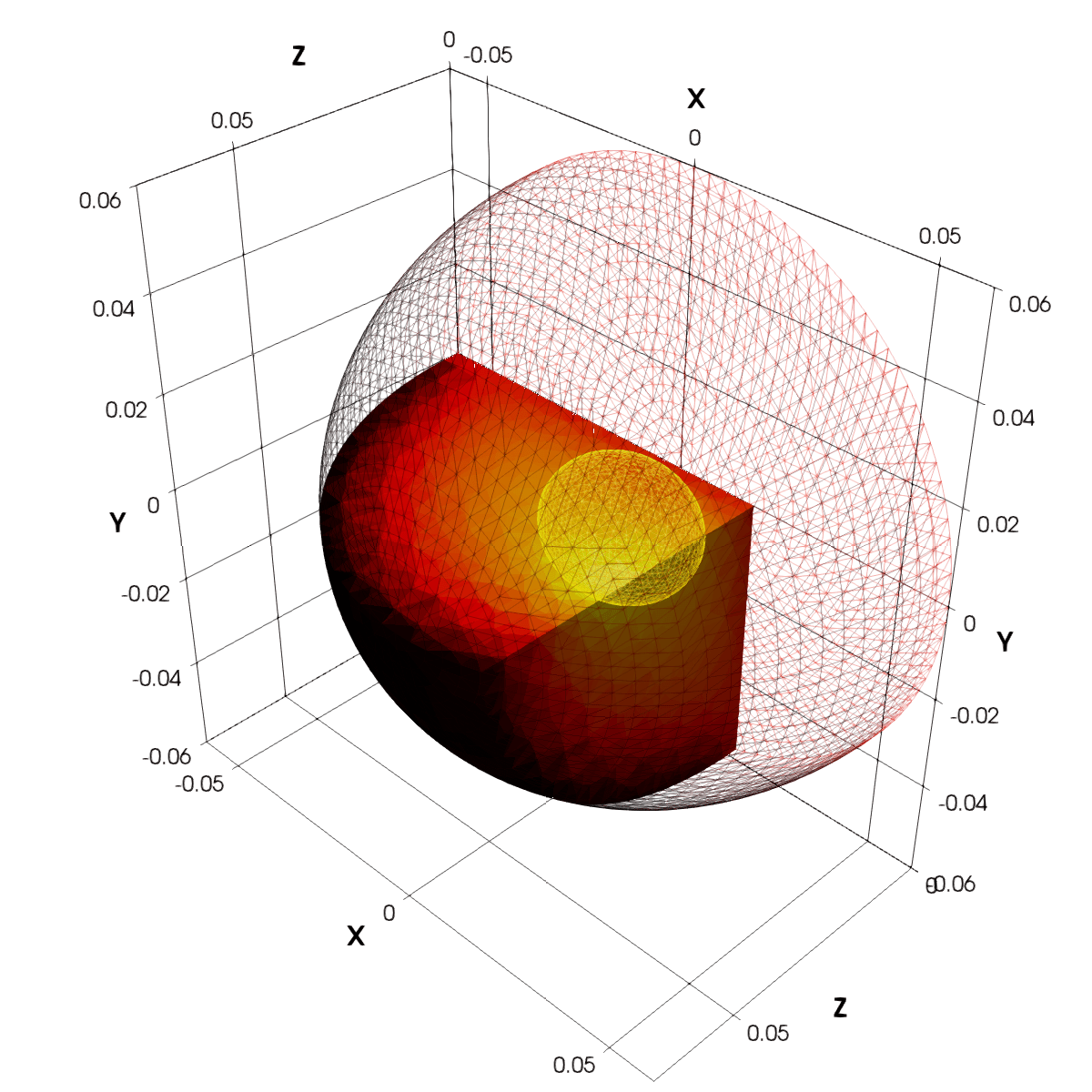}} 
\caption{Exact tumor location and temperature distribution}
\label{fig:3d_setup}
\end{figure}
\begin{figure}[htp!]
\centering   
\resizebox{0.25\textwidth}{!}{\includegraphics{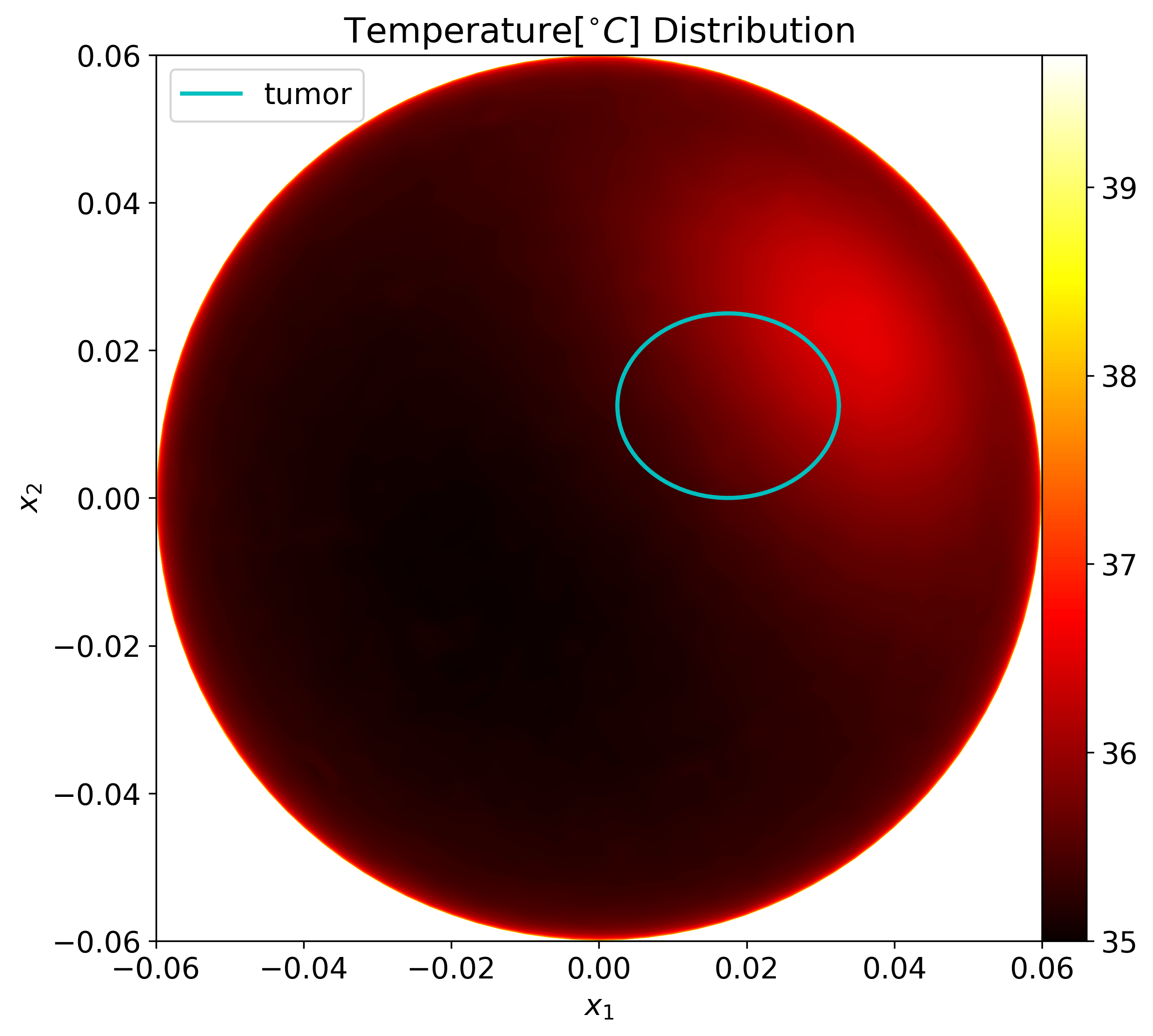}} 
\resizebox{0.25\textwidth}{!}{\includegraphics{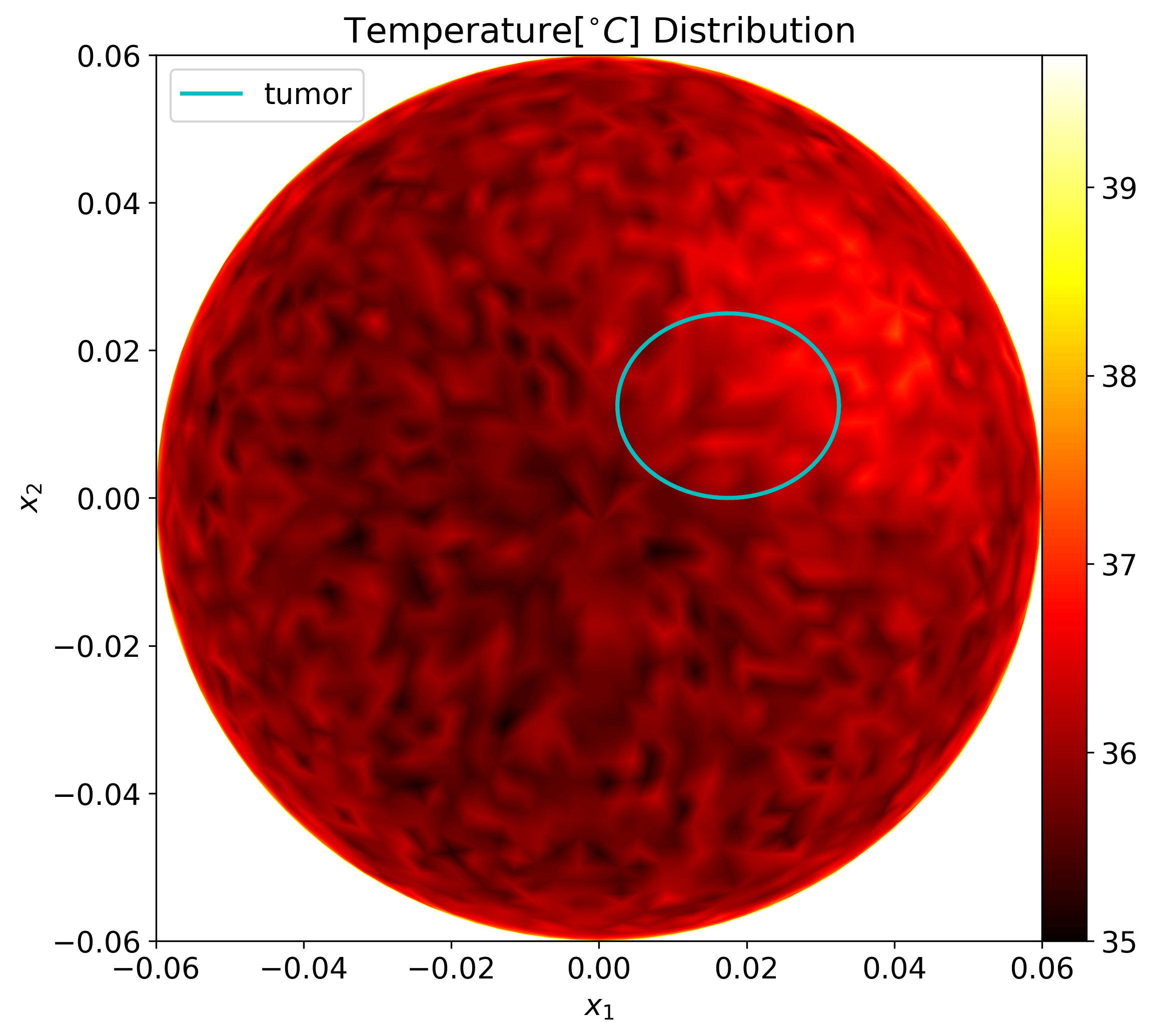}} 
\caption{Exact and noisy temperature distributions on the skin}
\label{fig:3d_temperature_distribution}
\end{figure}
\begin{figure}[htp!]
\centering   
\resizebox{0.15\textwidth}{!}{\includegraphics{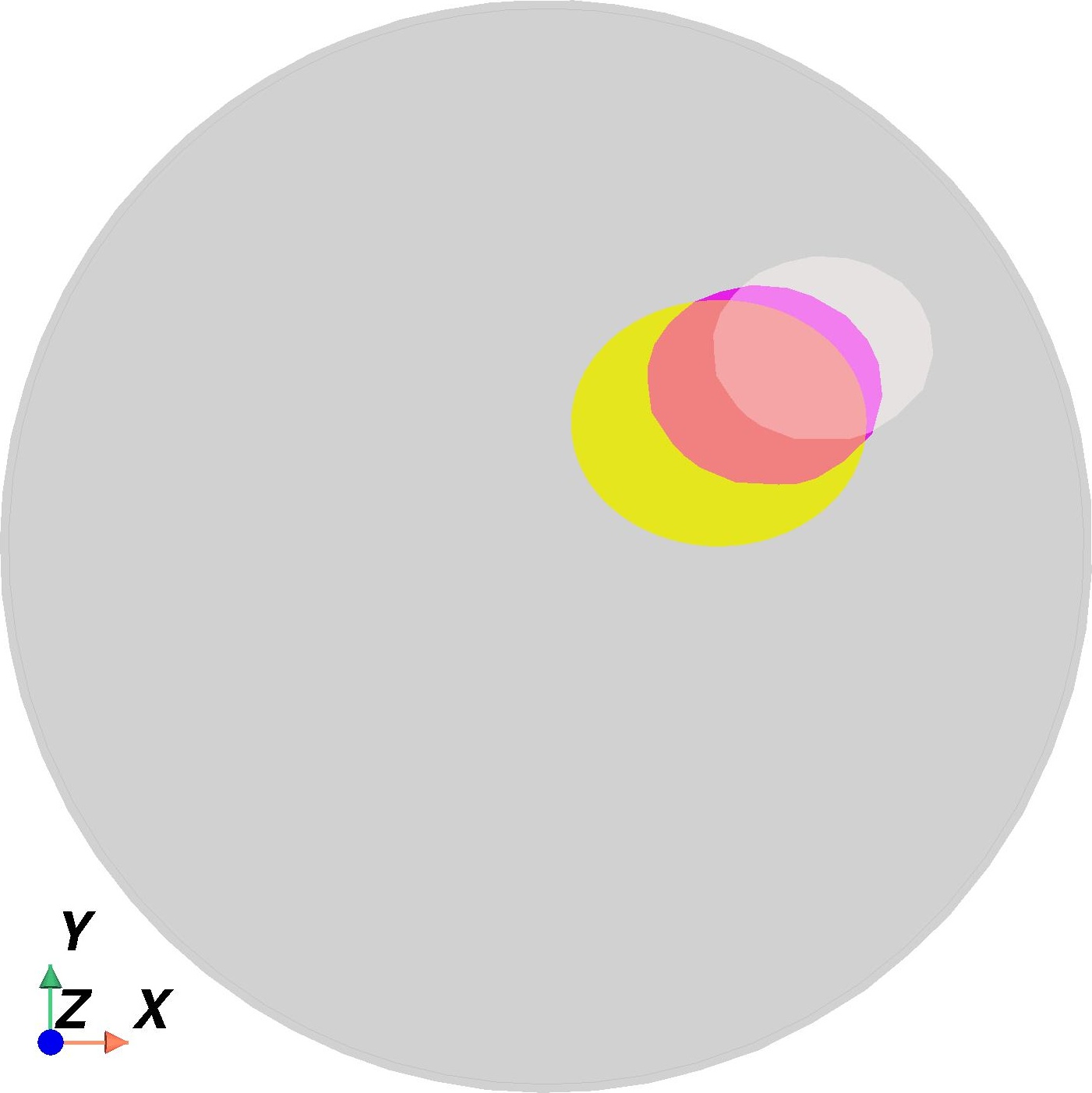}}  
\resizebox{0.15\textwidth}{!}{\includegraphics{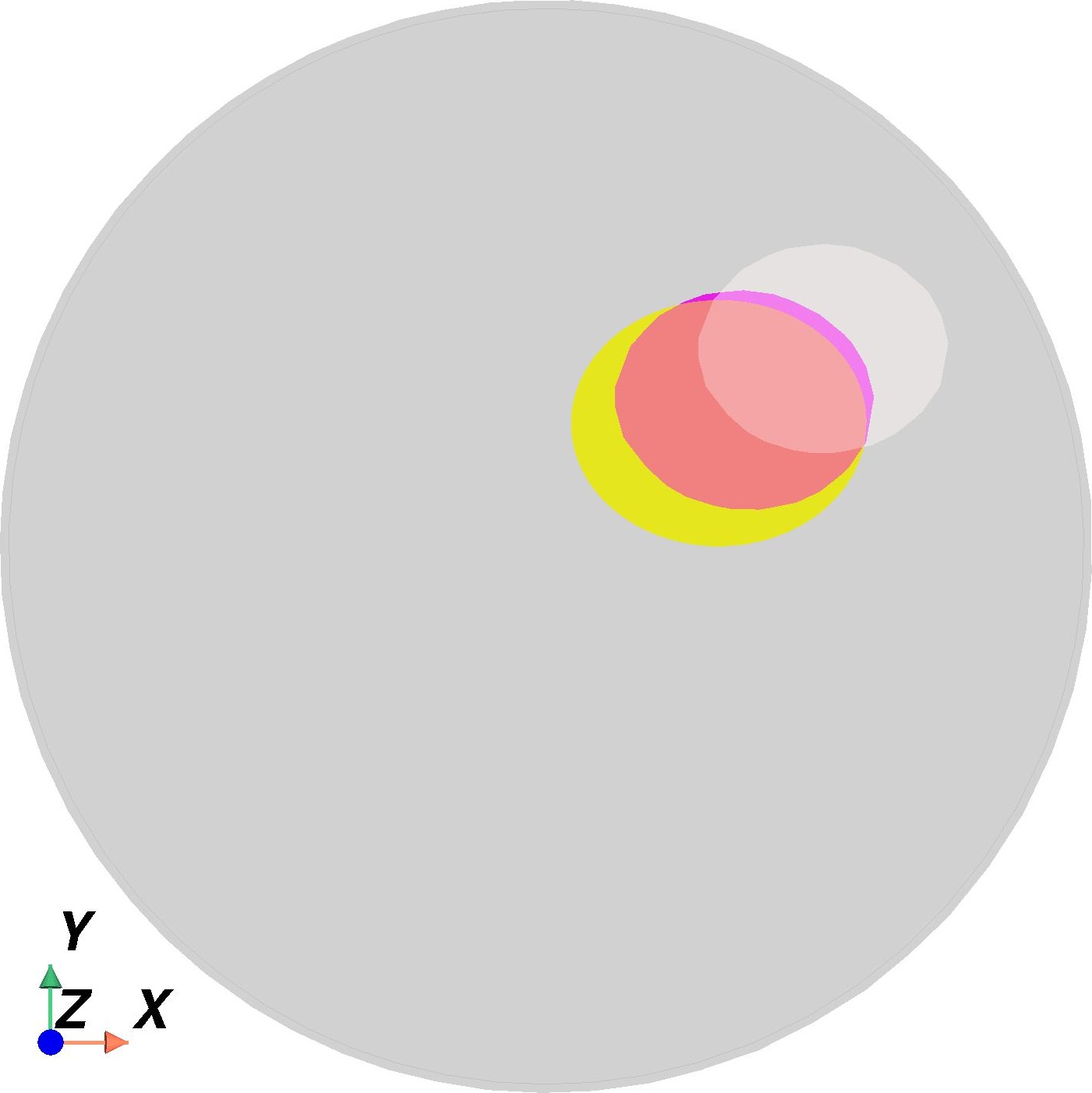}}   
\resizebox{0.15\textwidth}{!}{\includegraphics{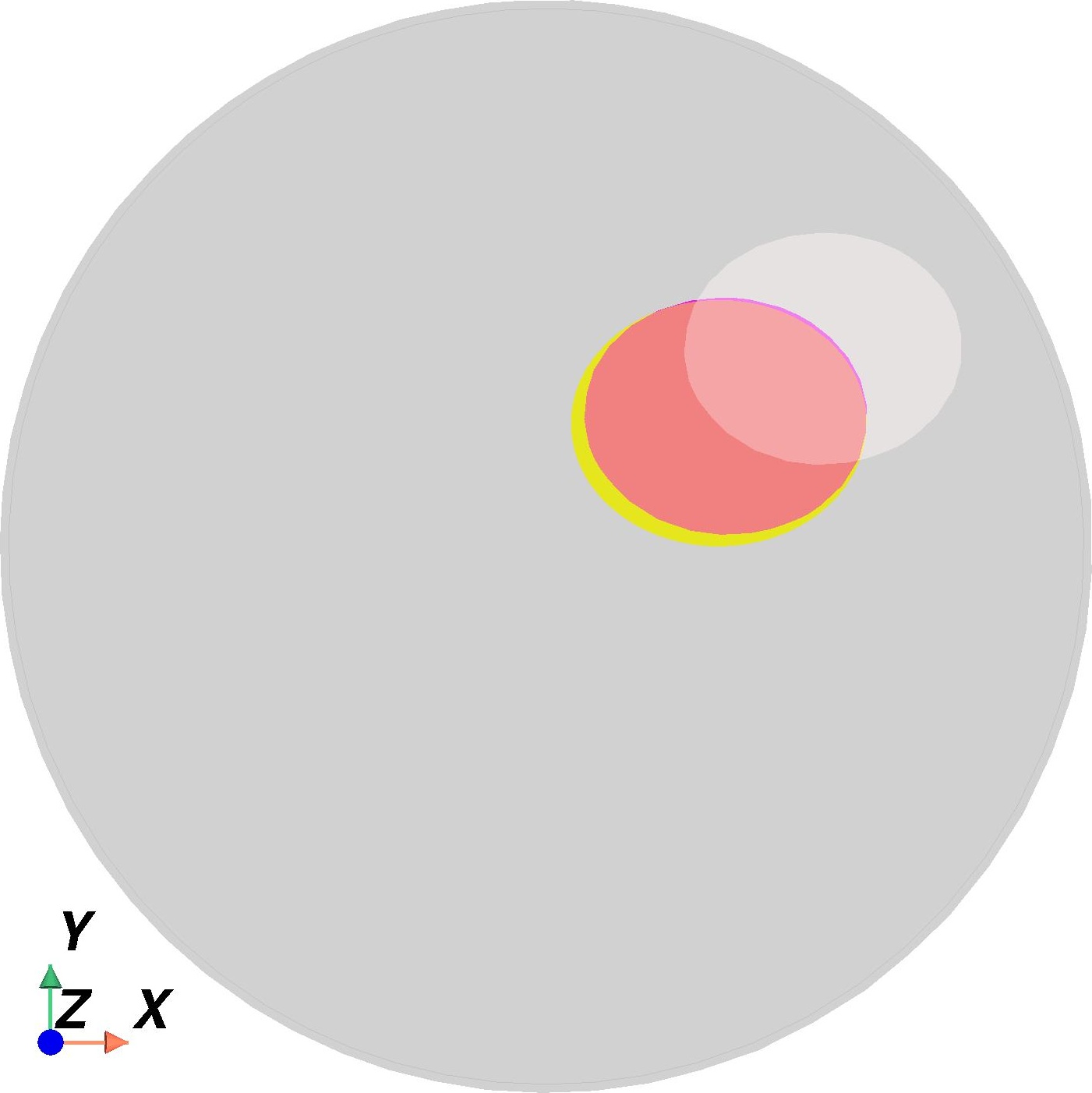}} \quad
\resizebox{0.15\textwidth}{!}{\includegraphics{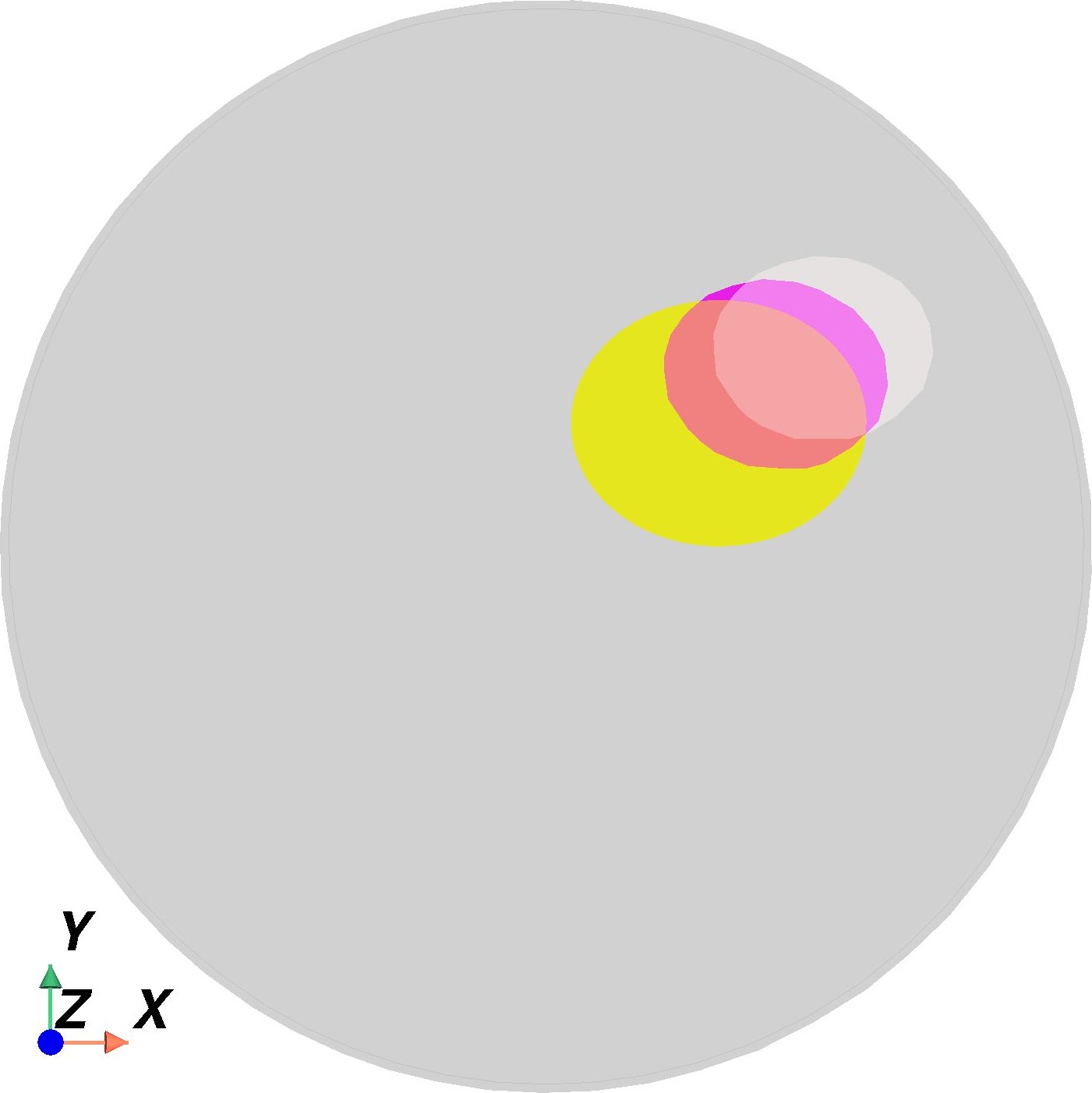}}  
\resizebox{0.15\textwidth}{!}{\includegraphics{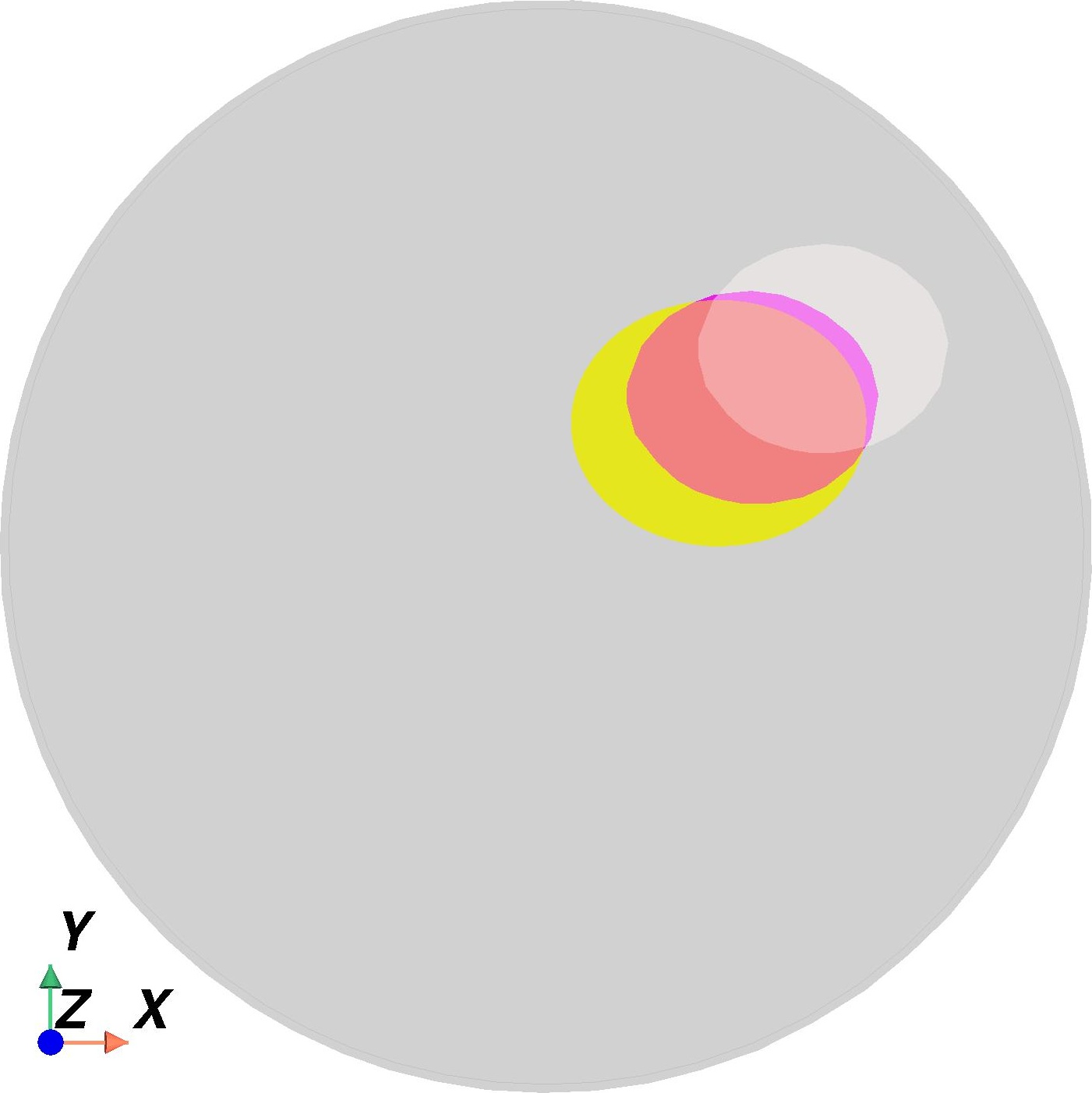}}    
\resizebox{0.15\textwidth}{!}{\includegraphics{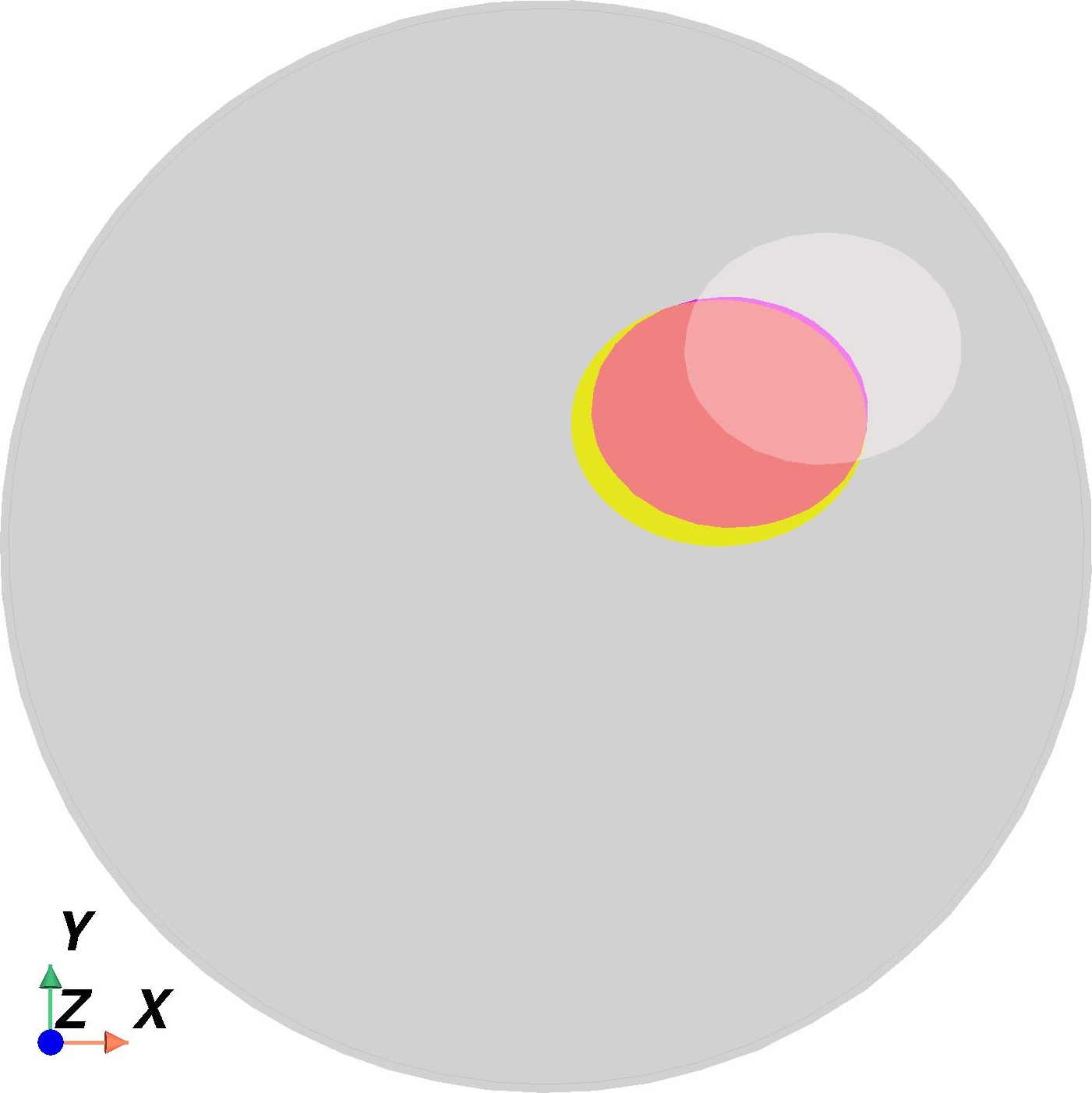}} \\[0.5em]
\resizebox{0.15\textwidth}{!}{\includegraphics{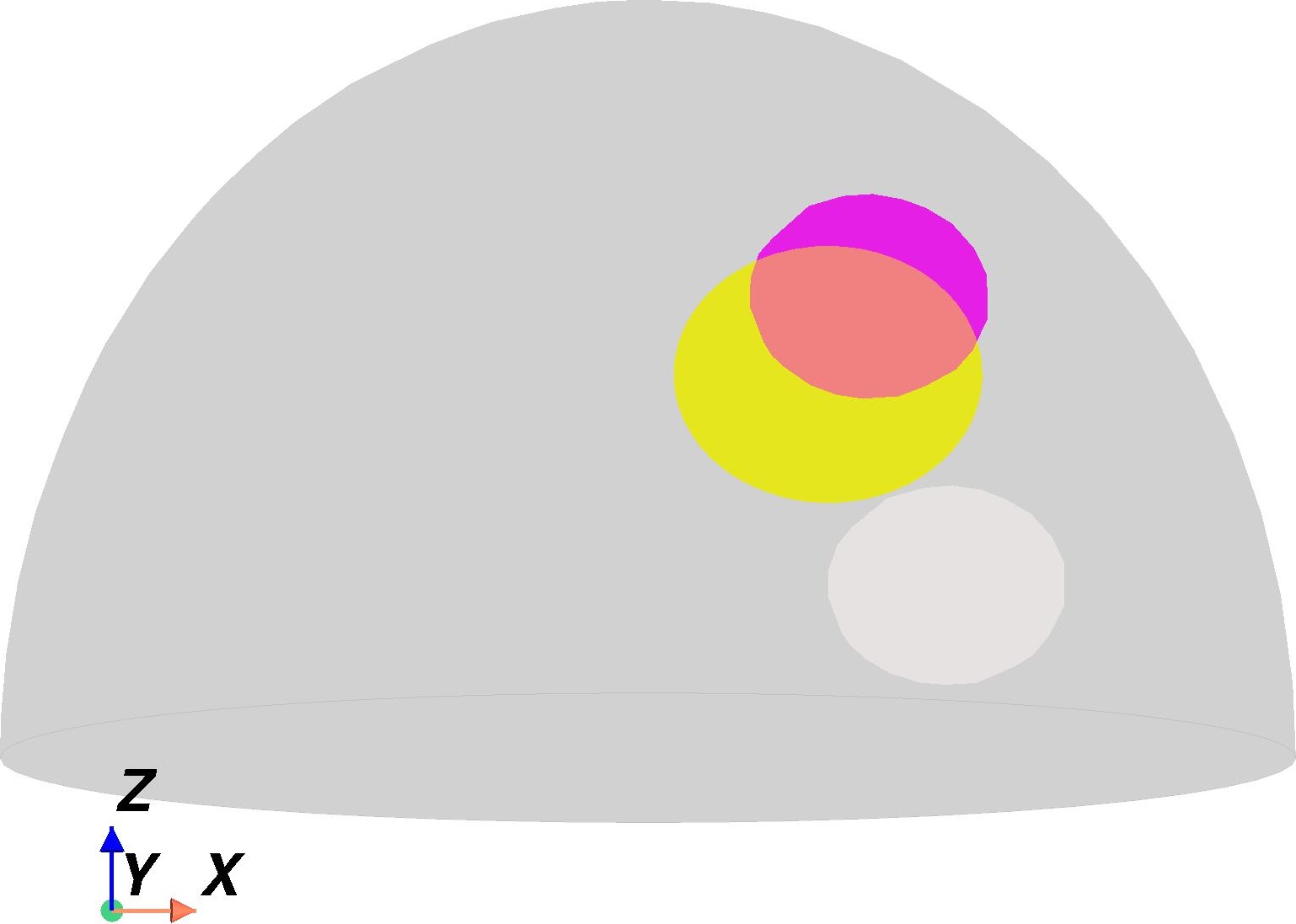}}  
\resizebox{0.15\textwidth}{!}{\includegraphics{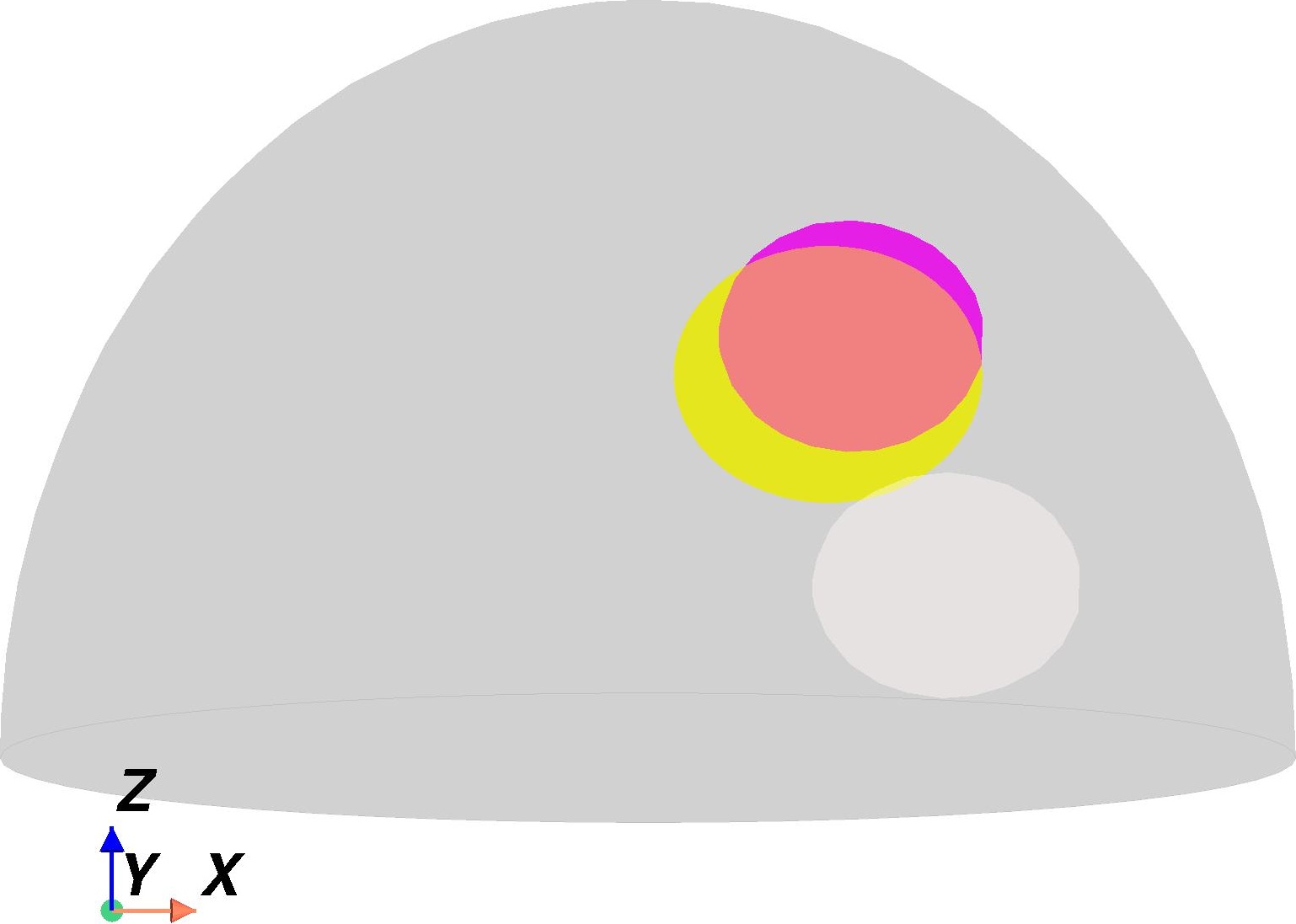}} 
\resizebox{0.15\textwidth}{!}{\includegraphics{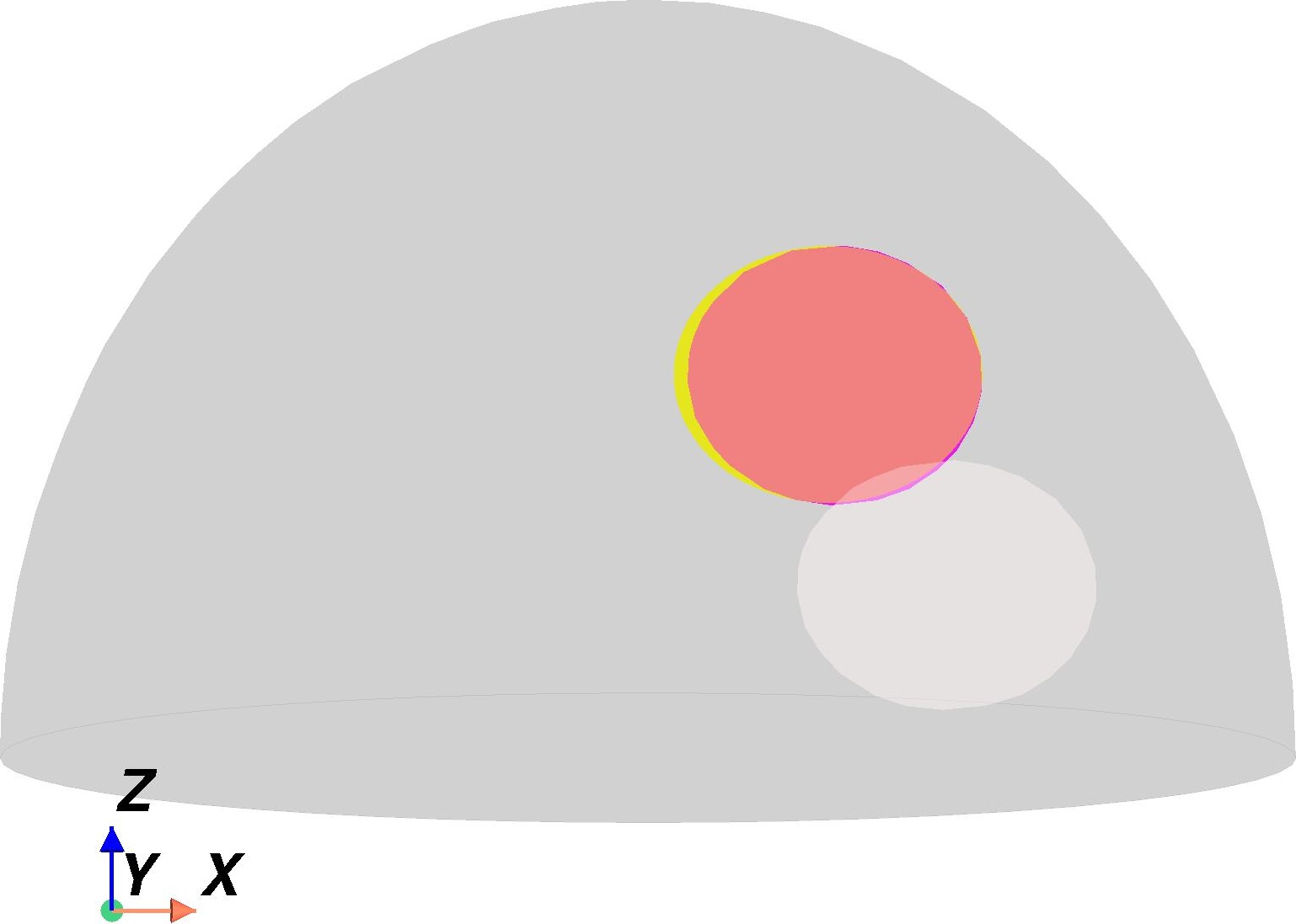}} \quad
\resizebox{0.15\textwidth}{!}{\includegraphics{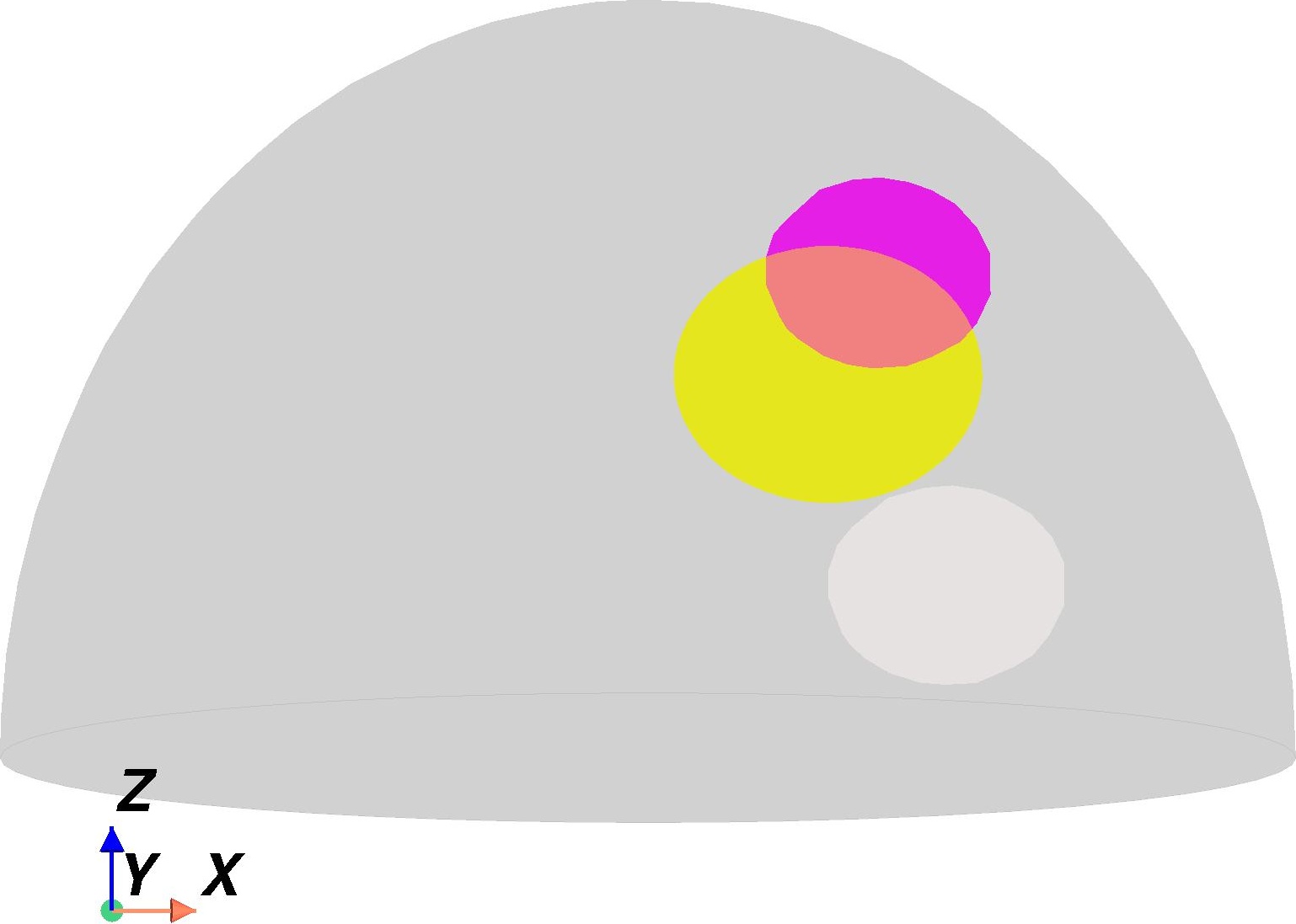}}  
\resizebox{0.15\textwidth}{!}{\includegraphics{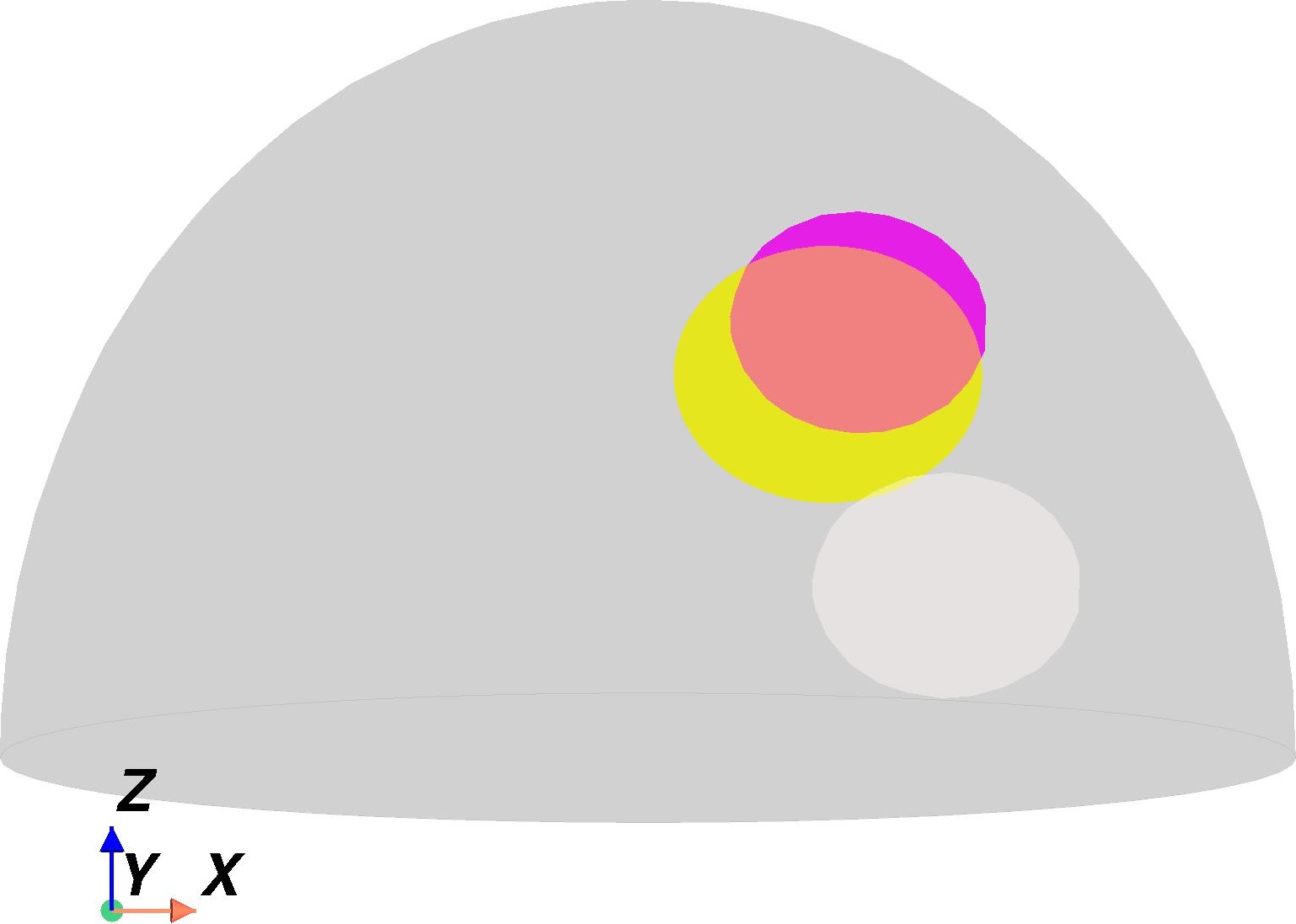}} 
\resizebox{0.15\textwidth}{!}{\includegraphics{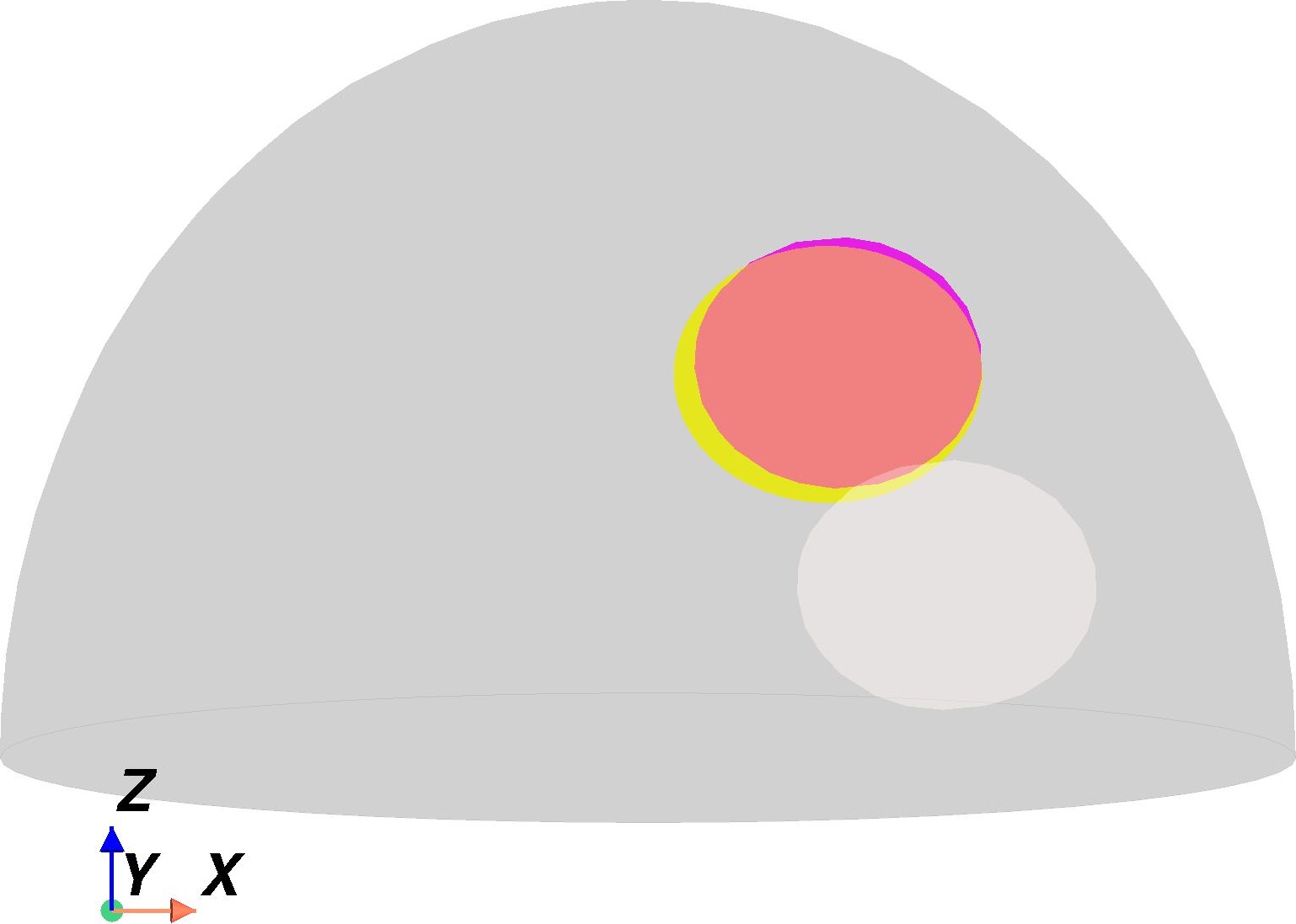}}  
\resizebox{0.15\textwidth}{!}{\includegraphics{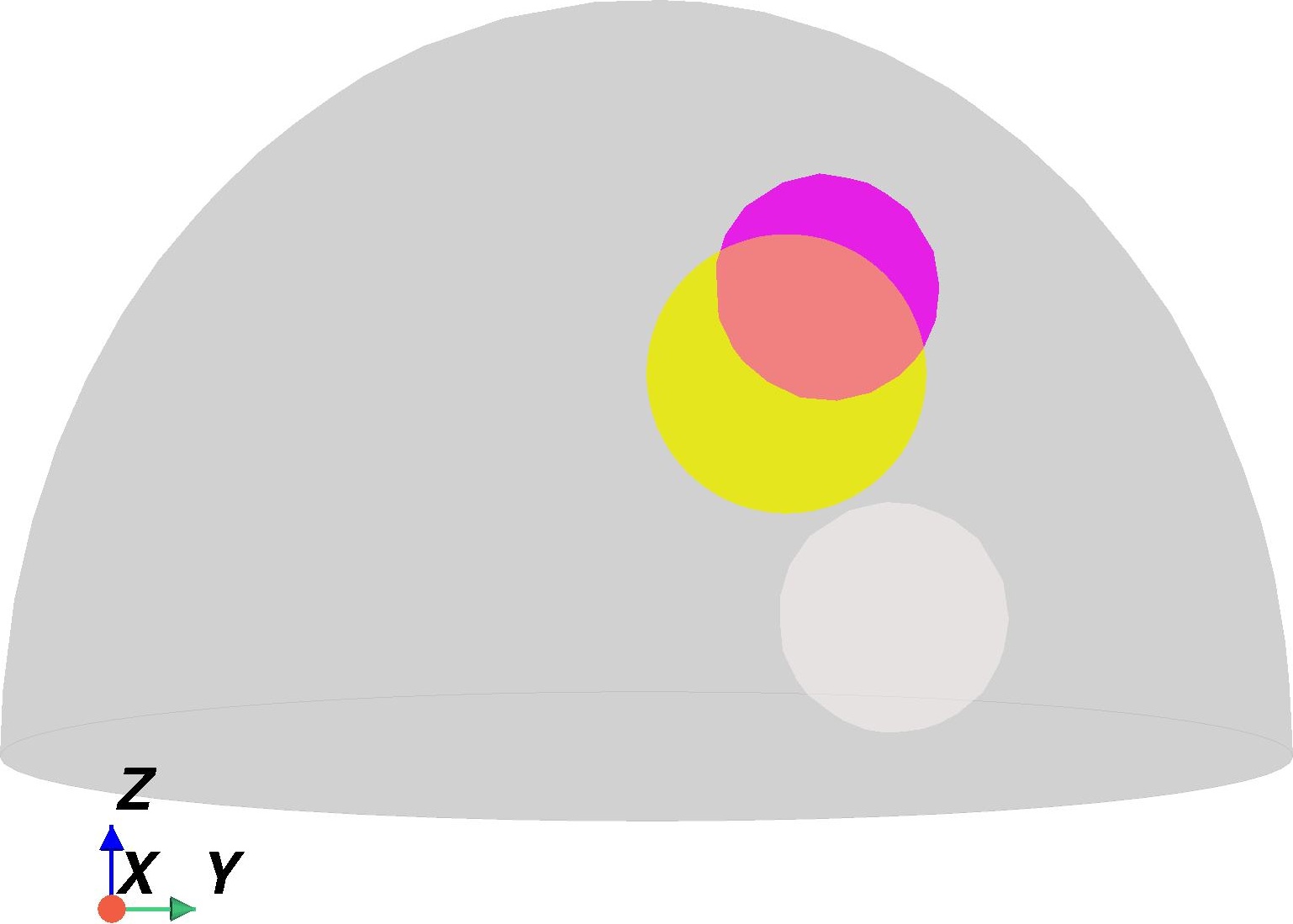}} 
\resizebox{0.15\textwidth}{!}{\includegraphics{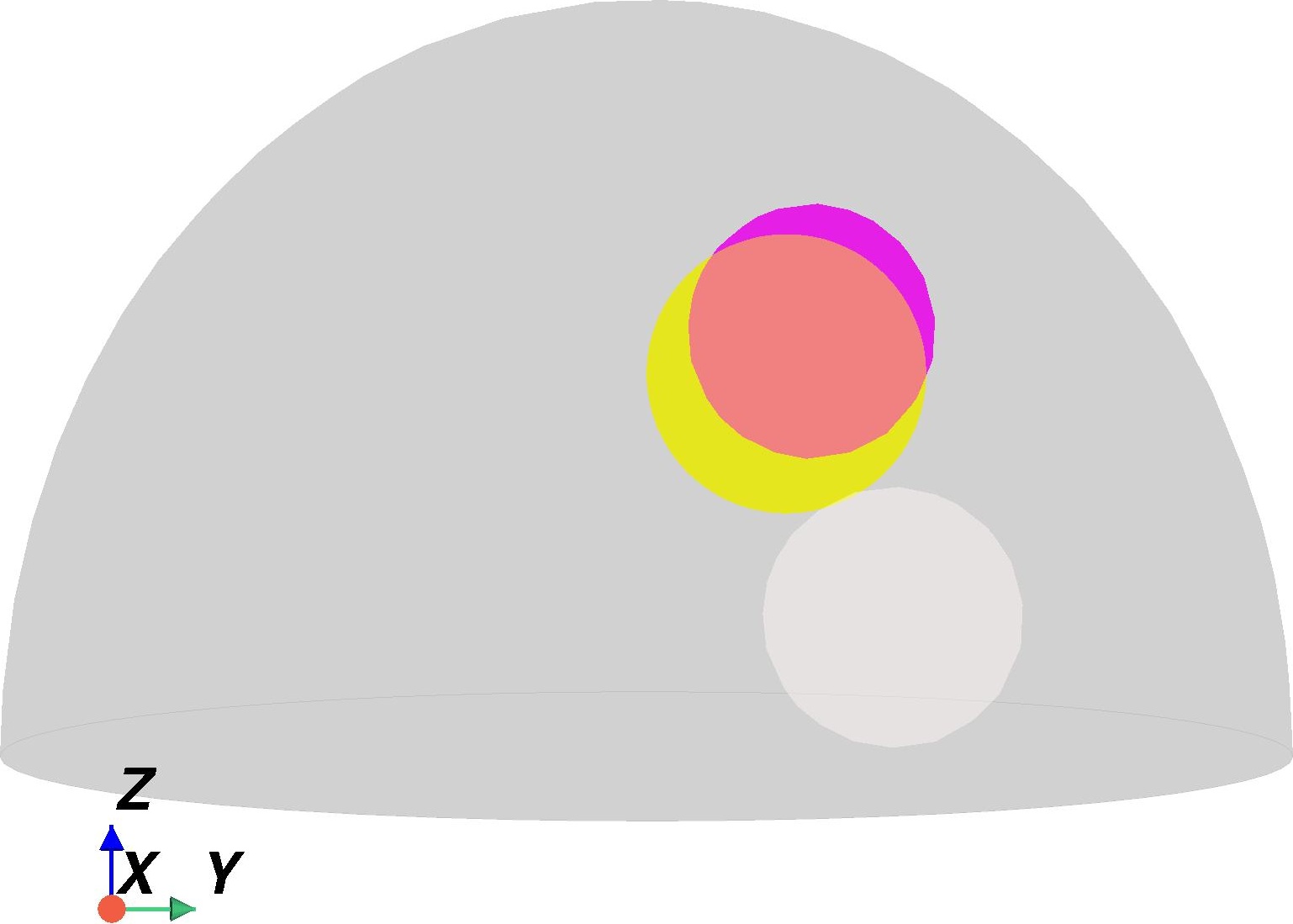}} 
\resizebox{0.15\textwidth}{!}{\includegraphics{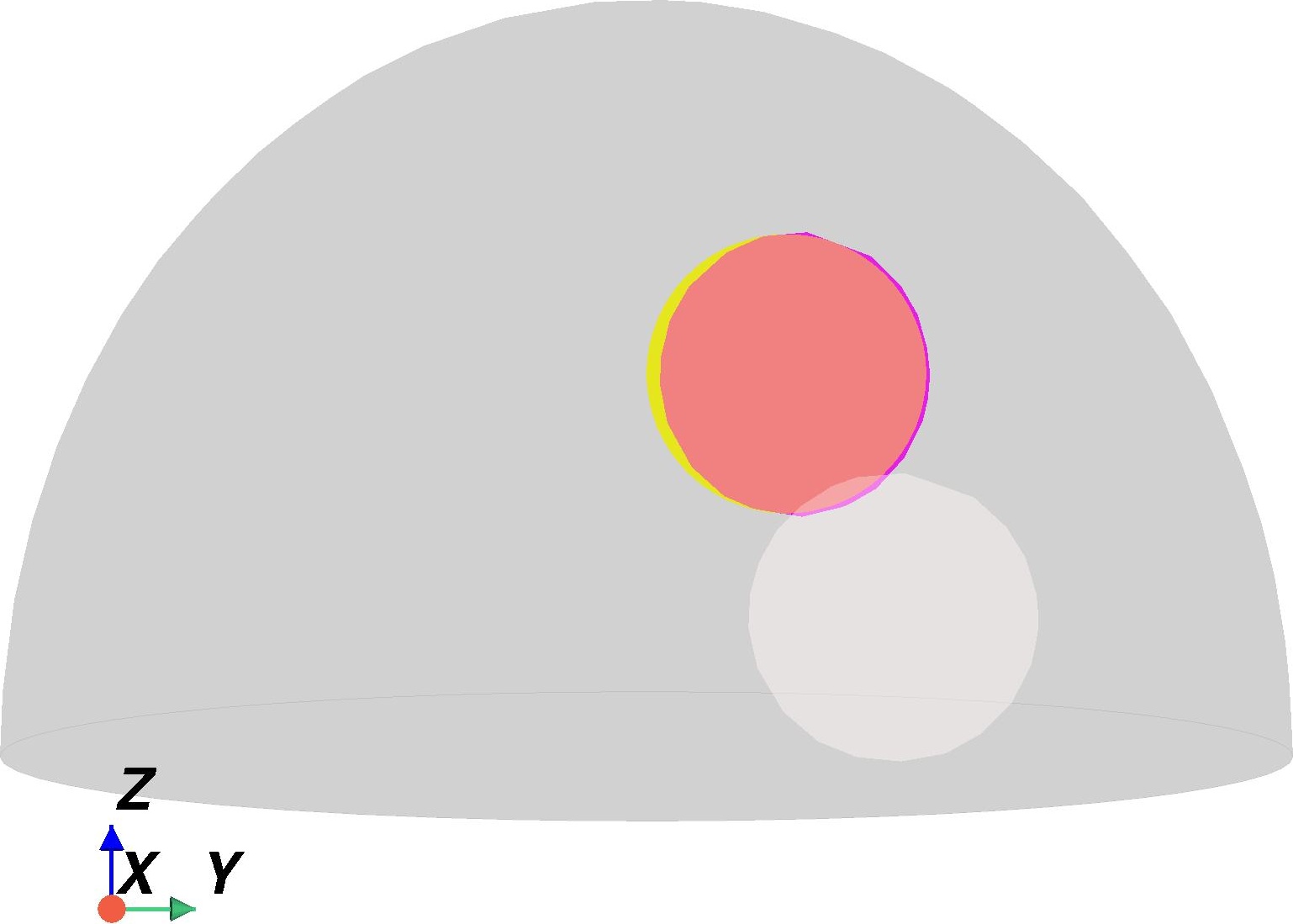}} \quad
\resizebox{0.15\textwidth}{!}{\includegraphics{example4_n0.01b1mod5rs0.8YZ.jpg}} 
\resizebox{0.15\textwidth}{!}{\includegraphics{example4_n0.01b1mod5rs0.9YZ.jpg}} 
\resizebox{0.15\textwidth}{!}{\includegraphics{example4_n0.01b1mod5rs1YZ.jpg}} \\[0.5em]
\resizebox{0.16\textwidth}{!}{\includegraphics{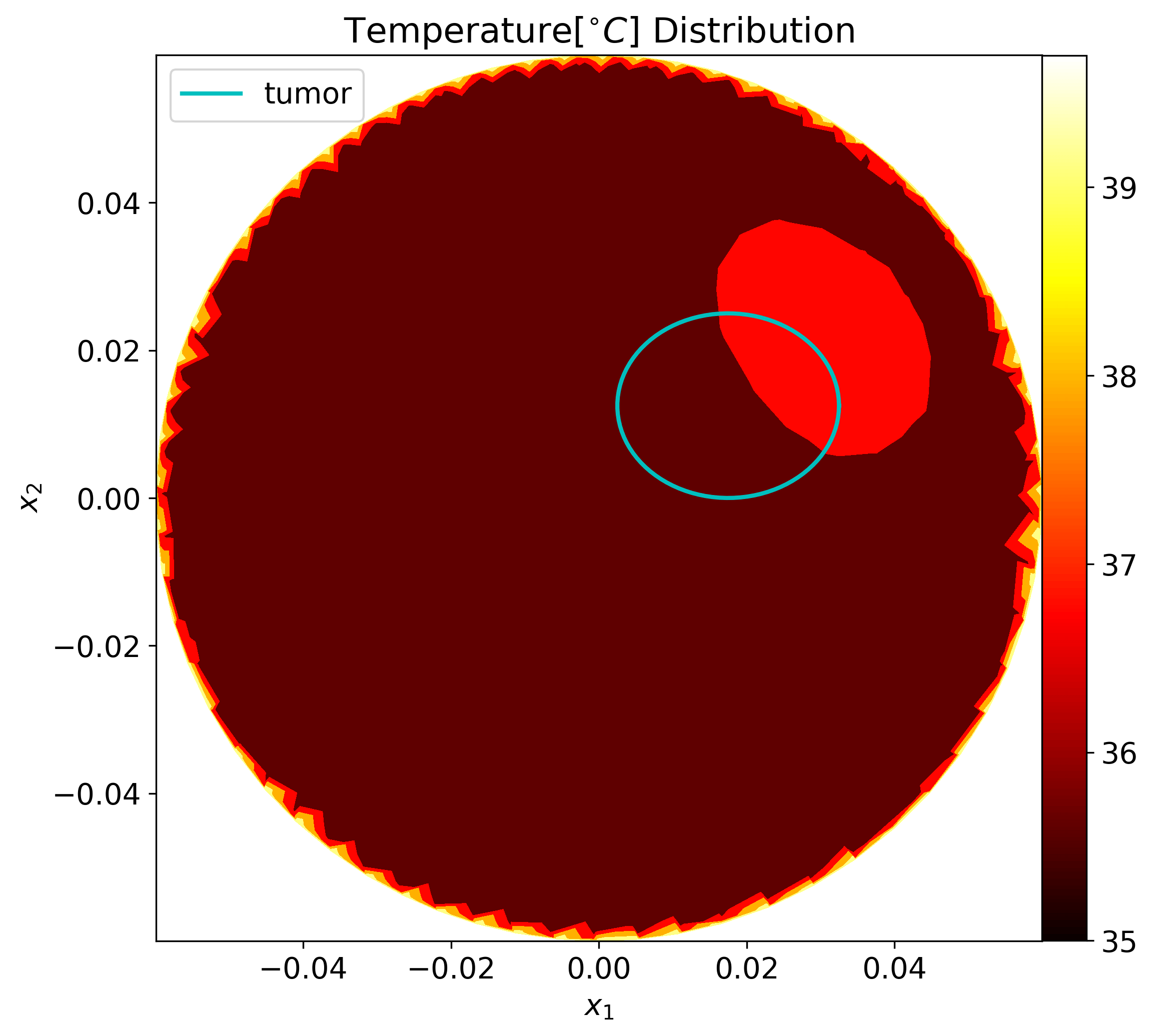}} 
\resizebox{0.16\textwidth}{!}{\includegraphics{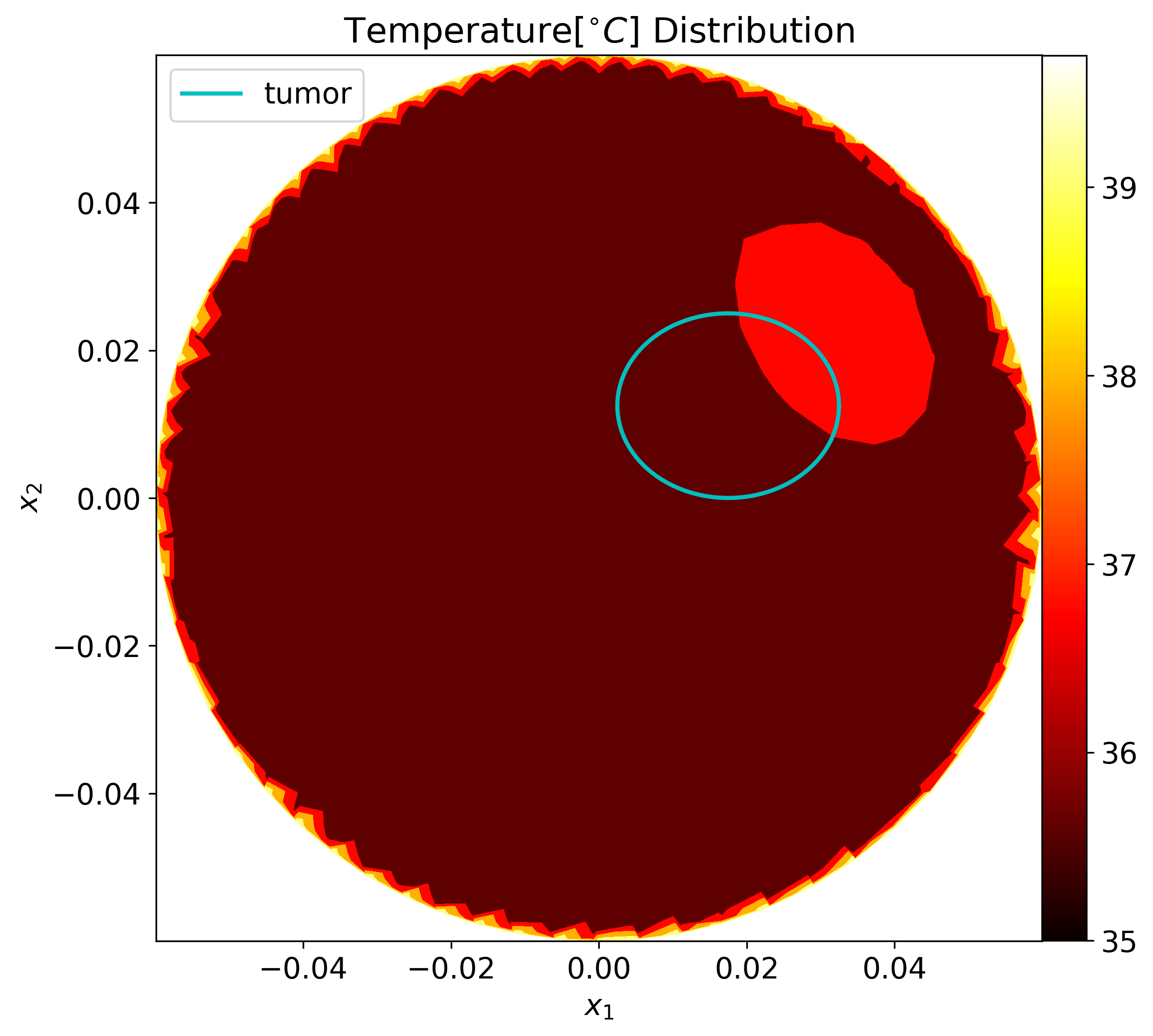}} 
\resizebox{0.16\textwidth}{!}{\includegraphics{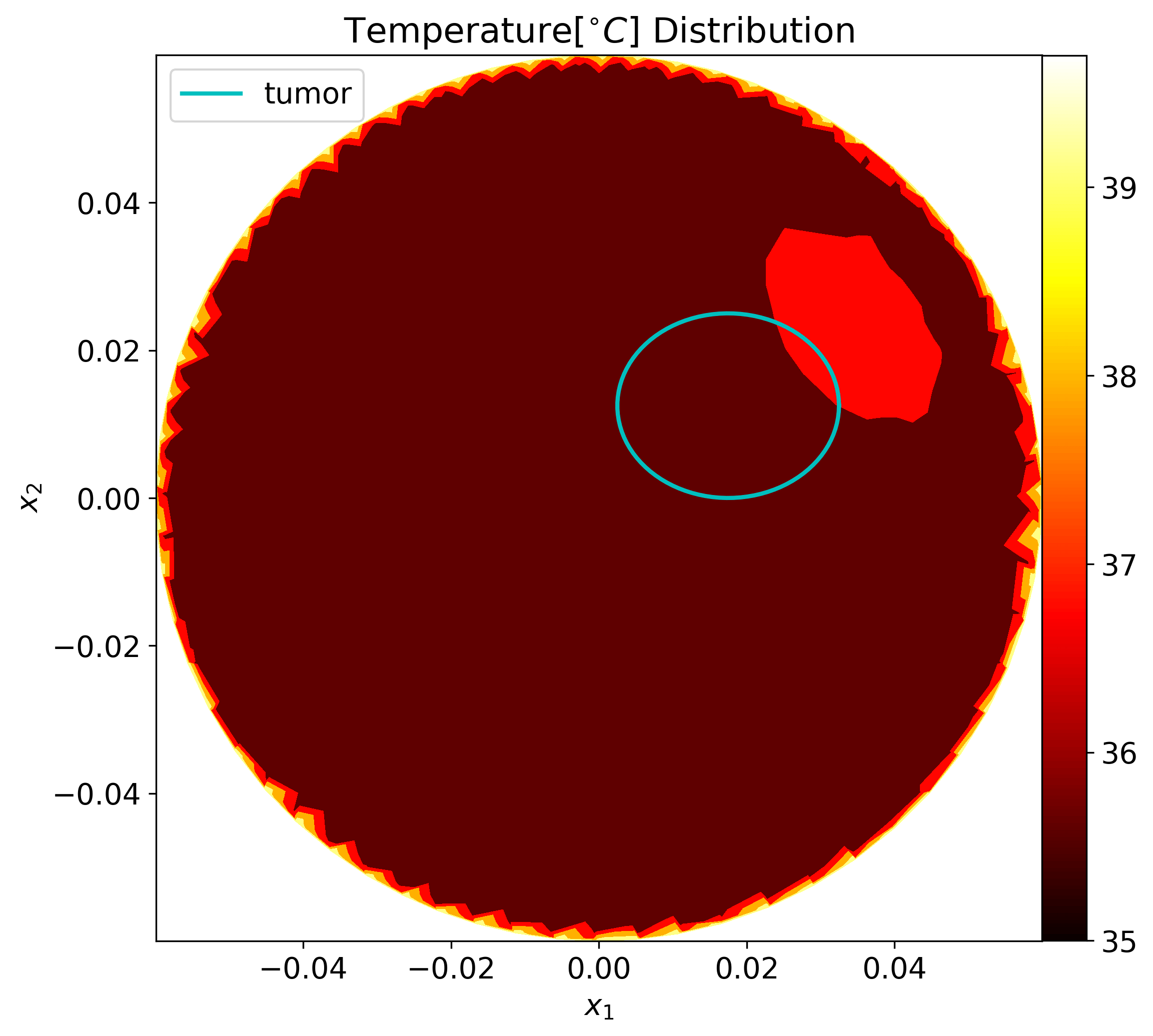}} \hfill
\resizebox{0.16\textwidth}{!}{\includegraphics{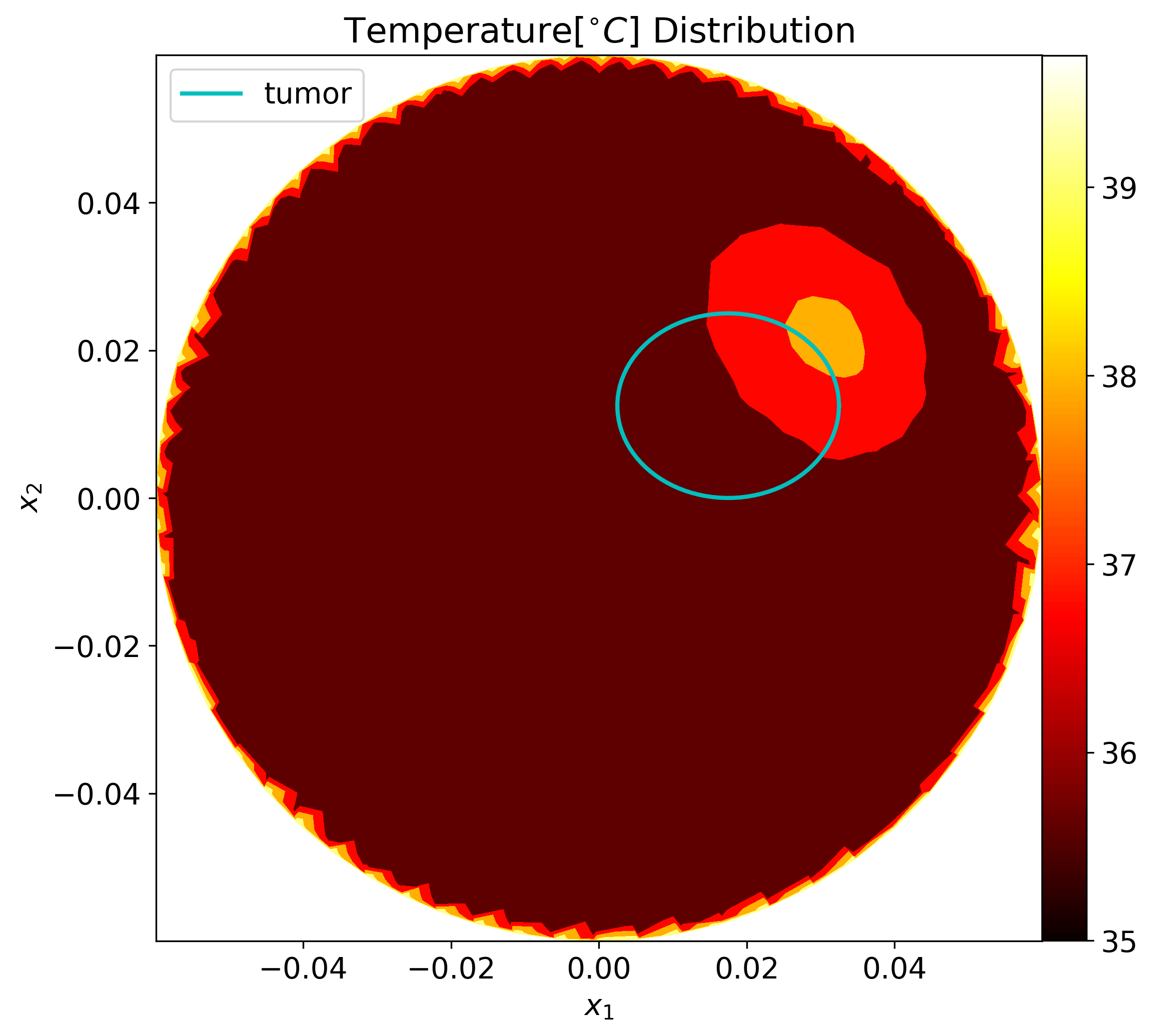}} 
\resizebox{0.16\textwidth}{!}{\includegraphics{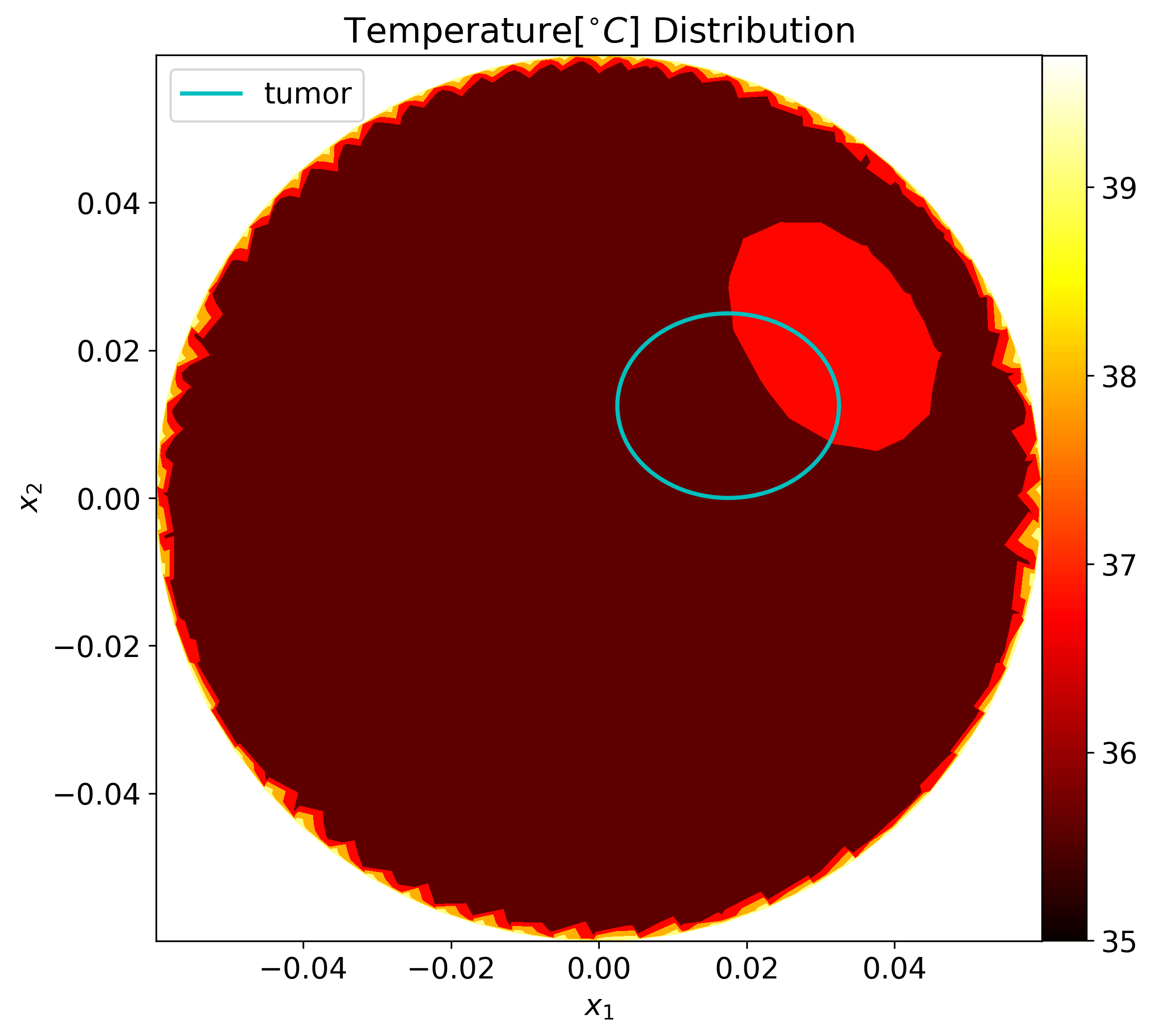}} 
\resizebox{0.16\textwidth}{!}{\includegraphics{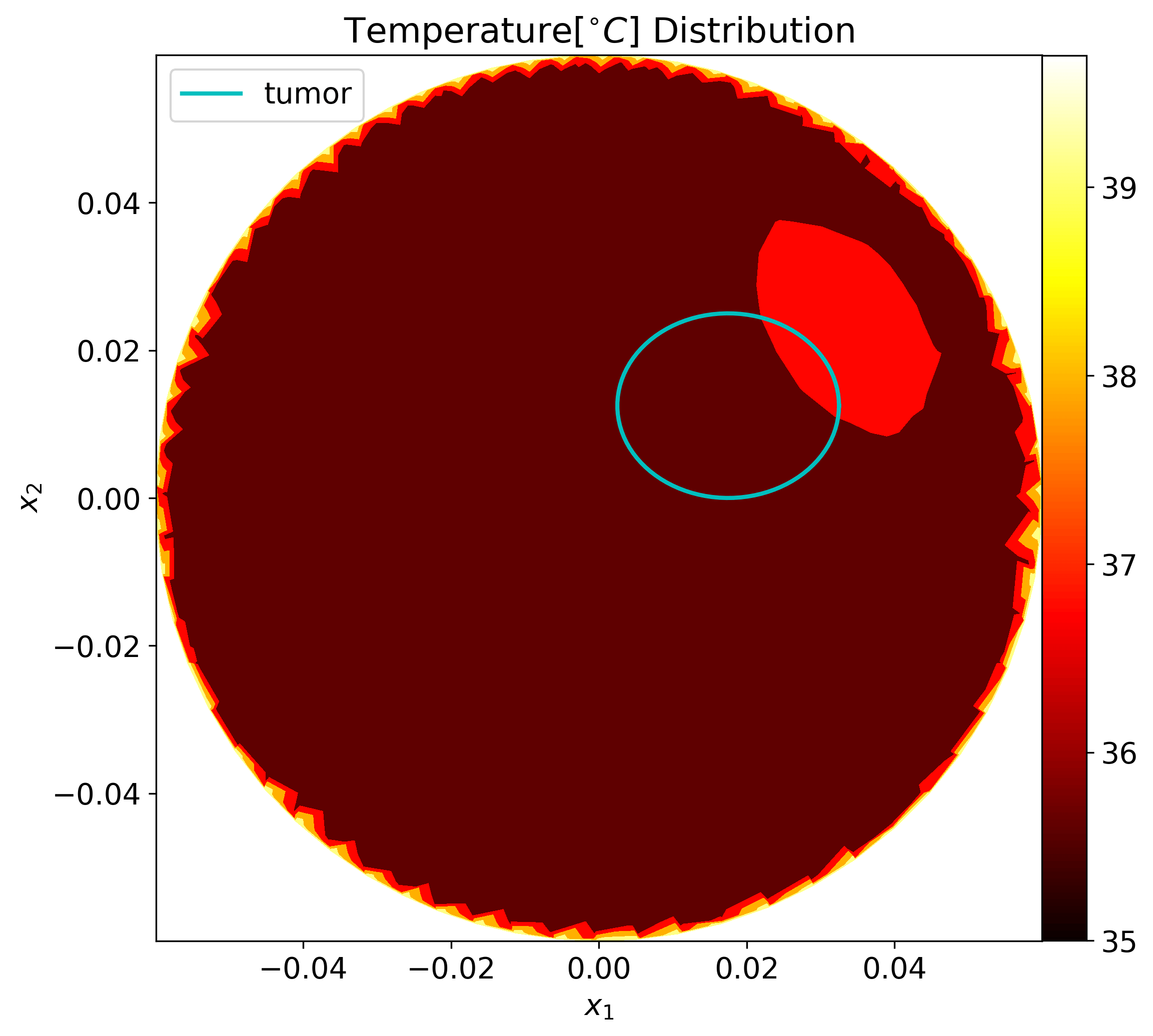}} 
\caption{Recovered shapes (magenta) from different initial guesses (light color) versus the exact tumor (yellow) from various views. The bottom row shows the temperature distributions on the skin surface of recovered shapes with three contour levels.  Left panel: without volume penalization and balancing principle \eqref{eq:balancing_principle}, right panel: with regularization. Columns (left to right) correspond to $\partial\varOmega_{0}^{(1)}$, $\partial\varOmega_{0}^{(2)}$, and $\partial\varOmega_{0}^{(3)}$.}
\label{fig:3d_results}
\end{figure}
\begin{figure}[htp!]
\centering   
\resizebox{0.4\textwidth}{!}{\includegraphics{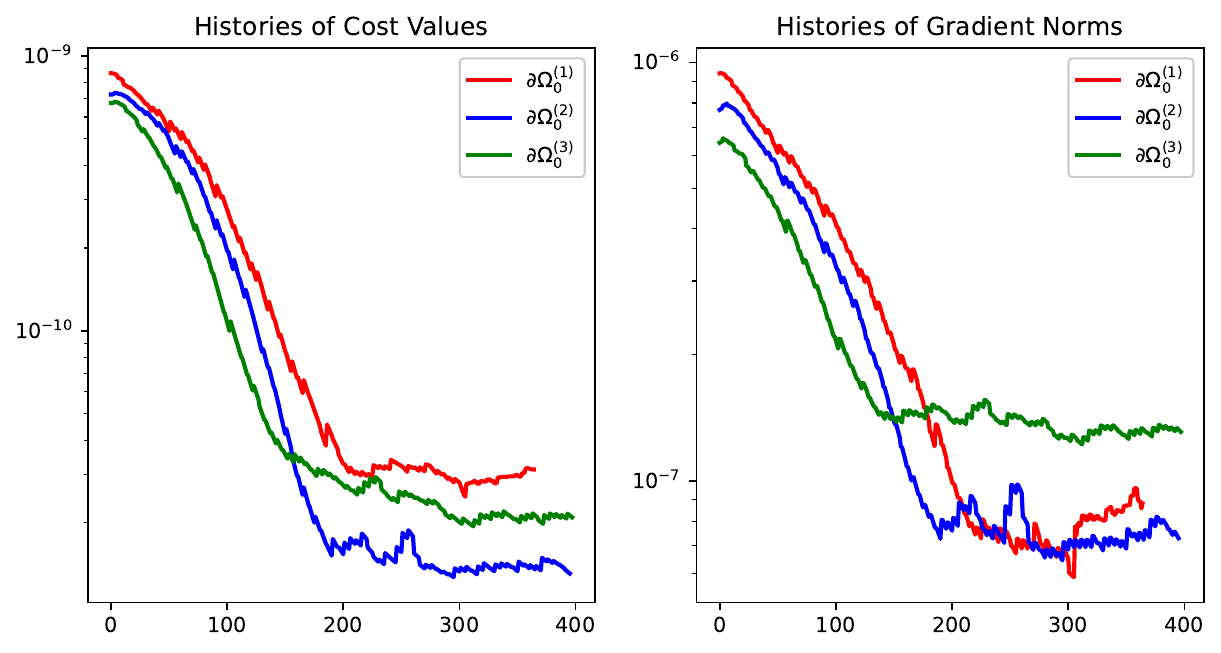}}  \hfill
\resizebox{0.575\textwidth}{!}{\includegraphics{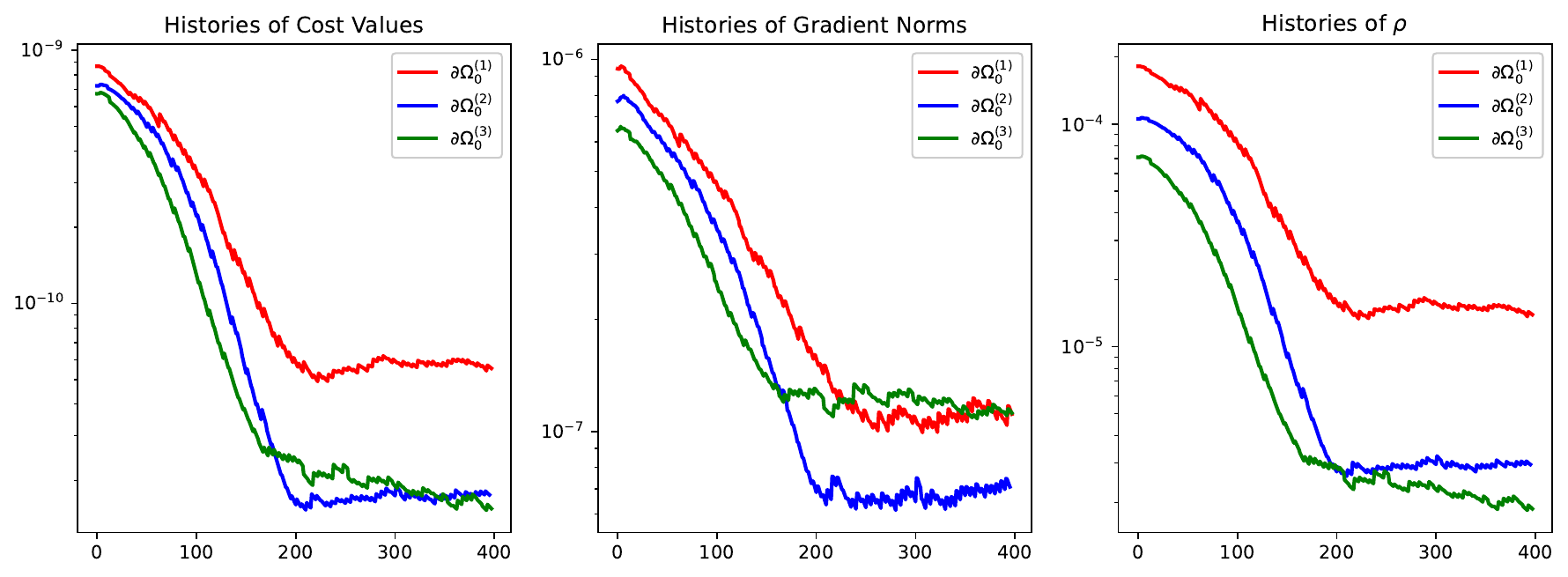}} 
\caption{Histories of cost values and gradient norms. Left: without volume penalization and balancing principle \eqref{eq:balancing_principle}; Right: with regularization, including histories of the parameter $\rho$ in \eqref{eq:balancing_principle}.}
\label{fig:3d_histories_of_values}
\end{figure}
\begin{figure}[htp!]
\centering    
\resizebox{0.2\textwidth}{!}{\includegraphics{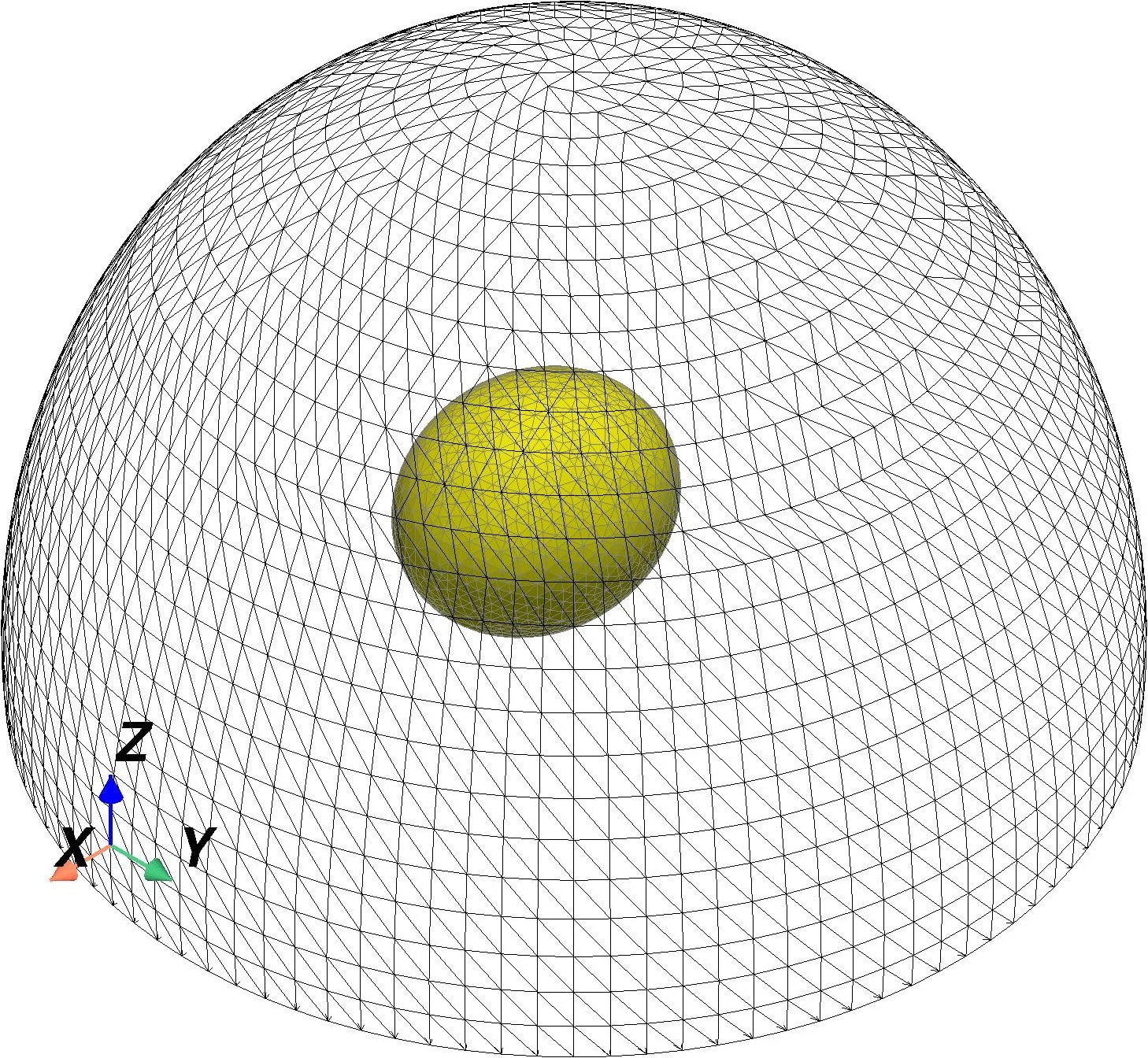}} \quad 
\resizebox{0.2\textwidth}{!}{\includegraphics{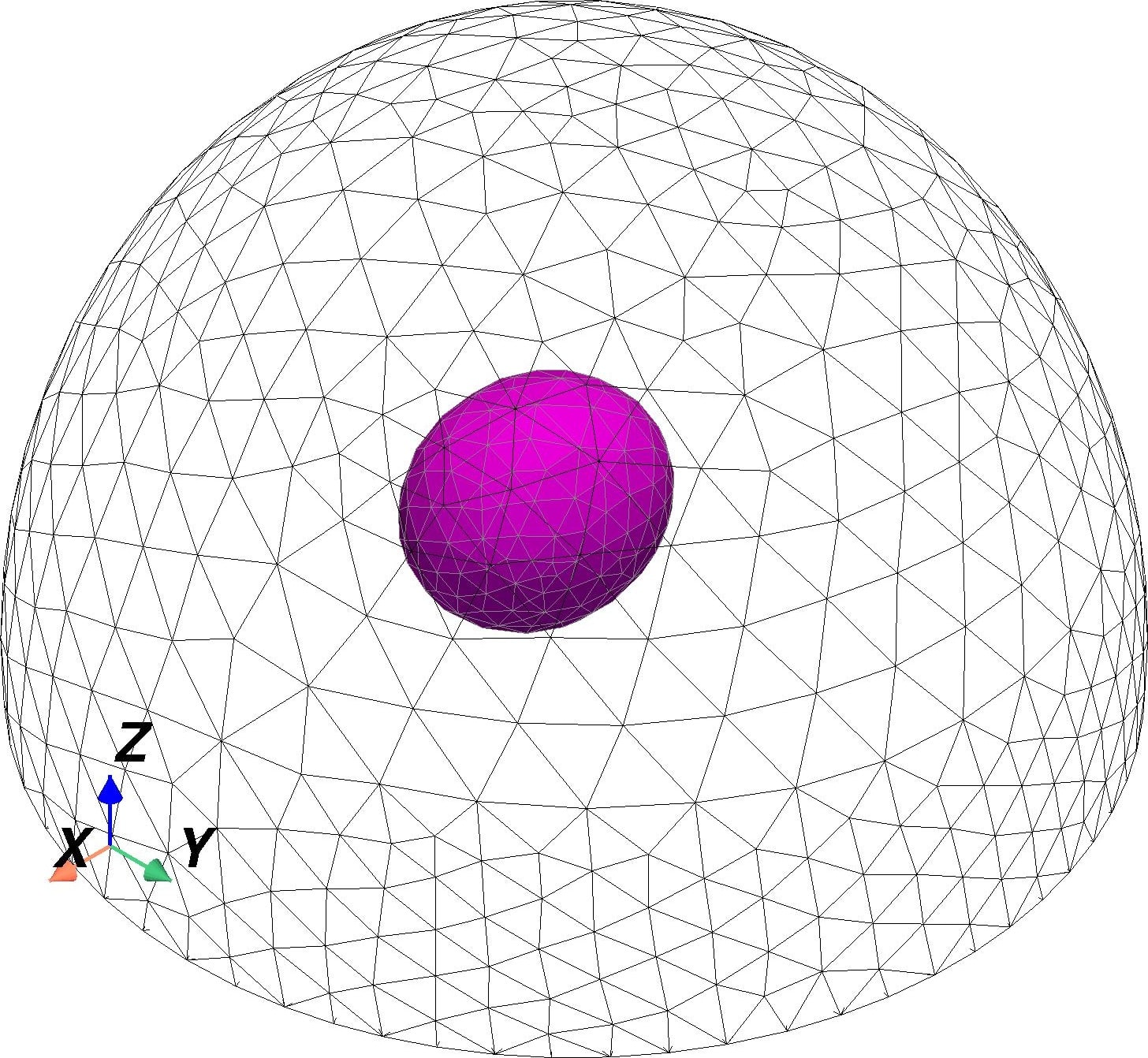}} 
\caption{The exact (yellow) and computed (magenta) shapes for $\varsigma = 1$.}
\label{fig:3d_mesh_profiles}
\end{figure}
%
%
%
\subsection{\harbrecht{Identification of two tumors}}\label{subsec:test_3d_multiple_tumors}
We repeat the previous example, but this time consider two embedded tumors. 
The tumors are ellipsoidal in shape. 
The first tumor has semi-axes $r_{1}^{\ast} = 0.0125$ (m), $r_{2}^{\ast} = 0.015$ (m), and $r_{3}^{\ast} = 0.0125$ (m), with its center located at $(x_{1}^{\ast}, x_{2}^{\ast}, x_{3}^{\ast}) = (0.0175, 0.0125, 0.035)$ (m). 
The second tumor has semi-axes $\bar{r}_{1}^{\ast} = 0.0175$ (m), $\bar{r}_{2}^{\ast} = 0.0125$ (m), and $\bar{r}_{3}^{\ast} = 0.015$ (m), with its center at $(\bar{x}_{1}^{\ast}, \bar{x}_{2}^{\ast}, \bar{x}_{3}^{\ast}) = (-0.0175, -0.015, 0.035)$ (m).

This test serves two main purposes. 
First, we aim to show that the algorithm performs effectively regardless of the size and shape of the initial guess.
Second, we demonstrate its ability to identify the tumor size and location accurately even under high noise levels. 
For this test, we set $c_{b} = 0.975$ in \eqref{eq:bilinear_form_for_regularization} and fix $\rho = 8.5 \times 10^{-5}$.

Figure~\ref{fig:skin_temperature_two_tumors} shows the skin temperature distributions, viewed from above, at different noise levels ($\delta = 0\%, 1\%, 2\%, 3\%$). 
From these measurements, one may infer the presence of more than one tumor. 
Although the observed temperature distribution does not definitively confirm this, we initialize the algorithm with two tumors positioned along the central $z$-axis.

It is worth noting that approaches such as the level-set method \cite{OsherSethian1998} can be used to identify multiple tumors. 
However, our current scheme is based on a Lagrangian framework, in contrast to the Eulerian nature of level-set methods. 
Due to this limitation, we directly initialize the algorithm with two embedded tumors.

Figure~\ref{fig:3d_results_comparisons_multiple_tumors} presents the identification results. 
The plots show that the method provides a good approximation of the true tumor geometry, with the estimated tumors (magenta ellipsoids) closely matching the actual tumors (yellow ellipsoids), even under a noise level of $\delta = 3\%$. 
The initial guess is shown in light gray.

In Figure~\ref{fig:skin_temperature_two_tumors_real_and_imaginary_parts}, we present both the exact and noisy skin temperature distributions at the initial and final states, under noise levels of $1\%$, $2\%$, and $3\%$. 
Starting from an initial distribution far from the observed data, the method converges to a final state that closely reproduces the measured temperature pattern, thereby confirming the reliability and effectiveness of the proposed approach for geometric inverse problems.

\sloppy Finally, Figure~\ref{fig:3d_results_multiple_tumors} compares the evolving shape of the free domain in magenta at iterations $k = 50, 100, 150, 200, 250,$ and $276$ (final) starting from the initial guess in light color against the exact tumor shape in yellow. 
The results shown from multiple angles illustrate the reconstruction accuracy when the measurement data contains one percent noise.
The plots demonstrate that the method reliably detects both the location and size of tumors, even when the initial guess is far from accurate and the data is noisy.
\begin{figure}[htp!]
\centering   
\resizebox{0.235\textwidth}{!}{\includegraphics{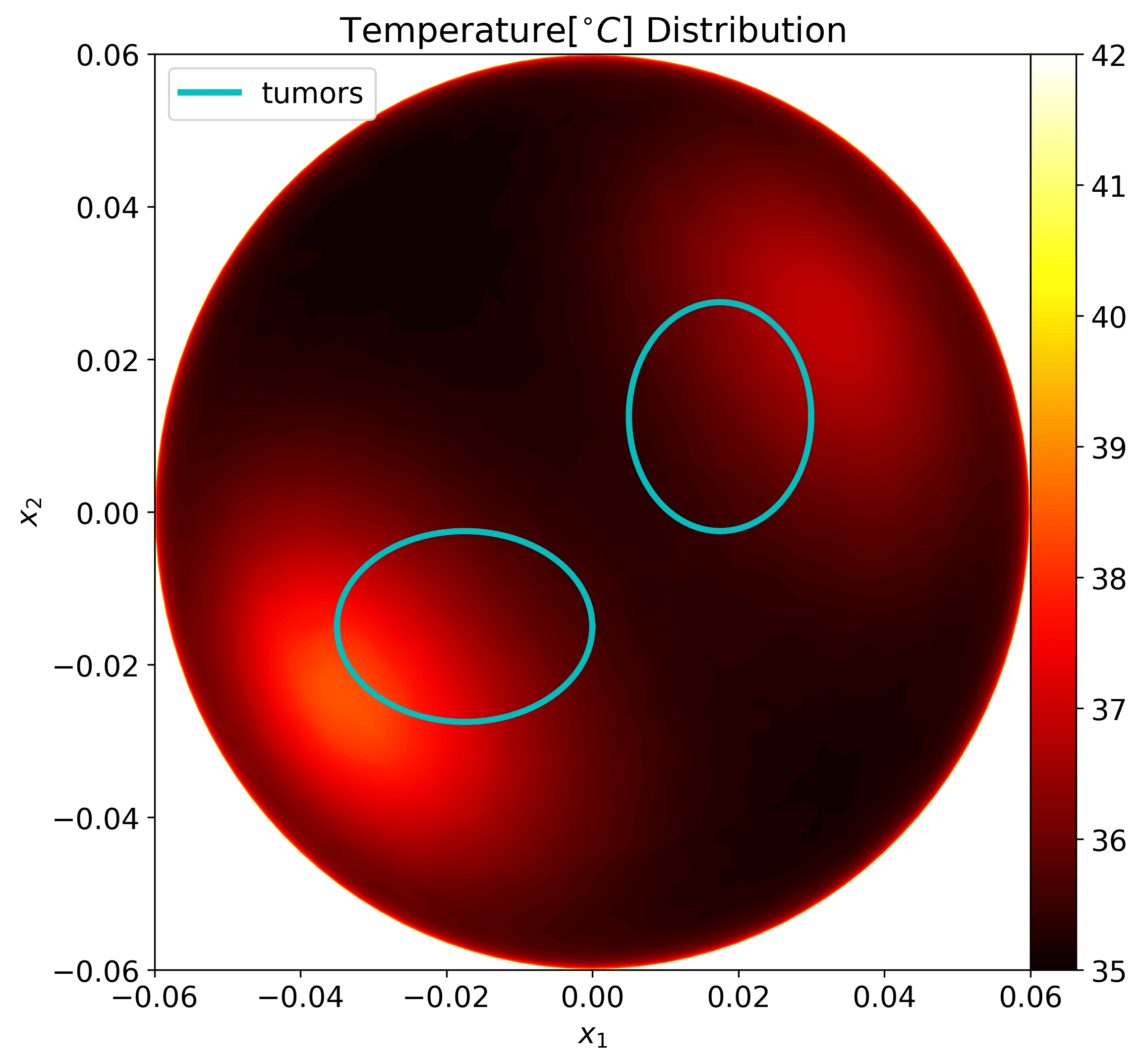}} 
\resizebox{0.235\textwidth}{!}{\includegraphics{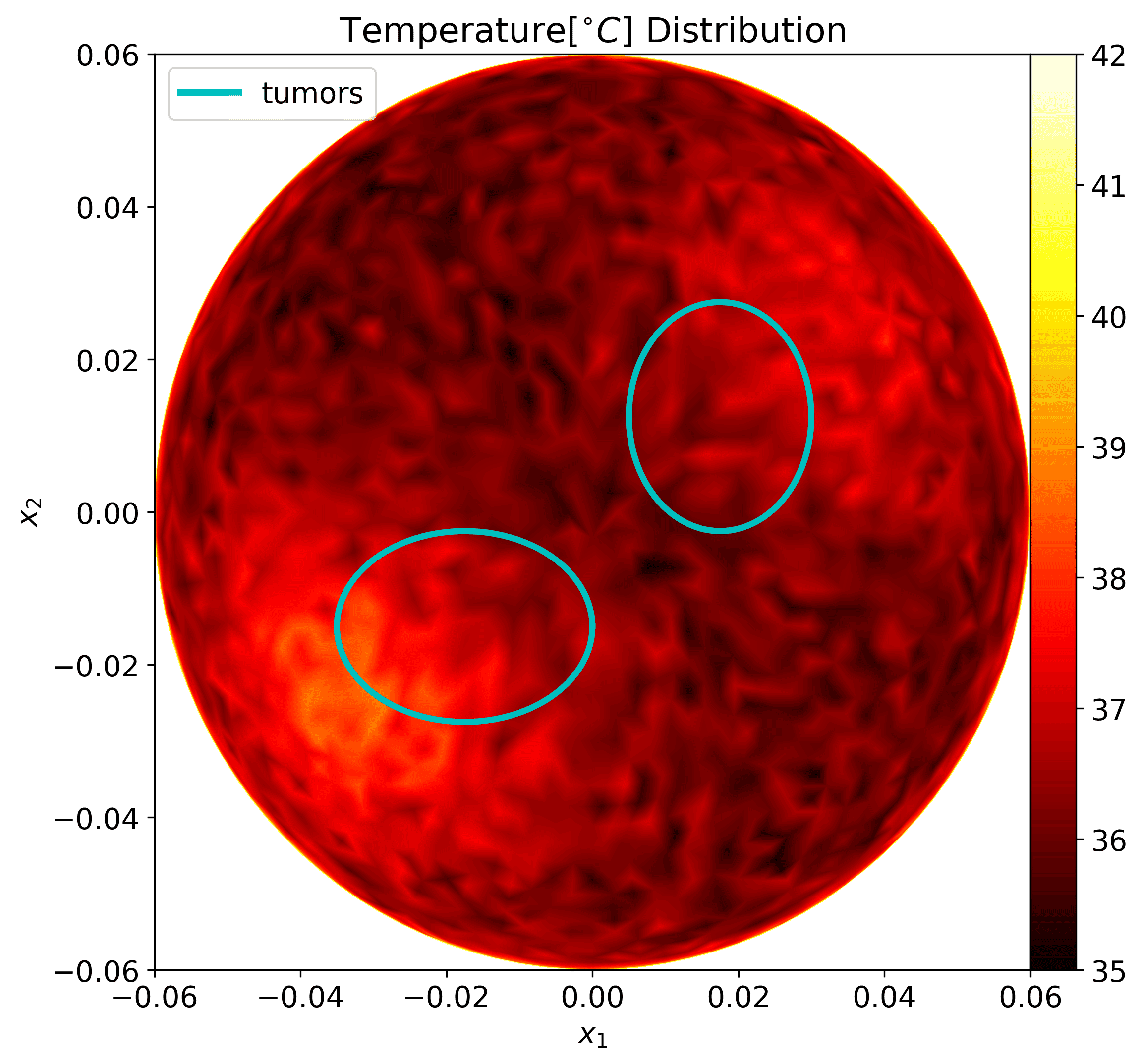}} 
\resizebox{0.235\textwidth}{!}{\includegraphics{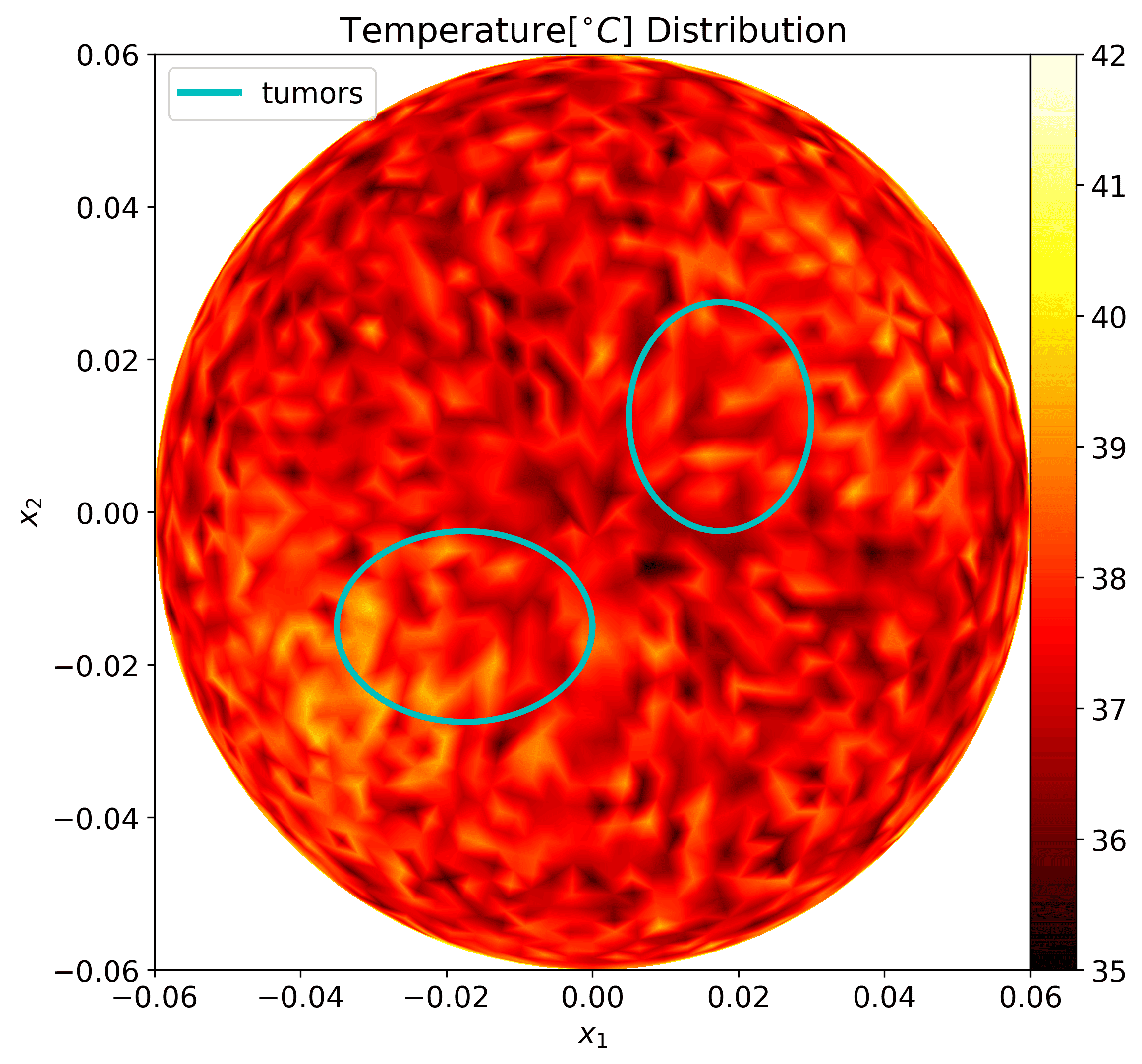}} 
\resizebox{0.235\textwidth}{!}{\includegraphics{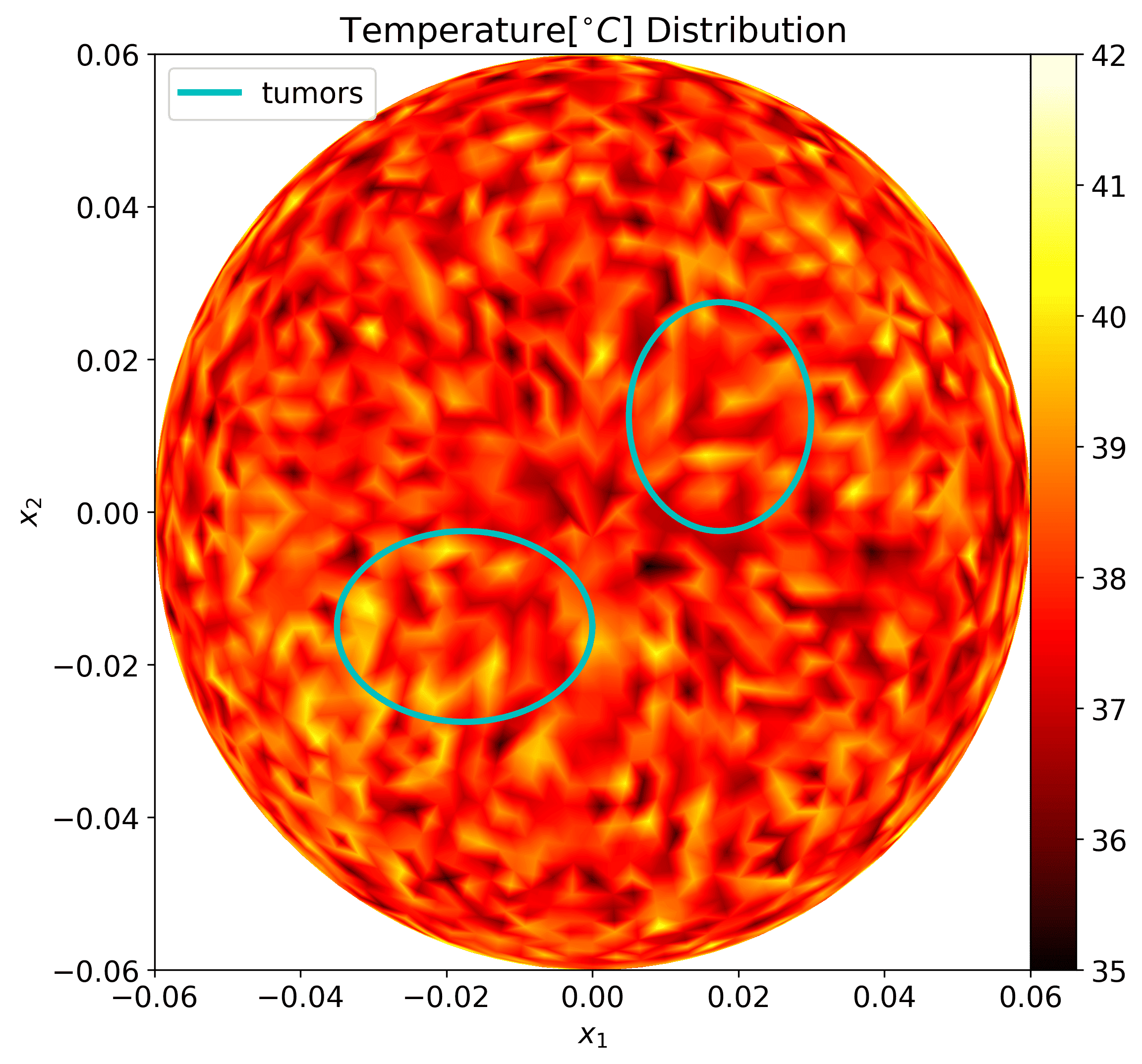}} 
\caption{Exact and noisy temperature distributions on the skin. From left to right: exact ($0\%$ noise), and noisy data with noise levels of $1\%$, $2\%$, and $3\%$, respectively.}
\label{fig:skin_temperature_two_tumors}
\end{figure}
\begin{figure}[htp!]
\centering    
\resizebox{0.18\textwidth}{!}{\includegraphics{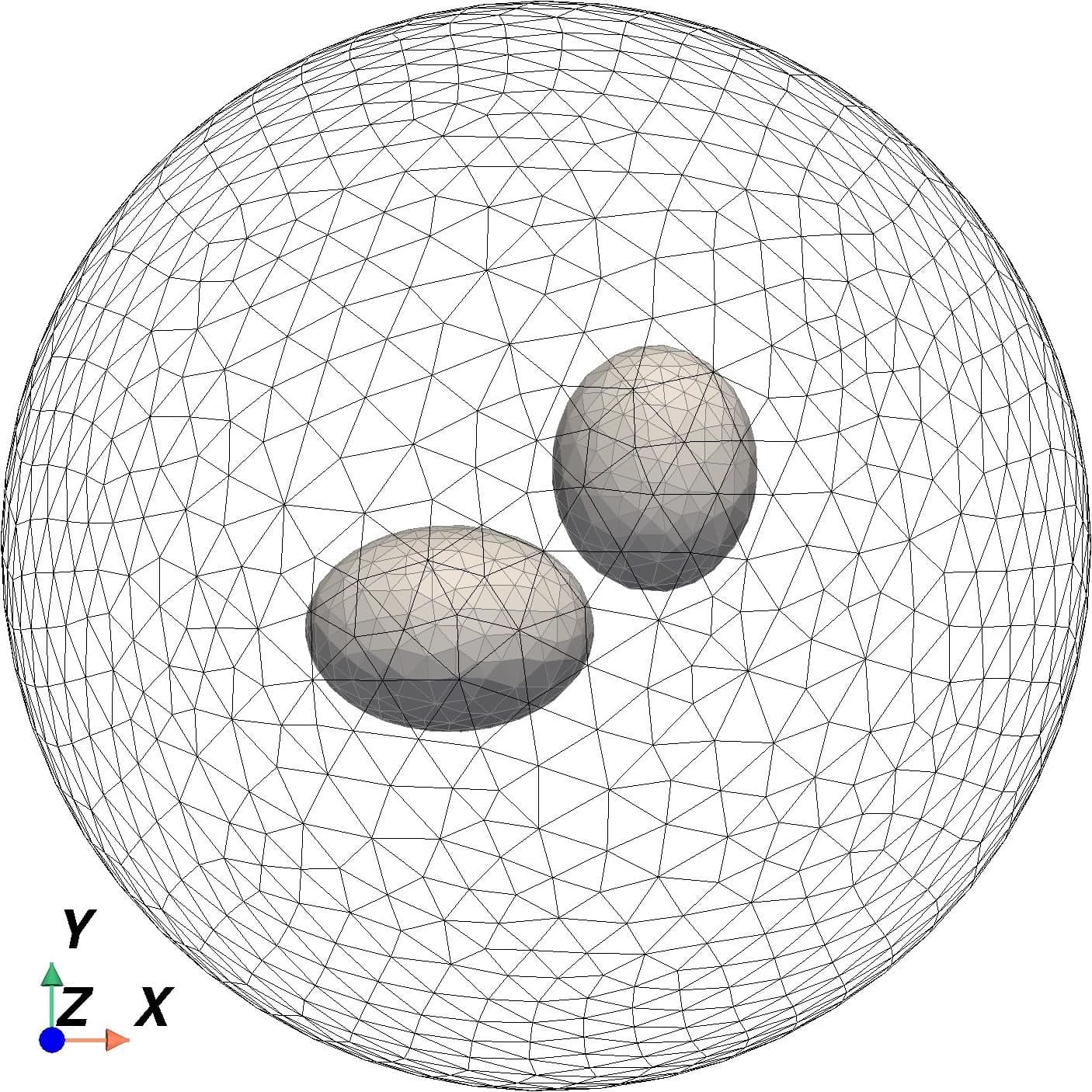}} 
\resizebox{0.18\textwidth}{!}{\includegraphics{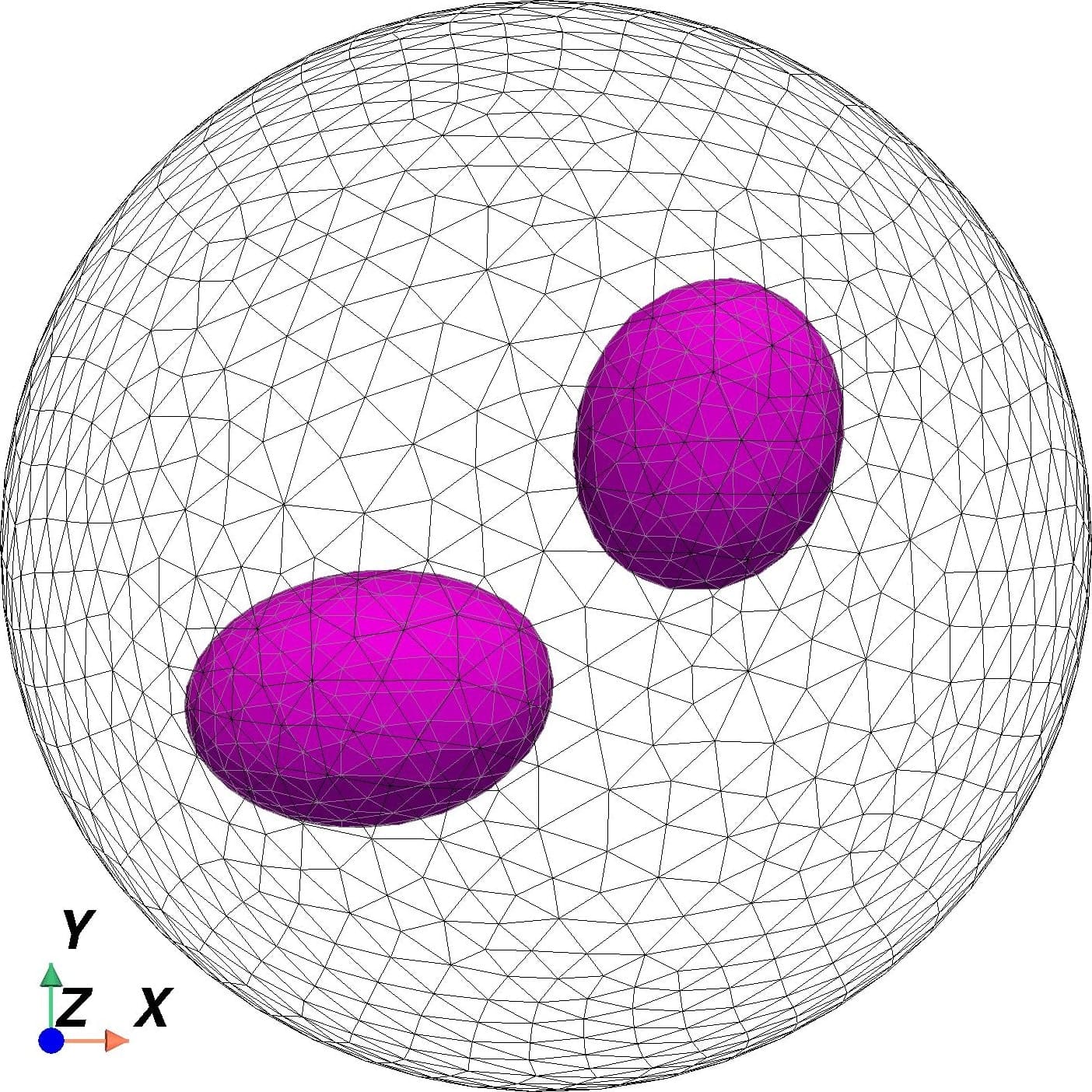}} 
\resizebox{0.18\textwidth}{!}{\includegraphics{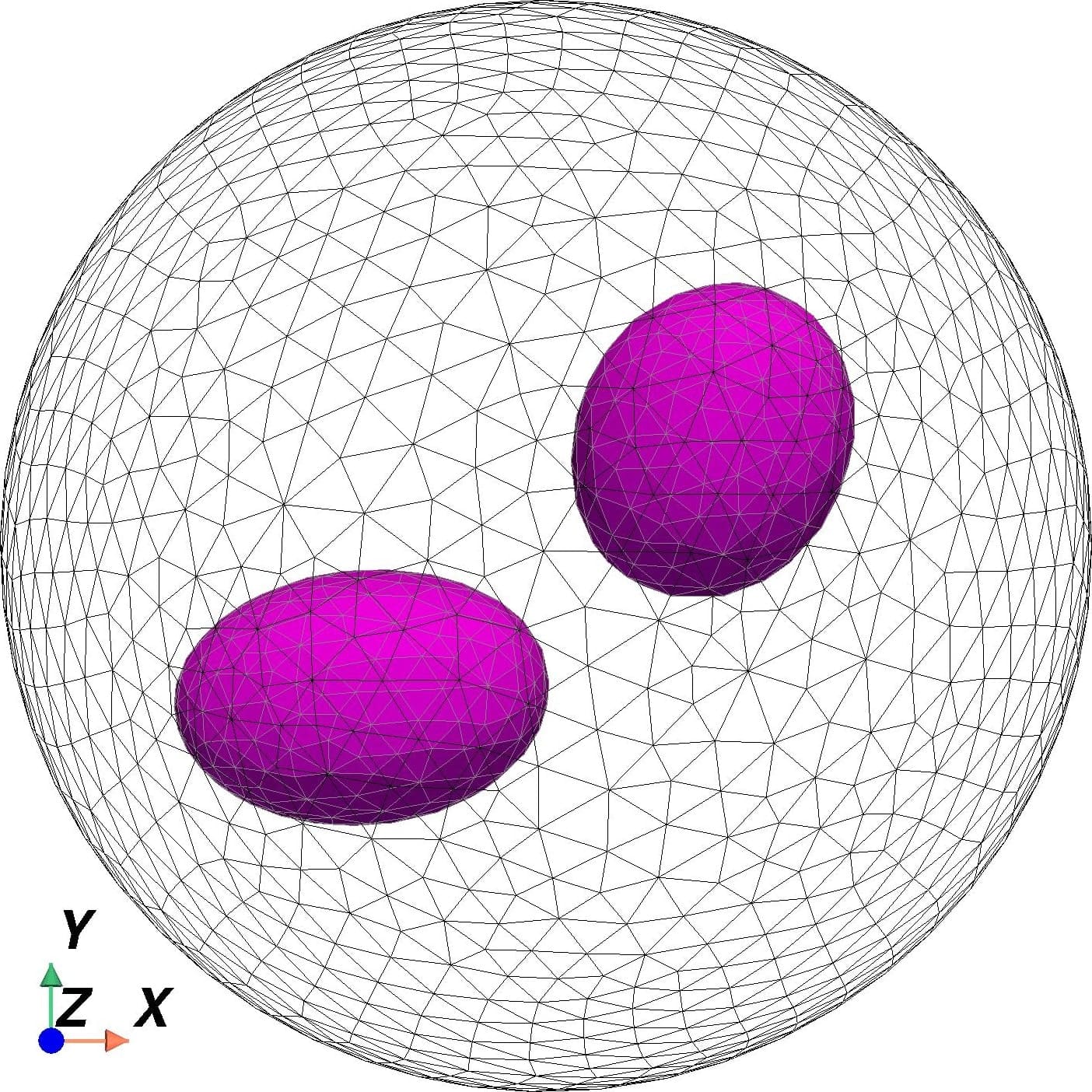}} 
\resizebox{0.18\textwidth}{!}{\includegraphics{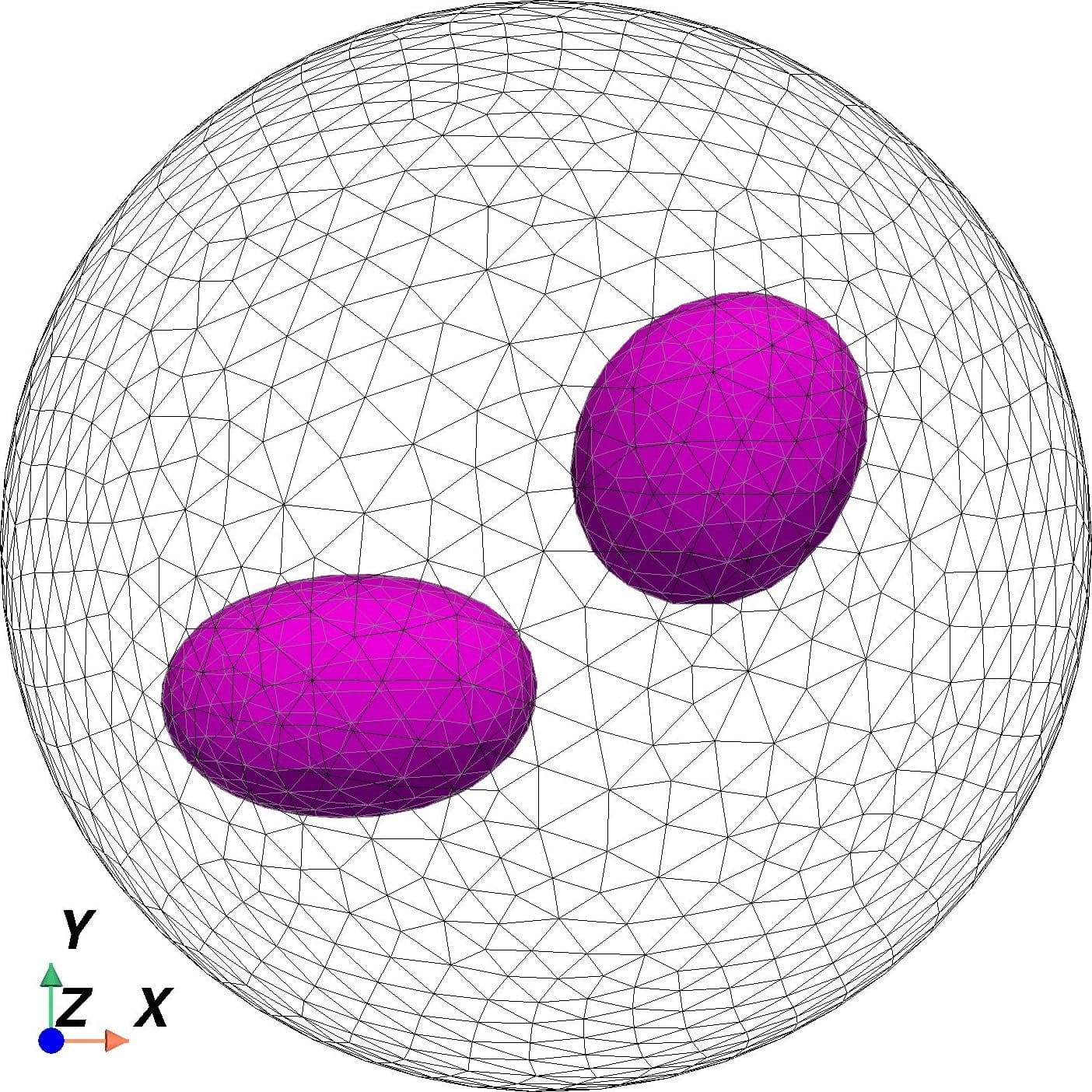}} 
\resizebox{0.18\textwidth}{!}{\includegraphics{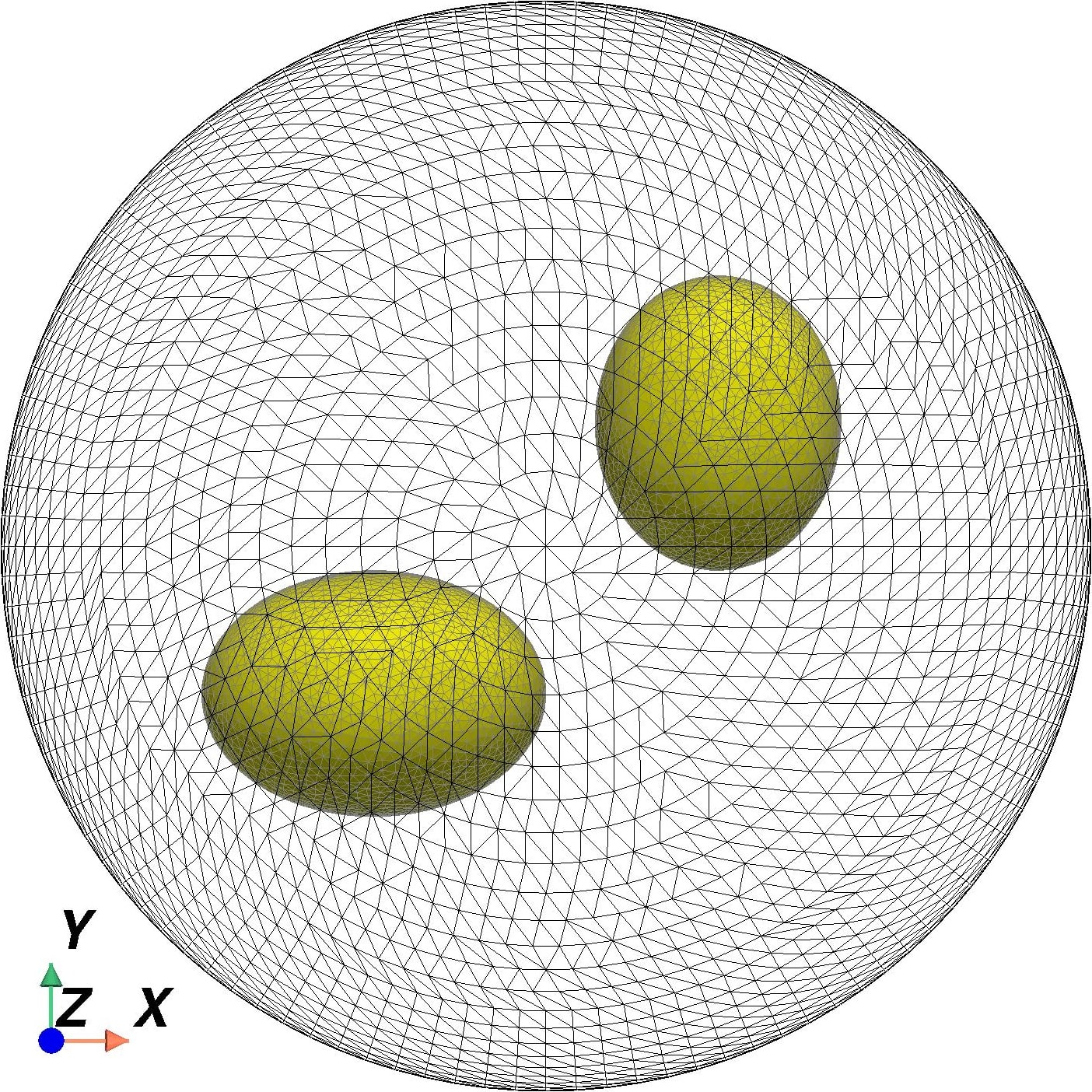}} \\[0.5em]
\resizebox{0.18\textwidth}{!}{\includegraphics{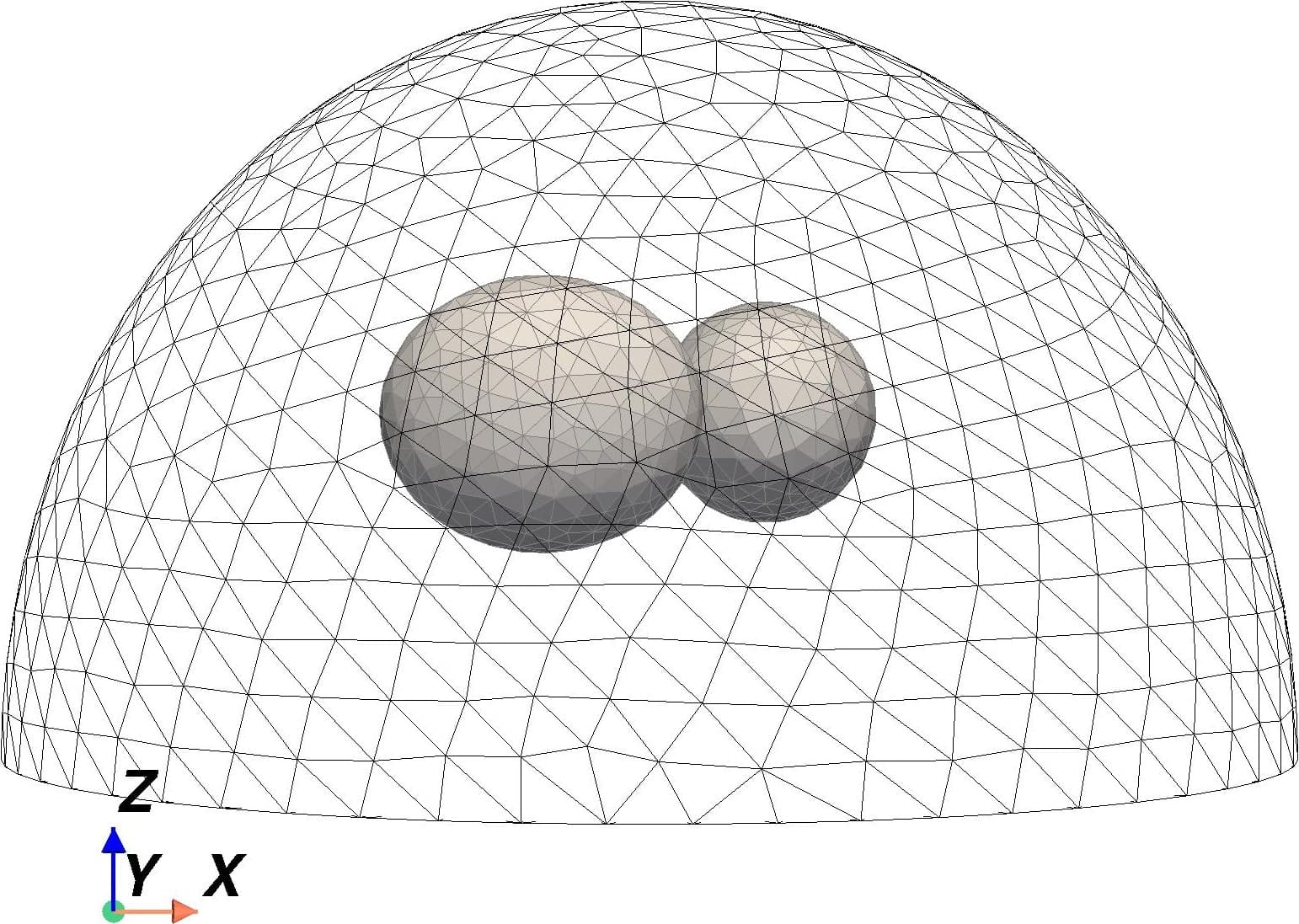}} 
\resizebox{0.18\textwidth}{!}{\includegraphics{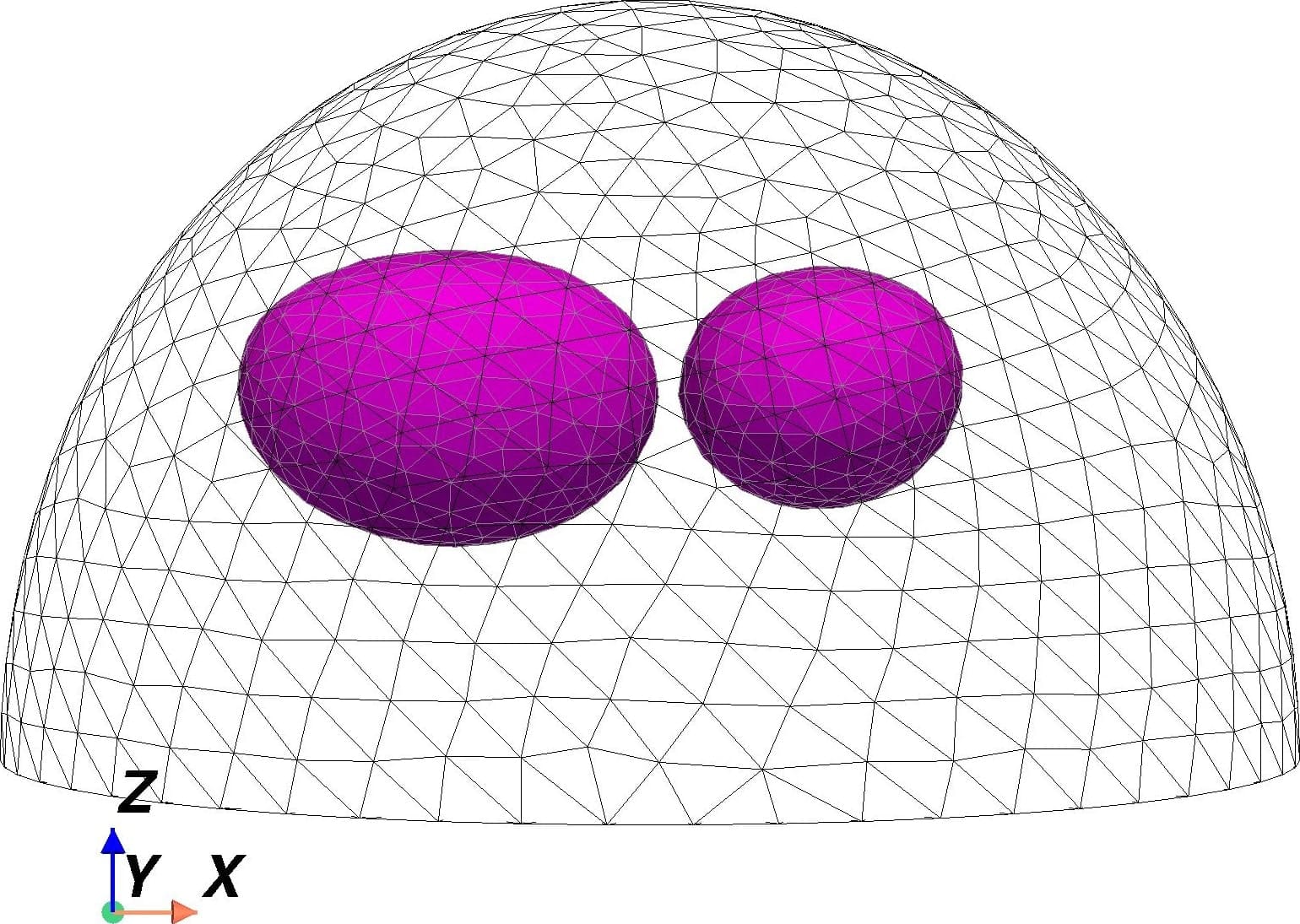}} 
\resizebox{0.18\textwidth}{!}{\includegraphics{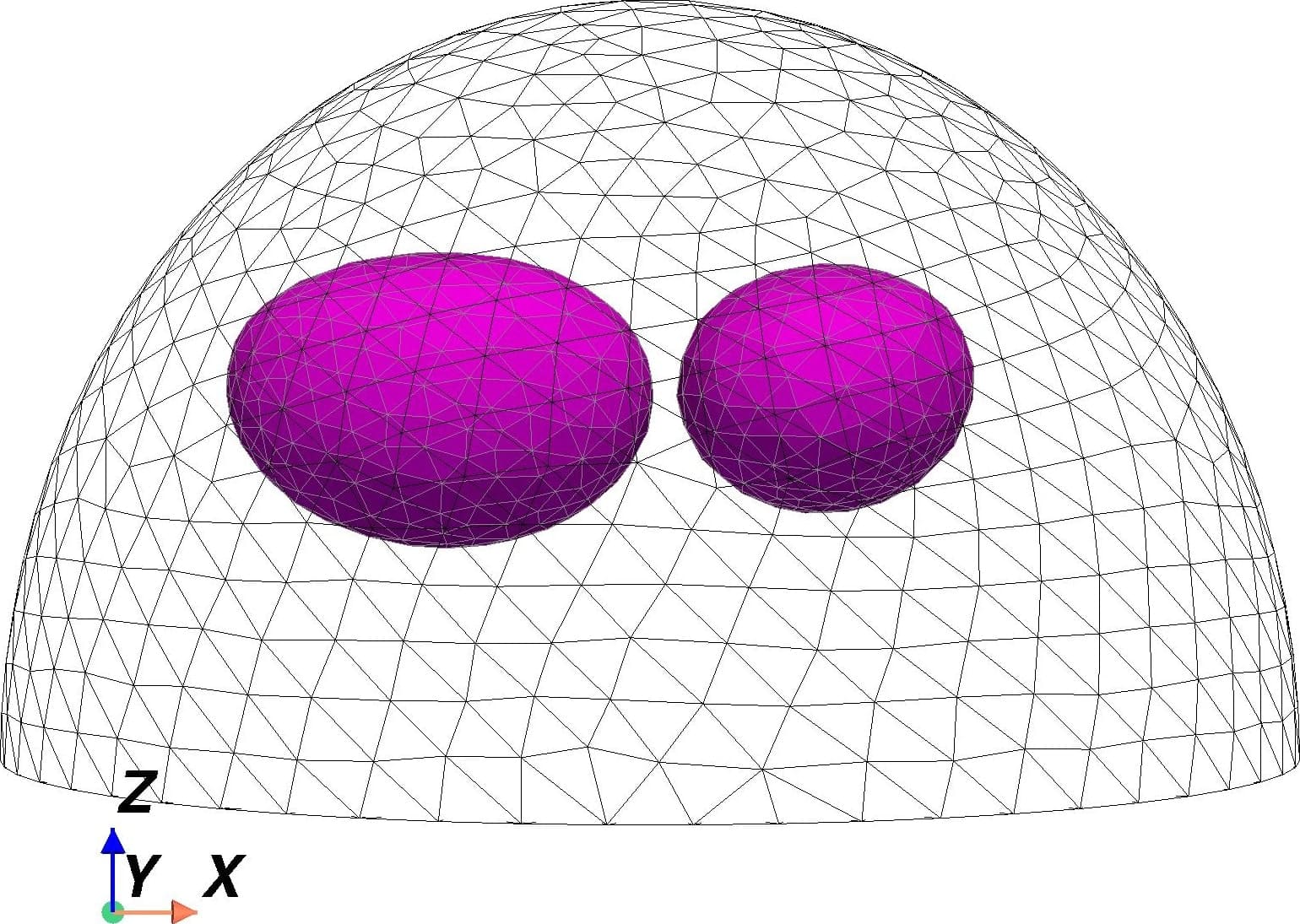}} 
\resizebox{0.18\textwidth}{!}{\includegraphics{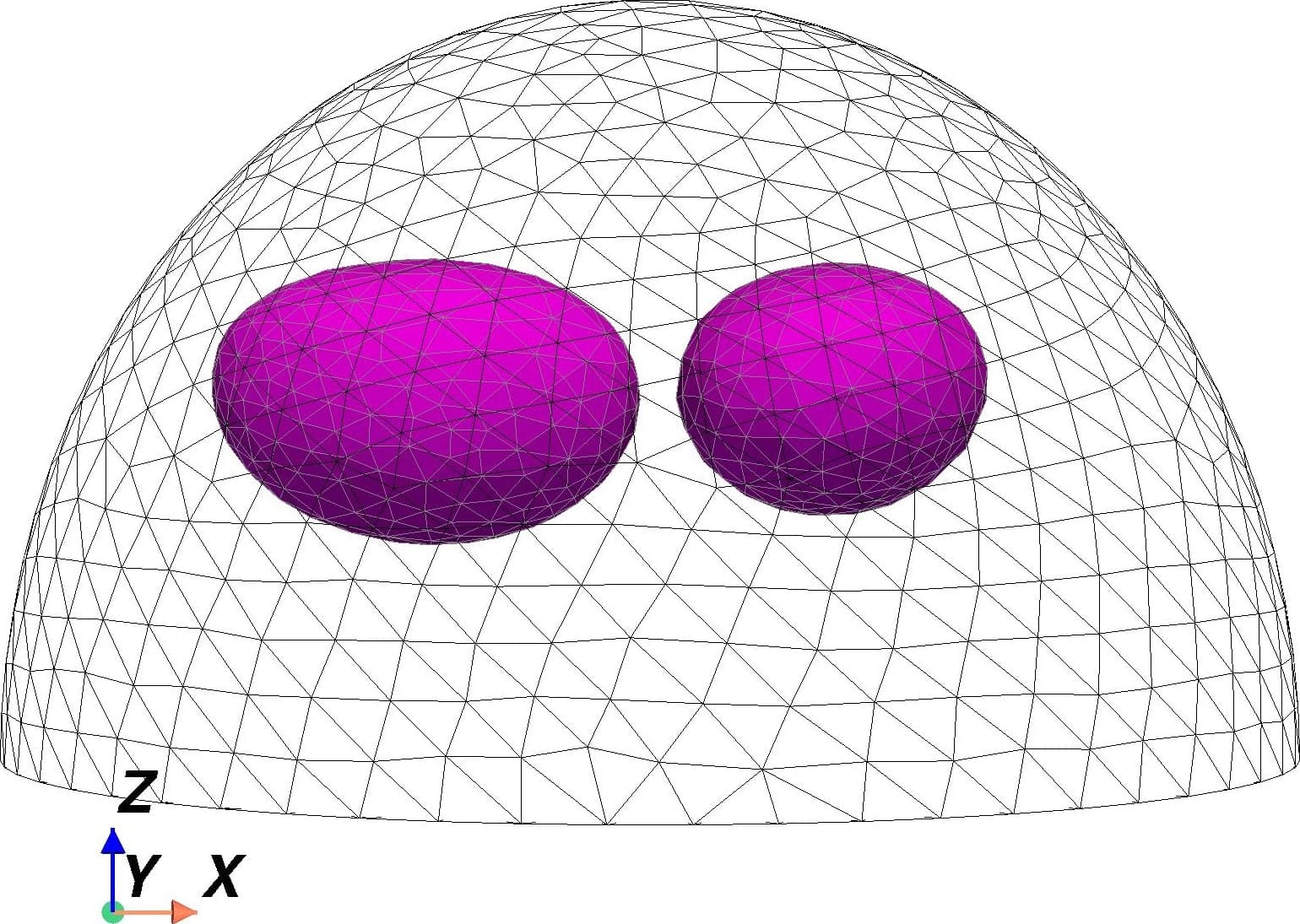}} 
\resizebox{0.18\textwidth}{!}{\includegraphics{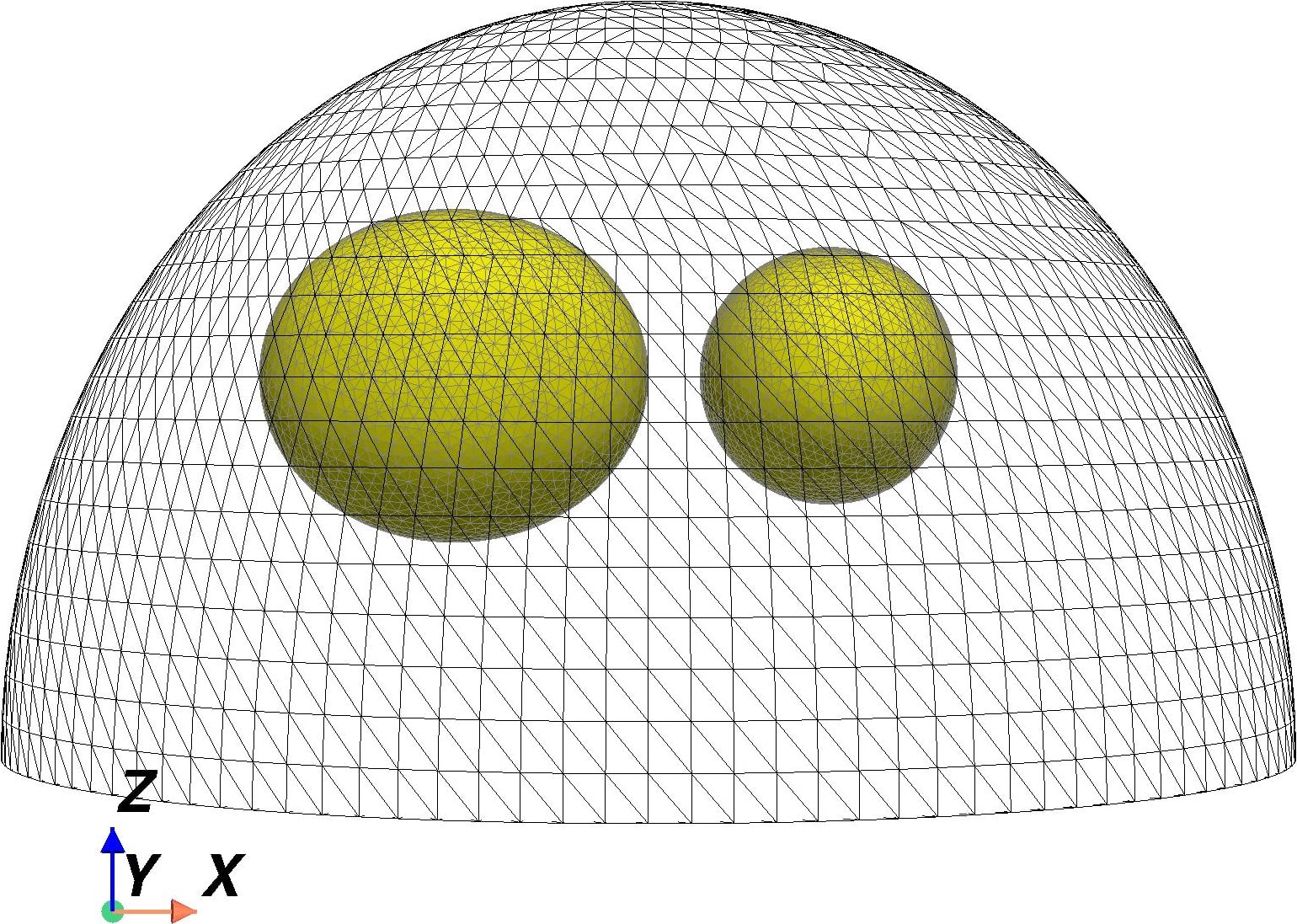}} \\[0.5em]
\resizebox{0.18\textwidth}{!}{\includegraphics{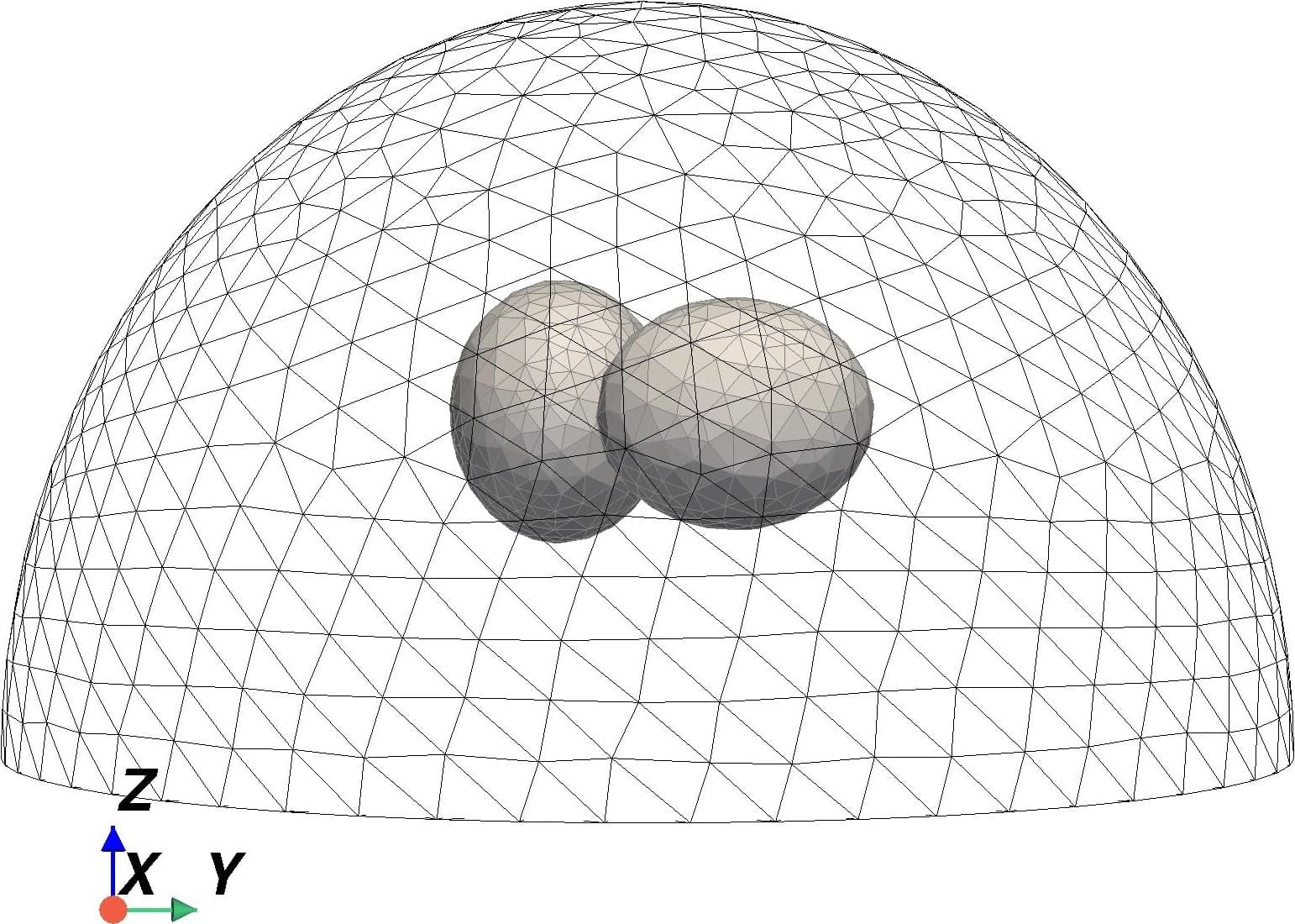}} 
\resizebox{0.18\textwidth}{!}{\includegraphics{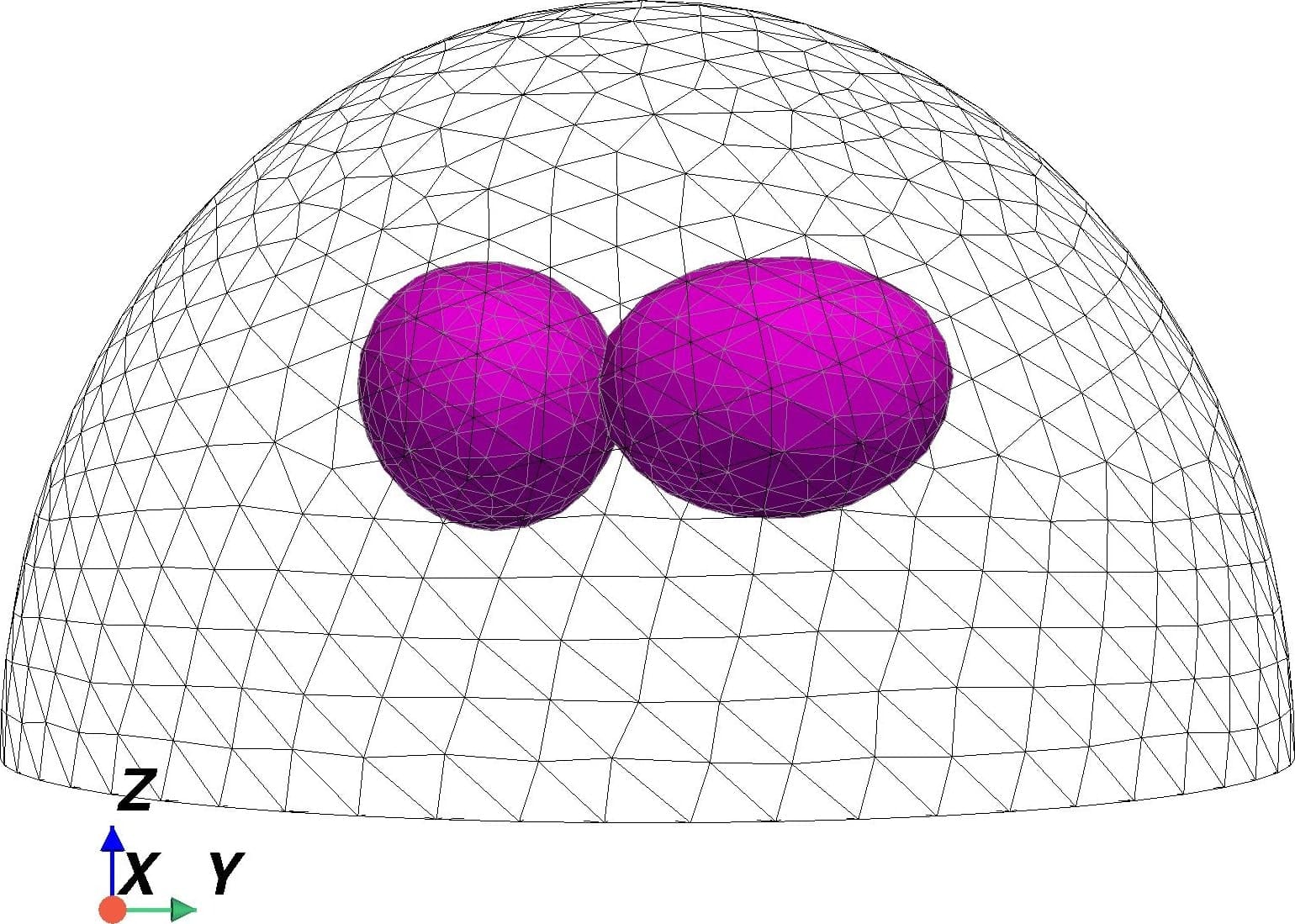}} 
\resizebox{0.18\textwidth}{!}{\includegraphics{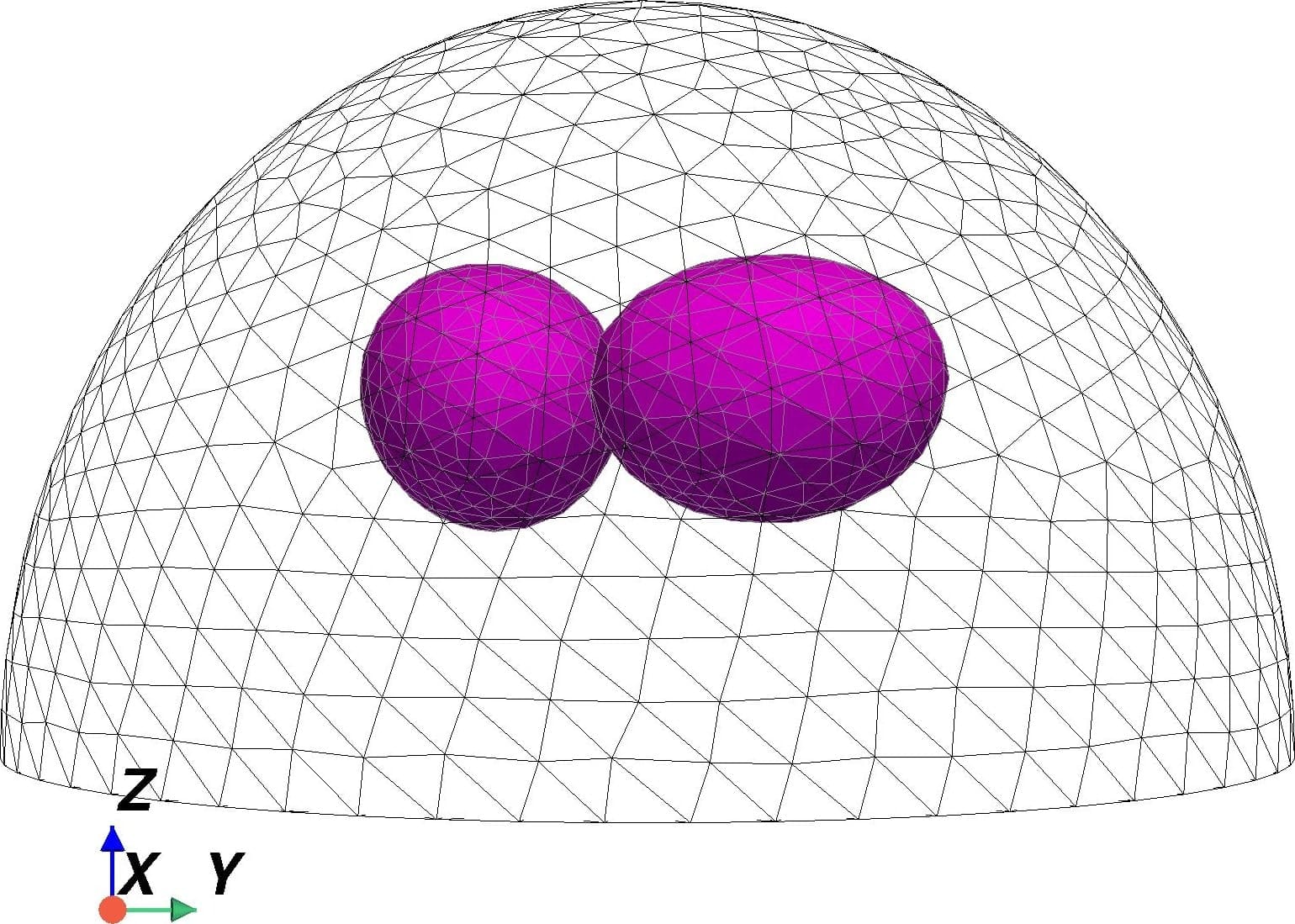}} 
\resizebox{0.18\textwidth}{!}{\includegraphics{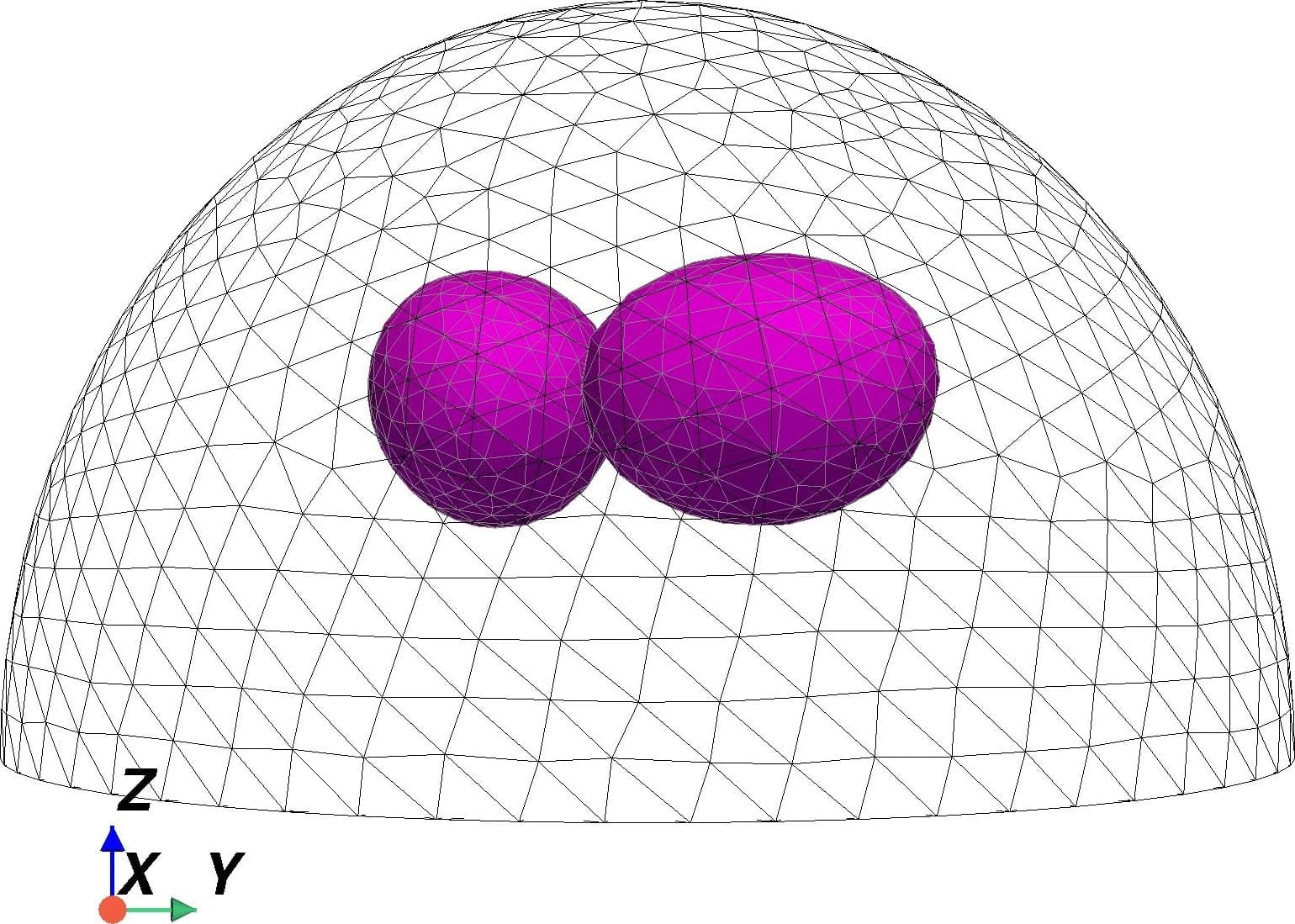}} 
\resizebox{0.18\textwidth}{!}{\includegraphics{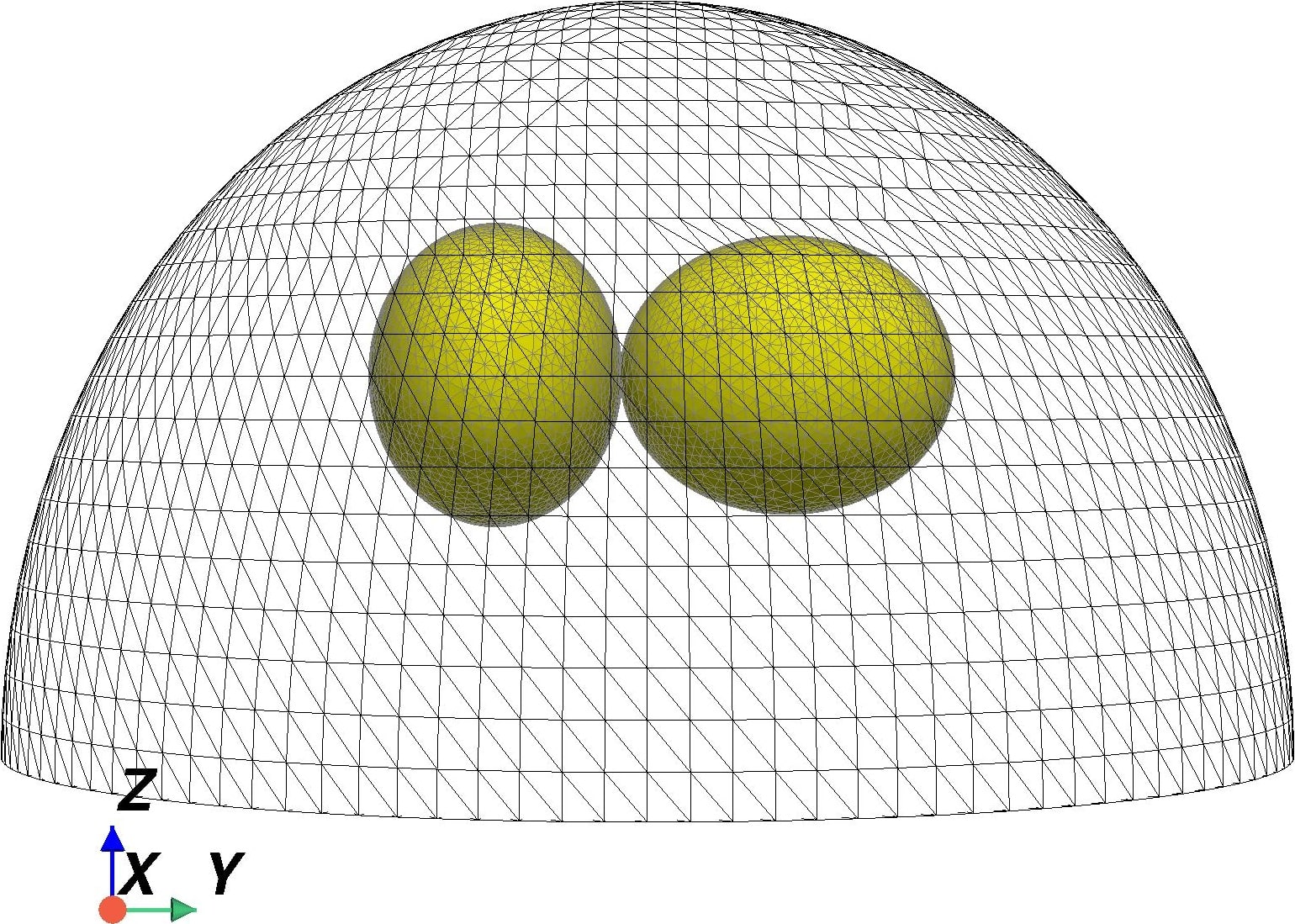}} \\[0.5em]
\caption{Recovered shape (magenta) from an initial guess (light color) compared with the exact tumor (yellow), shown from various views under $1\%$, $2\%$, and $3\%$ (second column to third column) noisy data.}
\label{fig:3d_results_comparisons_multiple_tumors}
\end{figure}
\begin{figure}[htp!]
\centering   
\resizebox{0.235\textwidth}{!}{\includegraphics{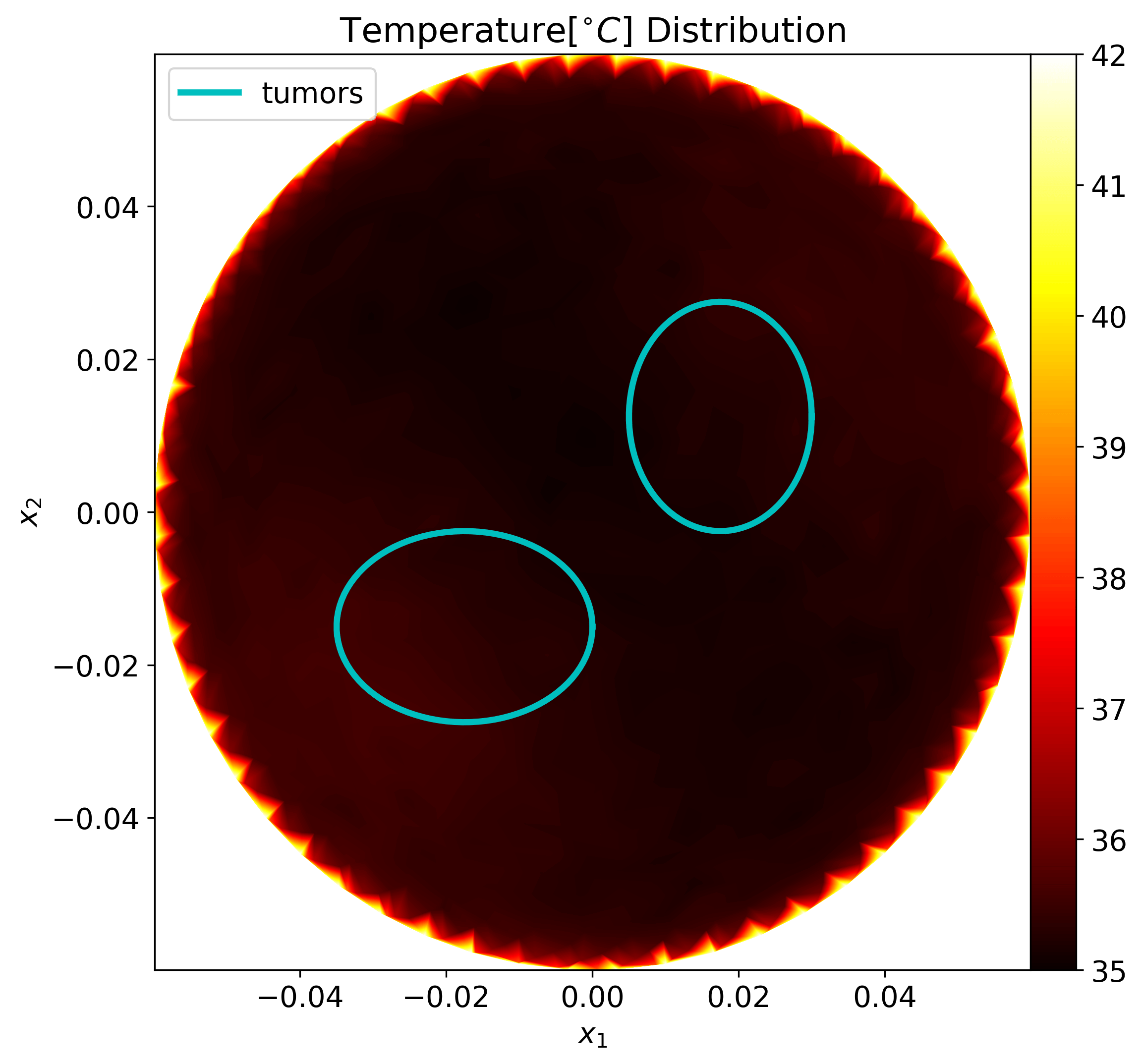}} 
\resizebox{0.235\textwidth}{!}{\includegraphics{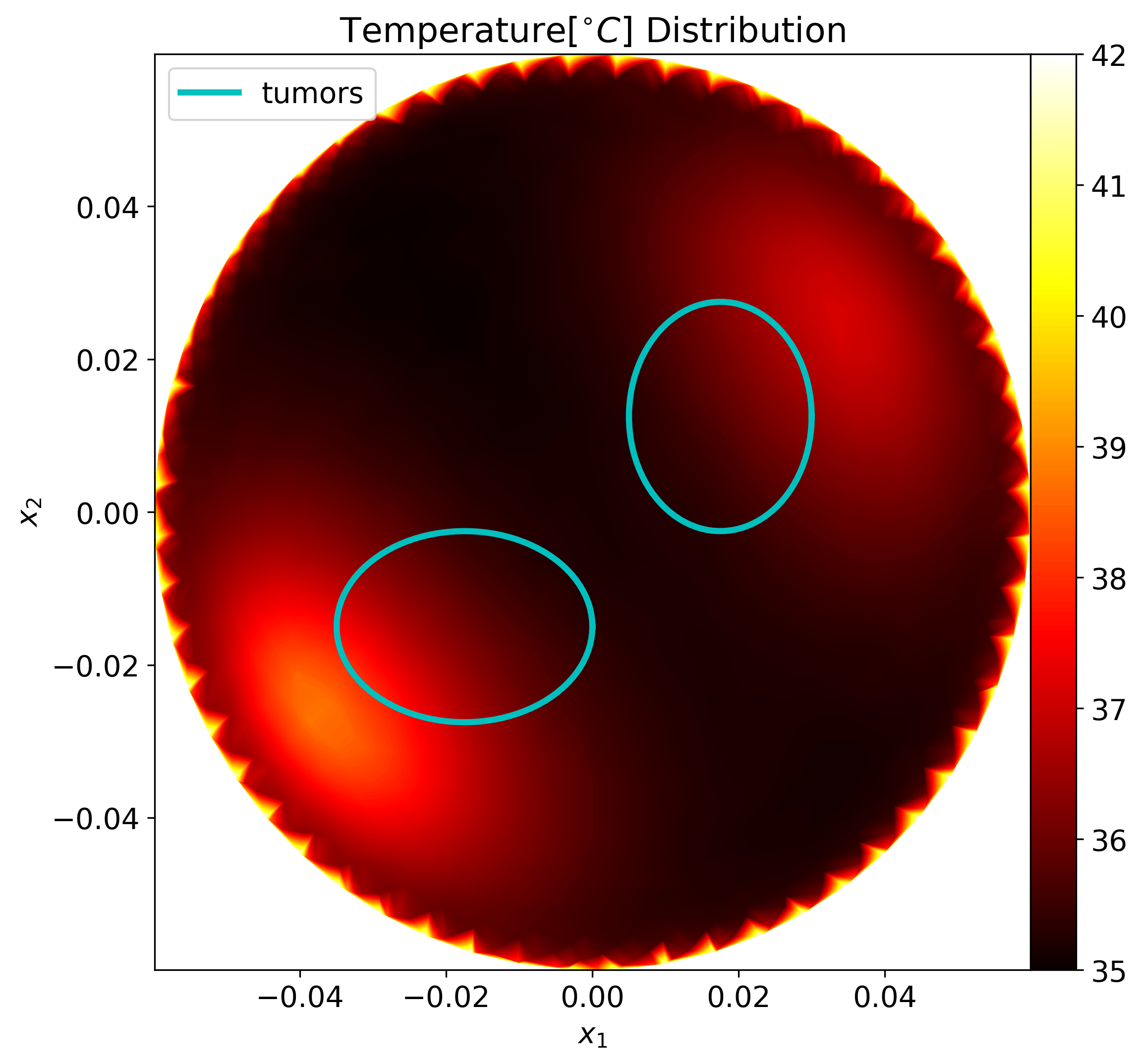}} 
\resizebox{0.235\textwidth}{!}{\includegraphics{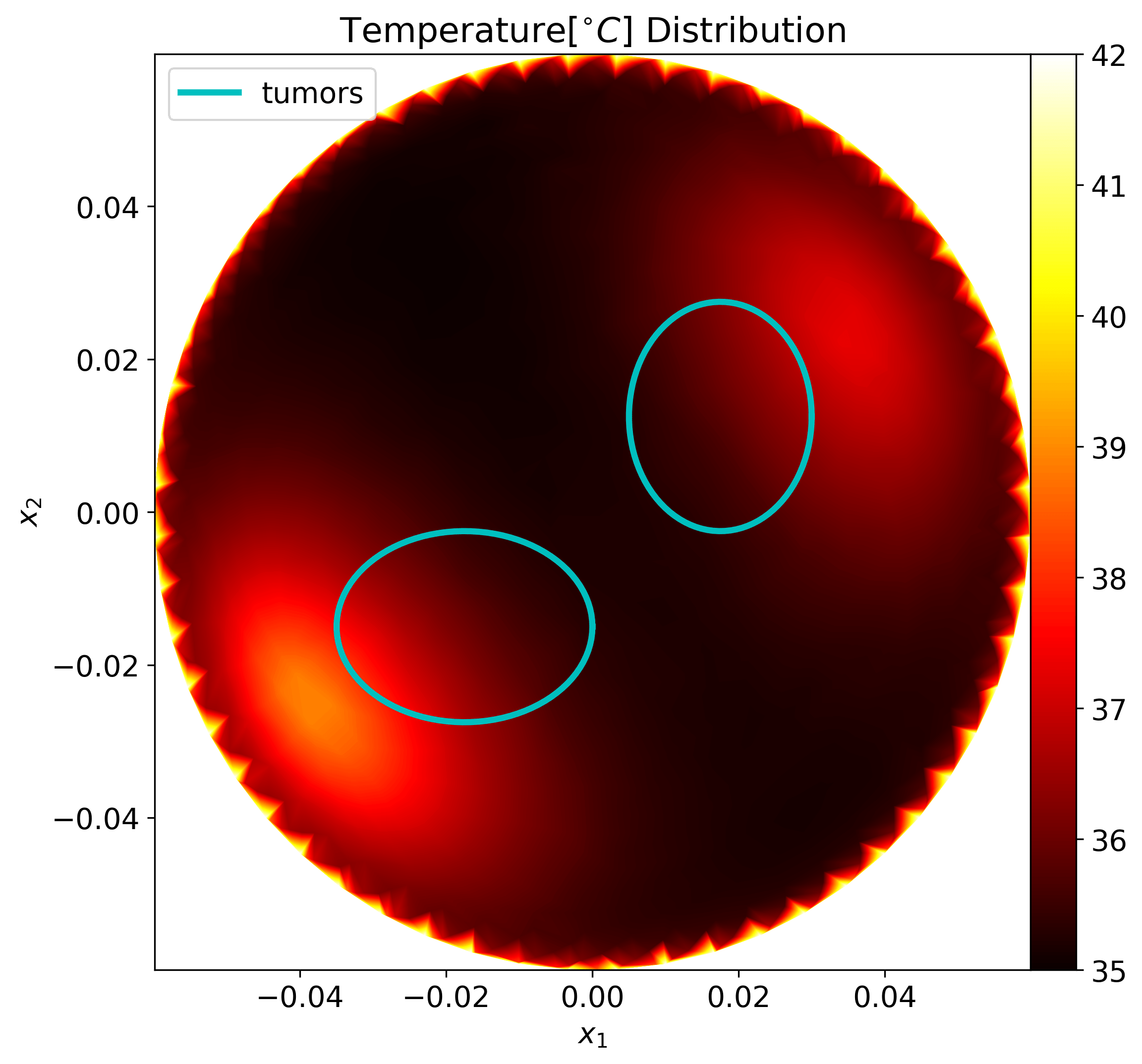}} 
\resizebox{0.235\textwidth}{!}{\includegraphics{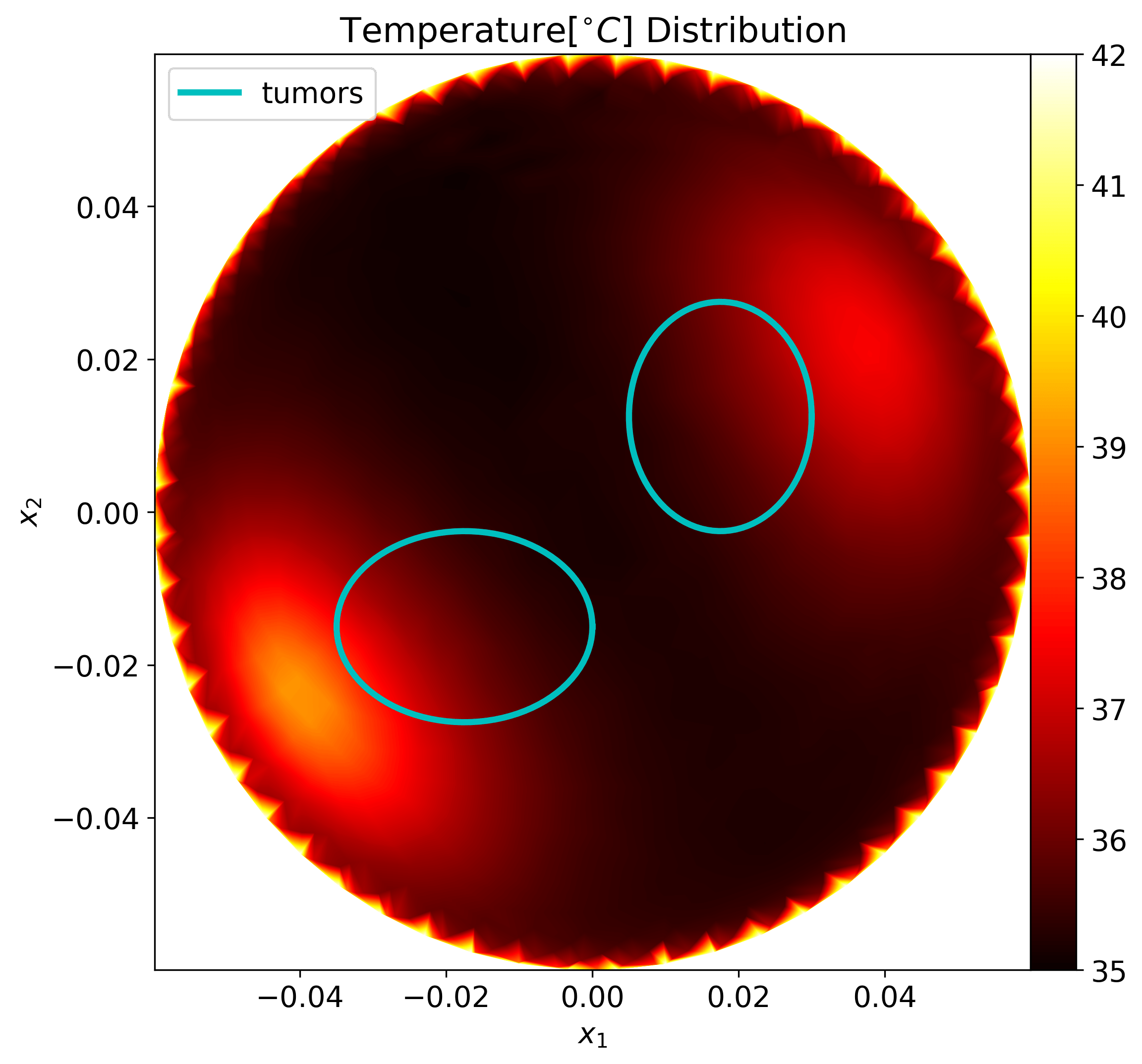}} 
\caption{Exact and noisy temperature distributions on the skin surface. From left to right: the initial noisy measurement, followed by the reconstructed temperature distributions at the final shape, using measured temperature data with added noise levels of $1\%$, $2\%$, and $3\%$, respectively.}
\label{fig:skin_temperature_two_tumors_real_and_imaginary_parts}
\end{figure}

\begin{figure}[htp!]
\centering   
\resizebox{0.15\textwidth}{!}{\includegraphics{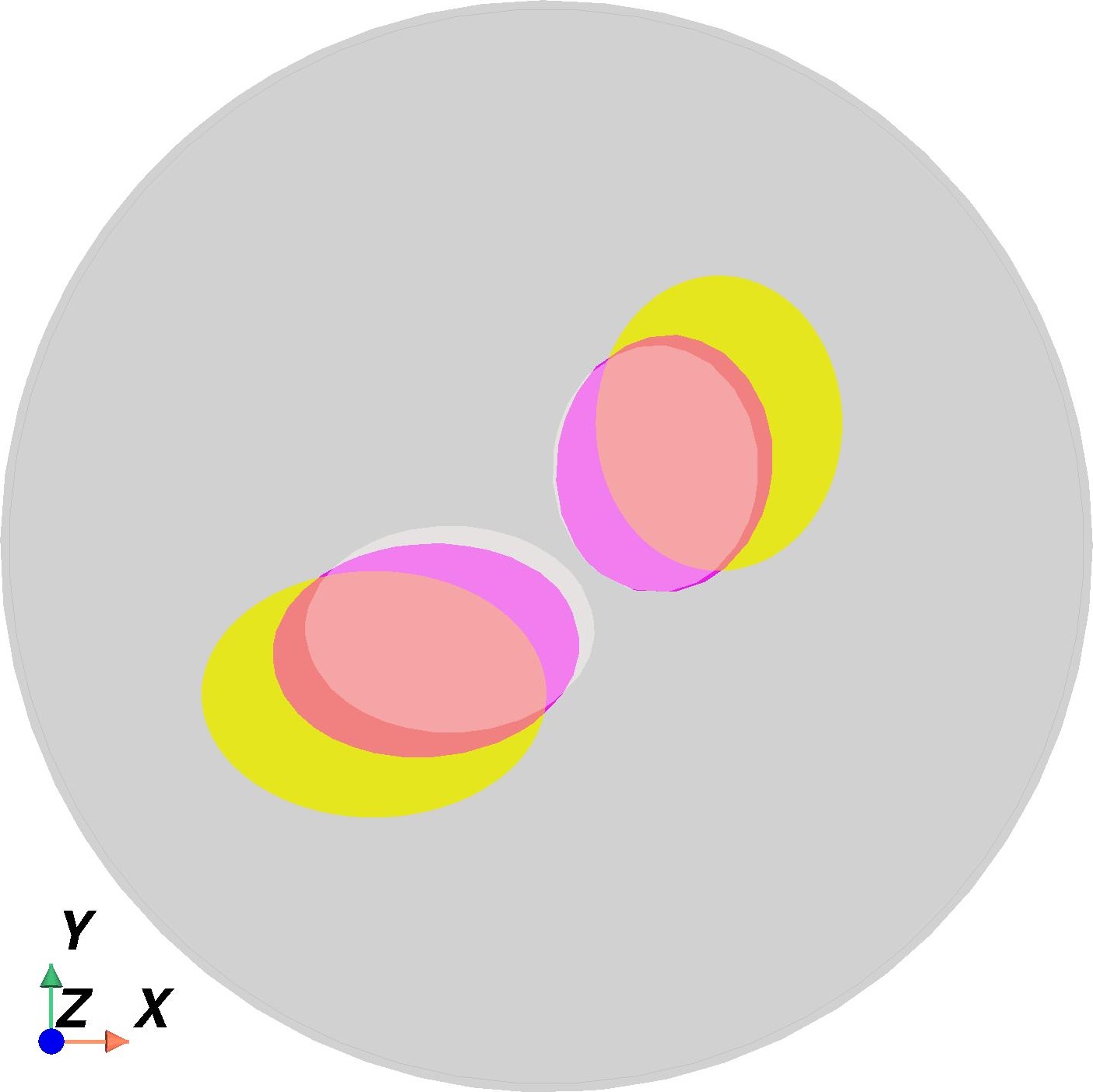}}  
\resizebox{0.15\textwidth}{!}{\includegraphics{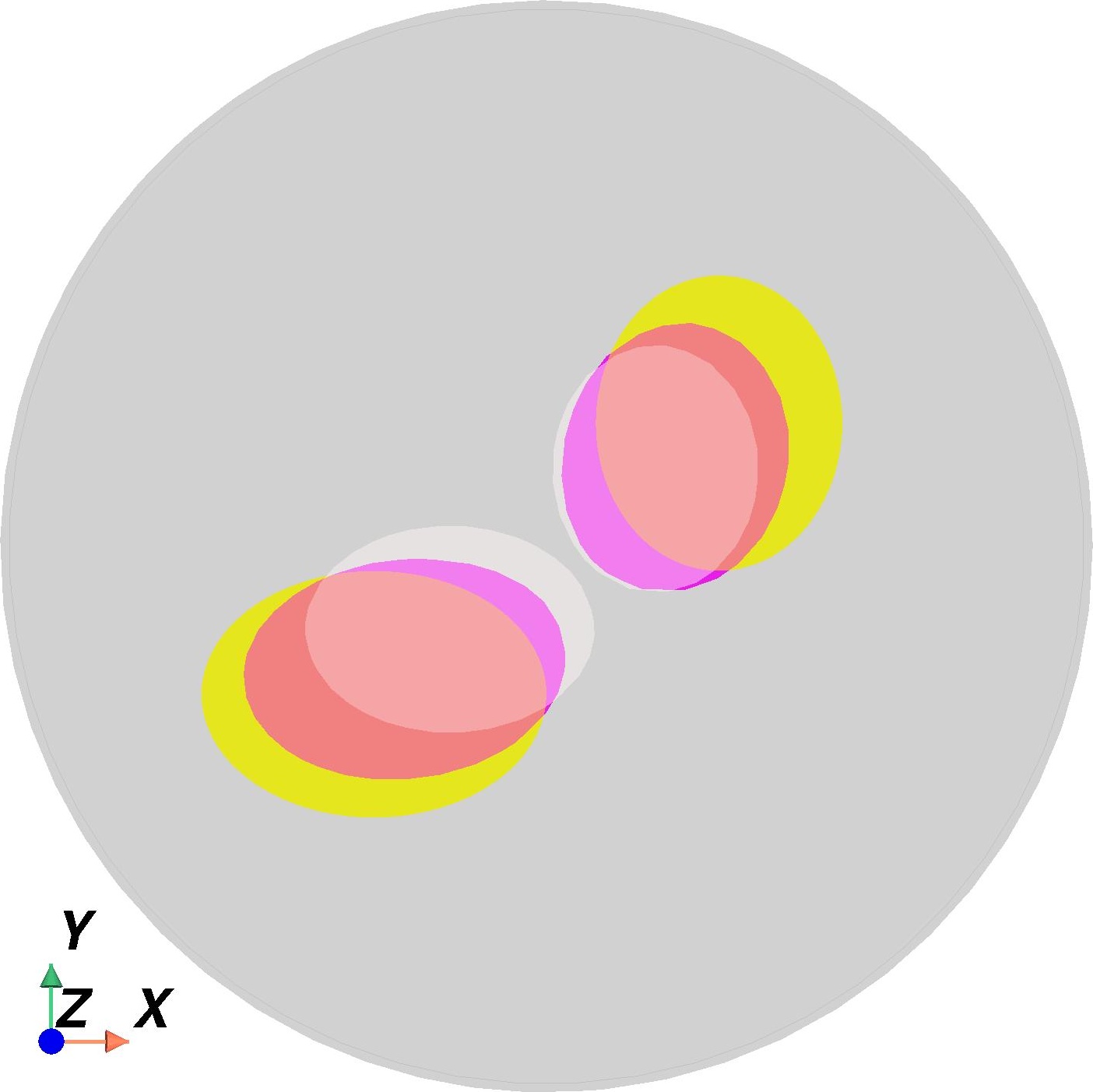}}  
\resizebox{0.15\textwidth}{!}{\includegraphics{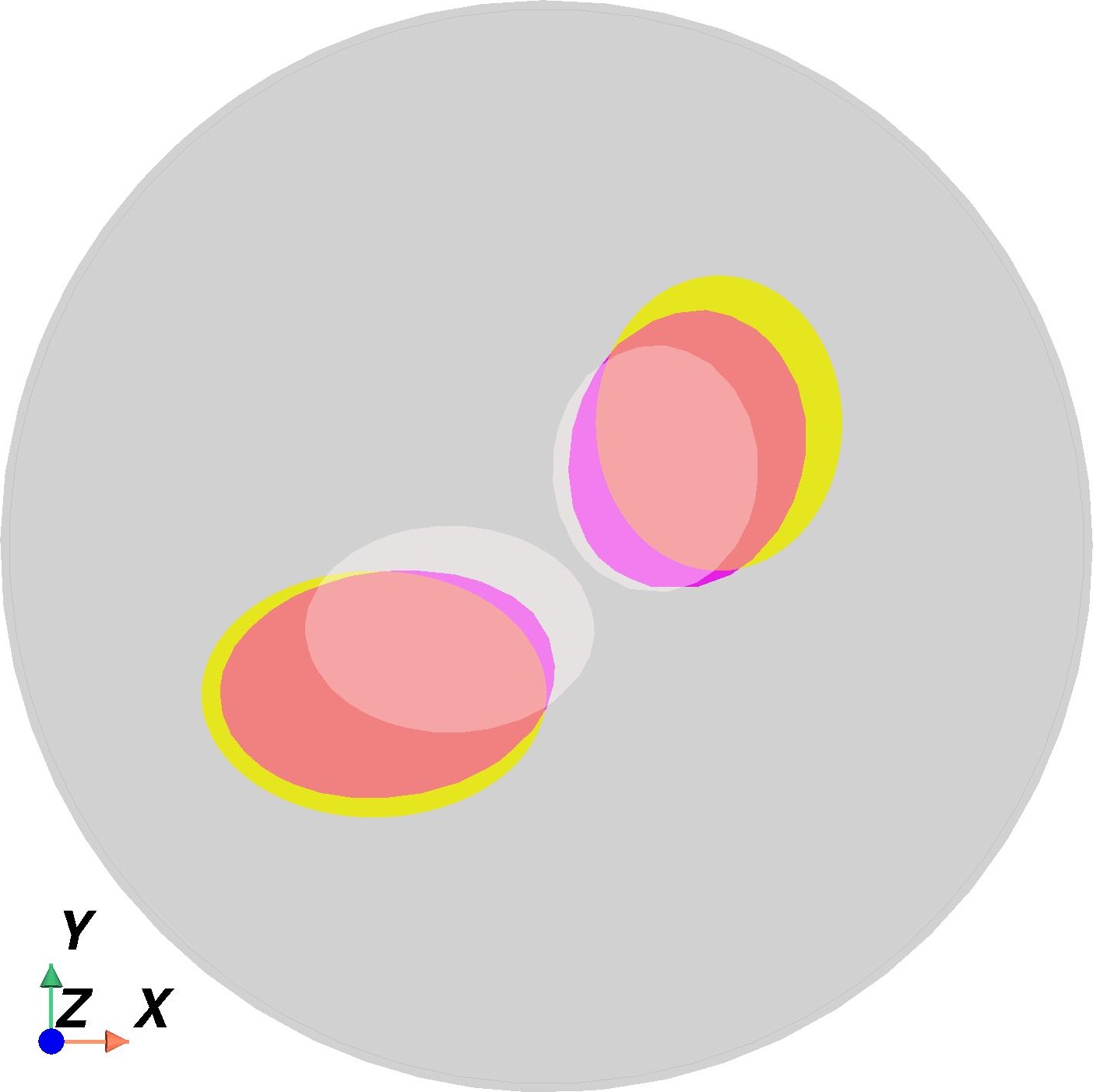}}  
\resizebox{0.15\textwidth}{!}{\includegraphics{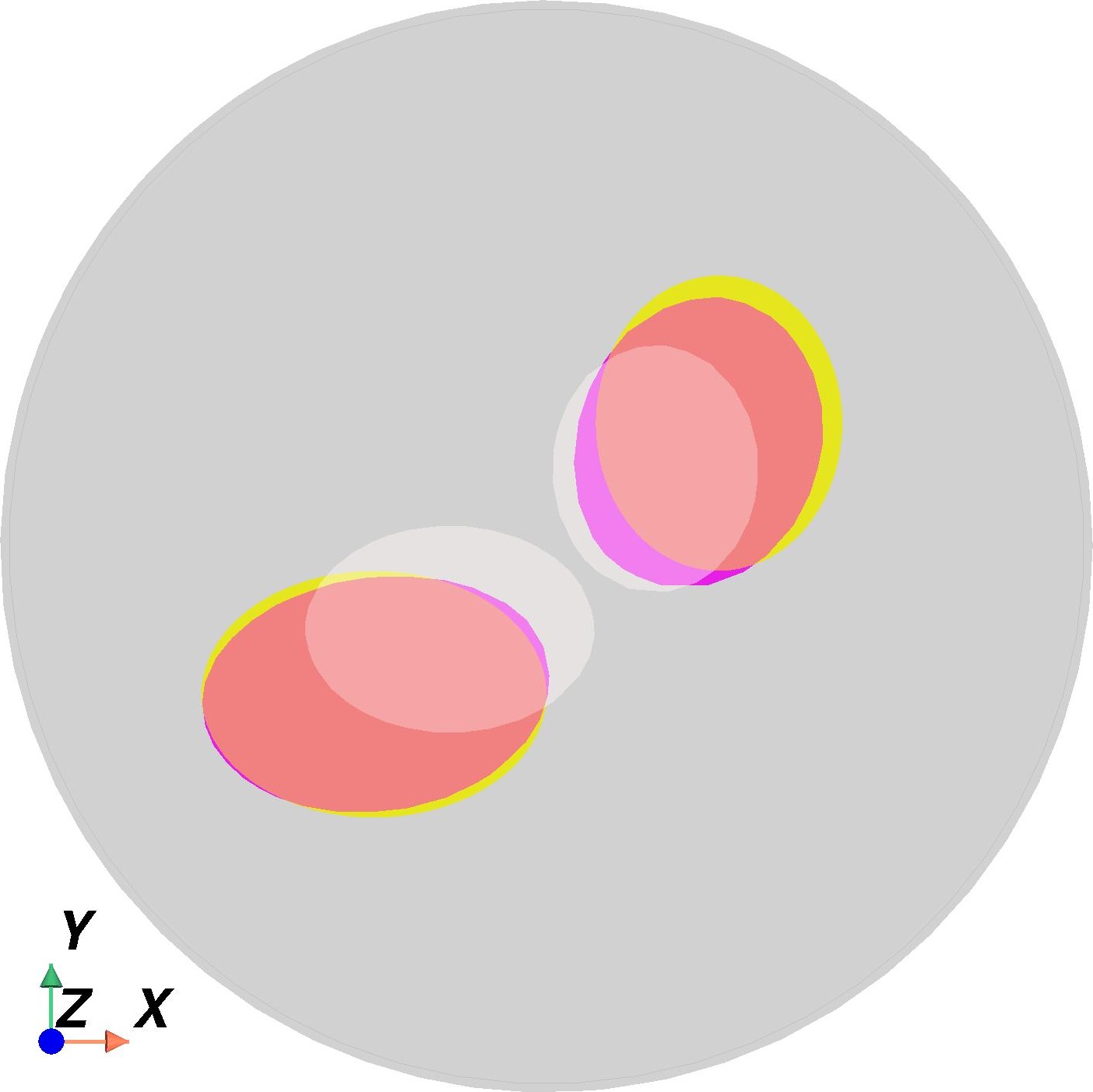}}  
\resizebox{0.15\textwidth}{!}{\includegraphics{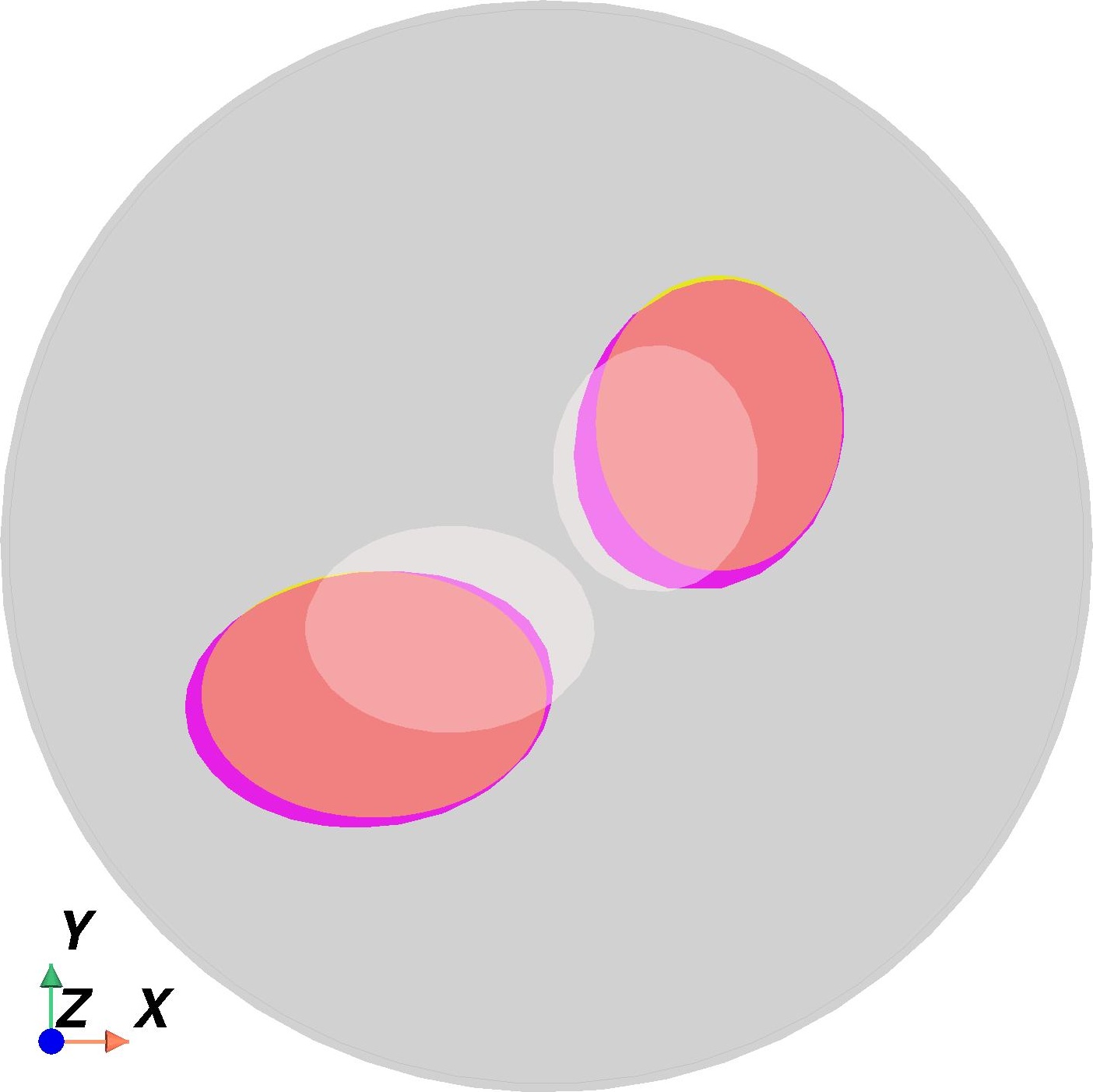}}  
\resizebox{0.15\textwidth}{!}{\includegraphics{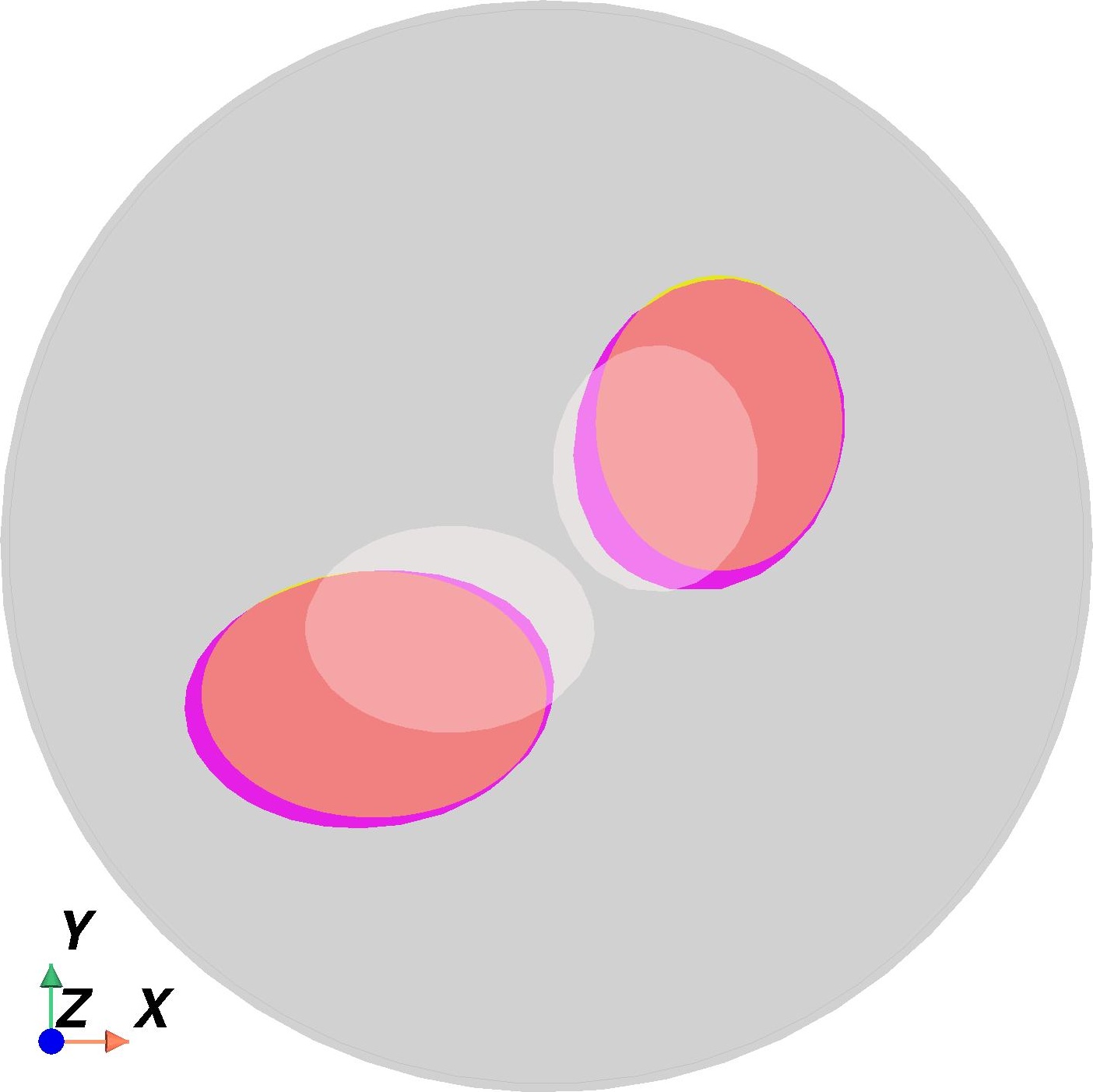}}\\[0.5em]
\resizebox{0.15\textwidth}{!}{\includegraphics{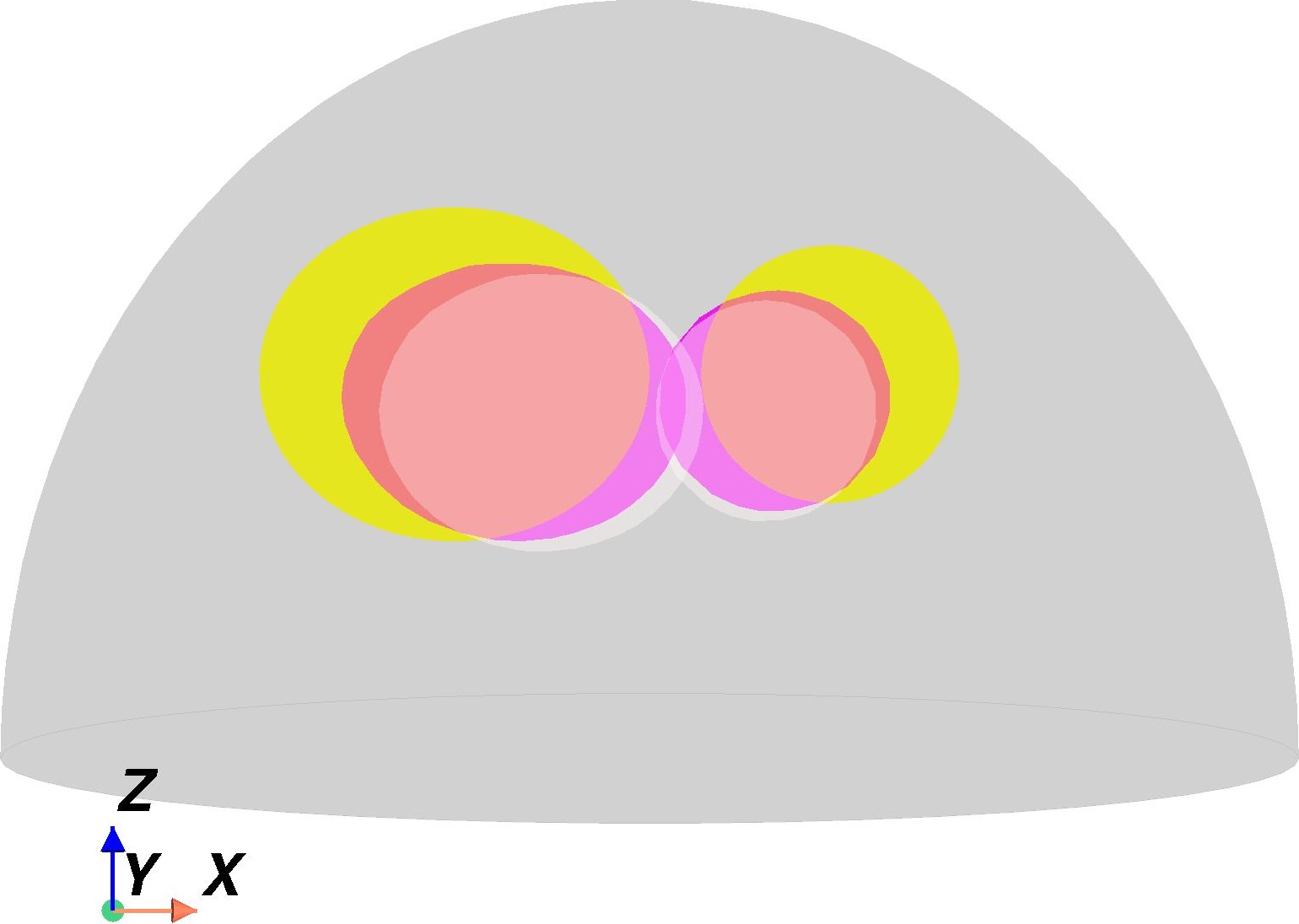}}  
\resizebox{0.15\textwidth}{!}{\includegraphics{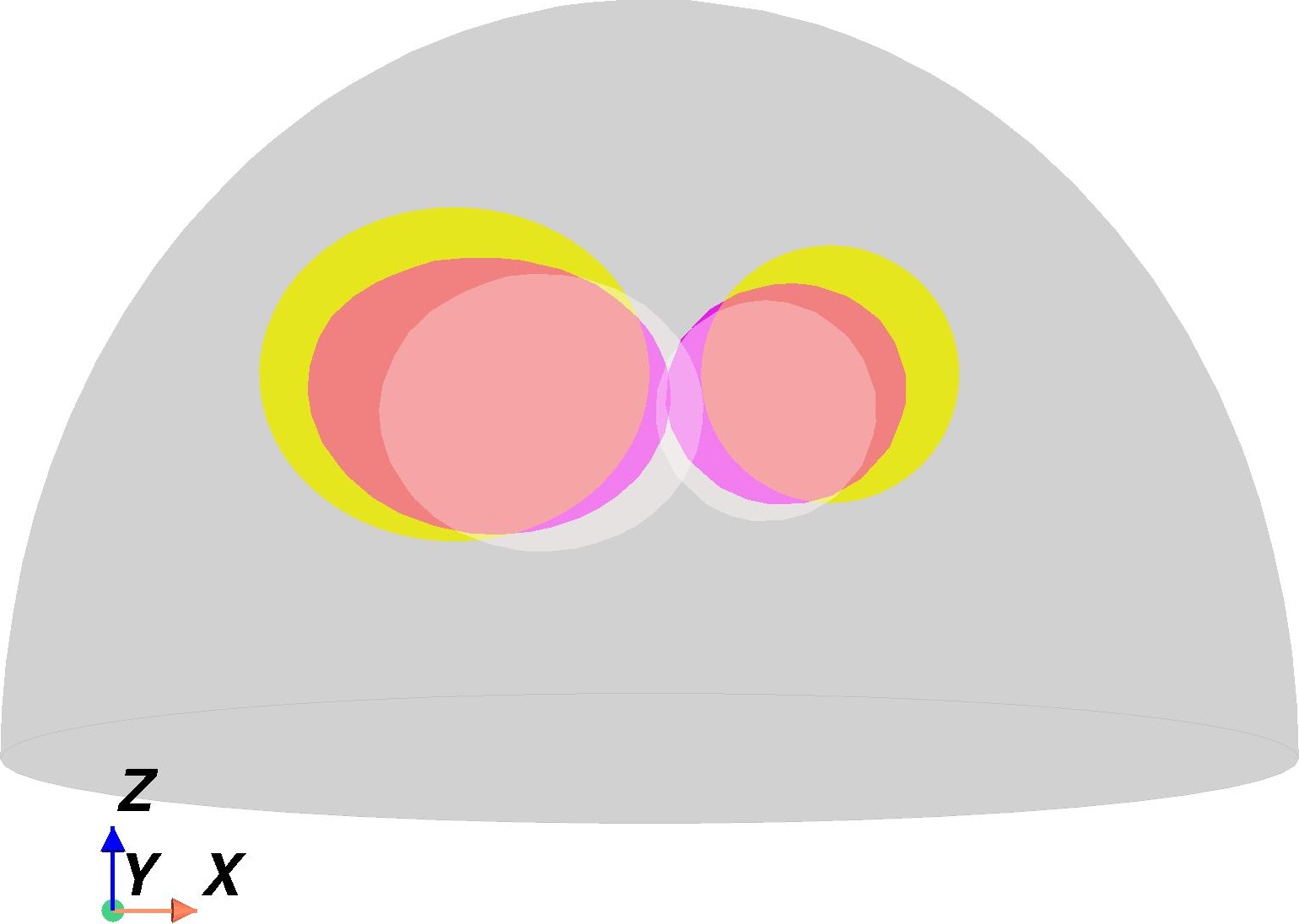}}  
\resizebox{0.15\textwidth}{!}{\includegraphics{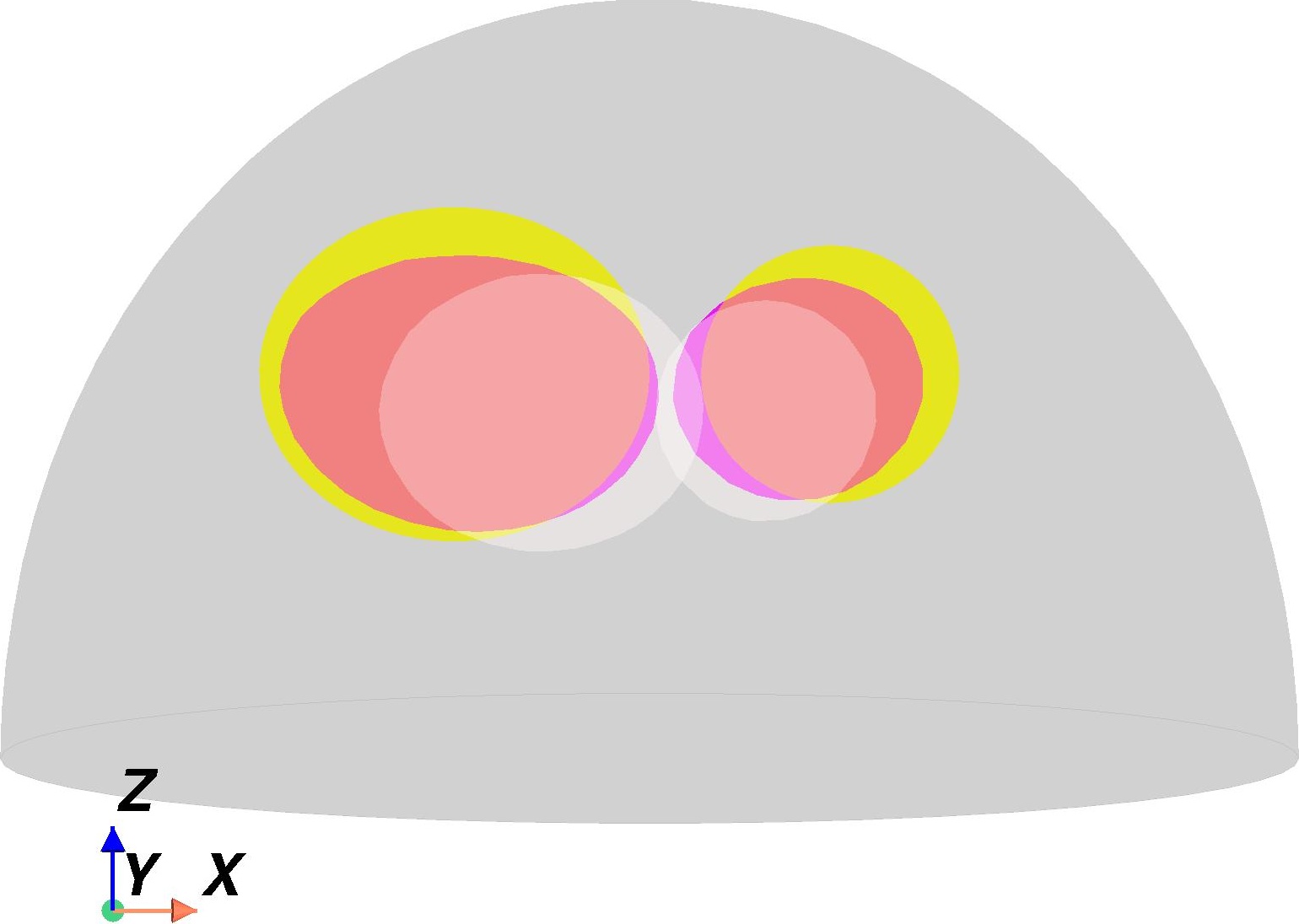}}  
\resizebox{0.15\textwidth}{!}{\includegraphics{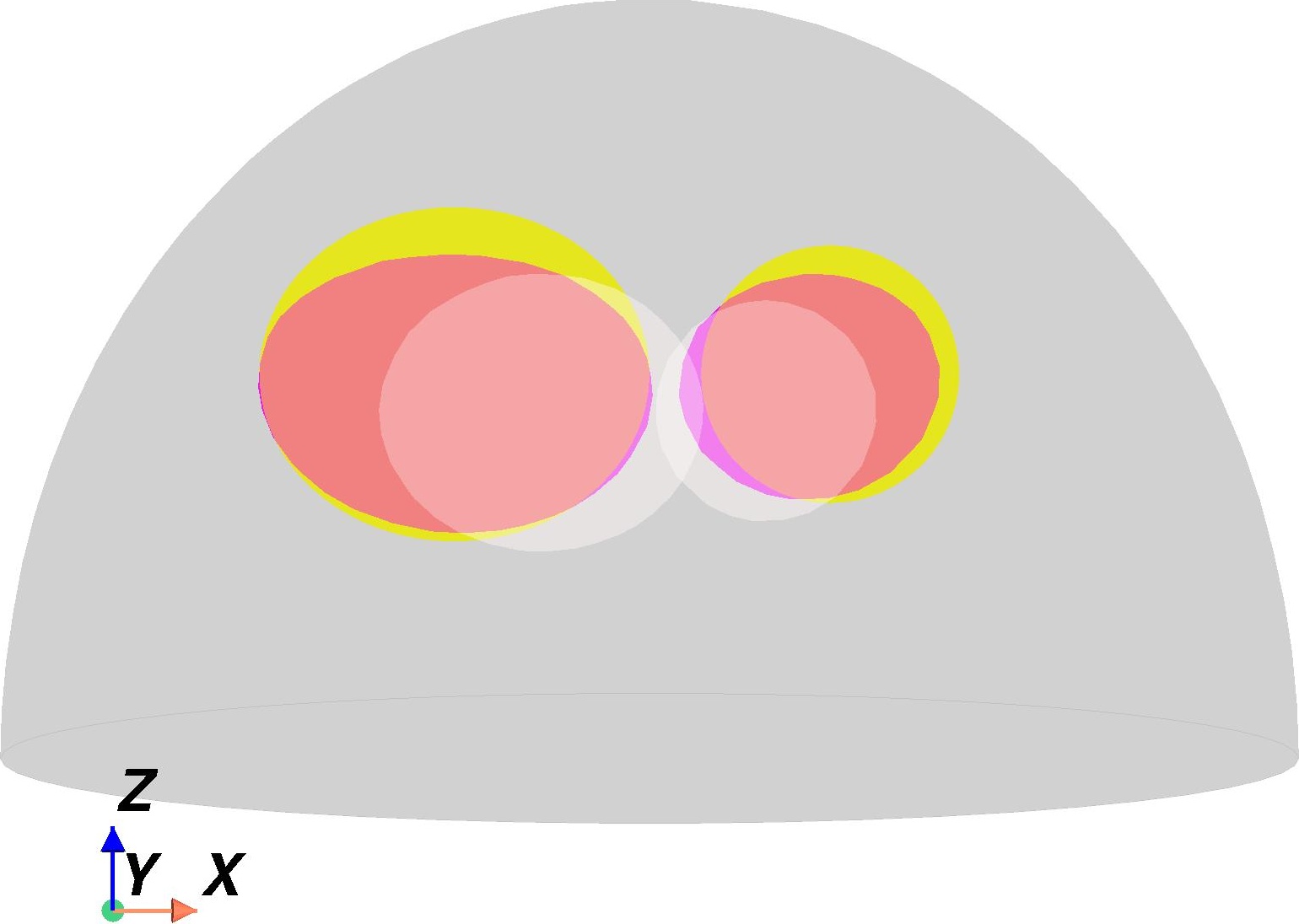}}  
\resizebox{0.15\textwidth}{!}{\includegraphics{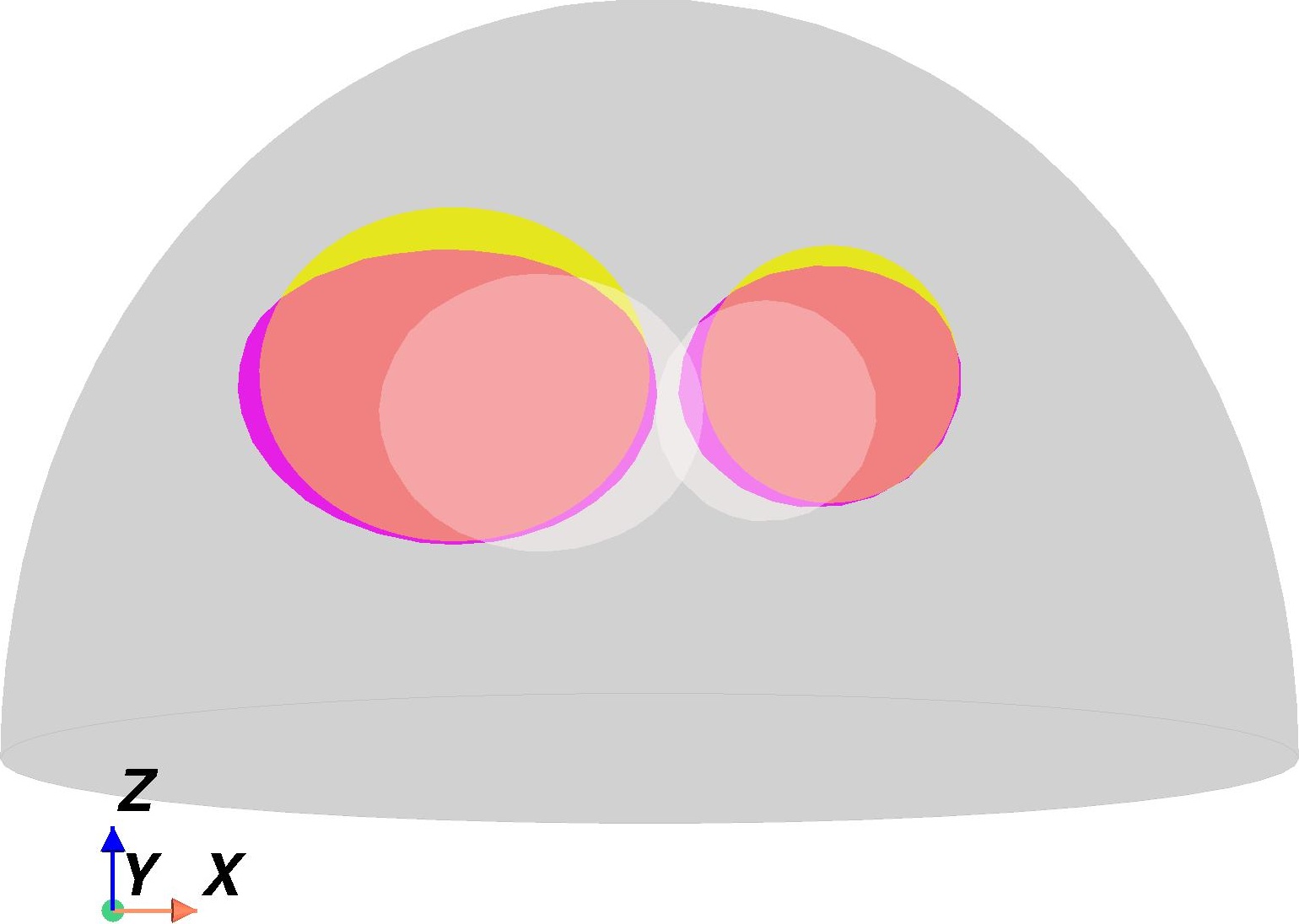}}  
\resizebox{0.15\textwidth}{!}{\includegraphics{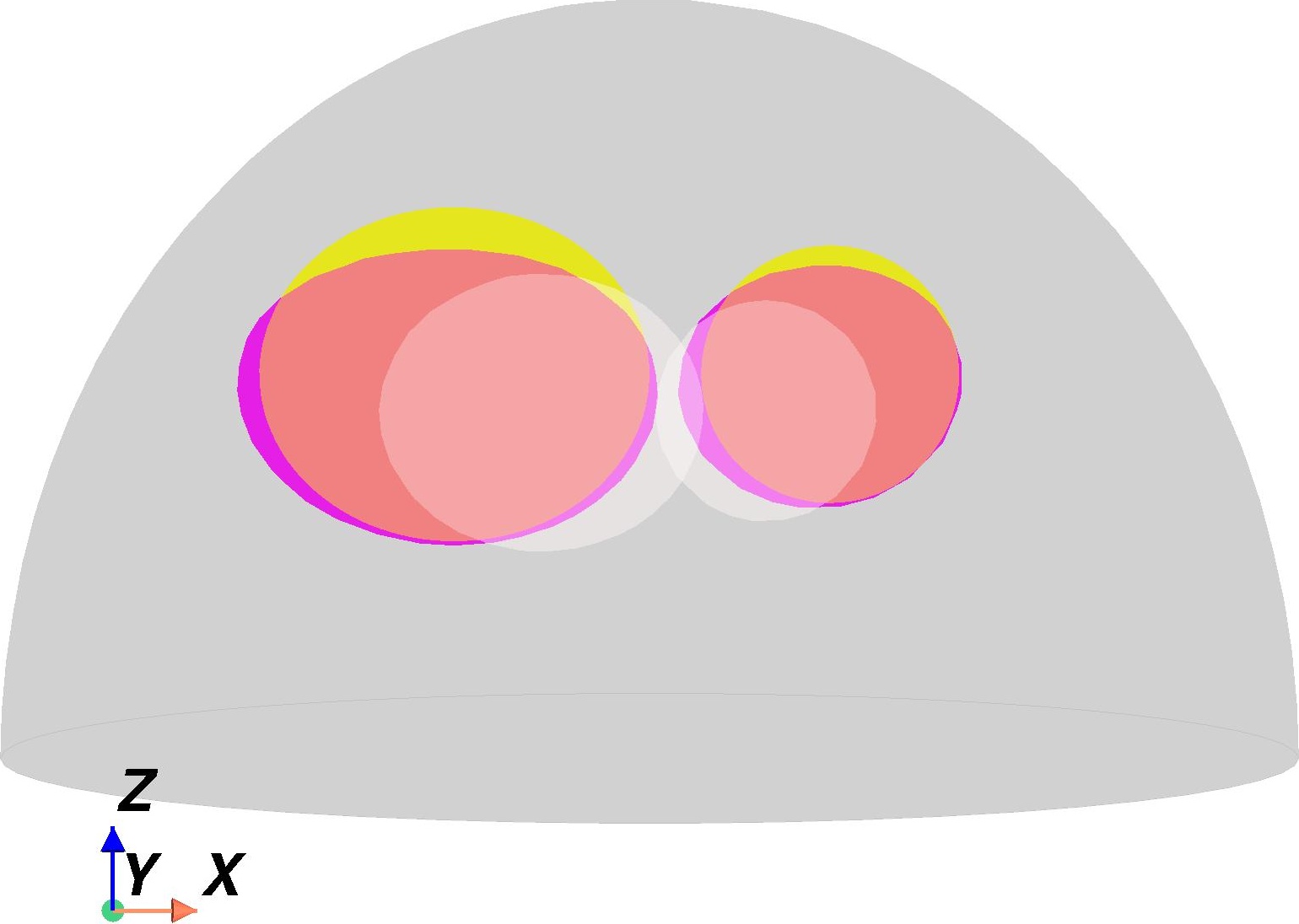}}\\[0.5em]
\resizebox{0.15\textwidth}{!}{\includegraphics{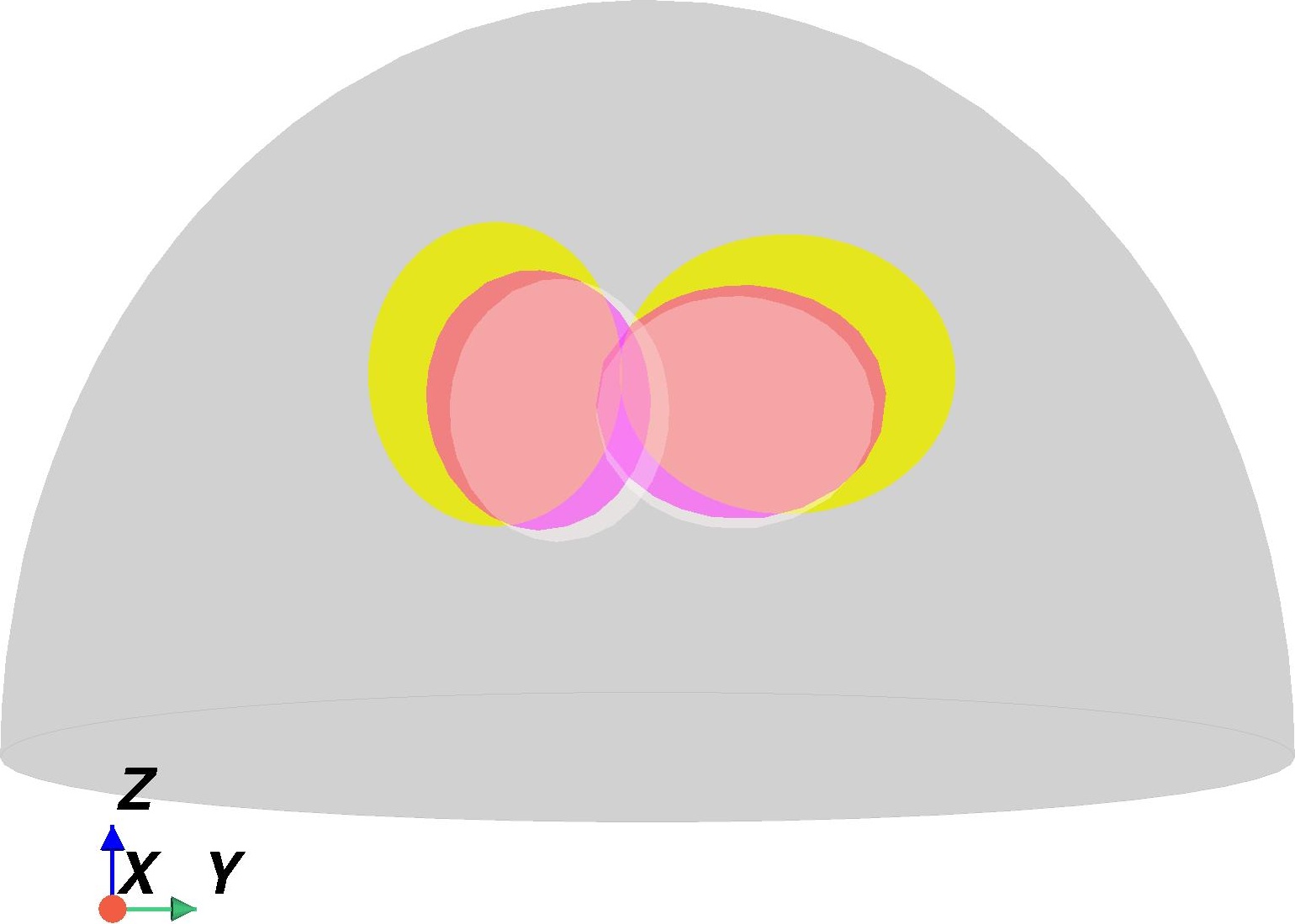}}  
\resizebox{0.15\textwidth}{!}{\includegraphics{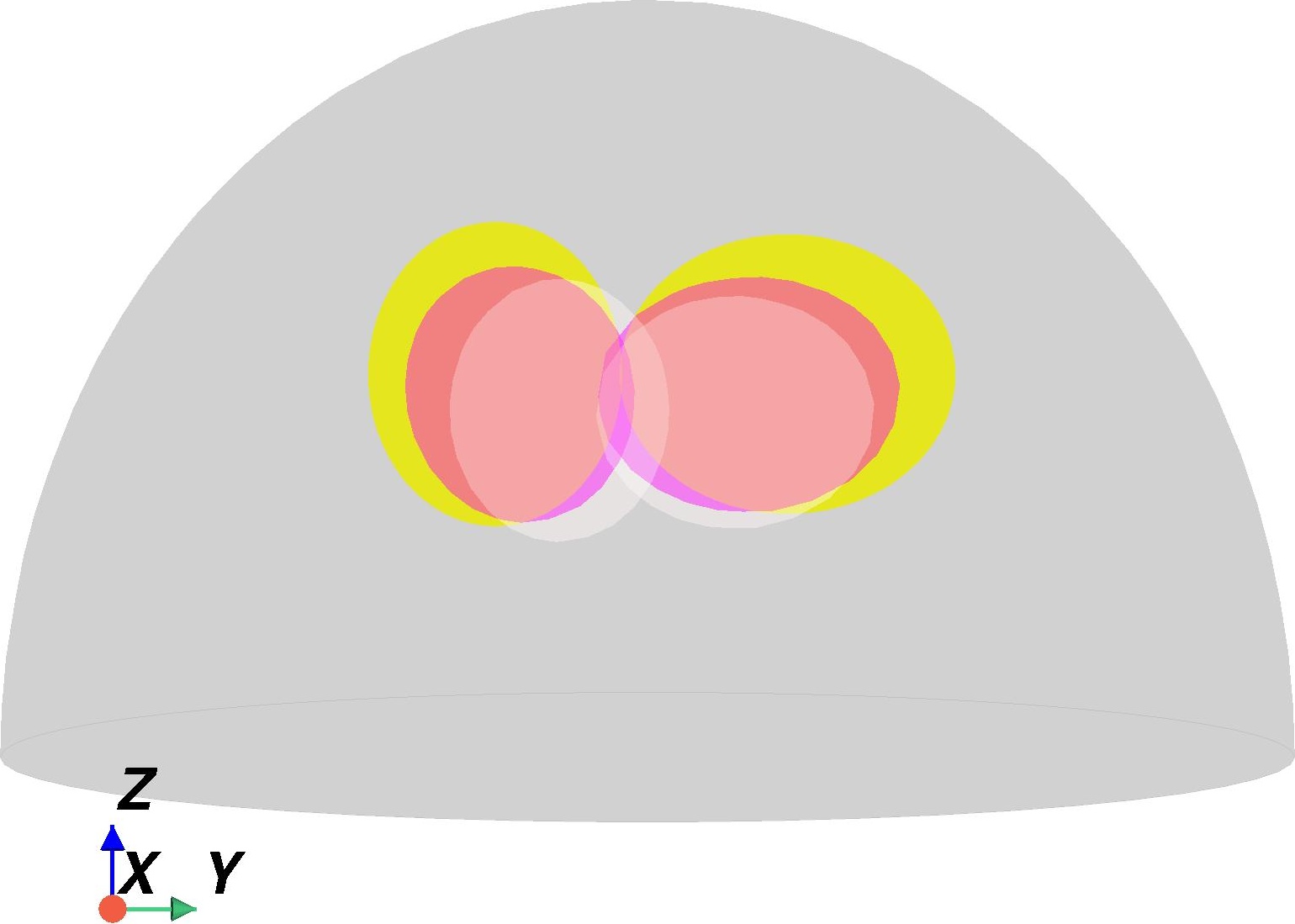}}  
\resizebox{0.15\textwidth}{!}{\includegraphics{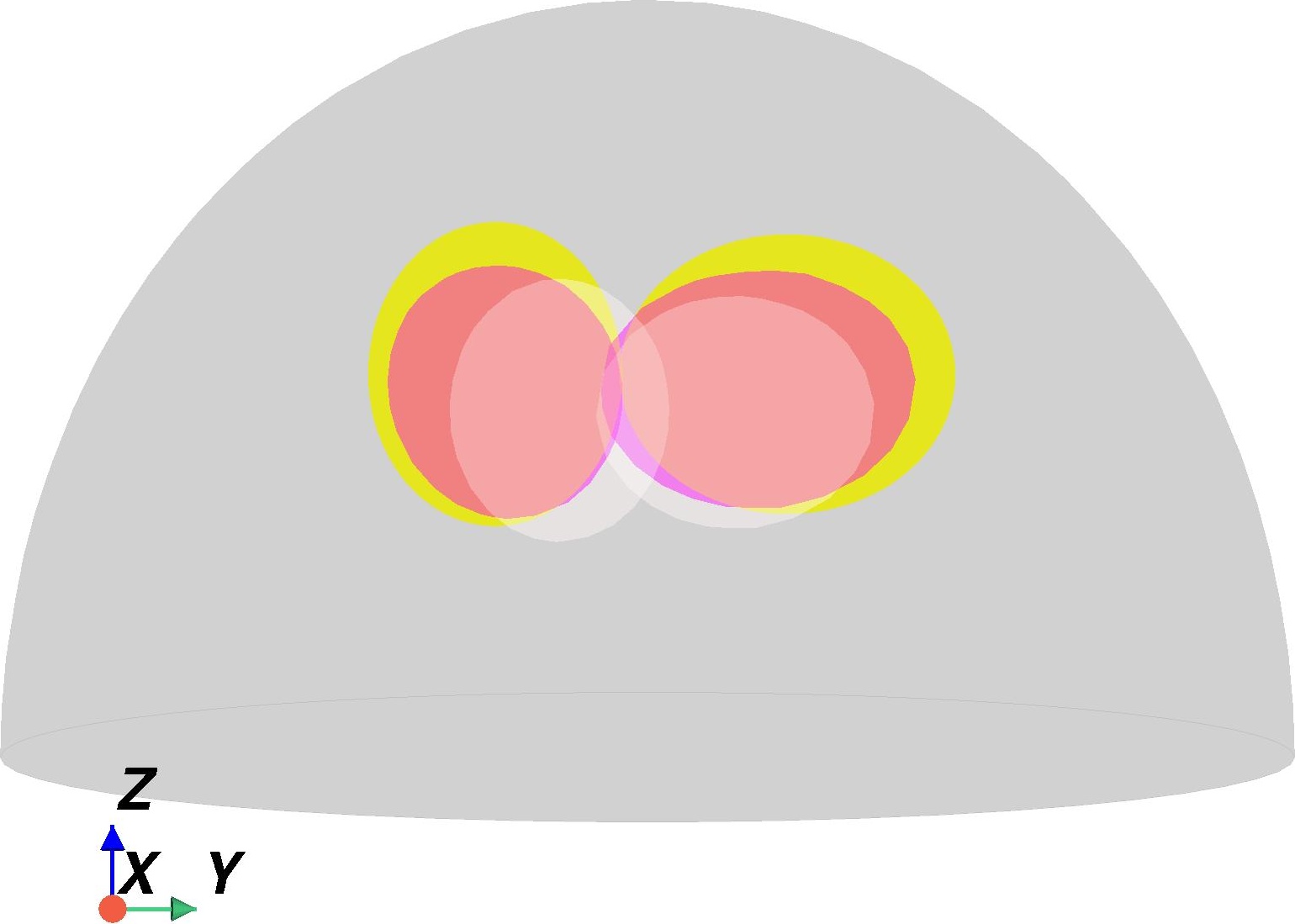}}  
\resizebox{0.15\textwidth}{!}{\includegraphics{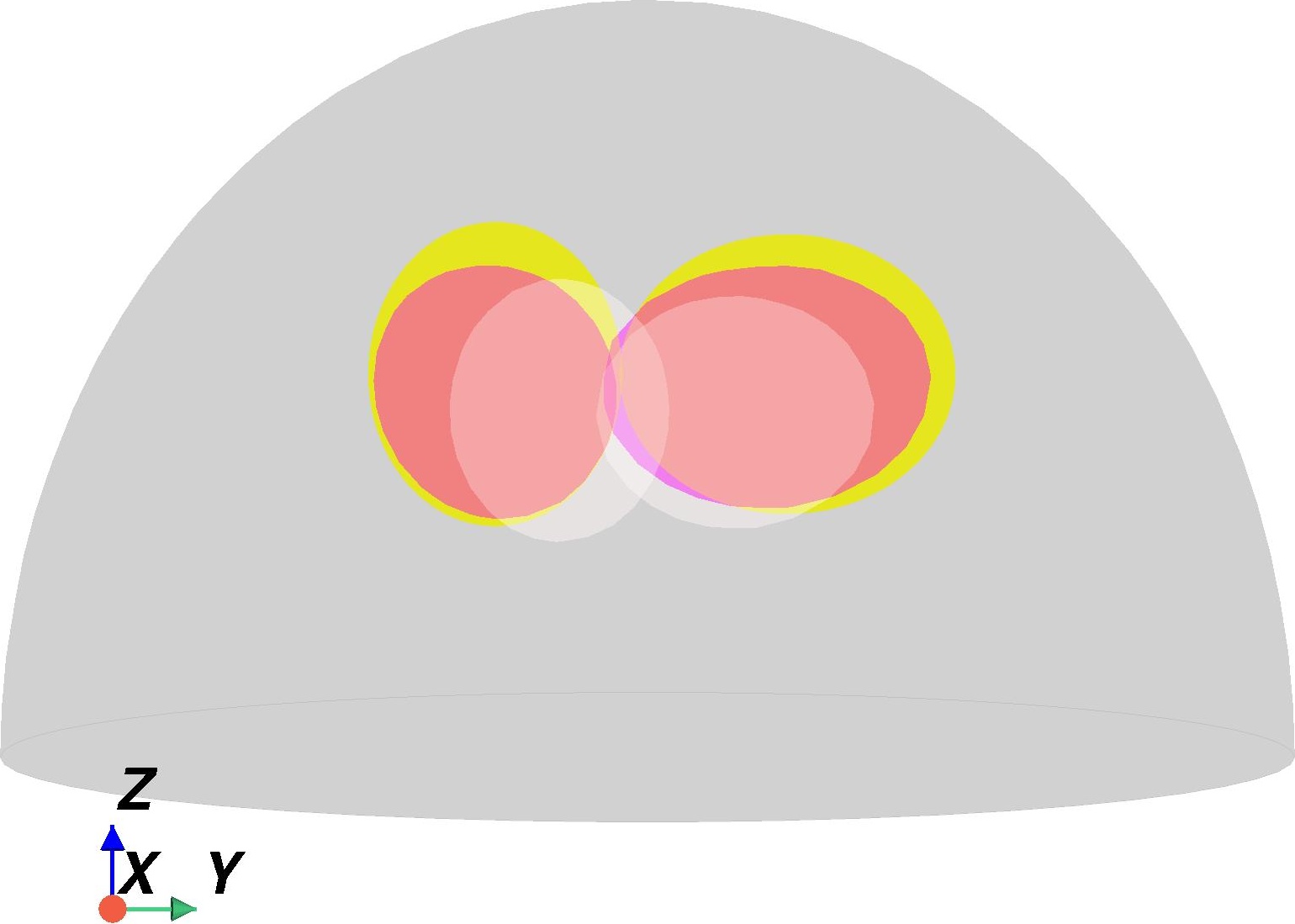}}  
\resizebox{0.15\textwidth}{!}{\includegraphics{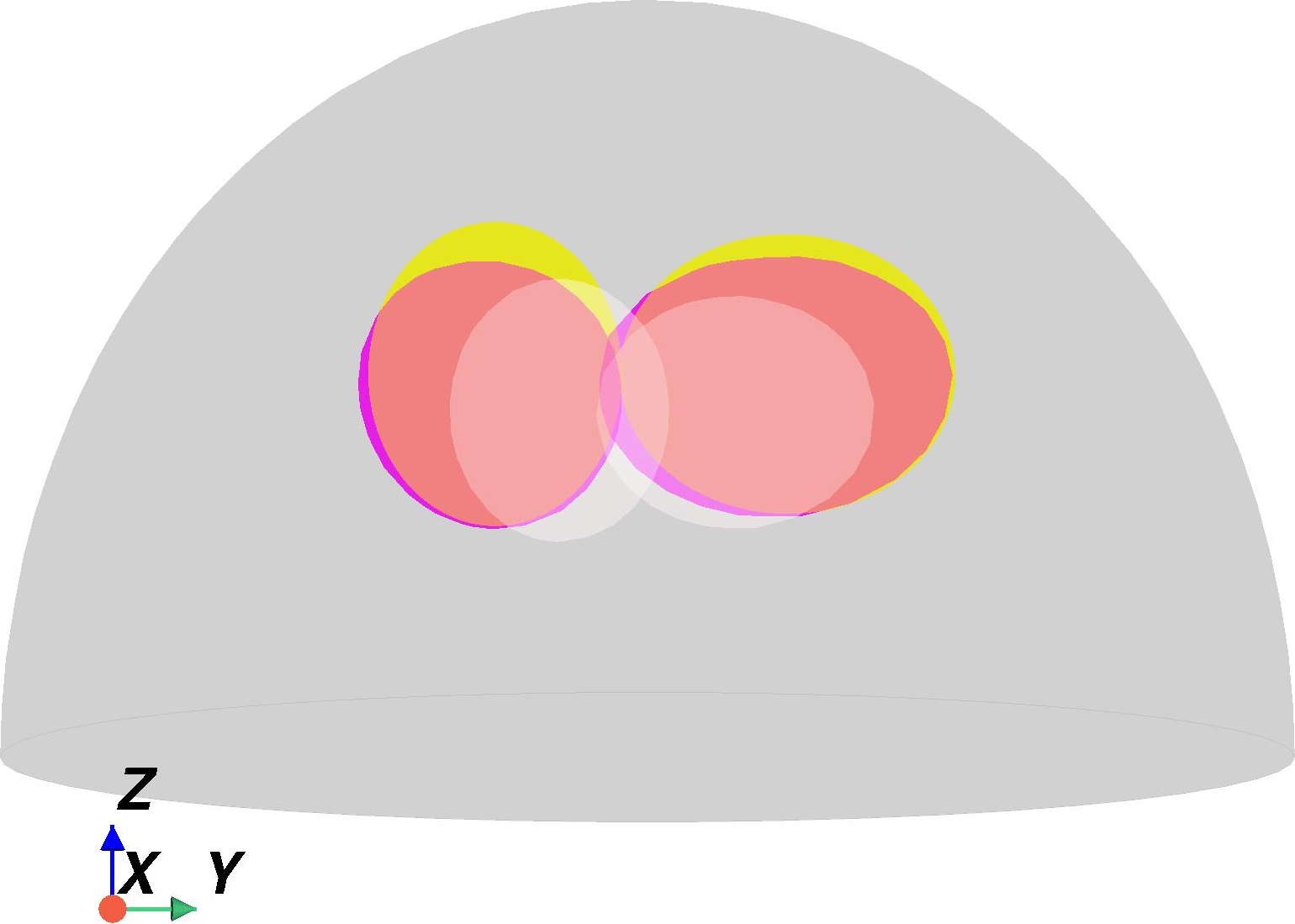}}  
\resizebox{0.15\textwidth}{!}{\includegraphics{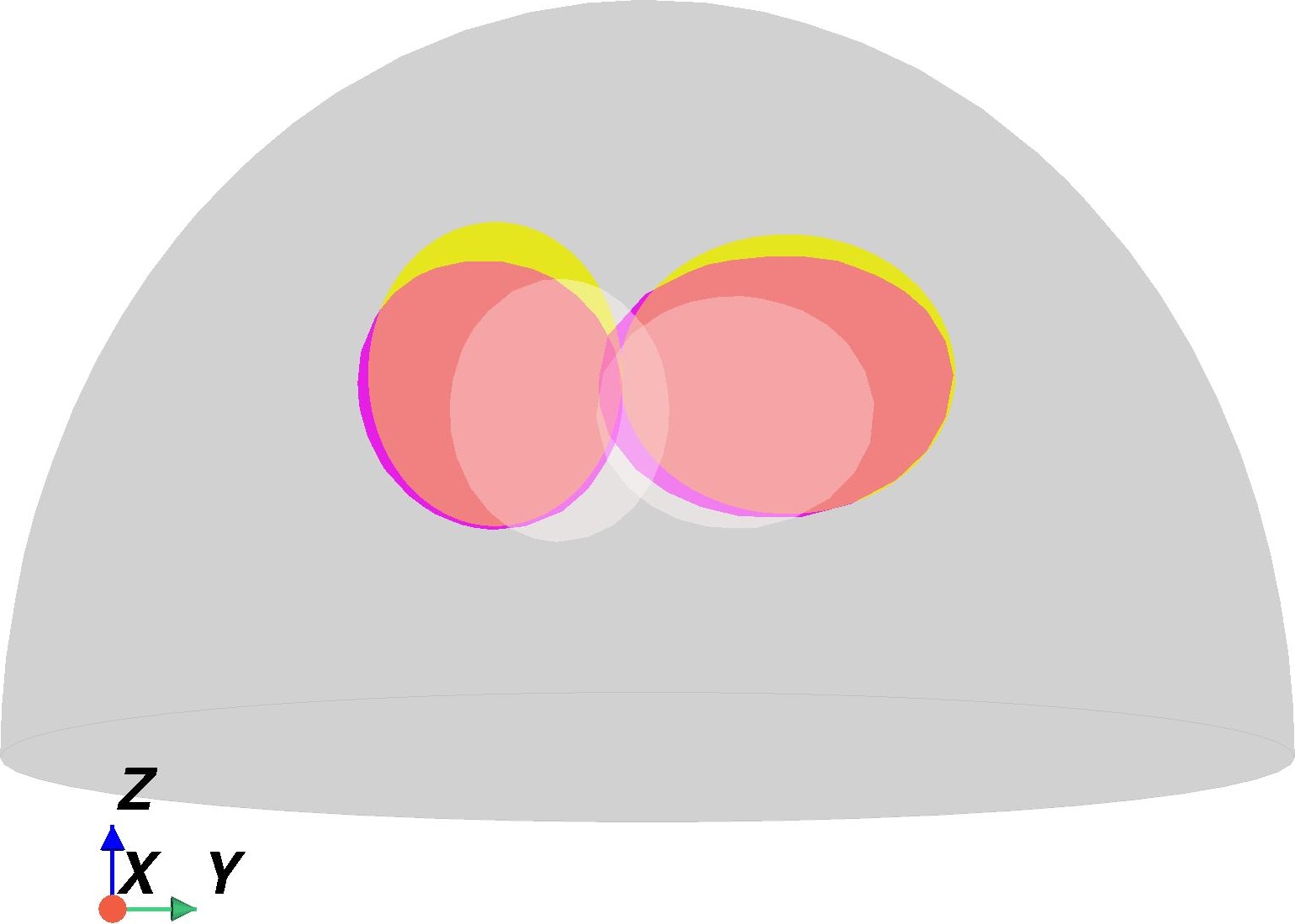}}
\caption{Evolution of the free domain (magenta) at iteration $k = 50, 100, 150, 200, 250, 276$ (final iterate) from the initial guess (light color) versus the exact tumor (yellow) from various views.}
\label{fig:3d_results_multiple_tumors}
\end{figure}
\paruch{
We end the numerical tests with remarks underlining the key findings and advantages of our method over previous work.
\begin{remark}
The numerical results presented here indicate that combining the balancing principle \eqref{eq:balancing_principle} with a small $c_{b} \in (0,1)$ in \eqref{eq:bilinear_form_for_regularization} is effective when the radii are precisely known before localization. 
Although this strongly enforces the volume constraint, it may cause overshooting, rendering the reconstruction sensitive to the initial guess and reliant on an accurate estimate of the inclusion size.
This limitation reduces the practical applicability of the method, as precise knowledge of the inclusion size is required for accurate reconstruction. In contrast, previous methods \cite{ParuchMajchrzak2007, AgnelliPadraTurner2011} assume a known shape but do not depend on the initial size. 
Our method requires a reasonably accurate initial location to apply the balancing principle effectively for detecting small, deep tumors. Nevertheless, it generally remains robust even when the initial size deviates significantly from the true value, as demonstrated in subsections~\ref{subsec:test_2_additional} and \ref{subsec:test_3d_multiple_tumors}.

Additionally, we have considered non-elliptical shapes, which may highlight a limitation of the approach used in \cite{AgnelliPadraTurner2011}, as their method controls only the radii and center.

Following the framework of \cite{PadraSalva2013}, we also address reconstructions involving multiple inclusions with varying shapes. Moreover, we extend the analysis to account for noise in the data, which was not considered in their study.
\end{remark}
}
\section{Conclusion}\label{sec:conclusion}  
This study addresses the tumor localization problem using skin temperature profiles in combination with a non-conventional shape optimization approach.  
The numerical method is based on the $H^1$ Riesz representative of the distributed shape gradient, rather than the conventional boundary-type shape gradient. 
The surface temperature distribution provides an initial estimate of the tumor position. By comparing final cost values across different initial radii, the method can reasonably approximate small, deep tumors even in the presence of noise.
\harbrecht{The use of weighted penalization of the volume integral, combined with the balancing principle, enhances tumor recovery. 
It was also observed that the method remains effective when $c_b$ is chosen close to one and the volume penalization parameter $\rho$ is fixed to a small value. 
In this setting, the balancing principle is not employed, meaning the volume constraint is not strongly enforced.
In conclusion, the proposed Lagrangian--type shape optimization method performs well regardless of tumor size and location and effectively identifies multiple tumors under noisy conditions.}

As a future direction, we aim to extend the Pennes bioheat equation by relaxing its assumptions of constant arterial blood temperature and uniform perfusion, which may not always hold in practice. 
Current research focuses on incorporating spatially varying perfusion, nonlinear tissue properties, and time-dependent effects to enhance the model’s accuracy and applicability. 
In addition, the simultaneous recovery of tumor region parameters and geometry using skin surface temperature data is being actively investigated by the author.

\harbrecht{Incorporating an adaptive strategy based on \textit{a posteriori} error estimation is also planned to improve reconstruction accuracy. 
Finally, future work includes developing a new cost functional that combines the strengths of the conventional and CCBM functionals. 
Although beyond the scope of the present study, this direction---supported by \cite{AgnelliPadraTurner2011,ParuchMajchrzak2007,PadraSalva2013} and our current results---offers promising potential for improvement.} 
The results will be reported in a separate venue.

\medskip

\section*{Acknowledgements} This work is partially supported by the JSPS Postdoctoral Fellowships for Research in Japan and partially by the JSPS Grant-in-Aid for Early-Career Scientists under Japan Grant Number JP23K13012 and the JST CREST Grant Number JPMJCR2014.
The author wishes to thank the referees for their helpful comments, suggestions, and remarks, which have greatly improved the quality of the manuscript, including the addition of subsections~\ref{subsec:test_2_additional} and \ref{subsec:test_3d_multiple_tumors}, as well as Appendix~\ref{appx:a_posteriori_error_estimations}.
He also thanks one of the referees for bringing to his attention the references \cite{Paruch2020,MajchrzakParuch2011,NgJamil2014,VaronOrlandeElicabe2015}, and the other referee for pointing out \cite{PadraSalva2013}.

\renewcommand{\appendix}{\par
  \setcounter{section}{0}
  \setcounter{subsection}{0}
  \gdef\thesection{\Alph{section}}
}
\renewcommand{\thesubsection}{\Alph{subsection}}
\renewcommand{\theequation}{\Alph{subsection}.\arabic{equation}}
\renewcommand\thelemma{\thesection.\arabic{lemma}}   
\renewcommand{\thesubsection}{\Alph{subsection}}
\appendix
\section{Appendices}\label{appx} 
\subsection{Some properties of the transformation $T_{t}$}\label{appx:properties_of_POI}
The following regularities hold (see, e.g., \cite{IKP2006, IKP2008} or \cite[Lem. 3.2, p.~111]{SokolowskiZolesio1992}):
\begin{equation}\label{eq:regular_maps}
\left\{
\begin{aligned}
	[t \mapsto DT_{t}] &\in {{C}}^{1}({\textsf{I}},{{C}}^{0,1}(\overline{\varOmega})^{d\times d}),
		&& [t \mapsto (DT_{t})^{-\top}] \in {{C}}^{1}({\textsf{I}},{{C}}(\overline{\varOmega})^{d\times d}),\\	
	[t \mapsto \dett] &\in {{C}}^{1}({\textsf{I}},{{C}}(\overline{\varOmega})), 
		&&\ [t \mapsto \dett := \det \, DT_{t}] \in {{C}}^{1}({\textsf{I}},{{C}}^{0,1}(\overline{\varOmega})),\\
	[t \mapsto \At] &\in {{C}}^{1}({\textsf{I}},{{C}}(\overline{\varOmega})^{d \times d}),
		&&[t \mapsto \At := \dett\Mt^{\top}\Mt] \in {{C}}({\textsf{I}},{{C}}(\overline{\varOmega})^{d \times d}),\\
	[t \mapsto \bt := \dett \abs{\Mt \nn}] &\in {{C}}^{1}({\textsf{I}},{{C}}(\partial{\omega})), && \Mt:=(DT_{t})^{-\top}.
\end{aligned}
\right.
\end{equation} 
The derivatives of the maps $[t \mapsto \dett]$, $[t \mapsto \At]$, and $[t \mapsto \bt]$ are given by:
\begin{equation}\label{eq:derivatives_of_maps}
\begin{aligned}
\frac{d}{dt} \dett \big|_{t=0} = \dive \VV, \qquad
\frac{d}{dt} \At \big|_{t=0} &= (\dive \VV)\idmat - D\VV - (D\VV)^\top =: A, \\
\frac{d}{dt} \bt \big|_{t=0} &= \dive_{\tau} \VV = \dive \VV \big|_{\tau} - (D\VV\nn) \cdot \nn.
\end{aligned}
\end{equation}
By choosing a smaller interval $\textsf{I}$ if necessary, we assume in this paper that
\begin{equation}\label{eq:bounds_At_and_{b}t}
 \abs{\xi}^2 \leqslant \sigma \At \xi \cdot \xi \leqslant \abs{\xi}^2, \quad \forall \xi \in \mathbb{R}^{d}.
\end{equation}

\subsection{Correa-Seeger Theorem}
Let ${\varepsilon} > 0$ be a fixed real number and consider a functional
\[
	\Lag:[0,{\varepsilon}]\times X \times Y \to \mathbb{R},
\]
for some topological spaces $X$ and $Y$. 
For each $t \in [0, {\varepsilon}]$, we define
\[
	M(t) := \min_{x\in X} \sup_{y\in Y} \Lag(t,x,y)
	\quad\text{and}\quad
	m(t) = \sup_{y\in Y} \min_{x\in X} \Lag(t,x,y),
\]
and the associated sets
\[
	X(t) := \left\{ \hat{x} \in X \mid \sup_{y\in Y} \Lag(t,\hat{x}, y) = M(t)\right\}
	\quad\text{and}\quad
	Y(t) := \left\{ \hat{y} \in Y \mid \min_{x\in X} \Lag(t,x,\hat{y}) = m(t)\right\}.
\]
We introduce the \textit{set of saddle points} $S(t)=\{(\hat{x},\hat{y})\in X\times Y : M(t) = \Lag(t, \hat{x}, \hat{y}) = m(t)\}$, which may be empty.
In general, we always have the inequality $m(t) \leqslant M(t)$, and when $m(t) = M(t)$, the set $S(t)$ is exactly $X(t) \times Y(t)$.

We quote below an improved version of Correa-Seeger theorem stated in \cite[pp.~556--559]{DelfourZolesio2011}.
\begin{theorem}[{{Correa and Seeger, \cite{CorreaSeeger1985}}}]
	\label{thm:Correa_Seeger}
	Let the sets $X$ and $Y$, the real number ${\varepsilon} > 0$, and the functional $\Lag: [0,{\varepsilon}] \times X \times Y \to \mathbb{R}$ be given.
	Assume that the following assumptions hold:
	\begin{description}
		\item[{(H1)}] for $0 \leqslant t \leqslant {\varepsilon}$, the set $S(t)$ is non-empty;
		
		\item[{(H2)}] \sloppy the partial derivative $\partial_{t}\Lag(t,x,y)$ exists everywhere in $[0,{\varepsilon}]$, for all $(x,y) \in \left[ \bigcup_{t \in [0,\varepsilon]} X(t)  \times Y(0)  \right]  \bigcup  \left[X(0) \times \bigcup_{t \in [0,\varepsilon]} Y(t) \right]$;
		
		\item[{(H3)}] there exists a topology $\mathcal{T}_X$ on $X$ such that for any sequence $\{t_{n} : 0 < t_{n} \leqslant {\varepsilon}\}, t_{n} \to t_0 = 0$,
						there exist an $x^{0} \in X(0)$ and a subsequence $\{t_{n_k} \}$ of $\{t_{n}\}$, 
						and for each $k \geqslant 1$, there exists $x_{n_k} \in X(t_{n_k})$ such that
						{{(i)}} $x_{n_k} \to x^{0}$ in the $\mathcal{T}_X$-topology, and
						{{(ii)}} for all $y$ in $Y(0)$, $\liminf_{t\searrow 0,\ k \to \infty} \partial_{t}\Lag(t, x_{n_k} , y) \geqslant \partial_{t}\Lag(0, x^{0}, y)$;
		
		\item[{(H4)}] there exists a topology $\mathcal{T}_Y$ on $Y$ such that for any sequence $\{t_{n} : 0 < t_{n} \leqslant {\varepsilon}\}, t_{n} \to t_0 = 0$,
						there exist $y^{0} \in Y(0)$ and a subsequence $\{t_{n_k} \}$ of $\{t_{n}\}$, 
						and for each $k \geqslant 1$, there exists $y_{n_k} \in Y(t_{n_k})$ such that
						{{(i)}} $y_{n_k} \to y^{0}$ in the $\mathcal{T}_Y$-topology, and
						{{(ii)}} for all $x$ in $X(0)$, $\limsup_{t\searrow 0,\ k \to \infty} \partial_{t}\Lag(t, x, y_{n_k}) \leqslant \partial_{t}\Lag(0, x, y^{0})$;
	\end{description}
	Then, there exists $(x^{0}, y^{0}) \in X(0) \times Y(0)$ such that
	\[
		{d}M(0) = \min_{x\in X(0)} \sup_{y\in Y(0)} \partial_{t}\Lag(0, x, y) 
			= \partial_{t}\Lag(0, x^{0}, y^{0})
			= \sup_{y\in Y(0)} \min_{x\in X(0)} \partial_{t}\Lag(0,x,y).
	\]
	Thus, $(x^{0}, y^{0})$ is a saddle point of $\partial_{t}\Lag(0, x, y)$ on $X(0) \times Y(0)$.
\end{theorem}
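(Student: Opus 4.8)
This is the classical differentiability theorem for minimax functions, so rather than proving it from scratch I would reconstruct the Correa--Seeger argument in the streamlined form of \cite[pp.~556--559]{DelfourZolesio2011}. The plan is to trap the difference quotient $\tfrac{1}{t}\left(M(t)-M(0)\right)$ between one-sided quantities built from $\partial_{t}\Lag$ and the extremizer sets $X(\cdot)$ and $Y(\cdot)$, and then pass to the limit $t\searrow0$ using (H3)--(H4).

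First I would record two one-sided bounds. By (H1) the sets $X(t)$ and $Y(t)$ are nonempty and $M(t)=m(t)$ on $[0,\varepsilon]$. Fix $t>0$. For any $x'\in X(0)$ and any $y_{t}\in Y(t)$ one has $M(t)=m(t)=\inf_{x}\Lag(t,x,y_{t})\leqslant\Lag(t,x',y_{t})$, while $M(0)=\sup_{y}\Lag(0,x',y)\geqslant\Lag(0,x',y_{t})$; subtracting and using (H2) together with the mean value theorem in $t$ gives $\tfrac{1}{t}(M(t)-M(0))\leqslant\partial_{t}\Lag(\xi_{t},x',y_{t})$ for some $\xi_{t}\in(0,t)$. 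Dually, for any $y'\in Y(0)$ and any $x_{t}\in X(t)$, from $M(t)=\sup_{y}\Lag(t,x_{t},y)\geqslant\Lag(t,x_{t},y')$ and $M(0)=m(0)=\inf_{x}\Lag(0,x,y')\leqslant\Lag(0,x_{t},y')$ one obtains $\tfrac{1}{t}(M(t)-M(0))\geqslant\partial_{t}\Lag(\eta_{t},x_{t},y')$ for some $\eta_{t}\in(0,t)$.

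Next, given any sequence $t_{n}\searrow0$, I would apply (H3) to extract a subsequence with points $x_{n_{k}}\in X(t_{n_{k}})$ such that $x_{n_{k}}\to x^{0}\in X(0)$ in $\mathcal{T}_{X}$, then apply (H4) along $\{t_{n_{k}}\}$ to extract a further subsequence with $y_{n_{k}}\in Y(t_{n_{k}})$ such that $y_{n_{k}}\to y^{0}\in Y(0)$ in $\mathcal{T}_{Y}$. Feeding the points $x_{n_{k}}$ and an arbitrary $y'\in Y(0)$ into the lower bound and invoking (H3)(ii) gives $\liminf_{k}\tfrac{1}{t_{n_{k}}}(M(t_{n_{k}})-M(0))\geqslant\partial_{t}\Lag(0,x^{0},y')$; since $y'\in Y(0)$ is arbitrary,
\[
    \liminf_{k\to\infty}\frac{M(t_{n_{k}})-M(0)}{t_{n_{k}}}\;\geqslant\;\inf_{x\in X(0)}\ \sup_{y\in Y(0)}\ \partial_{t}\Lag(0,x,y).
\]
Dually, feeding an arbitrary $x'\in X(0)$ and the points $y_{n_{k}}$ into the upper bound and invoking (H4)(ii) gives $\limsup_{k}\tfrac{1}{t_{n_{k}}}(M(t_{n_{k}})-M(0))\leqslant\partial_{t}\Lag(0,x',y^{0})$; since $x'\in X(0)$ is arbitrary,
\[
    \limsup_{k\to\infty}\frac{M(t_{n_{k}})-M(0)}{t_{n_{k}}}\;\leqslant\;\sup_{y\in Y(0)}\ \inf_{x\in X(0)}\ \partial_{t}\Lag(0,x,y).
\]
Because the sup-inf never exceeds the inf-sup, these two chains pinch together: the two minimax values of $\partial_{t}\Lag(0,\cdot,\cdot)$ on $X(0)\times Y(0)$ coincide, the difference quotient converges along the subsequence to that common value, and --- the sequence $t_{n}$ being arbitrary while the limiting value is fixed --- $dM(0)$ exists and equals it. The coincidence of the inf-sup and the sup-inf then exhibits $(x^{0},y^{0})$ as a saddle point of $\partial_{t}\Lag(0,\cdot,\cdot)$ with $dM(0)=\partial_{t}\Lag(0,x^{0},y^{0})$, which is the asserted identity.

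The main obstacle is the limit passage in $\partial_{t}\Lag(\xi_{t},x',y_{t})$: one must control the vanishing mean value node $\xi_{t}$ and the drifting extremizer $y_{t}$ at the same time, and neither pointwise semicontinuity of $\partial_{t}\Lag$ in $t$ nor convergence of $y_{t}$ by itself is enough --- this is precisely what the derivative-semicontinuity conditions (H3)(ii) and (H4)(ii) supply, backed by the fact that (H2) posits existence of $\partial_{t}\Lag$ exactly on the products of extremizer sets that arise in the estimates. A secondary technicality is obtaining a single subsequence along which the $x$'s converge in $\mathcal{T}_{X}$ and the $y$'s in $\mathcal{T}_{Y}$; this is handled by nesting the extractions from (H3) and (H4), noting that restriction to a further subsequence only reinforces the one-sided inequalities in (H3)(ii) and (H4)(ii).
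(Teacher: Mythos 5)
The paper does not prove this statement at all: Theorem~\ref{thm:Correa_Seeger} is quoted verbatim as an established result of Correa and Seeger, in the improved form given in \cite[pp.~556--559]{DelfourZolesio2011}, so there is no in-paper argument to compare against. Judged on its own terms, your reconstruction is the standard (and correct) proof. The two one-sided mean-value bounds are legitimate because (H1) guarantees $M(t)=m(t)$ and nonemptiness of $X(t)$, $Y(t)$, and because (H2) asserts existence of $\partial_t\Lag$ on $[0,\varepsilon]$ exactly for the pairs $(x',y_t)\in X(0)\times\bigcup_t Y(t)$ and $(x_t,y')\in\bigcup_t X(t)\times Y(0)$ that the mean value theorem produces. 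The nested extraction (apply (H3) to $\{t_n\}$, then (H4) to the resulting subsequence) is the right way to get both families of extremizers converging along a common subsequence, and you correctly observe that the joint $\liminf$/$\limsup$ in (H3)(ii)/(H4)(ii) is what absorbs the vanishing mean-value nodes $\xi_t,\eta_t$. The pinching chain $\inf\sup\leqslant\liminf\leqslant\limsup\leqslant\sup\inf\leqslant\inf\sup$ then forces equality, yields existence of $dM(0)$ via the arbitrary-sequence argument, and identifies $(x^0,y^0)$ as a saddle point of $\partial_t\Lag(0,\cdot,\cdot)$. No gaps.
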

%
%
%
\subsection{Material derivative of the state}\label{subsec:material_derivative_of_the_state}
This appendix describes the structure of the \textit{material} derivative of the state, defined as follows (see, e.g., \cite[Eq.~(3.38), p.~111]{SokolowskiZolesio1992}):
\begin{equation}\label{eq:definition_of_the_material_derivative} 
	\dot{u} = \dot{u}(\varOmega)[\VV] = \lim_{t \searrow 0} \frac{u(\varOmega_{t}) \circ T_t - u(\varOmega)}{t} 
\end{equation}
provided that the limit $\dot{u}$ exists in $H^{1}(\varOmega)$, where $(u(\varOmega_{t}) \circ T_t)(x) = u(\varOmega_{t})(T_t(x))$, $x \in \varOmega$.


We have the following assumptions: 
\begin{assumption}\label{eq:strong_assumptions}
We assume $\alpha, T_{a}, T_{b} \in \mathbb{R}_{+}$, $\varOmega \in \Upsilon^{1}$, $\VV \in \sfTheta^{1}$, $\sigma_0, \sigma_1 \in C^1(\overline{\varOmega})$ with $\sigma_0, \sigma_1 > 0$, $\coeffk_0, \coeffk_1 \in C^{1}(\overline{\varOmega})$ with $\coeffk_0, \coeffk_1 > 0$, $Q_0, Q_1 \in C^{1}(\overline{\varOmega})$, and $h \in H^{1/2}(\Gtop)$.
\end{assumption}
\begin{theorem}\label{thm:material_derivative_of_the_state}
Under Assumption~\ref{eq:strong_assumptions}, the state $u = u(\varOmega) \in \Vomega$ has a material derivative $\dot{u} \in \Vomega$ satisfying
\begin{equation}\label{eq:material_derivative}
\begin{aligned} 
a(\dot{u},v)
    &= -\sum_{j=0}^{1}\int_{\varOmega_{j}}{ \left[ 
    		\nabla{{\sigma}_{j}} \cdot \VV (\nabla{u} \cdot \nabla{\conj{v}}) 
    			+ \nabla{\coeffk_{j}} \cdot \VV {u} {\conj{v}}
			- \nabla{{Q}_{j}}\cdot\VV {\conj{v}}
    	\right] }{\, dx}\\
   &\qquad -\sum_{j=0}^{1}\int_{\varOmega_{j}}{ \left[
     		{\sigma_{j}} A \nabla{u} \cdot \nabla{\conj{v}}
		+ {\dive}{\VV}{\coeffk_{j}} {u} {\conj{v}}
		+ {\dive}{\VV}{{Q}_{j}}{\conj{v}}
    	\right] }{\, dx}, \qquad \forall v \in {\Vomega},
\end{aligned}
\end{equation}
where the sesquilinear form $a(\cdot,\cdot)$ is given by \eqref{eq:forms} and $A =  (\dive \VV)\idmat - D\VV - (D\VV)^\top $.
\end{theorem}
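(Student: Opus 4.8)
The plan is to derive the weak formulation of the material derivative by the standard pullback-and-differentiate technique, mirroring the finite-element computation already carried out in the proof of Theorem~\ref{thm:material_derivative_equation_fem} but now at the continuous level. First I would start from the weak form of the CCBM equation on the perturbed domain $\varOmega_t$: find $u_t = u(\varOmega_t) \in V(\varOmega_t)$ such that, for all test functions on $\varOmega_t$,
\[
\intOt{\bigl(\sigma \nabla u_t \cdot \nabla \conj{v} + \coeffk u_t \conj{v}\bigr)} + \intGtopt{(\alpha+i)u_t\conj{v}} = \intOt{Q\conj{v}} + \alpha\intGtopt{T_a\conj{v}} + i\intGtopt{h\conj{v}}.
\]
Then I would change variables via $T_t$, pulling everything back to the fixed reference domain $\varOmega$: this introduces the Jacobian factor $\dett = \det DT_t$, the symmetric matrix $\At = \dett\,(DT_t)^{-1}(DT_t)^{-\top}$ in the gradient term, and the boundary factor $\bt$ on $\Gtop$. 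Since $\VV$ vanishes on $\partial\varOmega$, we have $\bt = 1$ and $\Gtop(t) = \Gtop$ for all $t$, so the boundary integrals are unchanged. The coefficients become $\sigma^t = \sigma\circ T_t$, $\coeffk^t = \coeffk\circ T_t$, $Q^t = Q\circ T_t$, and the state becomes $u^t = u_t\circ T_t$.

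Next I would differentiate the pulled-back identity with respect to $t$ at $t = 0$, keeping the reference variable $x$ fixed. This is legitimate once one knows the limit defining $\dot u$ in \eqref{eq:definition_of_the_material_derivative} exists in $H^1(\varOmega)$; I would invoke the implicit-function-theorem-type argument (as in \cite[Chap.~10]{DelfourZolesio2011} or \cite{IKP2006,IKP2008}), using the higher regularity $C^1(\overline\varOmega)$ of $\sigma_j,\coeffk_j,Q_j$ and the well-posedness of Problem~\ref{prob:CCBM_weak_form} together with its continuous dependence \eqref{eq:continuous_dependence}. The key ingredients for the differentiation are the derivative formulas from Appendix~\ref{appx:properties_of_POI}: $\tfrac{d}{dt}\dett|_{t=0} = \dive\VV$, $\tfrac{d}{dt}\At|_{t=0} = A = (\dive\VV)\idmat - D\VV - (D\VV)^\top$, and the chain rule $\tfrac{d}{dt}(\varphi\circ T_t)|_{t=0} = \nabla\varphi\cdot\VV$ for the coefficient compositions. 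Applying these to each term — the $\sigma$-term yields both a $\nabla\sigma\cdot\VV$ contribution and the $\sigma A\nabla u\cdot\nabla\conj v$ contribution, the $\coeffk$-term yields $\nabla\coeffk\cdot\VV\,u\conj v$ and $(\dive\VV)\coeffk u\conj v$, the source term yields $\nabla Q\cdot\VV\,\conj v$ and $(\dive\VV)Q\conj v$, and the boundary terms drop out since their $t$-dependence is trivial — and separating the $\dot u$-terms (which assemble into $a(\dot u, v)$) onto the left-hand side produces exactly \eqref{eq:material_derivative}. Uniqueness of $\dot u$ in $\Vomega$ follows from the coercivity \eqref{eq:V_coercivity_of_a} of $a(\cdot,\cdot)$ and continuity of the right-hand side as a conjugate-linear functional, using the $W^{1,\infty}$ bounds on the coefficients.

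The main obstacle is rigorously justifying the differentiability of $t \mapsto u^t$ at $t = 0$ in $H^1(\varOmega)$ — that is, establishing that the material derivative actually exists as an element of $\Vomega$ rather than merely formally computing its equation. This requires showing that $t\mapsto u^t$ solves a $t$-parametrized family of variational problems with coefficients depending smoothly (in the appropriate operator norm) on $t$, and then applying an implicit function theorem in Banach spaces; the delicate points are the regularity of the transported coefficients and the uniform coercivity of the $t$-dependent sesquilinear forms for small $t$, which is where the bounds \eqref{eq:bounds_At_and_{b}t} and \eqref{eq:regular_maps} enter. Everything after that is routine bookkeeping of the change of variables and the product rule, closely following the already-presented discrete computation.
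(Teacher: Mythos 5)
Your proposal is correct in substance and arrives at the right equation by the same pullback-to-the-reference-domain computation the paper uses: the factors $\dett$, $\At$, $\bt$, the observation that $\bt=1$ on $\partial\varOmega$, and the derivative formulas \eqref{eq:derivatives_of_maps} all appear in the paper's proof exactly as you describe. The one place where you genuinely diverge is the existence of $\dot{u}$, which you correctly single out as the main obstacle but propose to settle by an implicit-function-theorem argument in Banach spaces. The paper instead takes the more hands-on difference-quotient route: it sets $w^{t}=u^{t}-u$, writes the difference equation $\tilde{a}(w^{t},v)=\tilde{l}(v)$ with right-hand side built from $(\sigma^{t}-\sigma)$, $(\At-\idmat)$, $(\dett-1)$, etc., proves via Lax--Milgram and the uniform bounds \eqref{eq:bounds_At_and_{b}t} that $z^{t}=w^{t}/t$ is bounded in $\Vomega$, extracts a weak limit $z$, identifies the limit equation using \eqref{eq:derivatives_of_maps} and \cite[Cor.~3.1]{IKP2006}, and then upgrades to strong convergence by comparing $a_{0}(z,z)$ with $\lim l_{t_n}(z^{t_n})$. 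The difference-quotient argument buys you existence and the characterizing equation simultaneously and only needs the convergence rates of $\At$, $\dett$, and the transported coefficients; the IFT route is cleaner to state but requires verifying Fr\'{e}chet differentiability of the $t$-parametrized solution map, which is more machinery than the paper deploys. Either way the formula \eqref{eq:material_derivative} and the uniqueness argument via coercivity come out the same, so your proposal would work if the IFT step were carried out in detail.
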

\begin{proof}
In the proof, we do not split the integrals over the sub-domains $\healthy$ and $\tumor$ for notational convenience. 
Let the assumptions of the assertion be satisfied.
Let us consider $u_{t} \in {\Vomegat} := H^{1}(\varOmega_{t})$, the solution of the perturbed problem for a given variation {$\VV \in {{\sfTheta}^{1}}$} is given by the solution of 
\begin{equation}\label{eq:perturbed_problem}
a_{t}(u_{t},v_{t}) = l_{t}(v_{t}), \quad \forall v_{t} \in {\Vomegat}. 
\end{equation}
where
\[ 
\begin{aligned}
	a_{t}(u_{t}, v) &:= \intOt{ \left( \sigma_{t} \nabla{u}_{t} \cdot \nabla \conj{v} + \coeffk_{t}u_{t}\conj{v} \right) } + \intGtop{ ( \alpha + i )  u_{t}\conj{v} },\\
	l_{t}(v) &:= \intOt{Q_{t}\conj{v}} + \intGtopt{ (\alpha T_{a} + i h) \conj{v} }.
\end{aligned} 
\]
Here, ${\sigma}_{t} = \sigma_{1,t} \chi_{\varOmega \setminus \bartumort} + \sigma_{0,t} \chi_{\tumort}$, ${\coeffk}_{t} = \coeffk_{1,t} \chi_{\varOmega \setminus \bartumort} + \coeffk_{0,t} \chi_{\tumort}$, and ${Q}= {Q}_{1,t} \chi_{\varOmega \setminus \bartumort} + {Q}_{0,t} \chi_{\tumort}$, where $\chi_{(\cdot)}$ denotes the characterisitc function.  
The change of variables (cf. \cite[pp.~482--484]{DelfourZolesio2011}) allows us to rewrite \eqref{eq:perturbed_problem} as follows:
\[
	a^{t}(\ut,v) = l^{t}(v), \quad \forall v \in {\Vomega},
\]
where
\[
\left\{
\begin{aligned}
    a^{t}(\ut,v) &= \intO{( {{\sigma}_{t}} \At \nabla{\ut} \cdot \nabla{\conj{v}} 
    			+ \dett {\kt} {\ut}{\conj{v}})} 
    			+ \intGtop{ \bt ( \alpha + i ) {\ut}{\conj{v}} }, \quad \text{for } \ut,v \in {\Vomega},\\
    l^{t}(v) &=  \intO{\dett \Qt {\conj{v}}} + \intGtop{ \bt (\alpha T_{a} + i h) {\conj{v}} }, \quad \text{for } v \in {\Vomega},
\end{aligned}
\right.
\]
where $\varphi^{t} = \varphi_{t} \circ T_{t} : \varOmega \to \mathbb{R}$.
We underline here that on $\partial\varOmega = \Gtop \cup \Gwall \cup \Gbottom$, $\bt = \dett \abs{({D}T_t)^{-\top} \nn} = 1$ because $\VV \big|_{\partial\varOmega} = {0}$.

Using the properties of $T_t$ from \eqref{eq:regular_maps} and the bounds from \eqref{eq:bounds_At_and_{b}t}, it can be shown that $\wt = \ut - u \in \Vomega$ is the unique solution to the variational equation $a^{\top}(\ut, v) - a(u, v) = l^{\top}(v) - l(v)$ for all $v \in \Vomega$. 
This equation can also be written as:
\begin{equation}\label{eq:difference_equation}
\tilde{a}(\wt, v) = \tilde{l}(v), \quad \forall v \in {\Vomega},
\end{equation}
where
\begin{equation}\label{eq:difference_equation_for_material_derivative}
\left\{
\begin{aligned}
\tilde{a}(\wt, v) &= \intO{( {{\sigma}_{t}} \nabla{\wt} \cdot \nabla{\conj{v}} + {\kt} {\wt}{\conj{v}})} + \intGtop{ ( \alpha + i ) {\wt}{\conj{v}} }, \quad  \wt, v \in {\Vomega},\\
\tilde{l}(v) &= -\intO{({\sigma}_{t} - {\sigma}) \At \nabla{\ut} \cdot \nabla{\conj{v}}}
-\intO{ {\sigma} (\At - \idmat) \nabla{\ut} \cdot \nabla{\conj{v}}}\\
&\qquad -\intO{ \dett (\kt - k) {\ut} {\conj{v}}}
-\intO{ (\dett - 1){\coeffk}{\ut} {\conj{v}}}
\\
&\qquad +\intO{ \dett (\Qt - Q) {\conj{v}}}
+\intO{ (\dett - 1) Q {\conj{v}}}, \quad \ut, v \in {\Vomega}.\\
\end{aligned}
\right.
\end{equation}
For all $t \in \textsf{I}$, the well-posedness of \eqref{eq:difference_equation} follows from the Lax-Milgram theorem \cite[p.~376]{DautrayLionsv21998}, using standard arguments along with the uniform boundedness of $\{\At\}$ and $\{\dett\}$ on ${\varOmega}$, and Assumption~\ref{eq:strong_assumptions}.
Thus, $\cnorm{\wt}_{V} \lesssim \cnorm{u}_{V}$, for $t \in \textsf{I}$; that is $\{\wt \mid t \in \textsf{I}\}$ is bounded in $V$.  

Let us define $\zt = \frac{1}{t} \wt$ for $t \in (0,t_{0})$ which also belongs to $V$. 
Then, we have
\begin{align*}
\tilde{a}(\zt, v) 
	&= -\intO{\left(\dfrac{{\sigma}_{t} - {\sigma}}{t} \right) \At \nabla{\ut} \cdot \nabla{\conj{v}}}
	-\intO{ {\sigma} \left(\dfrac{\At - \idmat}{t} \right) \nabla{\ut} \cdot \nabla{\conj{v}}}\\
	&\qquad -\intO{ \dett \left(\dfrac{\kt - {\coeffk}}{t} \right) {\ut} {\conj{v}} }
	-\intO{ \left(\dfrac{\dett - 1}{t} \right){\coeffk}{\ut} {\conj{v}} } 
\\
	&\qquad +\intO{ \dett \left(\dfrac{\Qt - Q}{t} \right) {\conj{v}}}
	+\intO{ \left(\dfrac{\dett - 1}{t} \right) {Q} {\conj{v}}}
	= \frac{1}{t} \tilde{l}(v) =: l_{t}(v), \quad (v \in {\Vomega}).
\end{align*}
By choosing $v = \zt$ above, we deduce that ${\zt}$ is bounded in ${\Vomega}$.
Thus, there exists a (sub)sequence ${t_n} \to 0$ and $z \in {\Vomega}$ such that $z^{t_n} \rightharpoonup z$ in ${\Vomega}$.
Since $\nabla{u}^{t_{n}} \to \nabla{u}$ in $L^{2}(\varOmega)^{d}$ and $I_{t_{n}} \rightarrow 1$ and $A_{t_{n}} \rightarrow {\idmat}$ uniformly on $\varOmega$ as $n \rightarrow \infty$, we conclude, by \eqref{eq:derivatives_of_maps} and \cite[Cor.~3.1]{IKP2006}, that
\[
\begin{aligned}
a_{0}(z,v) &:= \intO{( {\sigma}\nabla{z} \cdot \nabla{\conj{v}} + {{\coeffk} {z} {\conj{v}})} } + \intGtop{ (\alpha + i){z}{\conj{v}} }\\ 
&= -\intO{ \left( \nabla{{\sigma}} \cdot \VV (\nabla{u} \cdot \nabla{\conj{v}}) + {\sigma} A \nabla{u} \cdot \nabla{\conj{v}} \right) }
- \intO{ \left( \nabla{\coeffk} \cdot \VV {u} {\conj{v}} + {\dive}{\VV}{\coeffk}{u} {\conj{v}} \right)}\\
&\quad + \intO{ \left( \nabla{{Q}}\cdot\VV {\conj{v}} + {\dive}{\VV} {Q} {\conj{v}} \right) }
=: {l}_{1}(v) + {l}_{2}(v) + {l}_{3}(v) =: l_{0}(v), \quad (\forall v \in {\Vomega}).
\end{aligned}
\]
Because the solution of the above equation is unique, $\{z^{t_n}\}$ converges weakly to $z$ in $\Vomega$ for any sequence $\{t_n\} \to 0$. 
The strong convergence follows from  $a_0(z, z) = \lim_{t_n \searrow 0} \tilde{a}(z^{t_n}, z^{t_n}) = \lim_{t_n \searrow 0} l_{t_n}(z^{t_n}) = l_0(z)$,
combined with the weak convergence.  
Thus, the unique material derivative $z = \dot{u} \in \Vomega$ of $u \in \Vomega$ is characterized by \eqref{eq:material_derivative}.
\end{proof}
\subsection{\harbrecht{A posteriori error estimations}}\label{appx:a_posteriori_error_estimations}
In discrete shape variation, mesh deformation can distort or nearly degenerate triangles. 
This requires remeshing at every iteration to keep the mesh regular. 
To improve temperature accuracy along the boundary $\Gtop$, mesh refinement is also needed after several iterations. 
Identifying the regions for refinement is therefore essential.
Following \cite{PadraSalva2013}, we introduce error estimators for the temperature, $\cnorm{u - \uh}_{H^{1}(\varOmega)} $, and the adjoint variable, $\cnorm{p - \ph}_{H^{1}(\varOmega)}$, which provide a bound for the objective function.

Throughout this section, we assume that $T_{b} \in \mathbb{R}_{+}$ and restrict the problem to two dimensions for simplicity.
In this case, the solution of Problem~\ref{prob:CCBM_weak_form} is obtained in the space $\Womega$. 
We then introduce the finite element space
\[
	\spaceWh = \{\wh \in C(\overline{\varOmega}) \mid \ \wh|_{\Gbottom} = T_{b}, \ \wh|_K \in P_{1}(K), \ \forall K \in \Th \}.
\]
Accordingly, Problem~\ref{prob:CCBM_weak_form_fem} is reformulated as follows:
\begin{equation}\label{eq:weak_form_exact_ccbm}
	\text{Find $\uh \in \spaceWh$ such that $a(\uh, \psi) = l(\psi)$, for all $\psi \in \spaceVh$}.
\end{equation}
Hereinafter, we assume that the triangulation $\Th$ of $\varOmega$ is conforming and exact.
We let $N$ be the number of nodes, and let $B$ be the set of indices of the nodes located on the Dirichlet boundary $\Gbottom$.
We let $\{\eta_{j}\}_{j=1}^{N}$ be the basis of $\spacePone$ and $\{\zeta_{j}\}_{j \in B}$ be the nodes in $\Gbottom$.
Then, the finite element solutions of the state and adjoint problems can be written as $\uh = \sum_{j \notin B} u_{j} \eta_{j} + \sum_{j \in B} T_{b} \eta_{j} \in \spacePone$ and $\ph = \sum_{j \notin B} p_{j} \eta_{j} \in \spacePone$, respectively, for some unknown constants $\{u_{j}\}_{j=1}^{N}$ and $\{p_{j}\}_{j=1}^{N}$.

For convenience, we introduce the notations $\erru = u - \uh \in \Vomega$ and $\errp = p - \ph \in \Vomega$, where the latter represents the error in the finite element solution of the state.
\subsubsection{Estimator of the objective function} 
\begin{theorem}\label{thm:estimator_of_objective_function}
	Let $u$ and $p$ be the weak solutions of the state and adjoint problems, and let $\uh$ and $\ph$ be their respective finite element solutions.
	Then there exists a constant $c>0$ such that
	\[
		\abs{J(\tumor) - \Jh(\tumorh)}
			\leqslant c\left( \cnorm{\erru}_{H^{1}(\varOmega)}^{2} + \cnorm{\errp}_{H^{1}(\varOmega)}^{2} \right)
				+ \text{h.o.t.},
	\]
	where h.o.t. represents higher-order terms.
\end{theorem}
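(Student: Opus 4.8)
The strategy is to relate the difference $J(\tumor)-\Jh(\tumorh)$ to the finite element errors $\erru$ and $\errp$ by exploiting the adjoint structure. Since $J(\tumor)=\frac12\intO{(\imu)^2}$ and the same quadratic expression defines the right-hand side of the adjoint equation~\eqref{eq:adjoint_equation}, I would first write
\[
J(\tumor)-\Jh(\tumorh) = \tfrac12\intO{(\imu)^2} - \tfrac12\intOh{(\imuh)^2}.
\]
Working (as the theorem permits) on a fixed exact triangulation so that $\varOmega=\varOmegah$, this becomes $\tfrac12\intO{(\imu-\imuh)(\imu+\imuh)} = \Im\{\tfrac12\intO{(u-\uh)\,\overline{(u+\uh)}}\}$ up to the usual bookkeeping of real and imaginary parts. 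The plan is then to insert the adjoint variable: by~\eqref{eq:adjoint_relation_with_cost_function} one has $\intO{\imu\, v}=\overline{a_{\textsf{adj}}(p,v)}$ for all $v\in\Vomega$, and an analogous discrete identity holds with $\ph$ tested against $\vh\in\spaceVhOh$. Choosing $v=\erru=u-\uh$ (which lies in $\Vomega$) and using Galerkin orthogonality of the state problem, $a(\erru,\psi)=0$ for all $\psi\in\spaceVhOh$, together with the corresponding orthogonality for the adjoint problem, I expect to reduce the leading contribution of $J(\tumor)-\Jh(\tumorh)$ to a sum of bilinear pairings of the form $a(\erru,\errp)$, $a_{\textsf{adj}}(\errp,\erru)$, plus a genuinely quadratic remainder $\tfrac12\intO{(\imu-\imuh)^2}$.

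\textbf{Key steps.} First, expand $J-\Jh$ and convert the $L^2$ pairing of imaginary parts into an $a_{\textsf{adj}}$-pairing with $p$ via~\eqref{eq:adjoint_relation_with_cost_function}. Second, apply the Galerkin orthogonality relations $a(u-\uh,\psi_h)=0$ and $a_{\textsf{adj}}(p-\ph,\psi_h)=0$ for all $\psi_h\in\spaceVhOh$ to replace discrete test functions by their continuous counterparts where convenient, so that the cross terms reorganize into products $a(\erru,\errp)$ and its adjoint; the antisymmetric imaginary part of $a_{\textsf{adj}}$ versus $a$ (the $\pm i\intGtop{\varphi\conj\psi}$ terms) must be tracked carefully here, mirroring the remark preceding Theorem~\ref{thm:distributed_shape_gradient}. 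Third, estimate these bilinear terms by continuity of $a$ and $a_{\textsf{adj}}$ on $\Vomega$: $\abs{a(\erru,\errp)}\leqslant c\cnorm{\erru}_{H^1(\varOmega)}\cnorm{\errp}_{H^1(\varOmega)}\leqslant \tfrac{c}{2}(\cnorm{\erru}_{H^1(\varOmega)}^2+\cnorm{\errp}_{H^1(\varOmega)}^2)$ by Young's inequality, and bound the quadratic remainder $\tfrac12\intO{(\imu-\imuh)^2}\leqslant\tfrac12\cnorm{\erru}_{L^2(\varOmega)}^2\leqslant\tfrac12\cnorm{\erru}_{H^1(\varOmega)}^2$. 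Fourth, collect the terms that are of higher order in the mesh size — for instance contributions arising from the geometric mismatch between $\tumor$ and $\tumorh$ if one does \emph{not} assume $\varOmega=\varOmegah$, or from the fact that $\uh\in\spaceWh$ approximates the Dirichlet lift only up to interpolation error — into the "h.o.t." term, which is of order $\meshh^3$ or higher by standard interpolation estimates and is thus subleading relative to the $O(\meshh^2)$ energy-norm-squared terms.

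\textbf{Main obstacle.} The principal difficulty is the bookkeeping forced by the complex-valued, non-self-adjoint structure: $a$ and $a_{\textsf{adj}}$ differ by the sign of the $i\intGtop{\cdot\,\conj\cdot}$ boundary term, so when I substitute one for the other via the adjoint identity the Robin boundary contributions do not simply cancel, and I must verify that the surviving boundary pairing either is itself of the form $\intGtop{\erru\conj{\errp}}$ (hence controlled by trace estimates and absorbed by Young's inequality) or contributes only to the higher-order remainder. A secondary obstacle, if one wishes to keep the statement fully honest in the genuinely discrete-geometry setting, is quantifying the domain-perturbation error $\abs{\intO{\cdot}-\intOh{\cdot}}$; this is handled by the usual Strang-type lemma argument on the difference of the exact and polygonal domains, and under the "exact triangulation" assumption stated at the start of the appendix it disappears entirely, which is presumably why the authors make that assumption. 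Everything else is a routine application of continuity, coercivity~\eqref{eq:V_coercivity_of_a}, the Poincaré inequality~\eqref{eq:Poincare_inequality}, and Young's inequality.
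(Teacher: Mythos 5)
Your proposal follows essentially the same route as the paper's proof: split $J(\tumor)-\Jh(\tumorh)$ into a quadratic remainder $\tfrac12\intO{(\imu-\imuh)^2}$ plus a term linear in the error, convert the latter into $a(\erru,\errp)$ via the adjoint identity \eqref{eq:adjoint_relation_with_cost_function} and Galerkin orthogonality, and close with continuity, Cauchy--Schwarz, and Young's inequality, relegating the $\intGtop{\erru\,\conj{\errp}}$ boundary pairing to the higher-order terms exactly as the paper does. The only cosmetic difference is your factorization $(\imu)^2-(\imuh)^2=(\imu-\imuh)(\imu+\imuh)$ versus the paper's $(\imu-\imuh)^2+2\imuh(\imu-\imuh)$, which are algebraically identical.
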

\begin{proof}
	It can be verified that
	\[
		J(\tumor) - \Jh(\tumorh)
		= \frac{1}{2} \intO{ \abs{\imu - \imuh}^{2} } + \intO{ \imuh (\imu - \imuh) }.
	\]
	Obviously, the first integral is bounded  by $\cnorm{\erru}_{H^{1}(\varOmega)}^{2}$.
	For the second integral, we have the following estimations:
	\[
		\bigabs{\intO{ \imuh (\imu - \imuh) }}
		\leqslant \bigabs{\intO{ \imu {\erru} }}
		= \bigabs{ \overline{\tilde{a}(p, \erru)} }
		= \bigabs{ a(\erru, \errp) },
	\]
	where the last equation follows from Galerkin orthogonality.
	From \eqref{eq:bounds_for_sigma_and_k}, we get
	\begin{align*}
	\bigabs{ a(\erru, \errp) } 
		&\leqslant \max\{\maxsigma,\maxcoeffk\}
		\bigabs{ 
			\intO{ \left( \nabla{{\erru}} \cdot \nabla{ \conj{{\errp}} }  
				+  {{\erru}}\conj{{\errp}} \right) }
		} 
		+ \max\{\alpha, 1\} \bigabs{ \intGtop{ {{\erru}} \conj{ {\errp} } }  }.
	\end{align*}
	To get a bound in terms of $\cnorm{\erru}_{H^{1}(\varOmega)}^{2}$ and $\cnorm{\errp}_{H^{1}(\varOmega)}^{2}$, we apply the Cauchy--Schwarz inequality and then Young's inequality to the first summand above.
	
	Meanwhile, the higher-order terms can be estimated from the second summand using the following bound:
	\begin{align*}
		\bigabs{ \intGtop{ {{\erru}} \conj{ {\errp} } }  }
		\leqslant c h \cnorm{\erru}_{H^{1}(\varOmega)} \cnorm{\errp}_{H^{1}(\varOmega)}.
	\end{align*}
	The conclusion follows by applying Young's inequality and then combining the resulting bounds.
\end{proof}
By the previous theorem, the objective error is estimated via the temperature and adjoint temperature errors, computed in the next two subsections.
\subsubsection{Temperature estimator} 
Let $\RK$ be the volume residual inside each triangle $K \in \Th$, and let $\Jgamma$ be the residual on each side $\gamma$ of $K$ (i.e., the edge of $K$ previously denoted by $E_{i}$).
For each triangle $K \in \Th$, we define the \textit{local} indicator $\etaK$ and the \textit{global} indicator $\eta$ as follows:
\[
	\etaK = \left( \hk^{2} \intK{ \abs{\RK}^{2} }  + \sum_{\gamma \subset \partial{K}} \meshgamma \intgamma{ \abs{\Jgamma}^{2} } \right)^{1/2}
	\qquad 
		\text{and}
	\qquad
	\eta = \left( \sum_{K \in \Th} \etaK^{2} \right)^{1/2},
\]
where $\meshgamma = \abs{\gamma}$, $\RK = Q + \operatorname{div}(\sigma \nabla{\uh}) - \coeffk{\uh} = Q - \coeffk{\uh}$,
\[
\Jgamma = 
\begin{cases}
        -\sigma_{1} \ddn{u_{1,\meshh}} - \alpha(u_{1,\meshh} - T_{a}) - i (u_{1,\meshh} - h) & \text{if } \gamma \subset \Gtop, \\[0.5em]
        -\sigma_{1} \ddn{u_{1,\meshh}} & \text{if } \gamma \subset \Gwall, \\[0.5em]
	-\dfrac{1}{2} \left( \sigma_{1} \ddn{u_{1,\meshh}} -\sigma_{0} \ddn{u_{0,\meshh}} \right) & \text{if } \gamma \subset \mathcal{E}_{I}, \\[0.5em]
        0 & \text{if } \gamma \subset \Gbottom,
\end{cases}
\]
and $\mathcal{E}_{I}$ is the set of interior sides.
\begin{lemma}\label{eq:error_estimation_state}
	The error $\erru = u - \uh \in \Vomega$ for the finite element solution of the state satisfies the variational equation:
	\[
		a(\erru, v) = \sum_{K \in \Th} \intK{ \RK (v-\vh)} 
			+ \sum_{\gamma \subset \partial{K}} \intgamma{ \Jgamma (v-\vh)},
		\quad \forall v \in \Vomega, \ \forall \vh \in \spaceVh.
	\]
\end{lemma}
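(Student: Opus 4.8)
The statement is the standard residual representation for the error of a conforming finite element method, adapted to the complex-valued transmission problem at hand. The plan is to start from the definitions of the continuous and discrete problems and use Galerkin orthogonality, then integrate by parts element by element to expose the volume and jump residuals.

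\textit{Step 1 (Galerkin orthogonality).} Since $u \in \Womega$ solves $a(u,v) = l(v)$ for all $v \in \Vomega$ (Problem~\ref{prob:CCBM_weak_form}, reformulated as \eqref{eq:weak_form_exact_ccbm}), and $\uh \in \spaceWh$ solves $a(\uh,\psi) = l(\psi)$ for all $\psi \in \spaceVh$, subtracting gives $a(u - \uh, \psi) = 0$ for all $\psi \in \spaceVh$; that is, $a(\erru, \psi) = 0$ for every $\psi \in \spaceVh$. Hence for any $v \in \Vomega$ and any $\vh \in \spaceVh$ one has $a(\erru, v) = a(\erru, v - \vh) = a(u, v - \vh) - a(\uh, v - \vh) = l(v - \vh) - a(\uh, v - \vh)$. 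The point of inserting $\vh$ is to be able to invoke a Clément/Scott--Zhang-type quasi-interpolant later when deriving the indicator bounds, but for the identity itself $\vh$ is arbitrary.

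\textit{Step 2 (element-wise integration by parts).} Write $a(\uh, v - \vh) = \sum_{K \in \Th} \int_K (\sigma \nabla \uh \cdot \nabla \conj{(v-\vh)} + \coeffk \uh \conj{(v-\vh)}) + \int_{\Gtop}(\alpha + i)\uh\conj{(v-\vh)}$. On each $K$, since $\uh|_K$ is a polynomial, $\sigma \nabla \uh$ is (piecewise) smooth, so integrate by parts: $\int_K \sigma \nabla \uh \cdot \nabla \conj{w} = -\int_K \dive(\sigma \nabla \uh)\,\conj{w} + \int_{\partial K} \sigma \partial_\nn \uh\, \conj{w}\, ds$, with $w = v - \vh$. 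Summing over $K$, the boundary terms collect into contributions over interior edges $\gamma \subset \mathcal{E}_I$ — where the two adjacent element contributions combine into the jump $-\tfrac12(\sigma_1 \partial_\nn u_{1,\meshh} - \sigma_0 \partial_\nn u_{0,\meshh})$ after accounting for opposite outward normals and the transmission structure — and over boundary edges on $\Gtop$, $\Gwall$, $\Gbottom$, where the imposed (weak) boundary data are subtracted. Matching with $l(v - \vh) = \sum_K \int_K Q\,\conj{w} + \alpha\int_{\Gtop} T_a \conj{w} + i\int_{\Gtop} h\conj{w}$ produces exactly the volume residual $\RK = Q + \dive(\sigma\nabla\uh) - \coeffk \uh$ on each $K$ and the edge residuals $\Jgamma$ as defined in the statement (the $\Gtop$ case absorbing $\alpha(u_{1,\meshh} - T_a)$ and $i(u_{1,\meshh} - h)$, the $\Gwall$ case being the pure Neumann residual, the $\Gbottom$ case vanishing because $v - \vh = 0$ there). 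This yields the claimed identity.

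\textit{Main obstacle.} The routine calculations are bookkeeping, but the delicate point is getting the signs and the factor $\tfrac12$ on interior edges right: each interior edge is visited twice in $\sum_K \int_{\partial K}$, once from each side with opposite outward normals, and one must be careful that $\conj{w}$ is single-valued (legitimate since $v - \vh \in H^1$), so that the two traces of $\sigma \partial_\nn \uh$ combine into the jump rather than cancelling. A secondary subtlety is that $\erru \in \Vomega$ (so its trace vanishes on $\Gbottom$) whereas $u, \uh \in \Womega$; this is why the $\Gbottom$ edge residual is $0$ and why the identity is tested only against $v \in \Vomega$. No regularity beyond $\uh|_K \in P_1(K)$ and $u \in H^1$ is needed, so the transmission conditions enter only through the form of $\Jgamma$ on $\mathcal{E}_I$ and do not obstruct the argument.
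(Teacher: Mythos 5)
Your proposal is correct and follows essentially the same route as the paper: Galerkin orthogonality to reduce to $a(\erru,v) = l(v-\vh) - a(\uh,v-\vh)$, followed by element-wise integration by parts using $\uh \in \spaceWh$ to produce the volume residuals $\RK$ and edge residuals $\Jgamma$. The paper states this in two lines; your additional care with the interior-edge signs, the factor $\tfrac12$, and the vanishing of the $\Gbottom$ contribution is exactly the bookkeeping the paper leaves implicit.
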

\begin{proof}
Given that $a(\erru,v) = - a(\uh,v-\vh) + l(v-\vh)$, for all $v \in \Vomega$, for all $\vh \in \spaceVh$, the desired equation follows immediately by applying integration by parts on each triangle $K \in \Th$ and using the fact that $\uh \in \spaceWh$.
\end{proof}
\begin{lemma}
	There exists a constant $c>0$ such that $\cnorm{\erru}_{H^{1}(\varOmega)} \leqslant c \eta$.
\end{lemma}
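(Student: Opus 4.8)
The plan is to establish the \emph{a posteriori} upper bound $\cnorm{\erru}_{H^1(\varOmega)} \leqslant c\eta$ by the standard residual-based argument, combining the coercivity of $a(\cdot,\cdot)$ on $\Vomega$ (inequality \eqref{eq:V_coercivity_of_a}), the error representation in Lemma~\ref{eq:error_estimation_state}, and a Cl\'ement-type quasi-interpolation operator. First I would test the error equation in Lemma~\ref{eq:error_estimation_state} with $v = \erru \in \Vomega$ and use coercivity to write $c_1 \cnorm{\erru}_{1,\varOmega}^2 \leqslant \Re\{a(\erru,\erru)\} = \Re\bigl\{ \sum_{K}\intK{\RK(\erru - \vh)} + \sum_{\gamma}\intgamma{\Jgamma(\erru-\vh)} \bigr\}$, which is valid for \emph{any} $\vh \in \spaceVh$. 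The natural choice is $\vh = \mathcal{I}_{\meshh}\erru$, a Cl\'ement or Scott--Zhang interpolant of $\erru$, which is well-defined since $\erru$ vanishes on $\Gbottom$ (so $\mathcal{I}_{\meshh}\erru \in \spaceVh$).

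Next I would invoke the local interpolation estimates for the Cl\'ement operator on a shape-regular mesh: $\cnorm{\erru - \mathcal{I}_{\meshh}\erru}_{L^2(K)} \leqslant c\,\hk \cnorm{\nabla\erru}_{L^2(\omega_K)}$ and $\cnorm{\erru - \mathcal{I}_{\meshh}\erru}_{L^2(\gamma)} \leqslant c\,\meshgamma^{1/2}\cnorm{\nabla\erru}_{L^2(\omega_\gamma)}$, where $\omega_K$ and $\omega_\gamma$ denote the patches of elements surrounding $K$ and the edge $\gamma$ respectively. Applying Cauchy--Schwarz element-wise and edge-wise gives
\[
\Bigl| \sum_{K}\intK{\RK(\erru - \vh)} \Bigr| \leqslant c \Bigl( \sum_{K}\hk^2\cnorm{\RK}_{L^2(K)}^2 \Bigr)^{1/2}\cnorm{\nabla\erru}_{L^2(\varOmega)}
\]
and similarly for the edge term with the factor $\meshgamma^{1/2}$; the finite overlap of the patches $\{\omega_K\}$ and $\{\omega_\gamma\}$ (controlled by shape-regularity) absorbs the sum of the $\cnorm{\nabla\erru}_{L^2(\omega_K)}^2$ into $c\cnorm{\nabla\erru}_{L^2(\varOmega)}^2$. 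Summing the two contributions, recognizing $\bigl(\sum_K \etaK^2\bigr)^{1/2} = \eta$, and using the Poincar\'e inequality \eqref{eq:Poincare_inequality} to bound $\cnorm{\erru}_{1,\varOmega}$ by $c\cnorm{\nabla\erru}_{L^2(\varOmega)}$, I would divide through by $\cnorm{\nabla\erru}_{L^2(\varOmega)}$ to conclude $\cnorm{\erru}_{H^1(\varOmega)} \leqslant c\eta$.

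The main obstacle, or at least the point requiring care rather than difficulty, is the treatment of the complex Robin term $( \alpha + i)\erru\conj{\errp}$ hiding inside $a(\cdot,\cdot)$ when one takes $v=\erru$: here $a(\erru,\erru)$ has the Robin contribution $\intGtop{(\alpha+i)|\erru|^2}$, whose real part is $\alpha\intGtop{|\erru|^2} \geqslant 0$, so it only helps coercivity and no information is lost. One must also be slightly careful that the edge residual $\Jgamma$ on $\Gtop$ is complex-valued and includes the coupling term $-i(u_{1,\meshh}-h)$, but since we only ever estimate $|\Jgamma|$ this causes no trouble. A secondary technical point is that the statement implicitly assumes the mesh is shape-regular (required for the Cl\'ement estimates and finite patch overlap); this is consistent with the regularity hypotheses used elsewhere in this section, so I would simply state it. No saturation or lower-bound (efficiency) estimate is claimed here, so bubble-function arguments are not needed — only the reliability direction, which is exactly the routine computation sketched above.
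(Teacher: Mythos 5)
Your proposal is correct and follows essentially the same route as the paper: test the error representation of Lemma~\ref{eq:error_estimation_state} with $v=\erru$ and $\vh$ equal to a Cl\'ement interpolant of $\erru$, invoke the coercivity estimate \eqref{eq:V_coercivity_of_a}, apply Cauchy--Schwarz together with the Cl\'ement interpolation bounds to produce the factor $\eta$, and divide through. The only cosmetic differences are that the paper packages the interpolation estimates into a single quantity $\Psi(\erru)\leqslant c_2\cnorm{\nabla\erru}_{L^2(\varOmega)}$ and bounds $\abs{a(\erru,\erru)}$ rather than its real part, neither of which changes the argument.
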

\begin{proof}
	Let $\erru^{I} \in \spaceVh$ denote the Cl\'{e}ment interpolation, as defined in \cite{Clement1975} (see also \cite{Carstensen2006}).
	The following error estimate, which follows directly from the results in that reference, holds for this interpolation:
	\[
		\Psi(\erru) := \left[ 
			\sum_{K \in \Th} \left( \cnorm{\erru - \erru^{I}}_{L^{2}(K)}^{2} \hk^{-2} 
			+ \sum_{\gamma \subset \partial{K}} \cnorm{ \erru - \erru^{I} }_{L^{2}(\gamma)}^{2} \meshgamma^{-1} \right)
		\right]^{1/2} 
		\leqslant c_{2} \cnorm{\nabla{\erru}}_{L^{2}(\varOmega)},
	\]
	for some constant $c_{2} > 0$.
	Letting $v = \erru$ and $\vh = \erru^{I}$ in Lemma~\ref{eq:error_estimation_state}, and using the Cauchy--Schwarz inequality, the above estimate, and the definition of the local indicator $\etaK$, we get
	\begin{align*}
		c_{1}\cnorm{\erru}_{H^{1}(\varOmega)}^{2} 
		\leqslant \abs{a(\erru,\erru)}
		&\leqslant \Psi(\erru) \left( \sum_{K \in \Th} \etaK^{2}	\right)^{1/2} 
		\leqslant c_{2} \cnorm{\erru}_{H^{1}(\varOmega)}
		\left( 
			\sum_{K \in \Th}  \etaK^{2}
		\right)^{1/2}.
	\end{align*}
	Using the definition of the global indicator $\eta$, we obtain the desired estimate.
\end{proof}
\begin{theorem}\label{eq:estimate_per_triangle}
	Let $\mathcal{N}_{K}$ be the union of all triangles that share a side with $K$.
	Then, there exists a constant $c > 0$ such that, for all $K \in \Th$, $\etaK \leqslant c \cnorm{\erru}_{H^{1}(\mathcal{N}_{K})}$.
\end{theorem}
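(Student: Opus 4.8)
The plan is to prove the \emph{local efficiency} (lower bound) of the residual-based estimator $\etaK$ by the standard bubble-function technique of Verf\"urth, adapted to the present complex-valued transmission problem. The key tool is the strong residual equation satisfied by $\erru$ on each element and each interior/boundary edge, together with carefully chosen element and edge bubble functions that localize the estimate to the patch $\mathcal{N}_K$.

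First I would establish the interior residual bound. On a fixed triangle $K$, let $b_K$ denote the standard cubic element bubble supported in $K$ with $0\leqslant b_K\leqslant 1$. Take $v = \RK b_K$ (extended by zero outside $K$) as a test function in the residual identity of Lemma~\ref{eq:error_estimation_state}, noting that the edge terms vanish since $b_K$ vanishes on $\partial K$. Using the norm equivalence $\cnorm{\RK b_K^{1/2}}_{L^2(K)}^2 \gtrsim \cnorm{\RK}_{L^2(K)}^2$ for polynomials (valid because $Q$ is piecewise constant and $\uh$ piecewise linear, so $\RK = Q - \coeffk\uh$ is a low-degree polynomial on $K$) and the inverse inequality $\cnorm{\nabla(\RK b_K)}_{L^2(K)} \lesssim \hk^{-1}\cnorm{\RK b_K}_{L^2(K)}$, one absorbs the gradient of $\erru$ and arrives at
\[
\hk\cnorm{\RK}_{L^2(K)} \leqslant c\,\cnorm{\erru}_{H^1(K)}.
\]
One must be slightly careful that $a(\cdot,\cdot)$ contains the zeroth-order term $\inthealthy{\coeffk \erru v}$ (plus the analogous tumor term) and the boundary term on $\Gtop$; the latter does not appear here because $\operatorname{supp}(b_K)\cap\Gtop=\emptyset$, and the zeroth-order term is controlled by $\cnorm{\erru}_{L^2(K)}\cnorm{\RK b_K}_{L^2(K)}$, which is of the right order.

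Next I would treat the edge residual $\Jgamma$ for an interior edge $\gamma \subset \mathcal{E}_I$ shared by two triangles $K_1,K_2$ with $\mathcal{N}_K = K_1\cup K_2$. Let $b_\gamma$ be the edge bubble supported in $K_1\cup K_2$, and let $P_\gamma$ be a polynomial extension of $\Jgamma$ into the patch. Test the residual identity with $v = P_\gamma b_\gamma$; now both an edge term ($\Jgamma$ against $P_\gamma b_\gamma$ on $\gamma$) and interior terms ($\RK$ against $P_\gamma b_\gamma$ on $K_1$ and $K_2$) appear. Using $\cnorm{\Jgamma b_\gamma^{1/2}}_{L^2(\gamma)}^2 \gtrsim \cnorm{\Jgamma}_{L^2(\gamma)}^2$, the scaled trace/extension estimates $\cnorm{P_\gamma b_\gamma}_{L^2(K_i)}\lesssim \meshgamma^{1/2}\cnorm{\Jgamma}_{L^2(\gamma)}$ and $\cnorm{\nabla(P_\gamma b_\gamma)}_{L^2(K_i)}\lesssim \meshgamma^{-1/2}\cnorm{\Jgamma}_{L^2(\gamma)}$, together with the already-proven interior bound to dispose of the $\RK$ terms, one obtains $\meshgamma^{1/2}\cnorm{\Jgamma}_{L^2(\gamma)}\leqslant c\,\cnorm{\erru}_{H^1(\mathcal{N}_K)}$. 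The boundary edges on $\Gtop$ and $\Gwall$ are handled analogously, testing with an extension of $\Jgamma$ that vanishes on the rest of $\partial K$; on $\Gbottom$ there is nothing to prove since $\Jgamma=0$ and $\erru\in\Vomega$ vanishes there. Summing the squared interior and edge bounds over the (bounded number of) edges of $K$ and the element itself gives $\etaK^2 \leqslant c\,\cnorm{\erru}_{H^1(\mathcal{N}_K)}^2$, which is the claim.

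The main obstacle is the transmission (jump) edge: on $\gamma\subset\mathcal{E}_I$ that lies on $\partial\tumor$, the coefficients $\sigma,\coeffk,Q$ jump across $\gamma$, so the polynomial degree and the relevant norm equivalence constants on the two sides differ, and the extension $P_\gamma$ must be chosen compatibly on both $K_1$ and $K_2$ while the test function $P_\gamma b_\gamma$ must remain in $H^1$ across $\gamma$ (hence continuous there). The cleanest route is to keep $P_\gamma b_\gamma$ continuous across $\gamma$ by defining it from the single trace polynomial on $\gamma$, and to track the coefficient jump explicitly into the constant $c$ via $\max\{\maxsigma,\maxcoeffk\}$ from \eqref{eq:bounds_for_sigma_and_k}; the discrete state $\uh$ is globally continuous and piecewise linear, so $\sigma\dn{\uh}$ is piecewise constant on each side of $\gamma$ and $\Jgamma$ is a genuine polynomial on $\gamma$, which keeps all the polynomial-inverse estimates valid. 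One also needs the constant $c$ in the final bound to be independent of the mesh, which follows from shape-regularity of $\Th$ — a mild standing assumption here, in contrast to the mesh-sensitivity analysis of Section~\ref{sec:mesh_sensitivity_analysis}.
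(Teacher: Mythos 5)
Your proposal is correct and follows essentially the same route as the paper: the interior bound via the cubic element bubble $\bK$ tested in the residual identity of Lemma~\ref{eq:error_estimation_state} with the finite-dimensional norm equivalence and inverse inequality, and the edge bounds via edge bubbles on the patch $\mathcal{N}_K$ (which the paper delegates to the corresponding argument in \cite{PadraSalva2013} but you spell out, including the transmission edges and the $\Gtop$ boundary term). The only difference is one of detail, not of method.
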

\begin{proof}
We begin with an estimate over the interior of each triangle $K \in \Th$.
Let $\bK = 27 \lambda_1 \lambda_2 \lambda_3$ be the \emph{cubic bubble} of the triangle $K$.
Replacing $v = \RK \bK$ and $\vh = 0$ in Lemma~\ref{eq:error_estimation_state}, and observing that $\bK$ is zero on every side of $K$, we get
\[
	\intK{ {\RK^{2} \bK} }
	= \intK{ \left( \sigma \nabla{\erru} \cdot \nabla{\RK\bK} + \coeffk{\RK\bK} \right) } + \int_{\Gtop \cap K} (\alpha + i){e}{\RK\bK} \, ds.
\]
In finite dimensional setting, the following inequality holds:
\[
	\cnorm{ \varphi }_{L^{2}(K)}^{2} \leqslant c \intK{ { \varphi^{2} \bK} }
	\qquad \text{and} \qquad
	\cnorm{\nabla{ \varphi }}_{L^{2}(K)} \leqslant \dfrac{c}{\hK} \cnorm{ \varphi }_{L^{2}(K)},
\]
for some constant $c>0$.
Using the Cauchy--Schwarz inequality, the previous inequalities, and the fact that the cubic bubble function is at most one, we obtain the following, assuming that $\cnorm{ \RK }_{L^{2}(K)} \neq 0$:
\[
	\cnorm{ \RK }_{L^{2}(K)}^{2} \leqslant \dfrac{c}{\hK} \cnorm{ \erru }_{H^{1}(K)} \cnorm{ \RK }_{L^{2}(K)}
	\quad \Longleftrightarrow \quad
	\left( \hK^{2} \intK{ {\RK^{2} } } \right)^{1/2} \leqslant \hK \cnorm{ \erru }_{H^{1}(K)}.
\]
Now, the estimation over the edges of the triangle $K$, along with those of the neighboring triangles that share a side with $K$, is carried out in a similar fashion and follows the same argument as in the latter part of the proof of Theorem~4 in \cite{PadraSalva2013}; thus, we omit the details.  
Combining the estimate with the one previously obtained leads to the conclusion of the theorem.
\end{proof}
\subsubsection{Adjoint estimator} 
We can also derive the corresponding estimates for the adjoint error $\errp = p - \ph \in \Vomega$.  
We only state the results and omit the proofs, as they essentially follow the same arguments as in the previous subsection, using the same ideas from \cite{PadraSalva2013}.

For each triangle $K \in \Th$, we define the \textit{local} indicator $\muK$ and the \textit{global} indicator $\mu$ as follows:
\[
	\muK = \left( \hk^{2} \intK{ \abs{\RKa}^{2} }  + \sum_{\gamma \subset \partial{K}} \meshgamma \intgamma{ \abs{\Jgammaa}^{2} } \right)^{1/2}
	\qquad 
		\text{and}
	\qquad
	\mu = \left( \sum_{K \in \Th} \muK^{2} \right)^{1/2},
\]
where $\meshgamma = \abs{\gamma}$, $\RKa = \imuh + \operatorname{div}(\sigma \nabla{\ph}) - \coeffk{\ph} = \uh - \coeffk{\ph} $,
\[
\Jgammaa = 
\begin{cases}
        -\sigma_{1} \ddn{p_{1,\meshh}} - \alpha p_{1,\meshh} + i p_{1,\meshh} & \text{if } \gamma \subset \Gtop, \\[0.5em]
        -\sigma_{1} \ddn{p_{1,\meshh}} & \text{if } \gamma \subset \Gwall, \\[0.5em]
	-\dfrac{1}{2} \left( \sigma_{1} \ddn{p_{1,\meshh}} -\sigma_{0} \ddn{p_{0,\meshh}} \right) & \text{if } \gamma \subset \mathcal{E}_{I}, \\[0.5em]
        0 & \text{if } \gamma \subset \Gbottom.
\end{cases}
\]
\begin{lemma}\label{eq:error_estimation_adjoint}
	The error $\errp = u - \uh \in \Vomega$ for the finite element solution of the state satisfies the variational equation:
	\[
		a(\errp, v) = \sum_{K \in \Th} \intK{ \RKa (v-\vh)} 
			+ \sum_{\gamma \subset \partial{K}} \intgamma{ \Jgammaa (v-\vh)},
		\quad \forall v \in \Vomega, \ \forall \vh \in \spaceVh.
	\]
\end{lemma}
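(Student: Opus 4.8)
The plan is to prove Lemma~\ref{eq:error_estimation_adjoint} by mimicking exactly the argument used for the state error in Lemma~\ref{eq:error_estimation_state}, since the adjoint equation \eqref{eq:adjoint_equation_discrete} has the same sesquilinear structure as the state equation, only with a different right-hand side and the opposite sign on the imaginary boundary term (the form $a_{\textsf{adj}}$ instead of $a$). First I would start from the Galerkin orthogonality for the adjoint: subtracting the discrete adjoint equation \eqref{eq:adjoint_equation_discrete} from the continuous one \eqref{eq:adjoint_equation}, we obtain $a_{\textsf{adj}}(\errp, \vh) = 0$ for all $\vh \in \spaceVh$. Hence for arbitrary $v \in \Vomega$ and any $\vh \in \spaceVh$,
\[
a_{\textsf{adj}}(\errp, v) = a_{\textsf{adj}}(\errp, v - \vh) = \intO{\imuh \conj{(v-\vh)}} - a_{\textsf{adj}}(\ph, v - \vh).
\]

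Next I would localize: write the integrals over $\varOmega$ as a sum over the triangles $K \in \Th$, and on each $K$ integrate by parts the term $\intK{\sigma \nabla \ph \cdot \nabla \conj{(v-\vh)}}$. Since $\ph \in \spacePone$ is piecewise linear and $\sigma$ is piecewise constant, $\operatorname{div}(\sigma \nabla \ph) = 0$ inside each $K$, so the volume contribution collapses to $\intK{(\imuh - \coeffk \ph)\conj{(v-\vh)}} = \intK{\RKa \conj{(v-\vh)}}$ with $\RKa = \uh - \coeffk \ph$ as defined (using $\imuh$ from the right-hand side; note the identification $\imuh + \operatorname{div}(\sigma\nabla\ph) - \coeffk\ph = \uh - \coeffk\ph$ stated in the text). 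The boundary terms from integration by parts reassemble, over interior edges, into the normal-flux jumps $-\tfrac12(\sigma_1 \partial_\nn p_{1,\meshh} - \sigma_0 \partial_\nn p_{0,\meshh})$, and on the boundary edges into the appropriate residuals dictated by the adjoint boundary conditions: on $\Gtop$ the Robin-type term $-\sigma_1 \partial_\nn p_{1,\meshh} - \alpha p_{1,\meshh} + i p_{1,\meshh}$ (the $+i$ reflecting the sign flip in $a_{\textsf{adj}}$), on $\Gwall$ the term $-\sigma_1 \partial_\nn p_{1,\meshh}$, and zero on $\Gbottom$. Collecting everything gives precisely $a_{\textsf{adj}}(\errp,v) = \sum_K \intK{\RKa(v-\vh)} + \sum_{\gamma \subset \partial K} \intgamma{\Jgammaa(v-\vh)}$; one then notes (as implicitly done in the statement, which writes $a$ rather than $a_{\textsf{adj}}$) that by the argument in Theorem~\ref{thm:estimator_of_objective_function} the difference between $a$ and $a_{\textsf{adj}}$ only affects lower-order boundary terms, or one simply keeps $a_{\textsf{adj}}$ throughout and the indicator $\Jgammaa$ is defined to absorb the correct sign.

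The main obstacle — really a bookkeeping one rather than a conceptual one — is getting the signs and the factor $\tfrac12$ correct on the interior edge jumps and matching the boundary-edge residuals to the correct adjoint boundary conditions, since the adjoint problem's Robin condition on $\Gtop$ differs from the state's (the imaginary coefficient changes sign, and the forcing $h$ and $T_a$ are replaced by the adjoint source $\uh$, or rather $\imuh$). I would handle this by carefully reading off the strong form of the adjoint problem from $a_{\textsf{adj}}$ via integration by parts on the continuous level first, thereby pinning down exactly which boundary operator appears on each portion of $\partial\varOmega$, and only then transcribing the discrete residuals. The remaining pieces — the Clément interpolation estimate, the choice $\vh = \errp^I$, and the resulting global bound $\cnorm{\errp}_{H^1(\varOmega)} \leqslant c \mu$, as well as the local lower bound $\muK \leqslant c \cnorm{\errp}_{H^1(\mathcal{N}_K)}$ via bubble functions — are then word-for-word identical to the state case (Lemmas and Theorem~\ref{eq:estimate_per_triangle}), using the coercivity \eqref{eq:V_coercivity_of_a} of $a_{\textsf{adj}}$ (which holds since $\Re\{a_{\textsf{adj}}(v,v)\} = \Re\{a(v,v)\} \geqslant c_1 \cnorm{v}_{1,\varOmega}^2$, the imaginary boundary term contributing nothing to the real part), so I would simply state them and refer back rather than reproduce the arguments.
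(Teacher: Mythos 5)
Your overall strategy is exactly the one the paper intends: the paper omits the proof of this lemma, stating only that it ``essentially follows the same arguments'' as the state case, and your plan --- use the discrete equation to subtract off a test function, then integrate by parts element by element to read off the volume residual $\RKa$ and the edge residuals $\Jgammaa$ --- is precisely that argument. Your handling of the sign bookkeeping on $\Gtop$ (the $+i$ from $a_{\textsf{adj}}$) and of the interior jumps, as well as your observation that the lemma's statement contains typographical slips ($\errp = u - \uh$ should read $p - \ph$, and $a$ should be $a_{\textsf{adj}}$), are all sound.

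There is, however, one step in your derivation that does not hold as written: the claimed Galerkin orthogonality $a_{\textsf{adj}}(\errp, \vh) = 0$ for all $\vh \in \spaceVh$. Subtracting \eqref{eq:adjoint_equation_discrete} from \eqref{eq:adjoint_equation} gives $a_{\textsf{adj}}(\errp,\vh) = \intO{(\imu - \imuh)\conj{\vh}}$, which is not zero, because the continuous adjoint is driven by $\imu$ while the discrete adjoint is driven by $\imuh$. (Your next display has the companion slip: $a_{\textsf{adj}}(\errp, v-\vh) = \intO{\imu\,\conj{(v-\vh)}} - a_{\textsf{adj}}(\ph, v-\vh)$, with $\imu$ rather than $\imuh$.) Carrying the computation through correctly, the exact identity is the residual representation you derive \emph{plus} the consistency term $\intO{(\imu - \imuh)\conj{v}}$, which is not absorbed by $\RKa$ as defined (with $\imuh$). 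To repair this you must either (i) carry this extra term explicitly and observe that it is bounded by $\cnorm{\erru}_{L^{2}(\varOmega)}\cnorm{v}_{L^{2}(\varOmega)}$, hence contributes only to the higher-order terms already present in Theorem~\ref{thm:estimator_of_objective_function} and its corollary, or (ii) define $\errp$ as the error against the auxiliary continuous adjoint problem with data $\imuh$ in place of $\imu$, for which your Galerkin orthogonality is then exact. Everything else in your proposal (Cl\'{e}ment interpolation, bubble functions, coercivity of $a_{\textsf{adj}}$) transfers verbatim from the state case as you say.
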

\begin{lemma}
	There exists a constant $c>0$ such that $\cnorm{\errp}_{H^{1}(\varOmega)} \leqslant c \mu$.
\end{lemma}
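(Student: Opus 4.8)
The plan is to reproduce, with the obvious substitutions, the reasoning that established the temperature bound $\cnorm{\erru}_{H^{1}(\varOmega)} \leqslant c\eta$: replace the state residuals $\RK,\Jgamma$ by the adjoint residuals $\RKa,\Jgammaa$, the global indicator $\eta$ by $\mu$, and the state error $\erru$ by $\errp = p - \ph$. First I would note that $\errp \in \Vomega$ is governed by the (conjugated) adjoint problem, whose sesquilinear form has the same real part as $a(\cdot,\cdot)$, so that the coercivity estimate \eqref{eq:V_coercivity_of_a} applies unchanged and gives $c_{1}\cnorm{\errp}_{H^{1}(\varOmega)}^{2} \leqslant \abs{a_{\textsf{adj}}(\errp,\errp)}$.

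Next, I would apply the residual identity of Lemma~\ref{eq:error_estimation_adjoint} with the test function $v = \errp$ and with $\vh = \errp^{I} \in \spaceVh$ the Cl\'ement quasi-interpolant of $\errp$ \cite{Clement1975,Carstensen2006}. This yields
\[
c_{1}\cnorm{\errp}_{H^{1}(\varOmega)}^{2} \leqslant \sum_{K \in \Th}\left( \bigabs{\intK{\RKa(\errp - \errp^{I})}} + \sum_{\gamma \subset \partial K}\bigabs{\intgamma{\Jgammaa(\errp - \errp^{I})}} \right).
\]
Applying the Cauchy--Schwarz inequality element-by-element and edge-by-edge, then factoring out the weights $\hk^{2}$ and $\meshgamma$ and recognising the definition of $\muK$, the right-hand side is bounded by $\Psi(\errp)\big(\sum_{K\in\Th}\muK^{2}\big)^{1/2}$, where $\Psi(\cdot)$ is the weighted interpolation-defect functional from the proof of the temperature estimate. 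The Cl\'ement interpolation error bound gives $\Psi(\errp) \leqslant c_{2}\cnorm{\nabla\errp}_{L^{2}(\varOmega)} \leqslant c_{2}\cnorm{\errp}_{H^{1}(\varOmega)}$, whence $c_{1}\cnorm{\errp}_{H^{1}(\varOmega)}^{2} \leqslant c_{2}\cnorm{\errp}_{H^{1}(\varOmega)}\,\mu$; dividing by $\cnorm{\errp}_{H^{1}(\varOmega)}$ (the case $\errp = 0$ being trivial) gives the claim with $c = c_{2}/c_{1}$.

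The only genuine point of care — and the reason the paper omits the details — is the first step: one must track that $\errp$ solves the adjoint problem rather than the state problem, so that the correct residual identity is Lemma~\ref{eq:error_estimation_adjoint} and the coercivity invoked is that of $a_{\textsf{adj}}$. Since $a$ and $a_{\textsf{adj}}$ differ only in the sign of the purely imaginary boundary term on $\Gtop$ (compare \eqref{eq:adjoint_bilinear_form} with \eqref{eq:forms}), their real parts coincide and \eqref{eq:V_coercivity_of_a} transfers verbatim; moreover the Galerkin orthogonality needed to insert $\errp^{I}$ is already built into the statement of Lemma~\ref{eq:error_estimation_adjoint}, because $\ph$ solves the discrete adjoint problem \eqref{eq:adjoint_equation_discrete}. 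Everything downstream — Young's inequality to absorb the $\cnorm{\errp}_{H^{1}(\varOmega)}$ factor and the finite-overlap property of the patches $\mathcal{N}_{K}$ — is routine and identical to the temperature case.
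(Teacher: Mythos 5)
Your proposal is correct and follows essentially the same route the paper intends: the paper explicitly omits this proof on the grounds that it repeats the temperature-estimator argument (Cl\'ement interpolant, residual identity, Cauchy--Schwarz, coercivity) with $\RKa$, $\Jgammaa$, $\muK$ in place of $\RK$, $\Jgamma$, $\etaK$. Your added remark that the coercivity transfers because $a$ and $a_{\textsf{adj}}$ share the same real part is a correct and worthwhile clarification of the one point the paper glosses over.
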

\begin{theorem}
	There exists a constant $c > 0$ such that, for all $K \in \Th$, $\muK \leqslant c \cnorm{\errp}_{H^{1}(\mathcal{N}_{K})}$, where $\mathcal{N}_{K}$ is defined as Theorem~\ref{eq:estimate_per_triangle}.
\end{theorem}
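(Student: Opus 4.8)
The statement is the adjoint counterpart of Theorem~\ref{eq:estimate_per_triangle} (the local lower bound for the temperature indicator), and the plan is to reproduce that proof verbatim with the substitutions $\erru \rightsquigarrow \errp$, $\RK \rightsquigarrow \RKa$, $\Jgamma \rightsquigarrow \Jgammaa$, $\etaK \rightsquigarrow \muK$, using Lemma~\ref{eq:error_estimation_adjoint} in the role played by Lemma~\ref{eq:error_estimation_state}. The only conceptual difference is that the adjoint source in \eqref{eq:adjoint_equation} is $\imu$ while the computable residual $\RKa$ carries $\imuh$; the mismatch $\imu-\imuh$ will be pushed into the higher-order terms, exactly as in \cite[Thm.~4]{PadraSalva2013}, so that the final bound has the clean form stated (no extra $\erru$-contribution, consistent with Theorem~\ref{eq:estimate_per_triangle}). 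Throughout one works in two dimensions, with $\sigma$, $\coeffk$ piecewise constant, so that $\RKa=\imuh-\coeffk\ph$ and the element data oscillation is of higher order.

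\textbf{Step 1: interior residual.} Fix $K\in\Th$ and let $\bK=27\lambda_1\lambda_2\lambda_3$ be the cubic bubble on $K$. Choosing $v=\RKa\bK$ and $\vh=0$ in Lemma~\ref{eq:error_estimation_adjoint} kills the edge sum (since $\bK$ vanishes on $\partial K$), leaving $\intK{(\RKa)^2\bK}=a(\errp,\RKa\bK)$ restricted to $K$, plus the $\Gtop\cap K$ Robin contribution. Invoking the finite-dimensional equivalences $\cnorm{\varphi}_{L^2(K)}^2\leqslant c\intK{\varphi^2\bK}$ and $\cnorm{\nabla\varphi}_{L^2(K)}\leqslant (c/\hK)\cnorm{\varphi}_{L^2(K)}$, together with the Cauchy--Schwarz inequality, the bounds \eqref{eq:bounds_for_sigma_and_k}, and $0\leqslant\bK\leqslant1$, one deduces $\big(\hK^2\intK{|\RKa|^2}\big)^{1/2}\leqslant \hK\,\cnorm{\errp}_{H^1(K)}$, up to higher-order terms stemming from the surface integral.

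\textbf{Step 2: edge residuals and assembly.} For an edge $\gamma$ of $K$ one uses the quadratic edge bubble supported on the (one or two) triangles sharing $\gamma$, extends $\Jgammaa$ suitably, tests Lemma~\ref{eq:error_estimation_adjoint} with this function, and argues as in the latter part of \cite[Thm.~4]{PadraSalva2013}: the interior-residual term generated is controlled by Step~1, and scaled trace plus inverse inequalities give $\big(\meshgamma\intgamma{|\Jgammaa|^2}\big)^{1/2}\leqslant c\,\cnorm{\errp}_{H^1(\mathcal{N}_K)}$. The case $\gamma\subset\Gbottom$ is trivial ($\Jgammaa=0$), $\gamma\subset\Gwall$ is homogeneous Neumann, $\gamma\subset\mathcal{E}_I$ is the usual interelement jump, and $\gamma\subset\Gtop$ requires handling the complex term $-\sigma_1\dn{p_{1,\meshh}}-\alpha p_{1,\meshh}+i p_{1,\meshh}$ componentwise. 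Summing the squared estimates over the at most four triangles of $\mathcal{N}_K$ and using the definition of $\muK$ yields $\muK\leqslant c\,\cnorm{\errp}_{H^1(\mathcal{N}_K)}$.

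\textbf{Main obstacle.} The delicate point is the boundary edge $\gamma\subset\Gtop$: the complex Robin condition couples $\real{p}$ and $\imaginary{p}$, so the edge-bubble argument must be performed for the $\mathbb{C}$-valued residual, and one has to verify that the resulting surface integrals scale like $h$ (hence are higher order) rather than entering at leading order — this is where the piecewise-constant assumption on $\sigma,\coeffk$ and the $H^2$-regularity of the adjoint state keep the estimate clean. A secondary technical nuisance is that the adjoint source is $\imu$, not $\imuh$, so that the oscillation $\imu-\imuh$ appears and must be absorbed into the higher-order terms; this is routine but must be tracked carefully so that the stated bound genuinely involves only $\cnorm{\errp}_{H^1(\mathcal{N}_K)}$.
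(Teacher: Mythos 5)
Your proposal is correct and follows essentially the same route as the paper: the paper explicitly omits this proof, stating that it mirrors the temperature-estimator argument (cubic bubble for the interior residual via Lemma~\ref{eq:error_estimation_adjoint}, edge bubbles for the jump terms as in Theorem~4 of \cite{PadraSalva2013}), which is precisely what you reproduce with the substitutions $\erru\rightsquigarrow\errp$, $\RK\rightsquigarrow\RKa$, $\Jgamma\rightsquigarrow\Jgammaa$. Your explicit tracking of the $\imu$ versus $\imuh$ mismatch in the adjoint source is a point the paper glosses over in its statement of Lemma~\ref{eq:error_estimation_adjoint}, and flagging it is a welcome addition rather than a deviation.
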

\begin{remark}[Criterion for adaptive strategy]\label{rem:criterion_for_adaptive_strategy}
We define the \textit{local objective function indicator} $\xi_{K}$ for each triangle $K \in \Th$ as $\xi_{K} = \left( \frac{\kappa}{2} \etaK^2 + \frac{1}{2\kappa} \muK^2 \right)^{1/2}$, 
for some constant $\kappa > 0$.
By choosing $\kappa = \sqrt{\max_j\{\eta_j\}/\max_j\{\eta_j^a\}}$, one is ensured that $\max\left\{\frac{\kappa}{2} \muK^2\right\} = \max\left\{\frac{1}{2\kappa} \etaK^2\right\}$, which implies that triangles with large $\muK$ or large $\etaK$ can be refined in a similar manner, depending on the adaptive refinement strategy employed. 
\end{remark}
We then define the \textit{global objective function estimator} $\xi$ by $\xi = \left( \sum_{K \in \Th} \xi_{K}^{2} \right)^{1/2}$.

Using the estimates for the state and adjoint variables, we obtain the following consequence of Theorem~\ref{thm:estimator_of_objective_function}.
\begin{corollary}
	There exists a constant $c>0$ such that $\abs{J(\tumor) - \Jh(\tumorh)} \leqslant c \xi + \text{h.o.t.}$, where h.o.t. represents higher-order terms.
\end{corollary}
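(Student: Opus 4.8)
The plan is to chain together the three ingredients already assembled in this appendix. The last statement, $\abs{J(\tumor) - \Jh(\tumorh)} \leqslant c\,\xi + \text{h.o.t.}$, is a direct corollary of Theorem~\ref{thm:estimator_of_objective_function}, so the only work left is to convert the right-hand side of that theorem, namely $c\left( \cnorm{\erru}_{H^{1}(\varOmega)}^{2} + \cnorm{\errp}_{H^{1}(\varOmega)}^{2} \right) + \text{h.o.t.}$, into a bound in terms of the global objective estimator $\xi$. First I would invoke the two reliability lemmas already proved: $\cnorm{\erru}_{H^{1}(\varOmega)} \leqslant c_{\eta}\,\eta$ and $\cnorm{\errp}_{H^{1}(\varOmega)} \leqslant c_{\mu}\,\mu$. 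Squaring and summing these gives
\[
	\cnorm{\erru}_{H^{1}(\varOmega)}^{2} + \cnorm{\errp}_{H^{1}(\varOmega)}^{2}
	\leqslant c_{\eta}^{2}\,\eta^{2} + c_{\mu}^{2}\,\mu^{2}
	\leqslant \max\{c_{\eta}^{2}, c_{\mu}^{2}\}\,(\eta^{2} + \mu^{2}).
\]

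Next I would relate $\eta^{2} + \mu^{2}$ to $\xi$. Recall $\xi_{K}^{2} = \tfrac{\kappa}{2}\etaK^{2} + \tfrac{1}{2\kappa}\muK^{2}$ with $\kappa > 0$ fixed, so summing over $K \in \Th$ and using the definitions of the global indicators gives $\xi^{2} = \tfrac{\kappa}{2}\eta^{2} + \tfrac{1}{2\kappa}\mu^{2}$. Since both coefficients $\tfrac{\kappa}{2}$ and $\tfrac{1}{2\kappa}$ are strictly positive, we have the elementary two-sided comparison
\[
	\min\!\left\{\tfrac{\kappa}{2}, \tfrac{1}{2\kappa}\right\}\,(\eta^{2} + \mu^{2})
	\leqslant \xi^{2}
	\leqslant \max\!\left\{\tfrac{\kappa}{2}, \tfrac{1}{2\kappa}\right\}\,(\eta^{2} + \mu^{2}),
\]
hence $\eta^{2} + \mu^{2} \leqslant \big(\min\{\tfrac{\kappa}{2}, \tfrac{1}{2\kappa}\}\big)^{-1}\,\xi^{2}$. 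Combining this with the previous display and feeding the result into Theorem~\ref{thm:estimator_of_objective_function} yields $\abs{J(\tumor) - \Jh(\tumorh)} \leqslant c\,\xi^{2} + \text{h.o.t.}$ for a constant $c$ depending only on $c_{\eta}, c_{\mu}, \kappa$ and the coefficient bounds.

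There is a subtle point of consistency to address: Theorem~\ref{thm:estimator_of_objective_function} produces a \emph{quadratic} bound $c(\cnorm{\erru}^{2} + \cnorm{\errp}^{2})$, so the natural conclusion is $\abs{J - \Jh} \leqslant c\,\xi^{2} + \text{h.o.t.}$, whereas the corollary as stated has $\xi$ to the first power. I would reconcile this by noting that for the convergent sequence of meshes arising in the adaptive loop one has $\xi \to 0$, so $\xi^{2} \leqslant \xi$ eventually; absorbing the crossover into the higher-order term (or simply restating the corollary with $\xi^{2}$) removes the discrepancy. I expect this normalization/consistency check to be the only genuine obstacle — everything else is a mechanical composition of reliability estimates already established, plus the trivial norm-equivalence between $(\eta,\mu)$ and $\xi$ coming from the positivity of the weights in $\xi_{K}$. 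The local efficiency estimates ($\etaK \leqslant c\cnorm{\erru}_{H^{1}(\mathcal{N}_K)}$ and its adjoint analogue) are not needed for this corollary but confirm the estimator is two-sided, so no spurious over-refinement occurs.
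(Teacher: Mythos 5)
Your proof is correct and follows exactly the route the paper intends: the paper gives no explicit argument for this corollary, merely noting it is a consequence of Theorem~\ref{thm:estimator_of_objective_function} combined with the reliability lemmas $\cnorm{\erru}_{H^{1}(\varOmega)} \leqslant c\eta$ and $\cnorm{\errp}_{H^{1}(\varOmega)} \leqslant c\mu$ and the identity $\xi^{2} = \tfrac{\kappa}{2}\eta^{2} + \tfrac{1}{2\kappa}\mu^{2}$, which is precisely your chain. Your observation that the natural conclusion is $c\,\xi^{2}$ rather than $c\,\xi$ is well taken --- the quadratic dependence in Theorem~\ref{thm:estimator_of_objective_function} propagates to a quadratic dependence on the estimator, so the corollary as printed appears to carry the wrong power, and your reconciliation (restating with $\xi^{2}$, or absorbing the gap for bounded $\xi$ at the cost of a mesh-dependent constant) is the right fix.
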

%
\bibliographystyle{elsarticle-num} 
\bibliography{main}   

\end{document}